\newtheorem{theorem}{Theorem}[section]
\newtheorem{lemma}[theorem]{Lemma}
\newtheorem{proposition}[theorem]{Proposition}
\theoremstyle{definition}
\newcommand{\Rz}{\mathbb{R}}
\newcommand{\Nz}{\mathbb{N}}
\newcommand{\Zz}{\mathbb{Z}}
\newcommand{\eps}{\varepsilon}
\newcommand{\PPP}{\color{black}} 
\newcommand{\BBB}{\color{black}} 
\newcommand{\NNN}{\color{black}} 
\newcommand{\EEE}{\color{black}} 
\newcommand{\RRR}{\color{black}}
\title[Characterization of optimal carbon nanotubes]{Characterization of optimal carbon nanotubes under stretching and validation of the Cauchy-Born rule}
\author{Manuel Friedrich} \author{Edoardo Mainini} \author{Paolo
  Piovano} \author{Ulisse Stefanelli} 
\subjclass[2010]{Primary: 82D25}
 \keywords{Carbon nanotubes, Tersoff energy, variational perspective,
  new geometrical model, stability, Cauchy-Born rule.}
\address[Manuel Friedrich]{Applied Mathematics M\"unster, University of M\"unster\\
Einsteinstrasse 62, 48149 M\"unster, Germany}
\email{manuel.friedrich@uni-muenster.de}
\urladdr{https://www.uni-muenster.de/AMM/Friedrich/index.shtml}
\address[Edoardo Mainini]{Dipartimento di
   Ingegneria meccanica, energetica, gestionale e dei trasporti
   (DIME), Universit\`a degli Studi di Genova,  Via all'Opera Pia 15, I-16145 Genova, Italy 		}
\email{mainini@dime.unige.it}
\address[Paolo Piovano]{Faculty of Mathematics,  University of Vienna, Oskar-Morgenstern-Platz 1, A-1090 Vienna, Austria}
\email{paolo.piovano@univie.ac.at}
\urladdr{http://www.mat.univie.ac.at/~piovano/Paolo\texttt{Samp\_Dist\_Corr}Piovano.html}
\address[Ulisse Stefanelli]{Faculty of Mathematics,  University of Vienna, Oskar-Morgenstern-Platz 1, A-1090 Vienna, Austria $\&$ Istituto di Matematica Applicata e Tecnologie Informatiche ``E. Magenes'' - CNR, v. Ferrata 1, I-27100 Pavia, Italy}
\email{ulisse.stefanelli@univie.ac.at}
\urladdr{http://www.mat.univie.ac.at/~stefanelli}
\begin{document}

\maketitle


\begin{abstract}
Carbon nanotubes are modeled as point configurations and investigated by minimizing configurational
energies including two- and three-body interactions. Optimal
configurations are identified with local minima and their fine
geometry is fully characterized in terms of lower-dimensional problems. Under moderate
tension, \PPP we prove the existence of periodic local minimizers, \EEE which indeed
validates the so-called Cauchy-Born rule in this setting.
\end{abstract}

\tableofcontents 


\section{Introduction}

Nanostructured carbon has emerged over the last two decades as one of
the most promising materials available to mankind. The discovery of
fullerenes \cite{Kroto85,Kroto87}, followed by that of carbon
nanotubes \cite{Iijima91} and graphene \cite{Geim,Novoselov}, \PPP sparked an interest in \EEE   low-dimensional materials. 
The fascinating electronic and mechanical properties of
single-atom-thick surfaces and structures  are  believed to offer
unprecedented opportunities for innovative applications, ranging from
next-generation electronics to pharmacology, to batteries and
solar cells \cite{Gupta,Mannix,Mas-Balleste}.
New findings are emerging at an always increasing
pace, cutting across \PPP materials science, physics, and
chemistry, \EEE and extending from fundamental science to novel
applications \cite{Dresselhaus11,Morris13}. 

Carbon nanotubes are long, hollow  structures \PPP exhibiting cylindrical symmetry \EEE \cite{Dresselhaus92}. 
Their walls consist of a single (or multiple) one-atom-thick layer of
carbon atoms forming {\it $sp^2$ covalent} bonds
\cite{Clayden12} arranged in a hexagonal pattern. This molecular structure
is responsible for amazing mechanical properties: Carbon nanotubes are
presently among the strongest and stiffest known materials with a
nominal Young's modulus  \cite{KDEYT, TEG}  of 1 TPa and ideal strength greater than 100
MPa \cite{Arroyo05}. 
\PPP In addition, they are electrically and thermally conductive, chemically sensitive, transparent, and light weight \cite{Tuukanen}. \EEE
Nanotubes can be visualized
as the result of rolling up a patch of a regular hexagonal
lattice. Depending on the different possible realizations of this rolling-up, different
topologies may arise, giving rise to {\it zigzag, armchair},
and {\it chiral} nanotubes. These  topologies  are believed to have a specific impact on the
 mechanical and electronic properties of the nanotube, which can range from highly
  conducting to semiconducting \cite{Cao07,Charlier98}. 

In contrast to the ever-growing material knowledge, the rigorous mathematical
description of two-dimensional carbon systems is considerably less
developed. Ab initio atomistic models
are
believed to accurately describe some features of the carbon nanotube geometry
and mechanics \cite{Li07,Rochefort99,Yakobson96}. These methods are nevertheless computational in nature
and cannot handle a very large number of atoms due to the rapid
increase in 
computational complexity.
 On the other hand, a number
of continuum mechanics approaches have been proposed where carbon
nanotubes are modeled as rods \cite{Poncharal99}, shells
\cite{Arroyo05,Bajaj13,Favata12,Ru01}, or solids
\cite{Wang05}. These bring the advantage of possibly dealing with 
long structures, at the price however of a less accurate
description of the detailed microscopic behavior. 

The unique
mechanical behavior of nanotubes under {\it stretching} is a crucial feature
of these structures. As such, it has attracted attention from the
theoretical \cite{Bajaj13, Favata14, Ru01, Zhang08}, the computational
\cite{Agrawal, Cao07, Han14, Jindal}, and the experimental side
\cite{Demczyk, KDEYT,Warner11, YFAR}. \PPP Still, a reliable description of nanotubes under stretching requires one to correctly resolve \EEE   the atomic
scale and,  simultaneously, to rigorously deal with the whole
structure. We hence resort to 
   the classical frame of \PPP molecular
mechanics  \EEE \cite{Allinger,Lewars,Rappe} which identifies carbon
nanotubes 
with point configurations $\{x_1,
\dots, x_n\}\in \Rz^{3n}$ corresponding to their atomic
positions.  The atoms  are interacting via a {\it configurational
energy} $E=E(x_1,
\dots, x_n)$ given in terms of  classical potentials and taking 
into account both attractive-repulsive {\it two-body} interactions,
minimized at  a certain  bond length, and {\it three-body} terms
favoring specific angles between bonds
\cite{Brenner90,Stillinger-Weber85,Tersoff}.  The $sp^2$-type covalent bonding
implies that each atom has exactly three first neighbors
 and  that bond angles of
$2\pi/3$ are energetically preferred
\cite{Clayden12}.  The \PPP reader \EEE is referred to 
\cite{Davoli15,E-Li09,  FPS, Mainini-Stefanelli12,  stable} 
for a collection of results on local and global minimizers
in this setting and to \cite{Smereka15,cronut} for additional
results on carbon structures.  

The focus of this paper is to show the local minimality of 
periodic configurations, both in the unstreched case and under the effect of small
stretching.   More specifically, we prove that, by applying a small
stretching to a zigzag nanotube, the energy $E$ is locally strictly minimized by a  specific  periodic
configuration where all atoms see the same local configuration
(Theorem \ref{th: main3}). Local
minimality is here checked with respect to {\it all}  small  perturbations in
$\Rz^{3n}$, namely not restricting {\it a priori} to periodic
perturbations. On the contrary, periodicity is proved here to emerge as 
effect of the global variational nature of the problem.

The novelty of this result is threefold. At first, given the 
periodicity of the \PPP mentioned \EEE local minimizers, the actual configuration in $\Rz^{3n}$ can be
determined by solving a simple minimization problem in $\Rz^2$,  which consists in identifying the length of two specific bond lengths between neighboring atoms.  This
is indeed the standpoint of a number of contributions, see  
\cite{Agrawal,Budyka,Favata15,Favata16,Jiang,Jindal,Kanamitsu,Kurti}
among many others,  where nevertheless
periodicity is a priori {\it assumed}. In this regard, our result
offers a justification for these lower-dimensional approaches.
\PPP Our assumptions on $E$ are kept fairly general in order to 
include the \EEE menagerie of different
possible choices for energy
terms  which have   been implemented in \PPP computational chemistry \EEE codes \cite{Brooks83,Clark89,Gunsteren87,Mayo90,Weiner81}. A by-product of our
results is hence  the  
cross-validation of  these choices in view of their capability of
describing carbon nanotube geometries.

Secondly, we rigorously check that, also in presence of small
stretching, the geometrical model obtained via
local minimization corresponds neither to the classical {\it
  rolled-up} 
model  \cite{Dresselhaus92,Dresselhaus-et-al95, Jishi93},  where two out of three bond angles at each atom are
$2\pi/3$, nor to the {\it polyhedral} model   \cite{Cox-Hill07,Cox-Hill08,Lee}, where all  bond
angles are equal. The optimal configuration lies between these two
 (Proposition \ref{th: main2}),  a fact which remarkably corresponds
to measurements on very thin carbon nanotubes \cite{Zhao}.  Moreover, in accordance with the results in \cite{Jindal},  local minimizers are generically characterized by two different bond lengths. 
 
Finally, our result proves the validity of the so-called {\it
  Cauchy-Born rule} for carbon nanotubes: By imposing a small tension, the
periodicity cell deforms correspondingly and global periodicity is preserved. This
fact rests at the basis of a possible elastic theory for carbon
nanotubes. As a matter of fact, such periodicity is \PPP invariably \EEE {\it assumed} in a number of different
contributions, see \cite{Bajaj13,Favata14,Han14,Zhang08} among others, and then exploited in order to compute
tensile strength as well as stretched geometries. Here again our
results provide a theoretical justification of such approaches.

\PPP While the Cauchy-Born rule plays a pivotal role in mechanics \cite{Ericksen08, Ericksen84,Zanzotto92},  rigorous results are scarce.  \EEE
Among these we mention \cite{Friesecke02,Conti06}, which assess
its validity within two- and $d$-dimensional cubic  mass-spring systems,  respectively. More general interactions are considered in
\cite{E07,E07b}, where the Cauchy-Born rule is investigated under a
specific ellipticity condition applying to the triangular and
hexagonal lattice, both in the
static and the dynamic case. Our result is, to the best of our knowledge, the first one dealing
with a three-dimensional structure which is not a subset of a Bravais
lattice nor of a multilattice. Note  though  the Saint Venant principle
in \cite{Monneau14}, which corresponds to the validity of an approximate version of
the Cauchy-Born rule, up to a small error. However, the setting of
\cite{Monneau14} is  quite different from the present one,  where  
long-range purely two-body interactions are considered.

This work is the culmination of a series on the geometry and mechanics
of nanotubes \cite{MMPS,MMPS-new}. The theoretical outcomes of this
paper \PPP have been predicted computationally \EEE in \cite{MMPS}, where
stability of periodic configurations have been investigated with Monte
Carlo techniques, both for zigzag and armchair topologies under
moderate displacements.  A first step toward a rigorous analytical result
has been obtained in \cite{MMPS-new} for both zigzag and armchair
topologies under no stretching.  In \cite{MMPS-new},  stability is checked against a
number of non-periodic perturbations fulfilling a specific structural
constraint,  which is related to the nonplanarity of the hexagonal cells induced by the local geometry of the nanotube.  Here, we remove such constraint and consider all small
perturbations, even in presence of stretching.

 Indeed, removing the structural assumption and extending the result   of  \cite{MMPS-new} to the
present   fully general setting requires a remarkably deeper analysis.   In a nutshell,
one has to reduce to a cell problem and solve it. The actual
realization of this program poses however substantial technical
challenges and relies on a combination of perturbative and convexity techniques.

 Whereas the proof in \cite{MMPS-new} was essentially based on the convexity of the energy given by the three bond angles at one atom, in the present context we have to reduce to a  {\it cell}  which  includes eight atoms and is slightly nonplanar. The convexity of cell energies for various Bravais lattices has already been investigated in the literature \cite{Conti06, FriedrichSchmidt:2014.1, Friesecke02, Schmidt:2009}, particularly for problems related to the validation of the Cauchy-Born rule. In our setting, however, we need to deal with an almost planar structure embedded in the three-dimensional space and therefore, to confirm convexity of the cell energy, a careful analysis in terms of the  
nonplanarity is necessary, see Section \ref{sec: convexity} and Theorem \ref{th: cell convexity3}. \BBB In this context, an additional difficulty lies in the fact that the reference configuration of the cell is not a stress-free state. \EEE 

 The convexity is then  crucially exploited in order to obtain a quantitative control of the  \emph{energy defect} in terms of the \emph{symmetry defect} produced by symmetrizing a cell (Theorem \ref{th: Ered}). On the other hand, a
second quantitative estimate provides a bound on the  defect in
 the nonplanarity of the cell (called \emph{angle defect})  with respect to the  symmetry defect  of the cell (Lemma
\ref{lemma: sum}). The detailed
combination of these two estimates and a convexity and monotonicity argument  (Proposition \ref{th: mainenergy})  proves that ground states necessarily
have symmetric cells, from which our stability result follows (Theorem
\ref{th: main3}).

The validation of the Cauchy Born rule essentially relies on the application of a slicing technique which has also been used in \cite{FriedrichSchmidt:2014.1} in a more general setting: One reduces the problem to a chain of cells along the diameter of the structure and shows that identical deformation of each cell is energetically favorable. In the present context, however,  additional slicing arguments along the cross sections of the nanotube are necessary in order to identify correctly the nonplanarity of each hexagonal cell.

The paper is organized as follows. In Section \ref{Fsection} we introduce \PPP some notation \EEE and the mathematical setting. Section \ref{sec: mainresults} collects our main results. In Section \ref{sec: main proof} we present the proof strategy, the essential auxiliary statements (Lemma
\ref{lemma: sum} - Theorem \ref{th: Ered}),  and the proof of Theorem \ref{th: main3}. The proofs of the various necessary ingredients are postponed to  Sections \ref{sec: angles}-\ref{sec: cellenery}.

\section{Carbon-nanotube geometry}\label{Fsection}

\PPP The aim of this section is to introduce some notation and the nanotube configurational energy. \EEE
Let us start by introducing the mathematical setting as well as
some preliminary observations. 

As mentioned above, carbon nanotubes (nanotubes, in the following) are
modeled  by
{\it configurations} of atoms, \PPP i.e., \EEE collections of points in $\Rz^3$
representing the atomic sites. Nanotubes are very long
structures, measuring up to   $10^7$  times their diameter. 
As such, we shall not be concerned with  describing the  fine
 nanotube geometry close to their
ends.  \PPP We thus restrict our attention to \EEE  periodic configurations,  \PPP i.e., \EEE configurations that are invariant with respect to a translation of a certain period in the direction of the nanotube axis. Without loss of generality we consider only nanotubes with axis in the $e_1:=(1,0,0)$ direction. Therefore, a nanotube is identified with a configuration 
$$\mathcal{C}:=C_n+Le_1\Zz$$
where $L>0$ is the {\it period} of $\mathcal{C}$ and $C_n:=\{x_1,\dots,x_n\}$ is a collection of $n$ points $x_i\in\Rz^3$ such that $x_i\cdot e_1\in [0,L)$. In the following, we will refer to $C_n$ as the  {\it $n$-cell} of $\mathcal{C}$, and since $\mathcal{C}$ is characterized by its $n$-cell $C_n$ and its period $L$,  we will systematically identify the periodic configuration $\mathcal{C}$ with the couple $(C_n, L)$,  \PPP i.e., \EEE $\mathcal{C}=(C_n, L)$.

\subsection{Configurational energy}

We now introduce {\it the configurational energy $E$ of a nanotube
  $\mathcal{C}$}, and \PPP we detail the hypotheses 
 on $E$ that we assume throughout the paper. \EEE We aim here at minimal
assumptions in order \PPP to include \EEE in the analysis most of the
many different
 possible choices for energy
terms \PPP that have been \EEE successfully implemented in \PPP computational chemistry \EEE codes
\cite{Brooks83,Clark89,Gunsteren87,Mayo90,Weiner81}.

The energy $E$  is given by the sum of two contributions, respectively accounting for  {\it   two-body and  three-body  interactions among particles}  that are respectively \PPP modelled by \EEE the potentials $v_2$ and $v_3$, see \eqref{E}. 

We assume that the   {\it two-body potential} $v_2: \PPP (0,\infty) \EEE\to[-1,\infty)$ is smooth and attains its minimum value \RRR only \EEE at $1$ with $v_2(1) = -1$ and  $v''_2(1)>0$. Moreover, we ask $v_2$ to be {\it short-ranged}, that is to vanish 
shortly after $1$.  For the sake of definiteness, let us define $v_2(r)=0$
for $r\ge 1.1$. These assumptions reflect the nature of covalent
atomic bonding in carbon  favoring  a specific
interatomic distance, here normalized to $1$.  

 We say that two particles $x,y\in\mathcal{C}$ are {\it bonded} if
 $|x-y|<1.1$, and we refer to the graph formed by all the  bonds as
 the {\it bond graph} of $\mathcal{C}$.  Taking into account 
 periodicity, \PPP  this amounts to considering two particles $x_i$ and $x_j$ of the $n$-cell $C_n$ of $\mathcal{C}$ to be bonded if \EEE $|x_i-x_j|_L<1.1$, where $|\cdot|_L$ is the {\it distance modulo $L$} defined by
$$
|x_i-x_j|_L:=\min_{t\in\{-1,0,+1\}}|x_i-x_j+Lte_1|
$$
for every $x_i,x_j\in C_n$. Let us denote by $\mathcal{N}$ the set of all couples of indices corresponding to bonded particles,  \PPP i.e., \EEE
$$
\mathcal{N}:=\{(i,j)\,:\,\, \textrm{$x_i$, $x_j\in C_n$, $i\neq j$, and $|x_i-x_j|_L<1.1$}\}.
$$

The  {\it three-body potential}
$v_3: [0,2\pi]\to[0,\infty)$
is assumed to be smooth and  symmetric around $\pi$, namely
$v_3(\alpha)=v_3(2\pi{-}\alpha)$. Moreover, we suppose that the minimum value $0$
is attained only at $2\pi/3$ and $4\pi/3$ with $v_3''(2\pi/3)>0$. Let $\mathcal{T}$  be the index set  \PPP of \EEE the triples corresponding to first-neighboring particles,  \PPP i.e., \EEE
$$
\mathcal{T}:=\{(i,j,k)\,:\,\, \textrm{$i\neq k$, $(i,j)\in\mathcal{N}$ and $(j,k)\in\mathcal{N}$}\}.
$$ 
For all triples $(i,j,k)\in\mathcal{T}$ we denote by  $\alpha_{ijk} \in  \RRR [0,\pi] \EEE $
the {\it bond angle} formed by the vectors $x_i-x_j$ and
$x_k-x_j$. The assumptions on $v_3$ reflect the basic geometry of
carbon bonding in a nanotube: Each atom presents three $sp^2$-hybridized
orbitals, which tend to form $2\pi/3$ angles.

The configurational energy $E$ of a nanotube $\mathcal{C}=(C_n, L)$ is now defined by
\begin{equation}\label{E}
E(\mathcal{C})=E(C_n,L):=\frac12\sum_{(i,j)\in \mathcal{N}}v_2(|x_i{-}x_j|_L) + \frac12\sum_{(i,j,k)\in\mathcal{T}}v_3(\alpha_{ijk}),
\end{equation} 
\PPP where the factors $1/2$ are included to avoid double-counting the interactions among same atoms. \EEE
Let us mention  that the smoothness assumptions on $v_2$ and $v_3$ are for the 
sake of maximizing simplicity rather than generality and could be weakened. Observe that our  assumptions are generally satisfied by classical interaction potentials for carbon (see \cite{Stillinger-Weber85,Tersoff}). Since the energy $E$ is clearly rotationally and translationally invariant, in the following we will tacitly assume that all statements are to be considered up to isometries. We say that a nanotube $\mathcal{C}=(C_n,L)$ is {\it stable} if $(C_n,L)$ is a  strict  local minimizer of the interaction  energy   $E$.

\subsection{Geometry of zigzag nanotubes} We now  
introduce a specific two-parameter family of  nanotubes which will
play a crucial role in the following. This is the family of so-called
{\it zigzag nanotubes} having the 
 \emph{minimal period} $\mu>0$. The term {\it zigzag} refers to a
 specific topology of nanotubes, \PPP which can be visualized as the
 result of a rolling-up of a graphene sheet along a specific lattice
 direction, see Figure \ref{fig:nanotube}.
\begin{figure}[h]
    \pgfdeclareimage[width=0.5\textwidth]{nanotube}{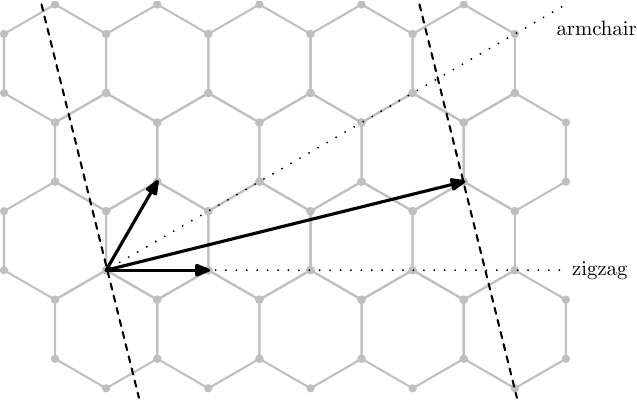}
    \pgfuseimage{nanotube}
    \caption{\PPP Rolling-up a graphene sheet to a zigzag nanotube: the vector illustrates the
      identification of the two dashed vertical lines. The term {\it
        zigzag} refers to the orientation of this vector with respect to
      bonds. Different vectors
      correspond indeed to different nanotube topologies. The dotted
     \PPP line indicates \PPP the identification direction for {\it armchair} nanotubes.}
    \label{fig:nanotube}
  \end{figure}
The resulting three-dimensional structure is depicted in Figure
\ref{Figure1}. Note that our
preference for the zigzag topology is solely motivated by the sake of definiteness. 
The \EEE other classical choice, \PPP namely the \EEE so-called {\it armchair}
 topology, could be considered as well. The \PPP reader \EEE is referred to
 \cite{MMPS-new} for some results on unstretched armchair geometries.

\begin{figure}[h]
    \pgfdeclareimage[width=0.9\textwidth]{tube}{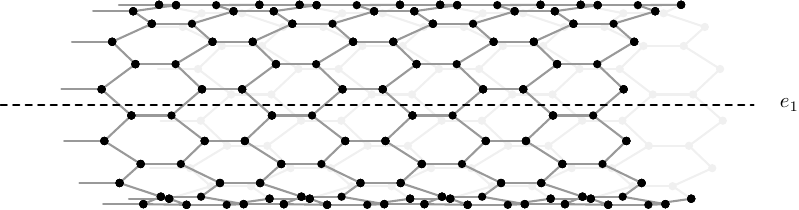}
    \pgfuseimage{tube}
    \caption{Zigzag nanotube.}
    \label{Figure1}
  \end{figure}

We let $\ell \in \Nz$, $\ell >3$, and  define the family $\mathscr{F}(\mu)$ as the
collection of all configurations that, up to isometries, coincide with
 \begin{align}
&\Bigg\{\left(k (\lambda_1+\sigma) +  j (2\sigma+2\lambda_1)  +
  l(2\sigma + \lambda_1), \rho \cos \left(\frac{\pi(2i+k)}{\ell}\right) , \rho\sin
    \left(\frac{\pi(2i+k)}{\ell}\right) \right)
 \ \Big| \ \nonumber
 \\
 &\hspace{80mm}
  i=1,\dots,\ell, \ j \in \Zz, \ k,l \in \lbrace 0,1 \rbrace \Bigg\}\label{zigzagfamilydefinition}
\end{align}
for some choice of 
$$\lambda_1 \in (0,\mu/2),\ \ \  \lambda_2 \in (0,\mu/2),\ \ \ \sigma \in (0,\mu/2), \ \ \   \text{and}  \ \ \ \rho\in \left(0,\,\frac{\mu}{4\sin(\pi/(2\ell))}\right)$$
 such that
\begin{align}\label{eq: basic constraints}
2\sigma + 2\lambda_1 = \mu, \ \ \ \ \ \ \ \ \ \sigma^2 +4\rho^2\sin^2\left(\frac{\pi}{2\ell}\right) =\lambda_2^2.
\end{align}
Of course, the configurations in $\mathscr{F}(\mu)$ are periodic with minimal period $\mu$. The parameter  $\rho$ indicates the diameter of the tube and $\lambda_1$, $\lambda_2$ are the two possibly different lengths of the covalent bonds in each hexagon of the tube, where the bonds of length $\lambda_1$ are oriented in the $e_1$ direction (see Figure \ref{extra}).  

These configurations are  {\it
  objective}  \cite{James}: They are obtained as
orbits of \NNN two points \EEE under the action of a
prescribed isometry group. \PPP 
The latter group is generated by a translation and by a translation combined with a rotation about the $e_1$-axis. \EEE
Notice that  our definition slightly differs from the one adopted in \cite{MMPS,MMPS-new} in the sense that for fixed $i$, $k$ the points \PPP identified \EEE by the quadruples $(i,j,k,l)$ for $j \in \Zz$, $l \in \lbrace 0,1 \rbrace$ lie on a line parallel to   $e_1$  (see Figure \ref{quadruples}).

  For fixed $\mu >0$,  $\mathscr{F}(\mu)$ is  a two-parameter smooth family of configurations since 
each configuration in $\mathscr{F}(\mu)$ is uniquely determined by
$\lambda_1$ and $\lambda_2$ by taking relation \eqref{eq: basic
  constraints} into account. Later we will consider different values
for the minimal period $\mu$ in order to model nanotubes under
stretching. 

 We state the following basic geometric properties of configurations in $\mathscr{F}(\mu)$ \PPP (see Figure \ref{quadruples}). \EEE The analogous properties in the case $\lambda_1 = \lambda_2 = 1$ have already been discussed in \cite{MMPS}. 

\begin{proposition}[Geometric structure of zigzag nanotubes]\label{basiczigzag}
Let $\mathcal{F}\in\mathscr{F}(\mu)$. Then
 \begin{enumerate}
\item[\rm (a)] Atoms in $\mathcal{F}$ lie on the surface of a cylinder with radius $\rho$ and axis  $e_1$. 
\item[\rm (b)] Atoms in $\mathcal{F}$ are arranged in planar {sections}, perpendicular to $e_1$, obtained by fixing $j$, $k$, and $l$ in \eqref{zigzagfamilydefinition}. Each of the sections \PPP contains  exactly \EEE $\ell$ atoms, arranged at the vertices of a regular $\ell$-gon. For each section, the two closest sections are at distance $\sigma$ and $\lambda_1$, respectively.
\item[\rm (c)] The configuration $\mathcal{F}$ is invariant under a rotation of
  $2\pi/\ell$ around $e_1$, under the translation  $\mu e_1$, and under
  a \PPP transformation consisting of a rotation of $\pi/\ell$ around $e_1$ and a translation along the vector $(\lambda_1+\sigma)e_1$ \emph{(}see Figure  \ref{extra}\emph{)}.  \EEE
\item [\rm (d)] Let $i\in\{1,\ldots,\ell\} $, $j\in\mathbb{Z}$ and $k,l\in\{0,1\}$: the quadruple $(i,j,k,l)$ \PPP identifies \EEE points of $\mathcal{F}$, denoted by $x_{i,k}^{j,l}$, where $(0,j,k,l)$ \PPP is identified \EEE with $(\ell,j,k,l)$.   Given $x_{i,0}^{j,0}\in \mathcal{F}$, the two points $x_{i,1}^{j-1,1}$, $x_{i-1, 1}^{j-1,1}$ have distance $\lambda_2$ and  $x_{i, 0}^{j-1,1}$ has distance $\lambda_1$ from $x_{i,0}^{j,0}$.
 For $x_{i,0} ^{j,1}$, the distance of $x_{i,1}^{j,0}$ and  $x_{i-1,1}^{j,0}$ is $\lambda_2$ and the distance from $x_{i,0}^{j+1,0}$ is $\lambda_1$. See {\rm Figure \ref{quadruples}} for the analogous notation of $x_{i,1}^{j,0}$ and $x_{i,1}^{j,1}$.  
  \end{enumerate} 
 \end{proposition}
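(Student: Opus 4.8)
The proof is a direct verification starting from the explicit parametrization in \eqref{zigzagfamilydefinition}, and the only analytic tool needed is the elementary formula for distances on a cylinder. Writing each atom as $x_{i,k}^{j,l}=(t,\,\rho\cos\theta,\,\rho\sin\theta)$ with axial coordinate $t=k(\lambda_1+\sigma)+j\mu+l(2\sigma+\lambda_1)$ and angle $\theta=\pi(2i+k)/\ell$, a short computation shows that two atoms with axial gap $\Delta$ and angular gap $\psi$ are at Euclidean distance $\sqrt{\Delta^2+4\rho^2\sin^2(\psi/2)}$, since $\rho^2(2-2\cos\psi)=4\rho^2\sin^2(\psi/2)$. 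Part (a) is then immediate: the second and third coordinates have modulus $\rho$ independently of $(i,j,k,l)$, so every atom lies on the cylinder of radius $\rho$ about the $e_1$-axis.

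For (b) I would first observe that fixing $j,k,l$ freezes the axial coordinate $t$, so the corresponding $\ell$ atoms lie in a single plane orthogonal to $e_1$; as $i$ runs through $1,\dots,\ell$ the angle $\theta$ advances in steps of $2\pi/\ell$, so these atoms form a regular $\ell$-gon. To locate the sections I would list, within one period, the four axial offsets $0,\ \lambda_1+\sigma,\ 2\sigma+\lambda_1,\ 3\sigma+2\lambda_1$ coming from $(k,l)\in\{0,1\}^2$, and reduce the last one modulo $\mu=2\sigma+2\lambda_1$ to $\sigma$. Sorting the resulting values $0<\sigma<\lambda_1+\sigma<2\sigma+\lambda_1<\mu$ shows that consecutive sections are separated by the alternating gaps $\sigma,\lambda_1,\sigma,\lambda_1,\dots$, which is exactly the claim.

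For (c) I would check that each of the three candidate isometries permutes the index set $\{(i,j,k,l)\}$. The rotation by $2\pi/\ell$ sends $\theta\mapsto\theta+2\pi/\ell$, i.e. $i\mapsto i+1$ (cyclically mod $\ell$) with $t$ unchanged, and the translation by $\mu e_1$ sends $j\mapsto j+1$; both are manifestly invariances. The rototranslation requires the most care and is the place where I expect the bookkeeping to be delicate: it advances $\theta$ by $\pi/\ell$ (hence $k\mapsto k+1$) and $t$ by $\lambda_1+\sigma$. For $k=0$ this lands on $(i,j,1,l)$, while for $k=1$ the angle wraps as $\pi(2i+2)/\ell$ and the axial shift produces $2(\lambda_1+\sigma)=\mu$, so the image is $(i+1,j+1,0,l)$; in either case it is again an atom, so the map is an invariance.

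Finally, (d) reduces to six applications of the distance formula, and here the main task is simply to match the stated quadruples correctly. The two ``$\lambda_1$ bonds'' connect atoms with the same angle ($\psi=0$) whose axial coordinates differ by $\mu-(2\sigma+\lambda_1)=\lambda_1$, giving distance $\lambda_1$. Each of the four ``$\lambda_2$ bonds'' connects atoms with axial gap $\sigma$ (using $\mu-(2\lambda_1+3\sigma)=-\sigma$, respectively $(\lambda_1+\sigma)-(2\sigma+\lambda_1)=-\sigma$) and angular gap $\pm\pi/\ell$; the distance formula then yields $\sqrt{\sigma^2+4\rho^2\sin^2(\pi/(2\ell))}$, which equals $\lambda_2$ precisely by the second constraint in \eqref{eq: basic constraints}. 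The only genuine obstacle throughout is the combinatorial identification of neighbouring quadruples together with the modular reduction by $\mu$; once these are pinned down, every distance follows from a one-line trigonometric identity.
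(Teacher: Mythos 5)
Your proof is correct: the paper itself omits the argument (deferring to the case $\lambda_1=\lambda_2=1$ treated in \cite{MMPS}), and your direct verification from the parametrization \eqref{zigzagfamilydefinition} --- computing axial and angular gaps, reducing modulo $\mu$, and applying the cylinder distance formula $\sqrt{\Delta^2+4\rho^2\sin^2(\psi/2)}$ together with the constraint \eqref{eq: basic constraints} --- is precisely the intended elementary argument. All the quadruple identifications and modular reductions you list check out, so nothing is missing.
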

 \begin{figure}[htp]
\pgfdeclareimage[width=0.7\textwidth]{quadruples}{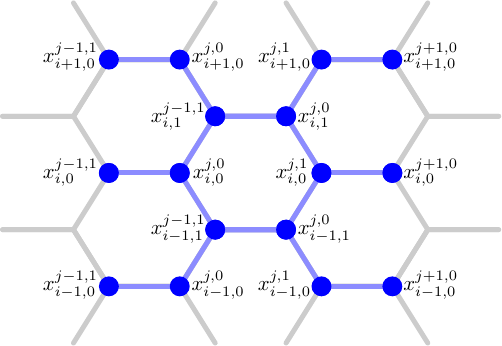}
    \pgfuseimage{quadruples}
\caption{ Configuration points are \PPP identified \EEE by quadruples $(i,j,k,l)$ for $i=1,\dots,\ell$, $j \in \Zz$, and $k,l \in \lbrace 0,1 \rbrace$.}
\label{quadruples} 
\end{figure}

Notice that for fixed $\lambda_1$ and $\lambda_2$ the other parameters
range between two degenerate cases: $\rho=0$ (the cylinder is reduced
to its axis) and $\sigma=0$ (sections collide).  We shall however
 impose further restrictions,  for each atom should have three
bonds.  In particular, the only three bonds per atom should be the ones \PPP identified \EEE by point (d) of Proposition \ref{basiczigzag}.
By recalling that two particles are bonded if their distance is less than the reference value $1.1$, since the distance between two consecutive sections is either $\lambda_1$ or $\sigma$, we require $\lambda_1 > 0.9$ and  $\sigma>0.2$. Additionally, we require $\lambda_1,\lambda_2 < 1.1$, which also implies $\sigma < 1.1$ by \eqref{eq: basic constraints}.  On the other hand, on each section, the edge of the regular $\ell$-gon should be greater than $1.1$.
Such length is given by $
2\rho\sin\gamma_\ell,
$
 where $\gamma_\ell$ is the internal angle of a regular $2\ell$-gon,  \PPP i.e., \EEE
 \begin{equation}\label{gamma}
\gamma_\ell:=\pi\left(1-\frac{1}{\ell}\right).
\end{equation}  
Therefore, we need to impose $\rho>\rho^-:=0.55/\sin\gamma_\ell$. With these restrictions we have the following

\begin{proposition}[Parametrization of the family]\label{betaproperties} Let $ \mathcal{F}\in\mathscr{F}(\mu)$ with $\rho > \rho^-$, $\sigma >0.2$ and $\lambda_1,\lambda_2 \in (0.9, 1.1)$. Then, all atoms in $\mathcal{F}$ have exactly three (first-nearest) neighbors, two at distance $\lambda_2$ and one at distance $\lambda_1$, where the bond corresponding to the latter neighbor is \PPP parallel to $e_1$. \EEE Among the corresponding  \PPP 
 three bond angles, which are smaller than $\pi$, \EEE two have amplitude
$\alpha$ (the ones involving atoms in three different sections), and the third has amplitude $\beta$ \PPP \emph{(}see Figure \ref{extra}\emph{)}, \EEE
  where $\alpha\in(\pi/2,\pi)$ is obtained from
  \begin{equation}\label{alphars}
  \sin\alpha=\sqrt{1-(\sigma/\lambda_2)^2}=2 (\rho/\lambda_2)\sin\left(\frac{\pi}{2\ell}\right)
  \end{equation}
  and $\beta\in(\pi/2,\pi)$ is given by
  \begin{equation}\label{betaz}
\beta=\beta(\alpha,\gamma_\ell):=2\arcsin\left(\sin\alpha\sin\frac{\gamma_\ell}{2}\right).
\end{equation}
\end{proposition}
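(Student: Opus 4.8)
The plan is to work in the explicit coordinates of \eqref{zigzagfamilydefinition} and reduce everything to elementary trigonometry on the cylinder of radius $\rho$, using that two atoms on this cylinder with axial separation $\Delta$ and angular separation $\psi$ are at distance $\sqrt{\Delta^2 + 4\rho^2\sin^2(\psi/2)}$. First I would fix a generic atom $p := x_{i,0}^{j,0}$ and take as candidate bonds the three neighbours $A := x_{i,0}^{j-1,1}$, $B := x_{i,1}^{j-1,1}$, $C := x_{i-1,1}^{j-1,1}$ singled out in Proposition \ref{basiczigzag}(d). Reading off coordinates, $A$ shares the angular coordinate of $p$ and has axial separation $\lambda_1$, so $A - p = -\lambda_1 e_1$ and $|A-p| = \lambda_1$, whereas $B$ and $C$ have axial separation $\sigma$ and angular separation $\pi/\ell$, giving $|B-p| = |C-p| = \sqrt{\sigma^2 + 4\rho^2\sin^2(\pi/(2\ell))} = \lambda_2$ by the second identity in \eqref{eq: basic constraints}. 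Since $\lambda_1,\lambda_2 < 1.1$, all three are bonded to $p$, with the $\lambda_1$-bond along $e_1$.

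Next I would show these are the \emph{only} bonds, using the section structure of Proposition \ref{basiczigzag}(b), where consecutive sections alternate at axial distances $\sigma$ and $\lambda_1$. Any atom outside the section of $p$ and its two neighbouring sections has axial distance at least $\sigma + \lambda_1 > 1.1$ (here $\sigma > 0.2$ and $\lambda_1 > 0.9$ are used), hence is not bonded. Within the section of $p$ the closest atoms are at the polygon-edge distance $2\rho\sin(\pi/\ell) = 2\rho\sin\gamma_\ell > 1.1$, which holds precisely because $\rho > \rho^-$. Finally, in the two adjacent sections $A$, $B$, $C$ are the angularly nearest atoms, and every other atom there is separated from $p$ by an angle at least $2\pi/\ell$ (in the $\lambda_1$-section) or $3\pi/\ell$ (in the $\sigma$-section), yielding chords at least $2\rho\sin(\pi/\ell)$ and $2\rho\sin(3\pi/(2\ell))$ respectively; both exceed $1.1$ since $\sin$ is increasing on $(0,\pi/2)$ and $\ell > 3$. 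Hence $p$ has \emph{exactly} three neighbours, two at distance $\lambda_2$ and one at distance $\lambda_1$.

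It then remains to compute the angles. The two angles spanning three sections are $\angle ApB$ and $\angle ApC$: since $A-p = -\lambda_1 e_1$ has vanishing circumferential components, the inner product reduces to the product of axial components, so $\cos\angle ApB = (A-p)\cdot(B-p)/(\lambda_1\lambda_2) = -\sigma/\lambda_2$, and identically for $\angle ApC$. Both equal a common value $\alpha$ with $\cos\alpha = -\sigma/\lambda_2 < 0$, hence $\alpha \in (\pi/2,\pi)$ and $\sin\alpha = \sqrt{1 - (\sigma/\lambda_2)^2}$; substituting $\lambda_2^2 - \sigma^2 = 4\rho^2\sin^2(\pi/(2\ell))$ from \eqref{eq: basic constraints} gives the second form in \eqref{alphars}. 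For the remaining angle $\beta = \angle BpC$ I would use the isosceles triangle $BpC$ with legs $\lambda_2$: as $B$ and $C$ lie in one section at angular separation $2\pi/\ell$, their distance is the edge $2\rho\sin(\pi/\ell)$, so $\sin(\beta/2) = \rho\sin(\pi/\ell)/\lambda_2$. Writing $\sin(\pi/\ell) = 2\sin(\pi/(2\ell))\cos(\pi/(2\ell))$ and $\cos(\pi/(2\ell)) = \sin(\gamma_\ell/2)$ (from \eqref{gamma}) turns this into $\sin(\beta/2) = \sin\alpha\,\sin(\gamma_\ell/2)$, which is exactly \eqref{betaz}; the range $\beta \in (\pi/2,\pi)$ follows from the sign of $\cos\beta = 1 - 2\sin^2(\beta/2)$ under the parameter restrictions.

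The angle identities are short once the coordinates are in place, so I expect the only delicate point to be the exhaustive exclusion of spurious bonds in the second step: it requires careful bookkeeping across all sections and angular positions, and it is exactly there that each hypothesis ($\sigma > 0.2$, $\lambda_1 > 0.9$, $\rho > \rho^-$, $\ell > 3$) enters. A secondary check is that the description of the two $\lambda_2$-neighbours and the $\lambda_1$-neighbour applies to every atom type ($k,l \in \{0,1\}$) and not only to $p = x_{i,0}^{j,0}$; this is guaranteed by the invariances in Proposition \ref{basiczigzag}(c), which make all atoms equivalent.
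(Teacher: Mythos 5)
The paper gives no proof of this proposition: it cites \cite{MMPS} for the case $\lambda_1=\lambda_2=1$ and declares the extension a straightforward adaption. Your direct computation in the coordinates \eqref{zigzagfamilydefinition}, using the cylinder chord formula $\sqrt{\Delta^2+4\rho^2\sin^2(\psi/2)}$, is exactly the intended route, and the substantive steps are correct: the identification of $A,B,C$ with the neighbours of Proposition \ref{basiczigzag}(d) and their distances $\lambda_1,\lambda_2,\lambda_2$; the exclusion of all other bonds via the alternating section spacings ($\sigma+\lambda_1>1.1$) and the angular bookkeeping in the three relevant sections (minimal extra separations $2\pi/\ell$ and $3\pi/\ell$, with $\rho>\rho^-$ and $\ell>3$ giving chords above $1.1$); and the derivations of \eqref{alphars} from $\cos\alpha=-\sigma/\lambda_2$ and of \eqref{betaz} from the isosceles triangle $BpC$ together with $\cos(\pi/(2\ell))=\sin(\gamma_\ell/2)$. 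Two small caveats. First, the one claim you assert without computation, $\beta>\pi/2$, does \emph{not} follow from the stated restrictions: $\cos\beta=1-2(1-\sigma^2/\lambda_2^2)\cos^2(\pi/(2\ell))$ is negative only if $\sigma/\lambda_2$ is (roughly) below $1/\sqrt{2}$, and the hypotheses $\sigma>0.2$, $\lambda_2<1.1$, $\rho>\rho^-$ permit, e.g., $\sigma=0.8$, $\lambda_2=1.09$, for which $\beta<\pi/2$. This is an imprecision inherited from the statement itself (it holds near the reference values $\sigma\approx 1/2$, $\lambda_2\approx 1$ actually used later, and nothing downstream depends on the range of $\beta$, only on formula \eqref{betaz}), but you should not claim it "follows from the parameter restrictions". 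Second, the invariances listed in Proposition \ref{basiczigzag}(c) relate the $k=0$ and $k=1$ atoms and shift $i,j$, but they do not map $l=0$ atoms to $l=1$ atoms; to cover all atom types you either need the additional reflection symmetry through a plane perpendicular to $e_1$ through a cell centre, or simply the explicit second case of Proposition \ref{basiczigzag}(d), which makes the computation for $x_{i,0}^{j,1}$ identical.
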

 \begin{figure}[htp]
\begin{center}
\pgfdeclareimage[width=0.45\textwidth]{cell}{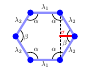}
    \pgfuseimage{cell}
\caption{\PPP The bond lengths and the angles for the hexagon of a configuration in \PPP $\mathscr{F}(\mu)$ are represented. A segment representing $\sigma$ is drawn in red. \EEE}
\label{extra}
\end{center}
\end{figure}

The proof for the case $\lambda_1 = \lambda_2=1$ was  detailed  in
\cite{MMPS}. The extension to our setting  is a straightforward
adaption  and is therefore omitted.  As already mentioned,   the collection $\mathscr{F}(\mu)$ is a two-parameter family where all its configurations are  uniquely determined by the specification of $\lambda_1$ and $\lambda_2$. The corresponding element will be denoted by $\mathcal{F}_{\lambda_1,\lambda_2,\mu}$. Restricting the minimal period $\mu$ to the interval $(2.6,3.1)$ we observe by \eqref{eq: basic constraints} and an elementary computation that the constraints $\lambda_1,\lambda_2 \in (0.9,1.1)$ and $\ell >3$ automatically imply $0.2 < \sigma <0.65$ and $\rho>\rho^-$. Therefore, the assumptions of {\BBB Proposition} \ref{betaproperties} hold.  

\section{Main results}\label{sec: mainresults}

In this section we collect  our main results. The corresponding
proofs will then be presented in Sections \ref{sec: main proof}-\ref{sec:
  cellenery}.

For  \PPP a \EEE fixed integer $\ell>3$, let us consider a configuration $\mathcal{F}$  in the family
$\mathscr{F}(\mu)$. \PPP As $\mathcal{F}$ is periodic, it can be identified with the couple \EEE $(F_n, L)$, where $F_n$ is the corresponding $n$-cell ($n=4m\ell$ for some $m\in \mathbb{N}$),
and
\begin{equation}\label{zigzagperiod}
L = L^\mu_m:= m\mu
\end{equation}
is the period parameter, corresponding to the cell length (notice that for $m=1$ we get the minimal period of the configuration). In view of \eqref{E} and the properties stated in  Proposition   \ref{betaproperties}, the energy can be written as 
\begin{align}\label{basicenergy}
E(\mathcal{F}) = E(F_n, L^\mu_m) = \frac{n}{2} \big( v_2(\lambda_1) + 2v_2(\lambda_2)  \big) + n\big(2v_3(\alpha) + v_3(\beta(\alpha,\gamma_\ell))\big).
\end{align}

\subsection{Unstrechted nanotubes} 
{\BBB A first natural problem to be considered is the energy minimization restricted to the families $\mathscr{F}(\mu)$, with the values of $\mu$ in the reference interval $\mu\in (2.6, 3.1)$. Let us denote by $\mathcal{F}_{\lambda_1,\lambda_2,\mu}$ an element of $\mathscr{F}(\mu)$ with bond lengths $\lambda_1,\lambda_2$. If we minimize among nanotubes $\mathcal{F}_{\lambda_1,\lambda_2,\mu}$ with respect to $\mu\in (2.6,3.1)$   and  $\lambda_1,\lambda_2$ in a neighborhood of $1$,  we  reduce to the case $\lambda_1=\lambda_2=1$}. \RRR Indeed,   we can replace   $\lambda_1,\lambda_2$ by $1$, leave $\alpha$ unchanged, and choose $\mu$ according to  \eqref{eq: basic constraints} and \eqref{alphars}    such that the energy \eqref{basicenergy} decreases\EEE .

We notice that $\lbrace \mathcal{F}_{1,1,\mu}| \  \mu \in (2.6,3.1) \rbrace $ is a one-parameter family.  It
follows from Proposition \ref{betaproperties} and \eqref{eq: basic constraints} that this family can also be parametrized in terms of the  bond angle $\alpha$ introduced in Proposition \ref{betaproperties} using the relation $\mu = 2 (1-\cos\alpha)$.  We indicate these configurations by $\mathcal{G}_\alpha$. 

As  already  discussed in \cite{MMPS}, there are two specific angles $\alpha^{\rm
  ch}_\ell < \alpha^{\rm ru}$ corresponding to the \emph{rolled-up}
 \cite{Dresselhaus92,Dresselhaus-et-al95} 
and  \emph{polyhedral} \cite{Cox-Hill07,Cox-Hill08}  configuration, respectively, with $\alpha^{\rm ru}
= 2\pi/3$ and $\alpha^{\rm ch}_\ell$  being  the unique solution of the equation  $\beta(\alpha^{\rm ch}_\ell, \gamma_\ell) = \alpha^{\rm ch}_\ell$ in $(\arccos(-0.4), \arccos(-0.6))$. The {\BBB one variable minimization problem for the map $\alpha\mapsto E(\mathcal{G}_\alpha)$ has} been investigated in \cite[Theorem 4.3]{MMPS}:

\begin{proposition}[Existence and uniqueness of minimizer: Unstretched case]\label{eq: old main result}
There exist an open interval $A$ and $\ell_0 \in \Nz$ only depending on  $v_3$  such \PPP  that \EEE the following holds for all $\ell \ge \ell_0$:  There is a unique angle $\alpha^{\rm us}_\ell \in A$ such that $\mathcal{G}_{\alpha^{\rm us}_\ell}$ minimizes the energy $E$ in the class $\lbrace  \mathcal{G}_\alpha| \ \alpha \in A \rbrace$. Moreover, one has $\alpha^{\rm us}_\ell \in  (\alpha^{\rm ch}_\ell,\alpha^{\rm ru}) \subset A$.
\end{proposition}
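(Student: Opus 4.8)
The plan is to reduce the problem to a one-variable convex minimization. On the family $\{\mathcal{G}_\alpha\}$ all covalent bonds have unit length ($\lambda_1=\lambda_2=1$), so by \eqref{basicenergy} the two-body contribution equals the constant $\tfrac{n}{2}\big(v_2(1)+2v_2(1)\big)=-\tfrac{3n}{2}$, and hence $E(\mathcal{G}_\alpha)=-\tfrac{3n}{2}+n\,g_\ell(\alpha)$ where
\[
g_\ell(\alpha):=2v_3(\alpha)+v_3\big(\beta(\alpha,\gamma_\ell)\big),
\]
with $\beta$ given by \eqref{betaz}. Minimizing $E$ over the class is therefore equivalent to minimizing $g_\ell$. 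The strategy is to fix an interval $A$ around $2\pi/3$ on which $g_\ell$ is strictly convex for all large $\ell$, and then to locate its unique critical point by evaluating $g_\ell'$ at the two natural competitors $\alpha^{\rm ch}_\ell$ and $\alpha^{\rm ru}=2\pi/3$.

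For the convexity, set $s_\ell:=\sin(\gamma_\ell/2)=\cos(\pi/(2\ell))$, so that $s_\ell\uparrow 1$ and $1-s_\ell^2=\sin^2(\pi/(2\ell))$. From \eqref{betaz} one has $\beta(\alpha,\gamma_\ell)\to 2\arcsin(\sin\alpha)$ as $\ell\to\infty$, which stays near $2\pi/3$ for $\alpha$ near $2\pi/3$; in particular both $\alpha^{\rm ru}=2\pi/3$ and the fixed point $\alpha^{\rm ch}_\ell$ (defined by $\beta=\alpha$) converge to $2\pi/3$. Differentiating twice, $g_\ell''=2v_3''(\alpha)+v_3''(\beta)(\beta')^2+v_3'(\beta)\beta''$, and using $\beta\to 2\pi/3$, $v_3'(\beta)\to 0$, $(\beta')^2\to 4$ at $\alpha=2\pi/3$, one finds $g_\ell''(2\pi/3)\to 6\,v_3''(2\pi/3)>0$. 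Since $v_3''(2\pi/3)>0$ by assumption, I would choose $A$ a small neighborhood of $2\pi/3$, depending only on $v_3$, on which the limiting second derivative is bounded below by a positive constant; uniform convergence of $g_\ell''$ on $\overline A$ then produces $\ell_0$, again depending only on $v_3$, so that $g_\ell''>0$ on $A$ (strict convexity) for all $\ell\ge\ell_0$. Enlarging $\ell_0$ if necessary guarantees also $\alpha^{\rm ch}_\ell\in A$.

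The crux is the sign of $g_\ell'$ at the two endpoints. At $\alpha=2\pi/3$ we have $v_3'(2\pi/3)=0$, while $\beta(2\pi/3,\gamma_\ell)<2\pi/3$ (because $s_\ell<1$), so $v_3'(\beta)<0$; since $\beta'<0$ there as well, $g_\ell'(2\pi/3)=v_3'(\beta)\beta'>0$. At $\alpha=\alpha^{\rm ch}_\ell$ the identity $\beta=\alpha$ gives $g_\ell'(\alpha^{\rm ch}_\ell)=v_3'(\alpha^{\rm ch}_\ell)\big(2+\beta'(\alpha^{\rm ch}_\ell)\big)$, and here the difficulty is that $\beta'\to -2$, so the factor $2+\beta'$ cannot be estimated crudely but must be computed exactly. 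The fixed-point equation forces $\cos(\alpha^{\rm ch}_\ell/2)=1/(2s_\ell)$, which, inserted into the derivative of \eqref{betaz}, yields the clean value $\beta'(\alpha^{\rm ch}_\ell)=2-4s_\ell^2$, whence $2+\beta'(\alpha^{\rm ch}_\ell)=4(1-s_\ell^2)=4\sin^2(\pi/(2\ell))>0$. The same identity $\cos(\alpha^{\rm ch}_\ell/2)=1/(2s_\ell)>1/2$ shows $\alpha^{\rm ch}_\ell<2\pi/3$, hence $v_3'(\alpha^{\rm ch}_\ell)<0$, so altogether $g_\ell'(\alpha^{\rm ch}_\ell)<0$.

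Finally, by strict convexity $g_\ell'$ is strictly increasing on $A$, and it changes sign between the interior points $\alpha^{\rm ch}_\ell$ and $2\pi/3$; hence it has a unique zero $\alpha^{\rm us}_\ell\in(\alpha^{\rm ch}_\ell,\alpha^{\rm ru})$, which is the unique minimizer of $g_\ell$, and therefore of $\alpha\mapsto E(\mathcal{G}_\alpha)$, over $A$. The main obstacle I expect is exactly the borderline cancellation $\beta'\to -2$ at $\alpha^{\rm ch}_\ell$, which is resolved only by exploiting the fixed-point identity $\cos(\alpha^{\rm ch}_\ell/2)=1/(2s_\ell)$; a secondary point is checking that $A$ and $\ell_0$ can be chosen depending on $v_3$ alone, which follows from the uniform convergence of $g_\ell''$ near $2\pi/3$.
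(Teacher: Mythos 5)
Your proof is correct, and its overall architecture coincides with the one the paper sketches (the paper itself defers the full argument to \cite[Theorem 4.3]{MMPS}): reduce to the one-variable function $g_\ell(\alpha)=2v_3(\alpha)+v_3(\beta(\alpha,\gamma_\ell))$, establish strict convexity near $2\pi/3$, and locate the critical point in $(\alpha^{\rm ch}_\ell,\alpha^{\rm ru})$. Two points of difference are worth recording. First, the paper obtains convexity structurally, from the convexity of $v_3$ together with the concavity and monotonicity of $\alpha\mapsto\beta(\alpha,\gamma_\ell)$ (so that $v_3\circ\beta$ is convex where $v_3$ is decreasing), whereas you compute $g_\ell''(2\pi/3)\to 6\,v_3''(2\pi/3)>0$ and invoke uniform convergence of $g_\ell''$; both are sound, and yours has the advantage of not needing the concavity of $\beta$, at the price of checking that $\partial^2_{\alpha\alpha}\beta$ stays bounded near $(2\pi/3,\gamma_\ell)$ (it does, since $1-s_\ell^2\sin^2\alpha$ is bounded away from $0$ there). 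Second, the localization $\alpha^{\rm us}_\ell\in(\alpha^{\rm ch}_\ell,\alpha^{\rm ru})$ is exactly the part the paper leaves to the citation, and your endpoint sign analysis — in particular the fixed-point identity $\cos(\alpha^{\rm ch}_\ell/2)=1/(2s_\ell)$ giving $2+\partial_\alpha\beta(\alpha^{\rm ch}_\ell,\gamma_\ell)=4\sin^2(\pi/(2\ell))>0$, which resolves the borderline cancellation $\partial_\alpha\beta\to-2$ — is the genuinely delicate step and is carried out correctly; it is also consistent with the paper's later use of $2\pi/3-\alpha^{\rm ch}_\ell={\rm O}(\ell^{-2})$ in Lemma \ref{aus}.
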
  

Let us report the idea of the proof. Exploiting the monotonicity
properties of $v_3$ and $\beta$ (the latter being decreasing as a
function of $\alpha$), one derives that the minimum  is  attained for $\alpha$ in a small left neighborhood $I$ of $2\pi/3$, \PPP e.g., $I:=(2\pi/3-\sigma, 2\pi/3]$ for some small $\sigma>0$. \EEE Using in addition the convexity of $v_3$ and the concavity of $\beta$, it follows that $\alpha \mapsto E(\mathcal{F}) = -3n/2  + n\big(2v_3(\alpha) + v_3(\beta(\alpha,\gamma_\ell))\big)$ is strictly convex in $I$, which implies the assertion.

The result in particular shows that neither the \PPP polyhedral \EEE nor the
rolled-up configuration is a local minimizer of the energy $E$. The
corresponding minimal period of the nanotube is given by 
\begin{align}\label{eq:muellneu}
\mu^{\rm
  us}_\ell := 2 - 2\cos\alpha^{\rm us}_\ell,
  \end{align}
  cf. \eqref{eq: basic
  constraints} and \eqref{alphars}, and we notice
$\mathcal{G}_{\alpha^{\rm us}_\ell} = \mathcal{F}_{1,1,\mu^{\rm
    us}_\ell}$.  {\BBB Nanotubes with $\mu=\mu_\ell^{\rm us}$ will be referred to as  {\it unstretched} nanotubes.
    }

The aim of   \cite{MMPS,MMPS-new} was  to prove  that 
$\mathcal{G}_{\alpha^{\rm us}_\ell}$ is a local minimizer. This has
been illustrated numerically in \cite{MMPS} and checked analytically
in \cite{MMPS-new},
for a restricted class of perturbations. Our stability result Theorem
\ref{th: main3} below delivers an analytical proof of stability with
respect to {\it all} small perturbations. As such, it generalizes
 and improves  known results, even in the unstreched case.

\subsection{Nanotubes under stretching}  Let us now move forward
to the case of {\it stretched} nanotubes. This corresponds to
 choosing  $\mu \neq \mu^{\rm us}_\ell$. Indeed, we impose a
tensile  or compressive  stress on the nanotube by simply modifying its minimal
period. Given the role of periodicity in the definition of the energy
$E$,  see \eqref{E},   this has the net effect of stretching/compressing the
structure. Note that this action on the structure is very general. In
particular, it includes, without reducing to, imposed Dirichlet boundary
conditions, where only the first coordinate of the boundary atoms is prescribed.  For   fixed $\mu \in (2.6,3.1)$ we consider the minimization problem 
\begin{align}\label{min2} 
E_{\rm min}(\mu)  = \min\big\{ E(\mathcal{F}_{\lambda_1,\lambda_2,\mu})| \ \mathcal{F}_{\lambda_1,\lambda_2,\mu} \in \mathscr{F}(\mu), \ \lambda_1,\lambda_2 \in   (0.9,1.1) \big\}.
\end{align}
We obtain the following existence result.

\begin{theorem}[Existence and uniqueness of minimizer: General case]\label{th: main1}
There  exist $\ell_0 \in \Nz$ and, for each $\ell \ge \ell_0$, an open interval $M^\ell$ only depending on $v_2$, $v_3$, and $\ell$, with $\mu^{\rm us}_\ell \in M^\ell$, such that for all $\mu \in M^\ell$ there is a unique pair of bondlengths $(\lambda^\mu_1,\lambda^\mu_2)$ such that $\mathcal{F}_{\lambda^\mu_1,\lambda^\mu_2,\mu}$ is a solution of the problem \eqref{min2}.
\end{theorem}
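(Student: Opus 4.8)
The plan is to recast \eqref{min2} as a two-dimensional smooth minimization problem depending on the parameter $\mu$, to identify the unstretched configuration as a nondegenerate strict local minimizer at $\mu=\mu^{\rm us}_\ell$, and then to continue it to nearby $\mu$ by the implicit function theorem, finally upgrading the resulting branch of critical points to the unique global minimizer over the admissible square. First I would eliminate the geometric variables: by \eqref{eq: basic constraints} and \eqref{alphars}, for fixed $\mu$ each $\mathcal{F}_{\lambda_1,\lambda_2,\mu}\in\mathscr{F}(\mu)$ is determined by $(\lambda_1,\lambda_2)$ through $\sigma=\mu/2-\lambda_1$ and $\cos\alpha=-\sigma/\lambda_2$, so that by \eqref{basicenergy} problem \eqref{min2} is the minimization of the smooth function
\begin{equation*}
\Phi(\lambda_1,\lambda_2,\mu):=\tfrac12\big(v_2(\lambda_1)+2v_2(\lambda_2)\big)+G(\alpha),\qquad G(\alpha):=2v_3(\alpha)+v_3\big(\beta(\alpha,\gamma_\ell)\big),
\end{equation*}
over the open square $B:=(0.9,1.1)^2$. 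Since on $\{\lambda_1=\lambda_2=1\}$ the two-body part equals the constant $-3/2$, Proposition \ref{eq: old main result} says precisely that $\alpha\mapsto G(\alpha)$ is minimized at $\alpha^{\rm us}_\ell$, so $G'(\alpha^{\rm us}_\ell)=0$ and $G''(\alpha^{\rm us}_\ell)>0$; together with $v_2'(1)=0$ this makes $(\lambda_1,\lambda_2)=(1,1)$ a critical point of $\Phi(\cdot,\cdot,\mu^{\rm us}_\ell)$.

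The heart of the matter is the nondegeneracy of this critical point. It is convenient to pass to the coordinates $(\alpha,\lambda_2)$, related by $\lambda_1=\mu/2+\lambda_2\cos\alpha$, which is a diffeomorphism since $\partial_\alpha\lambda_1=-\lambda_2\sin\alpha\neq0$. Writing $a:=v_2''(1)>0$ and $g:=G''(\alpha^{\rm us}_\ell)>0$ and using $v_2'(1)=0$, a direct computation of the Hessian of $\Phi$ in $(\alpha,\lambda_2)$ at $(\alpha^{\rm us}_\ell,1)$ yields
\begin{equation*}
\begin{pmatrix} \tfrac12 a\sin^2\alpha+g & -\tfrac12 a\sin\alpha\cos\alpha\\[1mm] -\tfrac12 a\sin\alpha\cos\alpha & a\big(1+\tfrac12\cos^2\alpha\big)\end{pmatrix},\qquad\alpha=\alpha^{\rm us}_\ell,
\end{equation*}
whose trace is positive and whose determinant simplifies to $\tfrac12 a^2\sin^2\alpha+ag\big(1+\tfrac12\cos^2\alpha\big)>0$. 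Hence this Hessian is positive definite, and since we are at a critical point and the coordinate change is a diffeomorphism, positive definiteness transfers to the Hessian in $(\lambda_1,\lambda_2)$. I expect this computation to be the main quantitative step, yet it is tractable precisely because the only delicate input, $g>0$, is already furnished by the unstretched analysis, the two-body block being controlled solely by $a>0$.

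I would then apply the implicit function theorem to $F(\lambda_1,\lambda_2,\mu):=\nabla_{(\lambda_1,\lambda_2)}\Phi$, which satisfies $F(1,1,\mu^{\rm us}_\ell)=0$ and has invertible partial differential in $(\lambda_1,\lambda_2)$ there. This produces an open interval $M^\ell\ni\mu^{\rm us}_\ell$ and a smooth branch $\mu\mapsto(\lambda^\mu_1,\lambda^\mu_2)$ of critical points with $(\lambda^{\mu^{\rm us}_\ell}_1,\lambda^{\mu^{\rm us}_\ell}_2)=(1,1)$; shrinking $M^\ell$ if necessary, the Hessian stays positive definite along the branch by continuity, so each $(\lambda^\mu_1,\lambda^\mu_2)$ is an interior strict local minimizer of $\Phi(\cdot,\cdot,\mu)$, with $M^\ell$ depending only on $v_2$, $v_3$, $\ell$.

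It remains to promote these local minimizers to the unique solutions of \eqref{min2} over the whole square $B$, and this global step is where I expect the main obstacle to lie, since $v_2$ and $v_3$ are only assumed strictly convex at their minima and $\Phi$ need not be convex on all of $B$. I would argue as in the unstretched case: using the monotonicity of $v_2$ away from $1$ together with the monotonicity of $v_3$ and of $\beta$, one confines any minimizer of $\Phi(\cdot,\cdot,\mu)$ to a fixed small neighborhood $U$ of $(1,1)$, uniformly for $\mu$ in a possibly smaller $M^\ell$; on $U$ the Hessian is positive definite by continuity, hence $\Phi(\cdot,\cdot,\mu)$ is strictly convex there and the confined minimizer is unique and equal to the branch point $(\lambda^\mu_1,\lambda^\mu_2)$. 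Equivalently, once $(1,1)$ is known to be the unique minimizer at $\mu^{\rm us}_\ell$, a compactness–continuity argument suffices: any sequence of minimizers along $\mu_k\to\mu^{\rm us}_\ell$ subconverges to a minimizer of $\Phi(\cdot,\cdot,\mu^{\rm us}_\ell)$ over $\overline B$, hence to $(1,1)$, so it eventually lies in $U$ and must agree with the locally unique critical point. This yields, for every $\mu\in M^\ell$, existence and uniqueness of the minimizing pair $(\lambda^\mu_1,\lambda^\mu_2)$.
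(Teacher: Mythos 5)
Your argument is correct, and it reaches the conclusion by a more self-contained route than the paper does. The paper's proof of Theorem \ref{th: main1} is a one-line corollary of Property 5 of Proposition \ref{th: mainenergy}, whose substance is Proposition \ref{prop1}: there the Hessian of the \emph{three}-variable symmetric cell energy $E^{\rm sym}_{\mu,\gamma_1,\gamma_2}(\lambda,\alpha_1,\alpha_2)$ is computed at the planar limit $(\mu,\gamma_1,\gamma_2)=(3,\pi,\pi)$, $(\lambda,\alpha_1,\alpha_2)=(1,2\pi/3,2\pi/3)$, positive definiteness is propagated to a fixed neighborhood by continuity, and global uniqueness follows from the confinement argument that the infimum outside that neighborhood stays above $-3$ while the minimum tends to $-3$. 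You instead work directly with the two-variable energy \eqref{basicenergy} in the coordinates $(\alpha,\lambda_2)$, evaluate the Hessian at the actual unstretched critical point $(\alpha^{\rm us}_\ell,1)$ (your matrix and its determinant $\tfrac12 a^2\sin^2\alpha+ag(1+\tfrac12\cos^2\alpha)$ check out), and continue the branch by the implicit function theorem. The paper's detour through $E^{\rm sym}$ buys reusability — that convexity is the engine of the stability theorem — whereas your route gives Theorem \ref{th: main1} with less machinery; the two are equivalent here because \eqref{min2} is exactly the restriction of the symmetric cell problem to $\alpha_1=\alpha_2$, $\gamma_1=\gamma_2=\gamma_\ell$.

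Two points deserve tightening. First, $g=G''(\alpha^{\rm us}_\ell)>0$ does not follow formally from the statement of Proposition \ref{eq: old main result} (strict convexity on an interval does not preclude a degenerate second derivative at the minimizer); you need the quantitative fact $G''(\alpha)=2v_3''(\alpha)+v_3''(\beta)(\partial_\alpha\beta)^2+v_3'(\beta)\partial^2_{\alpha\alpha}\beta\to 6v_3''(2\pi/3)>0$ as $\ell\to\infty$, which is available from \eqref{zigder2}, \eqref{zigder3}, and Lemma \ref{aus} but should be invoked explicitly; this is also where $\ell\ge\ell_0$ enters your Hessian step. Second, in the globalization, ``monotonicity'' is not quite the mechanism: the correct statement is that $v_2$ and $v_3$ attain their minima \emph{only} at $1$ and $2\pi/3$, so outside a fixed neighborhood of $(1,1)$ one has $\Phi\ge -3/2+c(\eps)$, while the minimum value is $-3/2+{\rm O}(\ell^{-4})$; the confinement therefore again needs $\ell$ large, exactly as in the paper's version. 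Your alternative compactness argument along $\mu_k\to\mu^{\rm us}_\ell$ is mildly circular as stated, since it presupposes global uniqueness over $\overline{B}$ at $\mu=\mu^{\rm us}_\ell$, which itself requires the confinement step; keep the first version.
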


In the following the minimizer is denoted by $\mathcal{F}_\mu^*$. Note
that we have  $\mathcal{F}_{\mu^{\rm us}_\ell}^* =
\mathcal{G}_{\alpha^{\rm us}_\ell}$  by Proposition \ref{eq: old main
  result}.

 Our aim is to investigate the local stability of $\mathcal{F}_\mu^*$. To this end, we consider \emph{general} small perturbations $\tilde{\mathcal{F}}$ of $\mathcal{F}_\mu^*$ with the same bond graph,  \PPP i.e., \EEE each atom keeps three and only three bonds, and we can identify the three neighboring atoms of the perturbed configurations with the ones for the configuration $\mathcal{F}_\mu^*$. By $F^\mu_n = \lbrace x^\mu_1,\ldots, x^\mu_n \rbrace$ denote the $n$-cell of $\mathcal{F}_\mu^*$ so that $\mathcal{F}_\mu^* = (F_n^\mu, L^\mu_m)$ with $L^\mu_m$ as defined in \eqref{zigzagperiod} for $m \in \Nz$ with $n = 4m\ell$. We define \emph{small perturbations} $\mathscr{P}_\eta(\mu)$ of $\mathcal{F}_\mu^*$ by 
\begin{align}\label{eq: bc}
\begin{split}
\mathscr{P}_\eta(\mu) = \lbrace \tilde{\mathcal{F}} = (F_n,L^\mu_m)| \ F_n := \lbrace x_1,\ldots,x_n \rbrace \ \text{ with } |x_i - x_i^\mu| \le \eta \rbrace.
\end{split}
\end{align}
The parameter $\eta>0$ will always be chosen sufficiently small such
that the topology of the bond graph remains invariant. 
$\eta$ will in general also depend on $\ell$. Moreover, we recall
$E(\tilde{\mathcal{F}}) = E(F_n, L^\mu_m)$. We obtain our main
result, concerning local stability under small stretching.

\begin{theorem}[Local stability of minimizers]\label{th: main3}
There  exist $\ell_0 \in \Nz$ and for each $\ell \ge \ell_0$ some $\mu^{\rm crit}_\ell > \mu^{\rm us}_\ell$ and $\eta_\ell >0$ only depending on $v_2$, $v_3$, and $\ell$  such that  for all $\ell \ge \ell_0$ and  for all  $\mu \in [\mu_\ell^{\rm us},\mu_\ell^{\rm crit}]$   we have
$$E(\tilde{\mathcal{F}})>E(\mathcal{F}_\mu^*) $$
for any nontrivial perturbation $\tilde{\mathcal{F}} \in \mathscr{P}_{\eta_\ell}(\mu)$  of the configuration $\mathcal{F}_\mu^*$.
\end{theorem}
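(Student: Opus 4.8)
The plan is to prove the strict local minimality of $\mathcal{F}_\mu^*$ by reducing the full $3n$-dimensional minimization to a collection of (almost) decoupled \emph{cell problems}, and then showing that each cell is minimized exactly at its symmetric configuration. First I would fix an arbitrary nontrivial perturbation $\tilde{\mathcal{F}} = (F_n, L^\mu_m) \in \mathscr{P}_{\eta_\ell}(\mu)$ having the same bond graph as $\mathcal{F}_\mu^*$, so that each atom keeps exactly three neighbors and the combinatorial labeling of Proposition \ref{basiczigzag}(d) transfers verbatim to the perturbed atoms. Since $v_2$ is short-ranged (it vanishes beyond $1.1$) and $\eta_\ell$ is small, the energy $E(\tilde{\mathcal{F}})$ in \eqref{E} splits into a sum of local contributions indexed by the hexagonal eight-atom cells tiling the nanotube. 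By the slicing technique along the tube axis $e_1$, it then suffices to bound from below the energy of a single chain of cells and to show that the minimal total energy is attained when all cells are deformed identically; the additional slicing along the cross sections (the regular $\ell$-gons) is what permits one to read off correctly the slight nonplanarity of each hexagonal cell.

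The core of the argument is the convexity of the cell energy. By Theorem \ref{th: cell convexity3}, the energy of a single eight-atom cell, expressed in suitable geometric variables, is strictly convex in a neighborhood of the symmetric reference cell, provided $\mu \le \mu^{\rm crit}_\ell$ and $\ell \ge \ell_0$. I would then invoke Theorem \ref{th: Ered} to turn this convexity into the quantitative lower bound that the \emph{energy defect} of a cell, namely its energy minus that of the symmetric cell carrying the same macroscopic data, is controlled from below by the square of a scalar \emph{symmetry defect} measuring how far the cell is from being symmetric. The delicate point here is that the reference cell is \emph{not} stress-free, so the linear term in the expansion need not vanish; one must exploit the monotonicity encoded in Proposition \ref{th: mainenergy} together with the convexity in order to absorb this term and still obtain a coercive lower bound.

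Next I would control the residual nonplanarity. Because the cells are only almost planar yet embedded in $\Rz^3$, the symmetrization step produces an \emph{angle defect}, measuring the deviation of the nonplanarity angles from their symmetric values. Lemma \ref{lemma: sum} furnishes the crucial estimate bounding this angle defect in terms of the symmetry defect of the cell. Combining this bound with the energy-defect estimate of Theorem \ref{th: Ered}, and feeding both into the convexity-and-monotonicity argument of Proposition \ref{th: mainenergy}, I would conclude that $E(\tilde{\mathcal{F}})$ strictly exceeds the energy of the symmetrized configuration unless every cell is already symmetric. A symmetric configuration, however, necessarily lies in the family $\mathscr{F}(\mu)$, where by Theorem \ref{th: main1} the unique solution of \eqref{min2} is $\mathcal{F}_\mu^*$. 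Hence any nontrivial $\tilde{\mathcal{F}} \in \mathscr{P}_{\eta_\ell}(\mu)$ satisfies $E(\tilde{\mathcal{F}}) > E(\mathcal{F}_\mu^*)$, which is the assertion; the constants $\ell_0$ and $\eta_\ell$ are chosen precisely so that all the above estimates hold uniformly.

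The main obstacle is the interplay between the two quantitative estimates in the nonplanar, stressed setting. Establishing the convexity of Theorem \ref{th: cell convexity3} for an almost planar structure in three dimensions, around a reference cell that carries residual stress, is already demanding, and simultaneously balancing the energy defect against the symmetry defect while keeping the angle defect under control via Lemma \ref{lemma: sum} is where the analysis becomes most intricate. The restriction $\mu \le \mu^{\rm crit}_\ell$ is exactly what guarantees that the convexity, and therefore the entire chain of estimates, remains valid: the threshold $\mu^{\rm crit}_\ell$ marks the onset of a possible loss of convexity of the cell energy, beyond which symmetrization can no longer be guaranteed to decrease the energy.
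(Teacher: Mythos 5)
Your outline follows the paper's strategy faithfully at the level of the main ingredients: decomposition of $E$ into eight-atom cell energies, Theorem \ref{th: Ered} to control the energy defect of a cell from below by the symmetry defect $\Delta$, Lemma \ref{lemma: sum} to control the angle defect by $\Delta$, and the convexity and monotonicity of $E_{\rm red}$ from Proposition \ref{th: mainenergy}. There is, however, a genuine gap in your final step, precisely where the theorem's hypotheses enter.

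After symmetrizing every cell you assert that the configuration ``necessarily lies in the family $\mathscr{F}(\mu)$'' and then appeal to Theorem \ref{th: main1}. This does not follow: even if each individual cell is symmetric, different cells may carry different local data, namely different lengths $|z^{\rm dual}_{i,j,k}-z^{\rm dual}_{i,j-1,k}|$ and different mean plane angles $\bar\theta(z_{i,j,k})$, so the symmetrized object need not be periodic with period $\mu$, nor even lie in any $\mathscr{F}(\mu')$. The paper closes this by a Jensen-type argument: the strict convexity of $E_{\rm red}$ (Property 2 of Proposition \ref{th: mainenergy}) passes from the cellwise data to their averages $\bar\mu$ and $\bar\theta$; Lemma \ref{lemma: sum} gives $\bar\theta \le \gamma_\ell + \mathrm{O}\bigl(\tfrac{1}{m\ell}\sum\Delta\bigr)$, so the monotonicity in $\gamma$ (with $|g'|\le C\ell^{-3}$, dominated by the $C\ell^{-2}\sum\Delta$ gain from Theorem \ref{th: Ered}) replaces $\bar\theta$ by $\gamma_\ell$; and finally the periodicity constraint $m\mu = \bigl|\sum_j (z^{\rm dual}_{i,j,k}-z^{\rm dual}_{i,j-1,k})\bigr| \le \sum_j |z^{\rm dual}_{i,j,k}-z^{\rm dual}_{i,j-1,k}|$ yields $\bar\mu\ge\mu\ge\mu^{\rm us}_\ell$, whence the monotonicity of $\mu\mapsto E_{\rm red}(\mu,\gamma_\ell,\gamma_\ell)$ on $\lbrace \mu\ge\mu^{\rm us}_\ell\rbrace$ gives $E_{\rm red}(\bar\mu,\gamma_\ell,\gamma_\ell)\ge E_{\rm red}(\mu,\gamma_\ell,\gamma_\ell)$. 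This last comparison is where the hypothesis $\mu\ge\mu^{\rm us}_\ell$ is used in an essential way and is the reason compression is excluded; your proposal never identifies this mechanism, and your reading of $\mu^{\rm crit}_\ell$ as marking ``the onset of a loss of convexity of the cell energy'' is not how the paper uses it --- $\mu^{\rm crit}_\ell$ is merely chosen so that $[\mu^{\rm us}_\ell,\mu^{\rm crit}_\ell]\subset\subset M^\ell$, the interval on which the properties of $E_{\rm red}$ are established; the binding constraint is the lower bound. (A minor further imprecision: $\Delta$ is already quadratic in the displacement, and Theorem \ref{th: Ered} provides a bound linear in $\Delta$, not in its square.)
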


 The theorem asserts that, under prescribed and small stretchings \PPP (i.e., the value of $L_m^\mu$ is prescribed), \EEE
 \PPP there exists a periodic strict-local minimizer  $\mathcal{F}_\mu^*$ that  belongs \EEE  to the
family $\mathscr{F}(\mu)$. \PPP In other words, given $\mu>\mu^{\rm us}$, the $\mu$-periodic configuration $\mathcal{F}_\mu^*$  is a local minimizer among 
configurations subject to the same macroscopic stretching, i.e., the atoms follow the
macroscopic deformation. This can be seen as a validation of the
Cauchy-Born rule  in  this specific setting. \EEE
 Especially,  the
result justifies the reduction of the $3n$-dimensional minimization problem
$\min \{E(\mathcal{F}) | \ \mathcal{F} \in \mathscr{P}_{\eta_\ell}(\mu)\}$ to the two-dimensional
problem \eqref{min2}.  

In the following statement we collect    the main properties of the local  minimizer.

\begin{proposition}[Properties of minimizer]\label{th: main2}
There  exist $\ell_0 \in \Nz$ and for each $\ell \ge \ell_0$ an open interval $M^\ell$ only depending on $v_2$, $v_3$, and $\ell$, with $\mu^{\rm us}_\ell \in M^\ell$, such that:
\begin{itemize}
\item[1.] The mapping $\mu \mapsto E(\mathcal{F}_\mu^*) = E_{\rm min}(\mu)$ is smooth, strictly convex on $M^\ell$ and attains its minimum in $\mu_\ell^{\rm us}$. Particularly, $\frac{d^2}{d\mu^2} E_{\rm min}(\mu_\ell^{\rm us}) \ge cn$ for $c>0$ only depending on $v_2$,  $v_3$. 
\item[2.]  The lengths  $\lambda^\mu_1,\lambda^\mu_2$ increase continuously for $\mu \in M^\ell$. In particular, we have $\lambda^\mu_1,\lambda^\mu_2>1$ for $\mu > \mu_\ell^{\rm us}$ and  $\lambda^\mu_1,\lambda^\mu_2<1$ for $\mu < \mu_\ell^{\rm us}$.
\item[3.] The angle $\alpha^\mu$ corresponding to $\lambda^\mu_1,\lambda^\mu_2$ given by the relations  \eqref{eq: basic constraints} and \eqref{alphars} satisfies $\alpha^\mu \in (\alpha^{\rm ch}_\ell,\alpha^{\rm ru})$ for all $\mu \in M^\ell$.
\item[4.] Whenever $v_2''(1) \neq 6v_3''(2\pi/3)$, the radius $\rho^\mu$ corresponding to $\lambda^\mu_1,\lambda^\mu_2$ given by relation \eqref{eq: basic constraints} is continuously increasing or decreasing for $\mu \in M^\ell$, respectively, depending on whether $v_2''(1) < 6v_3''(2\pi/3)$ or $v_2''(1) > 6v_3''(2\pi/3)$.
\end{itemize}

\end{proposition}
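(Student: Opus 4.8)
The plan is to recast the constrained minimization \eqref{min2} as an unconstrained two–variable problem, analyze its optimality system via the implicit function theorem, verify every quantitative assertion at the unstretched configuration, and then propagate it to a small neighborhood $M^\ell$ of $\mu^{\rm us}_\ell$ by continuity. By \eqref{basicenergy}, after dividing by $n$, the relevant object is the per–cell energy
\[
e(\lambda_1,\lambda_2;\mu):=\tfrac12 v_2(\lambda_1)+v_2(\lambda_2)+f(\alpha),\qquad f(\alpha):=2v_3(\alpha)+v_3(\beta(\alpha,\gamma_\ell)),
\]
where, combining \eqref{eq: basic constraints} and \eqref{alphars}, the angle $\alpha$ is determined by $\cos\alpha=(\lambda_1-\mu/2)/\lambda_2$. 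Thus $\mu$ enters only through $\alpha$, and $\beta$ only through $\alpha$. I record the elementary derivatives $\partial_{\lambda_1}\alpha=-1/(\lambda_2\sin\alpha)$, $\partial_{\lambda_2}\alpha=\cos\alpha/(\lambda_2\sin\alpha)$, $\partial_\mu\alpha=1/(2\lambda_2\sin\alpha)$, which drive all subsequent computations. At $\mu=\mu^{\rm us}_\ell$ the minimizer is $(\lambda_1,\lambda_2)=(1,1)$ with $\alpha=\alpha^{\rm us}_\ell$, and there $v_2'(1)=0$ while $f'(\alpha^{\rm us}_\ell)=0$ by Proposition \ref{eq: old main result}.

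For Part 1, I first observe that the $2\times2$ Hessian $H=\nabla^2_{(\lambda_1,\lambda_2)}e$ is positive definite at the unstretched point: writing $A:=f''(\alpha^{\rm us}_\ell)>0$ (strict convexity of $f$, cf. Proposition \ref{eq: old main result}), $p:=v_2''(1)>0$, $s:=\sin\alpha^{\rm us}_\ell$, $c:=\cos\alpha^{\rm us}_\ell$, a direct evaluation gives $\det H=\tfrac{p}{2}\big(p+A(c^2+2)/s^2\big)>0$ and $H_{11}>0$. Hence the optimality system $\partial_{\lambda_i}e=0$ admits, by the implicit function theorem, a smooth solution $\mu\mapsto(\lambda_1^\mu,\lambda_2^\mu)$ near $\mu^{\rm us}_\ell$, so $E_{\rm min}(\mu)=n\,e(\lambda_1^\mu,\lambda_2^\mu;\mu)$ is smooth. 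The envelope identity $E_{\rm min}'(\mu)=n\,\partial_\mu e=n\,f'(\alpha)\,\partial_\mu\alpha$ vanishes at $\mu^{\rm us}_\ell$ since $f'(\alpha^{\rm us}_\ell)=0$, so $\mu^{\rm us}_\ell$ is critical. Differentiating once more, $E_{\rm min}''$ equals $n$ times the Schur complement of $H$ in the full Hessian $\nabla^2_{(\lambda_1,\lambda_2,\mu)}e$, and evaluation at the unstretched point yields
\[
\tfrac1n E_{\rm min}''(\mu^{\rm us}_\ell)=\frac{Ap}{4\big(ps^2+A(c^2+2)\big)}>0 .
\]
Strict convexity on $M^\ell$ then follows by continuity. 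The uniform bound $\ge cn$ comes from the graphene limit $\alpha^{\rm us}_\ell\to2\pi/3$, whence $s^2\to3/4$, $c^2\to1/4$, $\beta'(\alpha^{\rm us}_\ell)\to-2$, and $A=f''(\alpha^{\rm us}_\ell)\to 6v_3''(2\pi/3)$, giving a strictly positive limit depending only on $v_2''(1),v_3''(2\pi/3)$ and hence a bound valid for all $\ell\ge\ell_0$.

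For Part 2, differentiating the optimality system gives $H\,(\dot\lambda_1,\dot\lambda_2)^{\mathsf T}=-(\partial_{\mu\lambda_1}e,\partial_{\mu\lambda_2}e)^{\mathsf T}$, and solving at the unstretched point produces $\dot\lambda_1=Ap/(2s^2\det H)>0$ and $\dot\lambda_2=-Apc/(4s^2\det H)>0$, the latter because $c=\cos\alpha^{\rm us}_\ell<0$. Positivity persists on $M^\ell$ by continuity, and since $\lambda_1^{\mu^{\rm us}_\ell}=\lambda_2^{\mu^{\rm us}_\ell}=1$, this gives monotone increase together with the stated $>1$/$<1$ dichotomy. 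Part 3 is then immediate: $\alpha^{\rm us}_\ell$ lies in the \emph{open} interval $(\alpha^{\rm ch}_\ell,\alpha^{\rm ru})$ by Proposition \ref{eq: old main result}, and $\mu\mapsto\alpha^\mu$ is continuous, so after possibly shrinking $M^\ell$ one has $\alpha^\mu\in(\alpha^{\rm ch}_\ell,\alpha^{\rm ru})$ throughout.

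Part 4 is where the dichotomy of elastic constants surfaces. From \eqref{alphars}, $\rho^\mu=\lambda_2^\mu\sin\alpha^\mu/(2\sin(\pi/(2\ell)))$, so the sign of $d\rho^\mu/d\mu$ is that of $\dot\lambda_2\sin\alpha+\lambda_2\cos\alpha\,\dot\alpha$, where $\dot\alpha=\partial_\mu\alpha+\partial_{\lambda_1}\alpha\,\dot\lambda_1+\partial_{\lambda_2}\alpha\,\dot\lambda_2$. Evaluating at the unstretched point, this quantity reduces to a positive multiple of $c\,(p-A)$; since $c<0$, its sign is that of $A-p$. Using once more $A\to 6v_3''(2\pi/3)$ and $p=v_2''(1)$, for $\ell\ge\ell_0$ the sign of $A-p$ coincides with that of $6v_3''(2\pi/3)-v_2''(1)$, which is nonzero precisely under the stated hypothesis and delivers the claimed increasing/decreasing alternative; monotonicity on $M^\ell$ again follows by continuity. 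The main obstacle is not any single step but the \emph{uniformity in $\ell$}: the perturbative second–order bookkeeping must be controlled together with the graphene–limit expansions $\alpha^{\rm us}_\ell\to2\pi/3$ and $f''(\alpha^{\rm us}_\ell)\to 6v_3''(2\pi/3)$ in a quantitative way, so that the positive lower bound of Part 1 and the strict sign conditions of Parts 2 and 4 hold simultaneously for all $\ell\ge\ell_0$ on one common neighborhood $M^\ell$. The excluded threshold $v_2''(1)=6v_3''(2\pi/3)$ is exactly where $A-p\to0$ and the radius monotonicity becomes indeterminate at leading order.
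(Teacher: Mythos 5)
Your proposal is correct, and every explicit formula in it is consistent with the paper's: in the graphene limit $\alpha^{\rm us}_\ell\to 2\pi/3$ your value $\tfrac1n E_{\rm min}''(\mu^{\rm us}_\ell)=Ap/\bigl(4(ps^2+A(c^2+2))\bigr)$ becomes $2v_2''(1)v_3''(2\pi/3)/(v_2''(1)+18v_3''(2\pi/3))$, which is exactly the paper's $v_2''(1)/K$ with $K=9+v_2''(1)/(2v_3''(2\pi/3))$ from \eqref{convexity-last-a}, and your $\dot\lambda_1,\dot\lambda_2$ reproduce the paper's $4/K$ and $1/K$ from \eqref{convexity3-b}, as does your sign criterion $c(p-A)$ for Part 4. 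The route, though, is genuinely different in execution: the paper derives Proposition \ref{th: main2} as a corollary of the machinery built for the stability theorem, passing through the three-variable symmetric energy $E^{\rm sym}_{\mu,\gamma_1,\gamma_2}(\lambda,\alpha_1,\alpha_2)$ and the reduced energy $E_{\rm red}$, linearizing the optimality system at the planar reference $(3,\pi,\pi)$ (see \eqref{convexity1}--\eqref{convexity3}), and obtaining the $>1$/$<1$ dichotomy of Part 2 by a separate contradiction argument with the first-order conditions (\eqref{larger than one}--\eqref{eq: sgn2}); you instead eliminate the angle via $\cos\alpha=(\lambda_1-\mu/2)/\lambda_2$, work with the two-variable value function of \eqref{min2} directly, evaluate the Hessian at the actual unstretched point $(1,1,\mu^{\rm us}_\ell)$, and get the dichotomy for free from $\dot\lambda_i>0$ together with $\lambda_i^{\mu^{\rm us}_\ell}=1$. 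This buys a self-contained proof of the Proposition that needs neither Lemma \ref{lemma: sym-energy} nor Proposition \ref{th: mainenergy}; what it does not buy is the dependence of $E_{\rm red}$ on $\gamma_1,\gamma_2$, which the paper needs anyway for Theorem \ref{th: main3}. Two points you should make explicit: (i) to identify your implicit-function branch with the minimizer $(\lambda_1^\mu,\lambda_2^\mu)$ you should invoke Theorem \ref{th: main1} (or the coercivity argument of Proposition \ref{prop1}) to know that the constrained minimizer of \eqref{min2} is an interior critical point converging to $(1,1)$ as $\mu\to\mu^{\rm us}_\ell$, so that by local uniqueness it lies on your branch; (ii) the uniform positivity in $\ell$ of $A=f''(\alpha^{\rm us}_\ell)$ and of the denominators should be quoted from $A\to 6v_3''(2\pi/3)$ via \eqref{zigder2} and Proposition \ref{eq: old main result}, which you do flag at the end.
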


Properties 1 and 2 imply that that the nanotubes show elastic 
response  for small extension and compression. Property 3  
reconfirms  that neither the polyhedral  nor the rolled-up
configuration is a local minimizer of the energy,  for all $\mu$
  near $\mu^{\rm us}_\ell$.   Finally, Property 4 implies that under stretching or
compressing the radius of the nanotube \PPP changes whenever $v_2''(1) \neq 6v_3''(2\pi/3)$. \EEE In particular, if $v_2''(1) > 6v_3''(2\pi/3)$, the radius of the nanotube decreases as changing the angles is energetically more convenient.

Notice that Theorem \ref{th: main3} provides a stability result only for the case of expansion $\mu \ge \mu^{\rm us}_\ell$ and for values $\mu$ near $\mu^{\rm us}_\ell$. The situation for compression is more subtle from an analytical point of view and our proof techniques do not apply in this case. However, we expect stability of nanotubes also for small compression and refer to \cite{MMPS}  for some numerical results in this direction.  Let us complete the picture in the tension regime by \BBB
 discussing briefly the fact that for larger stretching cleavage \EEE along a section is energetically favored. More precisely, we have the following result.

\begin{theorem}[Fracture]\label{fracture} 
Let \RRR $\mathcal{H}_\mu$ \EEE be the configuration
\begin{align*}
x_{i,k}^{j,l} = \begin{cases}   \bar{x}_{i,k}^{j,l} &  \RRR j \EEE \in [0,m/2) + m\Zz, \\ 
\bar{x}_{i,k}^{j,l} + m(\mu - \mu_\ell^{\rm us})  & \text{else}
\end{cases}
\end{align*}
for $i=1,\ldots,\ell$ and  $k,l \in \lbrace 0,1 \rbrace$, where $\bar{x}_{i,k}^{j,l}$ denote the atomic positions of the configuration $\mathcal{F}_{1,1,\mu^{\rm us}_\ell}$ \RRR (see Proposition \ref{basiczigzag}(d)). \EEE  Then there \BBB are an open interval $M^\ell$ containing $\mu_\ell^{\rm us}$ and a constant $c>0$ only depending on $v_2$ and $v_3$ such that for  all $\mu  \in M^\ell$, $\mu \ge \mu^{\rm frac}_{\ell,m} := \mu_\ell^{\rm us} + c/\sqrt{m}$, \EEE one has  $E(\mathcal{H}_\mu) < E(\mathcal{F}^*_\mu)$.
\end{theorem}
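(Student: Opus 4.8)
\emph{Proof strategy.} The plan is to compare the two natural ways of accommodating an imposed extension $\mu > \mu^{\rm us}_\ell$: the periodic minimizer $\mathcal{F}^*_\mu$, which distributes the stretching uniformly over all $n = 4m\ell$ atoms, against the fractured configuration $\mathcal{H}_\mu$, which leaves every bond at its unstretched length and instead concentrates the entire elongation $m(\mu - \mu^{\rm us}_\ell)$ into a single gap. The whole argument reduces to the observation that uniform elastic stretching costs energy quadratically in $\mu - \mu^{\rm us}_\ell$ and extensively in $n$, whereas opening one crack costs only a fixed \emph{surface} energy of order $\ell$; hence for sufficiently large extension fracture wins.

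First I would evaluate $E(\mathcal{H}_\mu)$. By construction the two portions $\{j \in [0, m/2) + m\Zz\}$ and $\{j \in [m/2, m) + m\Zz\}$ are rigid translates along $e_1$ of the unstretched configuration $\mathcal{F}_{1,1,\mu^{\rm us}_\ell}$, so every bond length and bond angle strictly inside either portion coincides with the corresponding quantity in $\mathcal{F}_{1,1,\mu^{\rm us}_\ell}$. A short check using the period $L = m\mu$ shows that the junction at $j \equiv 0$ is in fact seamless, since both of its endpoints carry the \emph{same} translation $m(\mu - \mu^{\rm us}_\ell)e_1$ relative to the unstretched tube; the configuration therefore presents a single genuine gap, located at $j \equiv m/2$. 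Across this gap precisely the $O(\ell)$ bonds joining the sections $j = m/2 - 1$ and $j = m/2$ are elongated by essentially $m(\mu - \mu^{\rm us}_\ell)$. For $\mu \ge \mu^{\rm frac}_{\ell,m}$ and $m$ large this elongation exceeds $1.1$, so by the definition of the bond graph these bonds are broken: their two-body contributions jump from $v_2(1) = -1$ to $0$, and the three-body terms involving them are removed. Consequently $E(\mathcal{H}_\mu) = E_{\rm min}(\mu^{\rm us}_\ell) + C_\ell$, where $C_\ell > 0$ is a \emph{fixed} crack energy, independent of $\mu$ in this regime, satisfying $C_\ell \le c_0 \ell$ for a constant $c_0$ depending only on $v_2, v_3$.

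Second I would bound $E(\mathcal{F}^*_\mu) = E_{\rm min}(\mu)$ from below using Property 1 of Proposition \ref{th: main2}. Since $E_{\rm min}$ is smooth and strictly convex on $M^\ell$ with minimum at $\mu^{\rm us}_\ell$ and $\frac{d^2}{d\mu^2}E_{\rm min}(\mu^{\rm us}_\ell) \ge cn$, after shrinking $M^\ell$ if necessary so that the second derivative stays $\ge \frac{c}{2}n$ throughout, a Taylor expansion (the first derivative vanishing at the minimum) yields $E_{\rm min}(\mu) - E_{\rm min}(\mu^{\rm us}_\ell) \ge \frac{c}{4}n(\mu - \mu^{\rm us}_\ell)^2 = cm\ell(\mu - \mu^{\rm us}_\ell)^2$, recalling $n = 4m\ell$.

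Combining the two estimates gives $E(\mathcal{F}^*_\mu) - E(\mathcal{H}_\mu) \ge cm\ell(\mu - \mu^{\rm us}_\ell)^2 - c_0\ell$, which is strictly positive as soon as $(\mu - \mu^{\rm us}_\ell)^2 > c_0/(cm)$, i.e. $\mu - \mu^{\rm us}_\ell > \sqrt{c_0/c}\,/\sqrt{m}$. Setting the threshold constant to $\sqrt{c_0/c}$, which depends only on $v_2, v_3$, proves the claim. The main obstacle is the rigorous bookkeeping behind $C_\ell$: one must identify exactly which bonds and angles are affected at the gap, verify that every broken bond indeed exceeds the cutoff $1.1$ for $\mu \ge \mu^{\rm frac}_{\ell,m}$ (which is precisely why the threshold scales like $m^{-1/2}$ and forces $m$ large), and, most importantly for the stated form of the result, control $C_\ell$ by $O(\ell)$ \emph{uniformly in $\ell$}, so that the final constant is $\ell$-independent. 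The convexity lower bound is comparatively routine given Proposition \ref{th: main2}.
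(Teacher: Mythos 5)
Your proposal is correct and follows essentially the same route as the paper: the paper likewise writes $E(\mathcal{H}_\mu)=E(\mathcal{F}_{1,1,\mu^{\rm us}_\ell})+4\ell$ (the crack severs exactly $4\ell$ bonds per $n$-cell, each worth $-1$), and then compares with $E_{\rm min}(\mu)=E_{\rm min}(\mu^{\rm us}_\ell)+\tfrac12 E_{\rm min}''(\mu^{\rm us}_\ell)(\mu-\mu^{\rm us}_\ell)^2+{\rm O}((\mu-\mu^{\rm us}_\ell)^3)\ge E_{\rm min}(\mu^{\rm us}_\ell)+cm\ell(\mu-\mu^{\rm us}_\ell)^2$ via Property 1 of Proposition \ref{th: main2}, yielding the same $m^{-1/2}$ threshold. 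Your identification of the seamless junction at $j\equiv 0$ versus the single genuine gap at $j\equiv m/2$, and the surface-versus-bulk energy comparison, match the paper's argument.
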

Notice that the configuration $\mathcal{H}_\mu$ corresponds to a brittle nanotube cleaved along a cross-section. The energy is given by $E(\mathcal{H}_\mu) = E(\mathcal{F}_{1,1,\mu^{\rm us}_\ell}) + \NNN 4\ell \EEE $ since in the configuration $\mathcal{H}_\mu$ there are $4\ell$ less active bonds {\BBB per $n$-cell}  than in $\mathcal{F}_{1,1,\mu^{\rm us}_\ell}$. Moreover,  $\mathcal{H}_\mu$ is a stable configuration in the sense of Theorem \ref{th: main3} \BBB for all $\mu \ge \mu^{\rm
   us}_\ell$, \EEE which can be seen by applying Theorem \ref{th: main3} separately on the two parts of $\mathcal{H}_\mu$, \RRR consisting of the  points $x_{i,k}^{j,l}$ with $j < m/2$ and $j \ge m/2$, respectively. \EEE

  As mentioned, nanotubes are long structures. In particular, $m$
 should be expected to be many orders of magnitude larger than
 $\ell$. The case of large $m$ is hence a sensible one and for  $m$ large enough we have $\mu^{\rm frac}_{\ell,m} <
 \mu^{\rm crit}_\ell$, with $\mu^{\rm crit}_\ell$ from Theorem
 \ref{th: main3}. Hence, by  combining Theorem \ref{th: main3}
 with Theorem \ref{fracture}, for {\it all} $\mu \ge \mu^{\rm
   us}_\ell$   we obtain a stability result for an elastically
 stretched or cleaved nanotube, respectively.

The proof of Theorem \ref{fracture} is elementary and relies on the fact that the difference of the energy associated to $\mathcal{F}^*_\mu$ and $\mathcal{H}_\mu$ can be expressed by
\begin{align*}
E(\mathcal{H}_\mu) -   E(\mathcal{F}^*_\mu)& = 4\ell + E(\mathcal{F}_{1,1,\mu^{\rm us}_\ell}) - E(\mathcal{F}^*_\mu) = 4\ell + E_{\rm min}(\mu^{\rm us}_\ell) - E_{\rm min}(\mu)\\
& = 4\ell -  \frac{1}{2}\frac{d^2}{d^2\mu}E_{\rm min}(\mu^{\rm us}_\ell) (\mu -\mu^{\rm us}_\ell)^2  + {\rm O}((\mu -\mu^{\rm us}_\ell)^3)\\
&  \le   4\ell -    \frac{1}{4}  \frac{d^2}{d^2\mu}E_{\rm min}(\mu^{\rm us}_\ell) (\mu -\mu^{\rm us}_\ell)^2  \le   4\ell  - m\ell c (\mu -\mu^{\rm us}_\ell)^2
\end{align*}
for $\mu$ in a small neighborhood around $\mu^{\rm us}_\ell$, where we
used Property 1 in Proposition \ref{th: main2} \RRR and $n = 4m\ell$. \EEE 

We close the section by noting that the scaling of $\mu^{\rm frac}_{\ell, m} - \mu^{\rm us}_\ell$ in $m$ is typical for atomistic \PPP  systems with pairwise interactions of Lennard-Jones type \EEE and has also been obtained in related models, cf. \cite{Braides-Lew-Ortiz:06, FriedrichSchmidt:2011, FriedrichSchmidt:2014.1}.

%
%

\section{Existence and stability:  Proof of Theorem \ref{th: main1} and Theorem \ref{th: main3}}\label{sec: main proof}

In this section we consider small perturbations $\tilde{\mathcal{F}}$ of configurations in $\mathscr{F}(\mu)$ with the same bond graph, {\BBB as defined in \eqref{eq: bc}}. The atomic positions of $\tilde{\mathcal{F}}$ will be indicated by $x_{i,k}^{j,l}$ and are labeled as for a configuration $\mathscr{F}(\mu)$, \PPP cf.~Proposition \EEE \ref{basiczigzag}(d). We first introduce some further notation needed for the proof of our main result. In particular, we introduce a \emph{cell energy} corresponding to the energy contribution of a specific {\BBB basic} cell.

\noindent
\textbf{Centers and dual centers.} We introduce the \emph{cell centers}
\begin{align}\label{eq: centers}
z_{i,j,k} = \frac{1}{2}\Big(x_{i,k}^{j,0} +  x_{i,k}^{j,1}\Big)
\end{align}
and the \emph{dual cell centers}
$$z^{\rm dual}_{i,j,k} = \frac{1}{2}\Big(x_{i,k}^{j,1} +  x_{i,k}^{j+1,0}\Big).$$
Note that for a configuration in $\mathcal{F}(\mu)$ for fixed $j$ the $2\ell$ points $z_{i,j,0}$ and $z^{\rm dual}_{i,j-1,1}$ for $i=1,\ldots,\ell$ lie in a plane perpendicular to $e_1$. Likewise,  $z_{i,j,1}$ and $z^{\rm dual}_{i,j,0}$ for $i=1,\ldots,\ell$ lie in a plane perpendicular to $e_1$. 

\noindent
\textbf{Cell energy.} The main strategy of our proof will be to reduce the investigation of \eqref{min2}  to a cell problem. {\BBB In order to correctly   capture the contribution of all bond lengths and angles to the energy, } it is not enough to consider a hexagon as a basic cell, but two additional atoms have to be taken into account.

 \begin{figure}[htp]
\begin{center}
\pgfdeclareimage[width=0.7\textwidth]{cell}{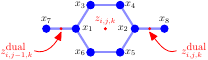}
    \pgfuseimage{cell}
\caption{ Notation for the points and the centers in the basic cell.}
\label{cell}
\end{center}
\end{figure}
Let be given a center $z_{i,j,k}$ and number the atoms of the corresponding hexagon by $x_1 = x_{i,k}^{j,0}$, $ x_2 = x_{i,k}^{j,1}$ and the remaining  clockwisely  by  $x_3,x_4,x_5,x_6$ as indicated in  Figure \ref{cell}, {\BBB such that $x_3$ is consecutive to $x_1$,} see also \eqref{kink}  below. Additionally, the atoms bonded to $x_1$ and $x_2$, respectively, which are not contained in the hexagon, are denoted by  $x_7$ and  $x_8$. Note that $z_{i,j-1,k}^{\rm dual} = (x_7 + x_1)/2$ and $z_{i,j,k}^{\rm dual} = (x_2 + x_8)/2$. For $i=1,\ldots,6$ we define the bondlengths $b_i$ as indicated in  Figure \ref{cellangles}    and $b_7 = |x_1 - x_7|$,  $b_8 = |x_2 - x_8|$, where
$$2|z_{i,j-1,k}^{\rm dual} - x_1| = b_7, \ \ \ \  2|z_{i,j,k}^{\rm dual} - x_2| = b_8.$$
 By $\varphi_i$ we denote the interior angle of the hexagon at
 $x_i$. By $\varphi_7,\varphi_8$ we denote the remaining two angles at
 $x_1$ and by $\varphi_9,\varphi_{10}$ we denote the remaining two
 angles at $x_2$, see again  Figure \ref{cellangles}.

 \begin{figure}[htp]
\begin{center}
\pgfdeclareimage[width=0.45\textwidth]{cellangles}{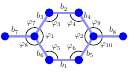}
    \pgfuseimage{cellangles}
\caption{ Notation for the bond lengths and angles in the basic cell.}
\label{cellangles}
\end{center}
\end{figure}
We define the \emph{cell energy} 
by 
\begin{align} 
E_{{\rm cell}}(z_{i,j,k}) & = \frac{1}{4} \big(v_2(b_1) + v_2(b_2) \big) + \frac{1}{2}\sum^6_{\PPP h\EEE=3} v_2(b_{\PPP h\EEE})    + \frac{1}{4} \big( v_2(b_7) + v_2(b_8) \big) \nonumber\\ 
&   + v_3(\varphi_1) + v_3(\varphi_2) + \frac{1}{2}\sum^6_{\PPP h\EEE=3}
v_3(\varphi_{\PPP h\EEE})  +\frac{1}{2} \sum^{10}_{\PPP h\EEE=7}
v_3(\varphi_{\PPP h\EEE}). \label{eq: cell} 
\end{align}
\PPP Notice that the cell energy is a function depending on the bond lengths and angles in the cell. However, as we identify each cell with its center $z_{i,j,k}$, for simplicity we use the notation $E_{{\rm cell}}=E_{{\rm cell}}(z_{i,j,k})$. Furthermore, also for notational convenience we do not put    indices $i,j,k$ on bond lengths and angles.  \EEE
To derive convexity properties of $E_{{\rm cell}}$ it is convenient to take also the contribution of the angles $\varphi_7, \ldots, \varphi_{10}$ into account.  Observe that 
\begin{align}\label{eq: sumenergy}
E(\tilde{\mathcal{F}}) = \sum_{i=1}^\ell \sum_{j=1}^m \sum_{k=0,1} E_{{\rm cell}}(z_{i,j,k}). 
\end{align}
Indeed, each bond not (approximately) parallel to $e_1$ is contained exactly in two cells. Each bond  (approximately) parallel to $e_1$ is contained in four cells, twice in form of a bond in a hexagon, once as a bond left of a hexagon and once as a bond right of a hexagon. Moreover, angles with index $\lbrace 1,2\rbrace$ are contained exactly in one cell and  angles with index $\lbrace 3,\ldots,10\rbrace$ are contained in exactly two cells.

\noindent
\textbf{Symmetrization of cells.} 
\BBB A basic cell is a configuration of eight points of $\mathbb{R}^3$.  By $\boldsymbol{x}_{\rm kink}^\ell\in\mathbb{R}^{3\times 8}$ we denote the \emph{unstretched kink configuration}:  a basic   cell as found in the unstretched configuration  $\mathcal{G}_{\alpha_\ell^{\rm us}}$ from Section \ref{sec: mainresults}, see \eqref{kink} below for the exact definition. \BBB  
\PPP Notice that the coordinates given in \eqref{kink} correspond to a convenient choice of a new reference orthonormal system in $\mathbb{R}^3$. \EEE
  Indeed, \PPP consider \EEE a cell of the nanotube $\mathcal{G}_{\alpha_\ell^{\rm us}}$, where the eight points are ordered from  $x_1$ to $x_8$ according to the convention of the previous subsection (see Figure \ref{cell}), \RRR in particular  the points  $x_3,x_4,x_5,x_6$ are numbered clockwisely    with respect to an observer lying in the interior of the tube. \EEE  \PPP We fix \EEE a new reference coordinate system as follows: we let the center of the cell be the origin,  $e_1$ (axis direction) be the direction of $x_2-x_1$,
 $e_2$  the direction of $x_3-x_6$,  and $e_3=e_1\wedge e_2$. Sometimes we will write $\mathbb{R}^2\times\{0\}$ for the plane generated by  $e_1,e_2$. If $\boldsymbol{x}\in \mathbb{R}^{3\times 8}$ denotes a generic cell, possibly after a rigid motion we may always assume that, {\BBB with respect to the new reference system}, the second and third components of \RRR $(x_1+x_7)/2$, $(x_2+x_8)/2$ are zero \EEE and the points $x_4$, $x_5$ lie in a plane parallel to $\Rz^2 \times \lbrace 0 \rbrace$.

A key step in our analysis will be to show that the minimization of the cell energy \eqref{eq: cell} can be reduced to a special situation with high symmetry. To this end, we introduce the \emph{symmetrization} of a cell.
For  $y = (y^1, y^2,y^3) \in \Rz^3$ we let $r_1 (y) := (-y^1,y^2,y^3)$ and $r_2 (y) := (y^1,-y^2,y^3)$. For the generic cell $\boldsymbol{x}= (x_1,\ldots,x_8) \in \Rz^{3 \times 8}$ we define the reflections
\begin{equation}\label{reflexion}
\begin{aligned}
S_1(\boldsymbol{x})& = ( r_2(x_1) \, | \,  r_2(x_2)   \, | \, r_2(x_6)  \, | \,  r_2(x_5) \, | \, r_2(x_4) \, | \, r_2(x_3) \, | \, r_2( x_7) \, | \, r_2(  x_8)),\\ 
S_2(\boldsymbol{x}) &= ( r_1( x_2) \, | \, r_1( x_1) \, | \,  r_1(x_4 )\,  | \,  r_1( x_3)  \, | \,r_1( x_6) \, | \,  r_1(x_5) \, | \, r_1( x_8) \, | \, r_1( x_7)).
\end{aligned}
\end{equation}
$S_1$ interchanges the pair of points $(x_3, x_6)$ and $(x_4,x_5)$, and changes the sign of the second components of all points. On the other hand, $S_2$ interchanges the pair of points $(x_1, x_2)$, $(x_3,x_4)$, $(x_5,x_6)$, and  $(x_7,x_8)$, and changes the    sign of the first components of all points. \EEE

{\BBB
We  let 
\begin{equation}\label{s1s2}
\boldsymbol{x}_{S_1}:=\boldsymbol{x}_{\rm kink}^\ell+S_1(\boldsymbol{x}-\boldsymbol{x}_{\rm kink}^\ell),\quad
\boldsymbol{x}_{S_2} : = \boldsymbol{x}_{\rm kink}^\ell  + S_2(\boldsymbol{x}- \boldsymbol{x}_{\rm kink}^\ell).
\end{equation}
 If $\boldsymbol{x}$ is seen as  a perturbation of  $\boldsymbol{x}_{\rm kink}^\ell$, $\boldsymbol{x}_{S_1}$ \PPP(resp.~$\boldsymbol{x}_{S_2}$) \EEE is the \PPP \emph{reflected} \EEE perturbation with respect to the plane generated by ${e_1, e_3}$ \PPP (resp.~$e_2, e_3$). \EEE The symmetry of the configurations implies therefore  $E_{\rm cell}(\boldsymbol{x}_{S_2} ) =  E_{\rm cell}(\boldsymbol{x}_{S_1} ) =  E_{\rm cell}(\boldsymbol{x})$.

}

  We define \PPP the \emph{symmetrized} perturbations \EEE
\begin{subequations}\label{reflection2}
  \begin{align}
    \boldsymbol{x}' :=  \boldsymbol{x}_{\rm kink}^\ell + \frac{1}{2} \Big((\boldsymbol{x}  - \boldsymbol{x}_{\rm kink}^\ell)   + S_1(\boldsymbol{x}- \boldsymbol{x}_{\rm kink}^\ell) \Big),\label{reflection2-a}\\
    \mathcal{S}(\boldsymbol{x}): = \boldsymbol{x}_{\rm kink}^\ell +
    \frac{1}{2} \Big((\boldsymbol{x}' - \boldsymbol{x}_{\rm
      kink}^\ell) + S_2(\boldsymbol{x}'- \boldsymbol{x}_{\rm
      kink}^\ell) \Big).\label{reflection2-b}
  \end{align}
\end{subequations}

We also introduce the \emph{symmetry defect}
\begin{align}\label{delta}
\Delta(z_{i,j,k}) := |\boldsymbol{x} - \boldsymbol{x}'|^2 + |\boldsymbol{x}' - \mathcal{S}(\boldsymbol{x})|^2.
\end{align}
\PPP Notice that for notational simplicity in \eqref{delta} we do not put indices $i,j,k$ on $\boldsymbol{x}$, $\boldsymbol{x}'$, and $\mathcal{S}(\boldsymbol{x})$. \EEE
{\BBB A property that we remark is that    for a basic cell $\boldsymbol{x}$ with center $z_{i,j,k}$ the quantity $|z^{\rm dual}_{i,j,k} - z^{\rm dual}_{i,j-1,k}|$   does not change when passing to $\mathcal{S}(\boldsymbol{x})$ \RRR since the second and third component of $z^{\rm dual}_{i,j,k}, z^{\rm dual}_{i,j-1,k}$ are assumed to be zero. \EEE  }
 Below we will see that the difference of the cell energy of $\mathcal{S}(\boldsymbol{x})$ and $\boldsymbol{x}$ can be controlled in terms of $\Delta(z_{i,j,k})$ due to strict convexity of the energy.

\noindent
\textbf{Angles between planes.} \PPP In the following we denote the plane through three points $p_1$, $p_2$, and $p_3$ by $\{p_1 p_2 p_3\}$, i.e., $$\{p_1 p_2 p_3\}:=\textrm{span}_{\Rz} \{p_1-p_2, p_3-p_2 \}.$$ Furthermore, for  each $y = x_{i,k}^{j,l}$  we denote by $y_1,y_2,y_3$  \EEE the three atoms that are bonded with $y$, where the three points are numbered such that $y_3 - y$ is (approximately) parallel to {\BBB the axis direction} $e_1$.  Let $\theta = \theta(x)\le\pi$ denote the angle between the planes defined by $\{y_3 y y_1\}$ and $\{y_3 y y_2\}$. More precisely, let $n_{13}$, $n_{23}$ denote  unit normal  vectors to the planes $\{y_3 y y_1\}$ and $\{y_3 y y_2\}$, respectively.  Then we have 
\begin{align}\label{eq: thetaangle}
\theta(\PPP y\EEE) = \max \big\{\pi - \arccos ( n_{13} \cdot n_{23}  ), \ \arccos ( n_{13} \cdot n_{23} ) \big\}
\end{align}
as represented in Figure \ref{angletheta}. With these preparations we will now define angles corresponding to centers and dual centers. Let  \PPP $z_{i,j,k} = \frac{1}{2}(x_{i,k}^{j,0} +  x_{i,k}^{j,1})$ be a center of a given \EEE hexagon. As before we denote the points of the hexagon by $x_1,\ldots,x_6$. By $\theta_l(z_{i,j,k})$ we denote the angle between the planes $\{x_1 x_3 x_4\}$ and $\{x_1 x_6 x_5\}$. By $\theta_r(z_{i,j,k})$ we denote the angle between the planes $\{x_3 x_4 x_2\}$ and $\{x_2 x_5 x_6\}$.  For a dual center $z^{\rm dual}_{i,j,k} = (x_{i,k}^{j,1} +  x_{i,k}^{j+1,0})/2$ we introduce $\theta_l(z^{\rm dual}_{i,j,k}) = \theta(x_{i,k}^{j,1})$ and $\theta_r(z^{\rm dual}_{i,j,k}) = \theta(x_{i,k}^{j+1,0})$.   
 \begin{figure}[htp]
\begin{center}
\pgfdeclareimage[width=0.5\textwidth]{angletheta}{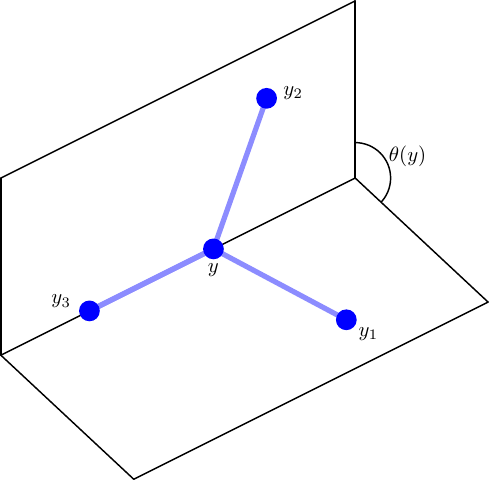}
    \pgfuseimage{angletheta}
\caption{ The angle between the planes \PPP $\{y_3 y y_1\}$ \EEE and \PPP $\{y_3 y y_2\}$ \EEE is denoted by \PPP $\theta(y)$. \EEE}
\label{angletheta}
\end{center}
\end{figure}

In Section \ref{sec: angles} we prove the following lemma which provides a linear control for the oscillation of plane angles of a perturbed configuration $\tilde{\mathcal{F}}$  \RRR with respect to those of a configuration in $\mathscr{F}(\mu)$ \EEE  in terms of the symmetry defect from \eqref{delta}.

\begin{lemma}[Symmetry defect controls angle defect]\label{lemma: sum}
There is a universal constant $c>0$ such that for $\eta>0$ small enough \RRR for  all \PPP $\tilde{\mathcal{F}}\in \mathscr{P}_\eta(\mu) $ \EEE with $\Delta(z_{i,j,k}) \le \eta$ for all centers $z_{i,j,k}$ \EEE we have
\begin{align*}
\sum_{j=1}^m\sum_{i=1}^\ell \sum_{k=0,1}\Big(\theta_l(z_{i,j,k}) + \theta_l(z^{\rm dual}_{i,j,k}) & + \theta_r(z_{i,j,k}) + \theta_r(z^{\rm dual}_{i,j,k})  \Big) \\ & \le    4m(2\ell - 2)\pi + c\sum_{j=1}^m\sum_{i=1}^\ell\sum_{k=0,1} \Delta(z_{i,j,k}).
\end{align*}

\end{lemma}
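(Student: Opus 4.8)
The plan is to reduce the global estimate to a one-dimensional statement on each cross-section of the nanotube and then to invoke the elementary identity that the interior angles of a simple planar polygon with $N$ vertices sum to exactly $(N-2)\pi$. First I would group the $8m\ell$ summands on the left-hand side into $4m$ families, one for each pair $(j,k)$ with $j=1,\dots,m$, $k\in\{0,1\}$ and each of the two types ``left''/``right''. Concretely, for fixed $(j,k)$ the ``left'' family collects the $\ell$ bending angles $\theta_l(z_{i,j,k})$ at the poles $x_{i,k}^{j,0}$ together with the $\ell$ kink angles $\theta_l(z^{\rm dual}_{i,j,k}) = \theta(x_{i,k}^{j,1})$, and analogously for the ``right'' family; by the additivity underlying \eqref{eq: sumenergy} it then suffices to prove, for each such family,
\[
\sum_{i=1}^\ell \big( \theta_l(z_{i,j,k}) + \theta_l(z^{\rm dual}_{i,j,k}) \big) \le (2\ell-2)\pi + c \sum_{i=1}^\ell \Delta(z_{i,j,k}),
\]
together with the three analogous inequalities, since summing these $4m$ estimates yields exactly the claim, the deficit $2\pi$ per family accounting for the difference between the trivial bound $2\ell\pi$ and $4m(2\ell-2)\pi$.

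Next I would fix one such family and project the $2\ell$ atoms involved onto the plane $\Rz^2\times\{0\}$ orthogonal to the axis $e_1$. By Proposition \ref{basiczigzag} these atoms form, in the reference configuration, a zigzag loop encircling the tube once, whose projection is a regular $2\ell$-gon with interior angle $\gamma_\ell=\pi(1-1/\ell)$; for $\eta$ small enough the perturbed projection is a simple (indeed convex) closed $2\ell$-gon, so its interior angles $\psi_1,\dots,\psi_{2\ell}$ satisfy $\sum_s \psi_s = (2\ell-2)\pi$ exactly. The heart of the matter is then a local comparison between each three-dimensional plane angle and the corresponding projected interior angle. If $x_3-x$ denotes the (approximately axial) bond common to the two planes defining $\theta(x)$ in \eqref{eq: thetaangle}, and more generally the relevant dihedral axis of each $\theta_l,\theta_r$, then a short computation shows that when this axis is exactly parallel to $e_1$ the dihedral angle equals the projected angle $\psi$; and this parallelism holds whenever the underlying cell is symmetric. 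Since $\theta$ and $\psi$ are, as geometric quantities, invariant under the reflections $S_1,S_2$ of \eqref{reflexion}, their difference is an even function of the symmetry-breaking component of the perturbation, so its first-order contribution vanishes and a second-order Taylor expansion yields $\theta - \psi \le c\,\Delta(z_{i,j,k})$. Summing over the loop and recalling $\sum_s\psi_s=(2\ell-2)\pi$ produces the per-family estimate above.

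The step I expect to be the main obstacle is precisely this last comparison, namely upgrading the naive bound $\theta-\psi = {\rm O}(\eta)$ to the quadratic control $\theta-\psi \le c\,\Delta$. A bound merely linear in the perturbation size $\eta$ would be useless, since $\Delta$ as defined in \eqref{delta} is quadratic in the antisymmetric part of the perturbation and can be far smaller than $\eta$; what must be exploited is that the symmetric configuration is a \emph{critical} configuration for the angle discrepancy, so that the discrepancy is genuinely second order in the distance to the symmetric subspace measured by $\Delta$. Some care is also required to verify that for $\eta$ small the projected loops remain simple, so that the polygon angle-sum identity applies with winding number one, and to check that the orientation conventions in \eqref{eq: thetaangle} select $\theta=\psi$ rather than $\theta=\pi-\psi$ in the near-flat regime at hand; the latter holds because the projected interior angles stay close to $\gamma_\ell\in(\pi/2,\pi)$.
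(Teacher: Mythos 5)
Your overall architecture --- reduce to cross-sectional rings of $2\ell$ angles, invoke a polygon angle-sum bound, and control the discrepancy quadratically by the symmetry defect --- is the same as the paper's, but the central comparison step fails. The quantity $\Delta(z_{i,j,k})$ of \eqref{delta} is computed in the cell's \emph{own} adapted frame (the cell is first normalized by a rigid motion), so it is a rigid-motion-invariant measure of asymmetry: a cell that is perfectly symmetric in its own frame but whose frame is tilted by an angle $\epsilon$ relative to the global axis has $\Delta=0$. For such a cell the dihedral axis is parallel to the cell's axis, not to the global $e_1$, and a direct computation (rotate the kink cell by $\epsilon$ about $e_2$, then project along the global $e_1$) shows that the globally projected angle $\psi$ moves at first order in $\epsilon$ while $\theta$ and $\Delta$ do not move at all; the first-order coefficient is proportional to $\sigma\sin^2(\gamma_\ell/2)\cos(\gamma_\ell/2)$ and is nonzero precisely because $\gamma_\ell<\pi$. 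Hence the pointwise bound $\theta-\psi\le c\,\Delta$ is false, and your evenness argument breaks because $\psi$ is not a function of the cell shape alone and so is not invariant under the symmetrizations $S_1,S_2$ of \eqref{reflexion}. The summed inequality is only true because these first-order tilt contributions cancel around the ring, and your proposal contains no mechanism producing that cancellation; if you instead project each cell along its own axis to restore invariance, the exact planar angle-sum identity $\sum_s\psi_s=(2\ell-2)\pi$ is lost because the projected vertices no longer assemble into a single planar polygon. The paper's proof is built to avoid exactly this dilemma: it forms the closed \emph{spatial} polygon $P$ with vertices $x_0+n_i$, where the $n_i$ are unit normals to the relevant planes, writes each dihedral angle as the sum of the two base angles of the isoceles triangle over an edge of $P$, uses the Fenchel-type \emph{inequality} $\sum_i\varphi_i\le(2\ell-2)\pi$ valid for non-planar polygons in place of your planar equality, and controls the per-vertex discrepancy by the squared distance $d_{i+1}^2$ of $x_0$ from the plane of three consecutive normals --- a local, frame-independent quantity that vanishes exactly when the two adjacent cells are symmetric and is therefore bounded by $c\big(\Delta(z_{\frac{i+1}{2},j,0})+\Delta(z_{\frac{i+3}{2},j,0})\big)$.

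A secondary, repairable error: your grouping pairs $\theta_l(z_{i,j,k})$, sitting at $x_{i,k}^{j,0}$, with $\theta_l(z^{\rm dual}_{i,j,k})=\theta(x_{i,k}^{j,1})$. By \eqref{zigzagfamilydefinition} the atoms $x_{i,k}^{j,0}$ and $x_{i,k}^{j,1}$ have the same angular coordinate $\pi(2i+k)/\ell$, so your $2\ell$ atoms project to only $\ell$ distinct points and do not bound a simple $2\ell$-gon. The ring that actually encircles the tube must alternate the index $k$, e.g.\ $\{x_{i,0}^{j,0},\,x_{i,1}^{j-1,1}\}_{i=1,\dots,\ell}$, which is the grouping the paper uses; since the total sum is grouping-independent this is fixable, but it must be fixed before any loop argument can start.
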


Note that the sum on the left equals exactly  $4m(2\ell - 2)\pi$ if $\tilde{\mathcal{F}} \in \mathscr{F}(\mu)$.

\noindent
\textbf{Reduced energy.} A key step in our analysis will be to show that the minimization of the cell energy \eqref{eq: cell} can be reduced to a special situation with high symmetry. As represented in Figure \ref{reducedenergy}, this  corresponds to the conditions
\begin{equation}\label{sym-assumption} 
  \begin{aligned}
    &b_1 = b_2 = \lambda_1, \ \  \ \ b_3 = b_4 = b_5 =  b_6 = \lambda_2, \ \  \ \  b_7 = b_8 = \lambda_3,\\
    &z^{\rm dual}_{i,j,k} - z^{\rm dual}_{i,j-1,k} = {\BBB \widetilde\mu }e_1, \ \ \ \  x_2-x_1 = \lambda_4 e_1,\\
    &\varphi_1 = \varphi_2 = \beta, \ \  \ \ \varphi_3 = \varphi_4 = \varphi_5 = \varphi_6 = \alpha_1, \ \ \ \ \varphi_7 = \varphi_8 = \varphi_9=  \varphi_{10} = \alpha_2, \\
    &{\theta_l}(z_{i,j,k}) = {\theta_r}(z_{i,j,k}) = \gamma_1, \ \ \ \
    \ {\theta_l}(z^{\rm dual}_{i,j,k}) = {\theta_r}(z^{\rm
      dual}_{i,j-1,k}) = \gamma_2
  \end{aligned}
\end{equation}
with $\lambda_1,\lambda_2,\lambda_3 \in (0.9,1.1)$, $\lambda_4 \in (0.9,3.3)$, ${\BBB \widetilde\mu} \in (2.6,3.1)$\PPP, \EEE  $\alpha_1,\alpha_2,\beta \in (\arccos(-0.4),\arccos(-0.6))$, $\gamma_1,\gamma_2 \in [\frac{3}{4}\pi,\pi]$.  Note that ${\theta_r}(z^{\rm dual}_{i,j-1,k})= \theta(x_1)$ and ${\theta_l}(z^{\rm dual}_{i,j,k})= \theta(x_2)$ with the angles introduced in \eqref{eq: thetaangle}. {\BBB The notation $\tilde \mu$ is reminiscent of the fact that we have indeed $\widetilde \mu=\mu$ for a basic cell of a nanotube in $\mathscr{F}(\mu)$.}   {\BBB Under \eqref{sym-assumption}},  arguing along the lines  of Proposition \ref{betaproperties},   we obtain 
\begin{align}\label{eq: constraint2}
\beta= \beta(\alpha_1,\gamma_1) =  2\arcsin\left(\sin\alpha_1\sin\frac{\gamma_1}{2}\right) = \beta(\alpha_2,\gamma_2) =   2\arcsin\left(\sin\alpha_2\sin\frac{\gamma_2}{2}\right).
\end{align}
 By elementary trigonometry, cf.  Figure \ref{reducedenergy},   we also get
\begin{align}\label{lambda4}
\lambda_4   = \lambda_1 - 2\lambda_2\cos\alpha_1.
\end{align}

We  now introduce the  \emph{symmetric energy} by 
\begin{align}\label{symmetric-cell}
\begin{split}
E_{\mu,\gamma_1,\gamma_2}^{{\rm sym}}(\lambda,\alpha_1,\alpha_2) &=  2v_2(\lambda)    + \frac{1}{2} {v}_2  \big(\mu/2 + \lambda\cos\alpha_1 \big) + \frac{1}{2} {v}_2  \big(\mu/2 + \lambda\cos\alpha_2 \big)  \\
& \ \ \ \  +  2   v_3(\alpha_1)  +  2 v_3(\alpha_2)   +  v_3(\beta(\alpha_1,\gamma_1)) +  v_3(\beta(\alpha_2,\gamma_2)).
\end{split}
\end{align}
{\BBB Notice that   $E_{\rm cell}(z_{i,j,k})=E^{\rm sym}_{\widetilde\mu,\gamma_1,\gamma_2}(\lambda,\alpha_1,\alpha_2)$ if the conditions \eqref{sym-assumption} hold with $\alpha_1=\alpha_2$,  $\gamma_1=\gamma_2$, \RRR $\lambda_1=\lambda_3= \mu/2 + \lambda\cos\alpha_1$, and $\lambda_2=\lambda$. 
In general, } we show that, up to a small perturbation, the symmetric energy {\BBB $E_{\widetilde\mu,\gamma_1,\gamma_2}^{{\rm sym}}$ delivers a lower bound for  $E_{\rm cell}$ for  cells satysfying \eqref{sym-assumption}}.
 \begin{figure}[htp]
\begin{center}
\pgfdeclareimage[width=0.7\textwidth]{reducedenergy}{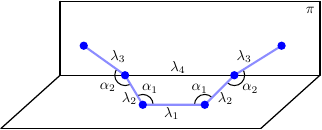}
    \pgfuseimage{reducedenergy}
\caption{Half of a cell configuration kinked at the plane $\pi$ and satisfying conditions \eqref{sym-assumption}. The other half of the cell configuration can be determined by symmetry with respect to the plane $\pi$.}
\label{reducedenergy}
\end{center}
\end{figure}

\begin{lemma}[Cell energy and symmetric energy]\label{lemma: sym-energy}
There exist a constant $c_0>0 $ and $\ell_0 \in \Nz$ only depending on $v_2$ and $v_3$  \RRR such that   for each \PPP $\tilde{\mathcal{F}}\in \mathscr{P}_\eta(\mu) $ \EEE  and all centers $z_{i,j,k}$ satisfying  conditions \eqref{sym-assumption} with $|\lambda_1 - 1| + |\lambda_3 - 1| \le \ell^{-4}$ and $|\gamma_1 - \gamma_2| \le \ell^{-2}$ \EEE  we have  
$$
E_{{\rm cell}}(z_{i,j,k}) \ge E_{{\BBB \widetilde\mu},\gamma_1,\gamma_2}^{{\rm sym}}(\lambda_2,\alpha_1,\alpha_2) - c_0 \ell^{-4} (\gamma_1 - \gamma_2)^2.
$$ 
\end{lemma}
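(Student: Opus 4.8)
The plan is to reduce the claimed inequality to a purely two-body statement and then to estimate the remaining discrepancy by an explicit trigonometric expansion. First I would insert the symmetry conditions \eqref{sym-assumption} into the cell energy \eqref{eq: cell}. The four slant bonds give $\tfrac12\sum_{i=3}^6 v_2(b_i) = 2v_2(\lambda_2)$, matching the term $2v_2(\lambda)$ with $\lambda=\lambda_2$ in \eqref{symmetric-cell}. For the three-body part, \eqref{sym-assumption} yields $v_3(\varphi_1)+v_3(\varphi_2)=2v_3(\beta)$, $\tfrac12\sum_{i=3}^6 v_3(\varphi_i)=2v_3(\alpha_1)$ and $\tfrac12\sum_{i=7}^{10}v_3(\varphi_i)=2v_3(\alpha_2)$; using the angle identity \eqref{eq: constraint2}, namely $\beta=\beta(\alpha_1,\gamma_1)=\beta(\alpha_2,\gamma_2)$, one rewrites $2v_3(\beta)=v_3(\beta(\alpha_1,\gamma_1))+v_3(\beta(\alpha_2,\gamma_2))$, which is exactly the three-body part of \eqref{symmetric-cell}. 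Hence all three-body terms and the slant two-body terms cancel, and the lemma reduces to
\[
\tfrac12 v_2(\lambda_1)+\tfrac12 v_2(\lambda_3)\ \ge\ \tfrac12 v_2(f_1)+\tfrac12 v_2(f_2)-c_0\ell^{-4}(\gamma_1-\gamma_2)^2,
\]
where $f_i:=\widetilde{\mu}/2+\lambda_2\cos\alpha_i$ are the idealized axial bond lengths of \eqref{symmetric-cell}.

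Next I would make the geometry of a cell satisfying \eqref{sym-assumption} fully explicit. Placing the two dual centers on the $e_1$-axis and exploiting the two reflection symmetries encoded in \eqref{sym-assumption}, one parametrizes $x_1,\dots,x_8$ by $\lambda_1,\lambda_2,\widetilde{\mu},\alpha_1,\gamma_1$ together with a single out-of-plane height $h$ (the $e_3$-coordinate of $x_1$, equivalently half the deflection of the external bond off the axis). From these coordinates I would read off the exact relations: the axial projection of the external bond is $\delta:=\widetilde{\mu}-\lambda_4$ with $\lambda_4$ from \eqref{lambda4}, whence $f_1=\tfrac12(\lambda_1+\delta)$; the external bond length obeys $\lambda_3^2=\delta^2+4h^2$; and $\cos\alpha_2$ is given by a dot product differing from $\cos\alpha_1$ only through an out-of-plane term carrying the factor $\cos(\gamma_1/2)$. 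The key structural fact is that $h$ vanishes precisely in the fully symmetric configuration $\alpha_1=\alpha_2$, $\gamma_1=\gamma_2$ of the remark following \eqref{symmetric-cell} (where $f_1=f_2=\tfrac12(\lambda_1+\lambda_3)$), while in general $h$ is comparable to $\gamma_1-\gamma_2$.

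Finally I would expand $v_2$ about its nondegenerate minimum, using $v_2(1)=-1$, $v_2'(1)=0$, $v_2''(1)>0$, so that $v_2(r)-v_2(s)$ is governed to leading order by $(r-1)^2-(s-1)^2$. Inserting the hypothesis $|\lambda_1-1|+|\lambda_3-1|\le\ell^{-4}$ and the relations above, the deviations of $f_1-1,f_2-1$ from $\lambda_1-1,\lambda_3-1$ are seen to enter only through the out-of-plane contributions governed by $h$. The crux, which I expect to be the main obstacle, is the quantitative bookkeeping showing that these deviations affect the energy only at order $\ell^{-4}(\gamma_1-\gamma_2)^2$: here one crucially uses that the cell is nearly flat, i.e. that $\gamma_1,\gamma_2$ are $O(\ell^{-1})$-close to $\gamma_\ell=\pi(1-1/\ell)$ from \eqref{gamma}, so that the prefactor $\cos(\gamma_1/2)=\sin(\pi/(2\ell))$ attached to every out-of-plane term is small, in combination with $|\gamma_1-\gamma_2|\le\ell^{-2}$. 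Tracking the resulting powers of $\ell^{-1}$ against the quadratic structure of the expansion, and absorbing the cubic Taylor remainders into the constant $c_0$, then yields the stated bound for $\ell\ge\ell_0$.
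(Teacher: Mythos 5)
Your proposal follows essentially the same route as the paper: the same reduction to the two-body inequality $\tfrac12 v_2(\lambda_1)+\tfrac12 v_2(\lambda_3)\ge \tfrac12 v_2(f_1)+\tfrac12 v_2(f_2)-c_0\ell^{-4}(\gamma_1-\gamma_2)^2$ after cancelling the three-body and slant-bond terms, and the same key geometric fact that $\lambda_3^2=(\widetilde\mu-\lambda_4)^2+(\text{out-of-plane terms})$ with the out-of-plane contribution vanishing when $\gamma_1=\gamma_2$, hence $\lambda_1+\lambda_3=\widetilde\mu+2\lambda_2\cos\alpha_i+{\rm O}((\gamma_1-\gamma_2)^2)$ for $i=1,2$. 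Two points in your final step deserve care. First, ``absorbing the cubic Taylor remainders into $c_0$'' is only legitimate if every remainder already carries a factor $(\gamma_1-\gamma_2)^2$, since the claimed error vanishes when $\gamma_1=\gamma_2$; a direct quadratic expansion of $v_2(\lambda_1)+v_2(\lambda_3)$ versus $v_2(f_1)+v_2(f_2)$ produces remainders of size $|\lambda_i-1|^3\sim\ell^{-12}$ that are not so controlled. The paper avoids this by first invoking the exact convexity inequality $v_2(\lambda_1)+v_2(\lambda_3)\ge 2v_2((\lambda_1+\lambda_3)/2)$ and only then Taylor-expanding around the midpoint, which differs from $f_i$ by ${\rm O}((\gamma_1-\gamma_2)^2)$, so that all subsequent errors inherit the required factor. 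Second, the factor $\ell^{-4}$ in the paper comes from $|v_2'(\widetilde\mu/2+\lambda_2\cos\alpha_i)|={\rm O}(\ell^{-4})$, a consequence of the hypothesis $|\lambda_1-1|+|\lambda_3-1|\le\ell^{-4}$ together with $|\gamma_1-\gamma_2|^2\le\ell^{-4}$, rather than from the flatness prefactor $\cos(\gamma_1/2)={\rm O}(\ell^{-1})$ you emphasize; your mechanism is not wrong but the bookkeeping is cleaner, and certainly sufficient, via the derivative bound. With these adjustments your argument closes and coincides with the paper's proof.
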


  This lemma will be proved in Section \ref{sec: reduced-energy}. The idea in the proof  is to express $\lambda_3$ in terms of the relations \eqref{sym-assumption} and \eqref{lambda4} to find $\lambda_3 = {\BBB \widetilde\mu} - \lambda_1 + 2\lambda\cos\alpha_1 + {\rm O}( (\gamma_1 - \gamma_2)^2)$, where we set $\lambda=\lambda_2$. Here the term $ {\rm O}( (\gamma_1 - \gamma_2)^2)$ appears as the points $x_7,x_1,x_2,x_8$ in general do not lie on a line. Likewise, we obtain  $\lambda_1 = {\BBB  \widetilde\mu} - \lambda_3 + 2\lambda\cos{\BBB \alpha_2} + {\rm O}( (\gamma_1 - \gamma_2)^2)$. Finally, we use $v_2(\lambda_1) + v_2(\lambda_3) \ge 2v_2( (\lambda_1 + \lambda_3)/2 )$ by convexity of $v_2$.  
  
    We also introduce the \emph{reduced energy}
\begin{align}\label{red}
E_{\rm red}(\mu,\gamma_1,\gamma_2) &= \min\lbrace E_{\mu,\gamma_1,\gamma_2}^{{\rm sym}}(\lambda,\alpha_1,\alpha_2)| \ \lambda \in (0.9,1.1), \ \alpha_1,\alpha_2 \in (\arccos(-0.4),\arccos(-0.6)) \rbrace.
\end{align}
Since $E_{\mu,\gamma_1,\gamma_2}^{{\rm sym}}$ is symmetric in  $(\alpha_1,\gamma_1)$ and $(\alpha_2,\gamma_2)$, we observe that $E_{\rm red}$ is symmetric in $\gamma_1$ and  $\gamma_2$, \PPP i.e., \EEE $E_{\rm red}(\mu,\gamma_1,\gamma_2)  = E_{\rm red}(\mu,\gamma_2,\gamma_1)$.  The following result, which is proved in Section \ref{sec: reduced-energy}, collects the fundamental properties of $E_{\rm red}$.

\begin{proposition}[Properties of $E_{\rm red}$]\label{th: mainenergy}
There exists $\ell_0 \in \Nz$ and for each $\ell \ge \ell_0$ there are open intervals $M^\ell$, $G^\ell$ only depending on $v_2$,  $v_3$ and $\ell$ with $\mu^{\rm us}_\ell \in M^\ell$, $\gamma_\ell \in G^\ell$ \PPP \emph{(}where we recall that $\mu^{\rm us}_\ell$ and $\gamma_\ell$ were defined in \eqref{eq:muellneu} and \eqref{gamma}, respectively\emph{)} \EEE such that the following holds:   
\begin{itemize}
\item[1.] \BBB (Unique minimizer) For each   $(\mu, \gamma_1,\gamma_2) \in M^\ell \times G^\ell \times G^\ell$  there exists a unique triple $(\lambda^\mu, \alpha^\mu_1,\alpha^\mu_2)$ solving the minimization problem \eqref{red}. {\BBB   {Moreover, $\alpha_1^\mu=\alpha_2^\mu$ if $\gamma_1=\gamma_2$.}} \EEE
\RRR (For simplicity, the dependence of the triple on $\gamma_1,\gamma_2$ is not included in the notation.)\EEE
\item[2.] (Strict convexity) $E_{\rm red}$ is strictly convex on  $M^\ell \times G^\ell \times G^\ell$, in particular there is a  constant $c_0'>0$ only depending on $v_2$ and $v_3$  such that
$$E_{\rm red}(\mu,\gamma_1,\gamma_2) \ge E_{\rm red}(\mu,\bar{\gamma},\bar{\gamma}) + c_0'\ell^{-2} (\gamma_1 - \gamma_2)^2$$ with $\bar{\gamma} = (\gamma_1 + \gamma_2)/2$ for all $\mu \in M^\ell$ and $\gamma_1,\gamma_2 \in G^\ell$.
\item[3.] (Monotonicity in $\gamma$) For each $\mu \in M^\ell$, the mapping $g(\gamma):= E_{\rm red}(\mu,\gamma,\gamma)$ is decreasing on $G^\ell$ with $|g'(\gamma)| \le C\ell^{-3}$ for all $\gamma \in G^\ell$ for some $C>0$ depending only on  $v_3$.
\item[4.] (Monotonicity in $\mu$) The mapping $h(\mu):= E_{\rm red}(\mu,\gamma_\ell,\gamma_\ell)$ is strictly convex on $M^\ell$ with $h''(\mu^{\rm us}_\ell)>0$ and strictly increasing on $M^\ell \cap \lbrace \mu \ge \mu^{\rm us}_\ell \rbrace$. 
\item[5.] \BBB (Minimization) For each   $\mu \in M^\ell$ and $\gamma_1 = \gamma_2 = \gamma_\ell$, letting $\lambda_1^\mu = \mu/2 + \lambda^\mu \cos\alpha^\mu_1$ and $\lambda_2^\mu = \lambda^\mu$ with $\lambda^\mu$ and $\alpha^\mu_1$ from 1., \EEE the configuration $\mathcal{F}_{\lambda_1^\mu,\lambda_2^\mu,\mu}$ is the unique minimizer of the problem \eqref{min2} with 
$$E(\mathcal{F}_\mu^*)  = E(\mathcal{F}_{\lambda_1^\mu,\lambda_2^\mu,\mu}) = 2m\ell E_{\rm red}(\mu,\gamma_\ell,\gamma_\ell).$$
\end{itemize}

\end{proposition}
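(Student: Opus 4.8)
The plan is to derive all five assertions from a single object — the inner minimization in \eqref{red} — together with repeated use of the implicit-function and envelope theorems. I work on small intervals $M^\ell\ni\mu^{\rm us}_\ell$ and $G^\ell\ni\gamma_\ell$, and treat $1/\ell$ as the governing small parameter: since $\pi-\gamma_\ell=\pi/\ell$, formulas \eqref{betaz}--\eqref{eq: constraint2} give $\partial_\gamma\beta={\rm O}(\ell^{-1})$ and $\partial_\alpha\beta\approx-2$ near $(\alpha,\gamma)=(2\pi/3,\pi)$. For Part~1, observe that $(\lambda,\alpha_1,\alpha_2)\mapsto E^{\rm sym}_{\mu,\gamma_1,\gamma_2}$ in \eqref{symmetric-cell} is $2v_2(\lambda)$ plus two blocks decoupled in $(\alpha_1,\gamma_1)$ and $(\alpha_2,\gamma_2)$, so its Hessian is arrowhead-shaped with vanishing $\alpha_1$--$\alpha_2$ entry. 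At the reference point $\lambda=1$, $\alpha_i=2\pi/3$ (every $v_2$-argument equal to $1$, every $v_3$-argument equal to $2\pi/3$) the diagonal is dominated by $v_2''(1)>0$, $v_3''(2\pi/3)>0$, and a one-line Schur-complement estimate with respect to the $(\alpha_1,\alpha_2)$-block shows positive definiteness; by continuity this persists on the domain for $\ell$ large and $M^\ell,G^\ell$ small. Strict convexity, together with the growth of $v_2,v_3$ toward the endpoints of $(0.9,1.1)$ and $(\arccos(-0.4),\arccos(-0.6))$, forces a unique interior minimizer; the relation $E^{\rm sym}(\lambda,\alpha_1,\alpha_2)=E^{\rm sym}(\lambda,\alpha_2,\alpha_1)$ at $\gamma_1=\gamma_2$ then yields $\alpha^\mu_1=\alpha^\mu_2$ by uniqueness. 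Non-degeneracy makes $E_{\rm red}$ smooth and licenses the envelope computations below.

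For Parts~2 and~3 I would reduce the convexity in $(\gamma_1,\gamma_2)$ to the single-index energy $\psi(\alpha,\gamma):=\tfrac12 v_2(\mu/2+\lambda\cos\alpha)+2v_3(\alpha)+v_3(\beta(\alpha,\gamma))$ at the optimal $\lambda$. Writing $\gamma_{1,2}=\bar\gamma\pm t$ and optimizing the antisymmetric response $\alpha_{1,2}=\bar\alpha\pm a$ at frozen $\lambda$ (legitimate since $\partial_\lambda E^{\rm sym}=0$ there), an even expansion gives $E_{\rm red}(\mu,\bar\gamma+t,\bar\gamma-t)=E_{\rm red}(\mu,\bar\gamma,\bar\gamma)+\big(\psi_{\gamma\gamma}-\psi_{\alpha\gamma}^2/\psi_{\alpha\alpha}\big)t^2+{\rm o}(t^2)$, so Part~2 reduces to bounding the $(\alpha,\gamma)$-Schur complement of $\psi$ below by $c\ell^{-2}$. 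The decisive point is that the $v_3''(\beta)$ contribution to $\nabla^2_{(\alpha,\gamma)}\psi$ is exactly the rank-one matrix $v_3''(\beta)\,w\otimes w$ with $w=(\partial_\alpha\beta,\partial_\gamma\beta)$: its determinant cancels against $\psi_{\alpha\gamma}^2$, leaving $\det\big(\nabla^2_{(\alpha,\gamma)}\psi\big)=\big(2v_3''(\alpha)+\tfrac12 v_2''\lambda^2\sin^2\alpha\big)\,v_3''(\beta)(\partial_\gamma\beta)^2+\big(2v_3''(\alpha)+v_3''(\beta)(\partial_\alpha\beta)^2\big)\,v_3'(\beta)\,\partial^2_{\gamma\gamma}\beta+\text{h.o.t.}$, with both leading terms of order $\ell^{-2}$ and \emph{positive} — here one checks that at the minimizer $\beta<2\pi/3$ (hence $v_3'(\beta)<0$) and $\partial^2_{\gamma\gamma}\beta<0$, so the $v_3'(\beta)\,\partial^2_{\gamma\gamma}\beta$ term is positive. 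Dividing by $\psi_{\alpha\alpha}={\rm O}(1)$ gives Part~2 with $c_0'>0$. Part~3 is then immediate from the envelope identity $g'(\gamma)=2v_3'(\beta(\bar\alpha,\gamma))\,\partial_\gamma\beta(\bar\alpha,\gamma)$: the first-order condition forces $\beta-2\pi/3={\rm O}(\ell^{-2})$ and hence $v_3'(\beta)={\rm O}(\ell^{-2})$, while $\partial_\gamma\beta={\rm O}(\ell^{-1})$, yielding $|g'|\le C\ell^{-3}$; the signs $v_3'(\beta)<0$, $\partial_\gamma\beta>0$ make $g$ decreasing.

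Parts~4 and~5 are comparatively soft. For Part~4, along $\gamma_1=\gamma_2=\gamma_\ell$ the envelope theorem gives $h'(\mu)=\tfrac12 v_2'(\lambda_1^\mu)$ with $\lambda_1^\mu=\mu/2+\lambda^\mu\cos\alpha^\mu_1$; at $\mu=\mu^{\rm us}_\ell$ the identity $\mathcal G_{\alpha^{\rm us}_\ell}=\mathcal F_{1,1,\mu^{\rm us}_\ell}$ from Proposition~\ref{eq: old main result} and \eqref{eq:muellneu} give $\lambda_1^\mu=1$, hence $h'(\mu^{\rm us}_\ell)=0$. Differentiating once more, $h''(\mu)=\tfrac12 v_2''(\lambda_1^\mu)\,\tfrac{d}{d\mu}\lambda_1^\mu$, which is positive since $v_2''(1)>0$ and the implicit-function theorem applied to the positive-definite inner problem yields $\tfrac{d}{d\mu}\lambda_1^\mu>0$; strict convexity and monotonicity on $M^\ell\cap\{\mu\ge\mu^{\rm us}_\ell\}$ follow. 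For Part~5 I record the algebraic identity: by \eqref{eq: basic constraints} and \eqref{alphars} the bond lengths satisfy $\lambda_1=\mu/2+\lambda_2\cos\alpha$, so \eqref{basicenergy} with $n=4m\ell$ reads $E(\mathcal F_{\lambda_1,\lambda_2,\mu})=2m\ell\big(2v_2(\lambda_2)+v_2(\mu/2+\lambda_2\cos\alpha)+4v_3(\alpha)+2v_3(\beta(\alpha,\gamma_\ell))\big)=2m\ell\,E^{\rm sym}_{\mu,\gamma_\ell,\gamma_\ell}(\lambda_2,\alpha,\alpha)$. Minimizing the right-hand side over $(\lambda_2,\alpha)$ and using the $\alpha_1=\alpha_2$ symmetry of Part~1 to identify this constrained minimum with $E_{\rm red}(\mu,\gamma_\ell,\gamma_\ell)$ pins down $\mathcal F^*_\mu=\mathcal F_{\lambda_1^\mu,\lambda_2^\mu,\mu}$ and its energy, with uniqueness inherited from Part~1.

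I expect the crux to be Part~2: the competing quantities $\psi_{\gamma\gamma}$ and $\psi_{\alpha\gamma}^2/\psi_{\alpha\alpha}$ are \emph{both} of order $\ell^{-2}$, so a naive estimate is inconclusive, and the argument closes only because the rank-one structure of the $v_3''(\beta)$-term produces the exact cancellation above, leaving manifestly positive leading coefficients. Keeping this expansion uniform over $(\mu,\gamma_1,\gamma_2)\in M^\ell\times G^\ell\times G^\ell$ — and in particular controlling the $v_2'(\lambda_1)$-type remainders, which are small only because $M^\ell$ is a shrinking neighborhood of $\mu^{\rm us}_\ell$ — is where the bookkeeping is heaviest.
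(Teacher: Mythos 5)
Your Parts 1, 3 and 5 follow essentially the paper's route (positive-definite Hessian of $E^{\rm sym}$ at the planar reference plus continuity; the envelope identity $g'(\gamma)=2v_3'(\beta)\partial_\gamma\beta$ with the sign and order observations from Lemma \ref{aus} and \eqref{zigder3}; the algebraic identification with \eqref{basicenergy}). Your Part 4 is a cleaner variant: the paper proves monotonicity by a contradiction argument around $\mu/2+\lambda^\mu\cos\alpha^\mu>1$, whereas you use $h'(\mu^{\rm us}_\ell)=\tfrac12 v_2'(1)=0$ together with $h''>0$; this works, but note that both $\lambda_1^{\mu^{\rm us}_\ell}=1$ and $\tfrac{d}{d\mu}\lambda_1^\mu>0$ still require the implicit-function computation the paper carries out in \eqref{convexity3} (one finds $\partial_\mu\lambda_1^\mu=4/K$ with $K=9+v_2''(1)/(2v_3''(2\pi/3))$), so you have not actually saved that work. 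For the displayed inequality in Part 2 your rank-one/Schur-complement cancellation is a genuinely different and rather more transparent argument than the paper's, which instead differentiates the first-order optimality conditions to extract $\partial_{\gamma_i}\alpha_j^\infty$ explicitly; your leading terms agree with the paper's, and your observation that $v_3'(\beta)\,\partial^2_{\gamma\gamma}\beta>0$ is exactly the paper's key sign. (One small repair: freezing $\lambda$ is not justified by $\partial_\lambda E^{\rm sym}=0$ — first-order stationarity does not license dropping the minimizer's response in a second-order expansion of a value function — but it is justified here because the Hessian of $E^{\rm sym}$ has the form $\bigl(\begin{smallmatrix}a&b&b\\ b&d&0\\ b&0&d\end{smallmatrix}\bigr)$ at $\gamma_1=\gamma_2$, so the antisymmetric direction $(0,1,-1)$ is an eigenvector and induces no first-order $\lambda$-response.)

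The genuine gap is that you never establish the first clause of Part 2, namely strict convexity of $E_{\rm red}$ on all of $M^\ell\times G^\ell\times G^\ell$, and this clause is not decorative: the proof of Theorem \ref{th: main3} applies Jensen's inequality to $(\mu,\gamma)\mapsto E_{\rm red}(\mu,\gamma,\gamma)$ when passing from $\sum_{i,j,k}E_{\rm red}(\mu_{ijk},\bar\theta_{ijk},\bar\theta_{ijk})$ to $2m\ell\,E_{\rm red}(\bar\mu,\bar\theta,\bar\theta)$, which requires joint convexity in $(\mu,\gamma)$ and hence control of the mixed derivative $\partial^2_{\mu\gamma_i}E_{\rm red}$. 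This cross term is exactly of the borderline order: the paper computes $\partial^2_{\mu\mu}E_{\rm red}={\rm O}(1)$, $\partial^2_{\gamma_i\gamma_i}E_{\rm red}={\rm O}(\ell^{-2})$ and $\partial^2_{\mu\gamma_i}E_{\rm red}=-\ell^{-1}\pi v_2''(1)/(2K)+{\rm O}(\ell^{-2})$, so the relevant $2\times 2$ determinant is a difference of two quantities both of order $\ell^{-2}$ and its sign cannot be read off from order counting. In the paper's computation of $H_3$ the two $(v_2''(1))^2/K^2$ contributions cancel exactly and positivity survives only through the $v_2''(1)v_3''(2\pi/3)/K$ term; nothing in your proposal produces, or even mentions, $\partial^2_{\mu\gamma_i}E_{\rm red}$, so this near-cancellation is simply unaddressed. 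To close the gap you would need to run your envelope/implicit-function machinery once more for the derivative in $\mu$ of the optimal $(\lambda,\alpha_1,\alpha_2)$ (which you partly need for Part 4 anyway) and verify the positivity of the full Hessian minors, as in the paper's \eqref{convexity7-a}--\eqref{convexity6} and the estimates on $H_1,H_2,H_3$.
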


\RRR
\noindent\textbf{Proof of Theorem \ref{th: main1} and Theorem \ref{th: main3}.}  We postpone the proofs of the auxiliary results Lemma \ref{lemma: sum}, Lemma \ref{lemma: sym-energy}, and Proposition \ref{th: mainenergy}  to the next sections and now proceed with the proof of Theorem \ref{th: main1} and Theorem \ref{th: main3}. For the proof of Proposition \ref{th: main2} we refer to Section \ref{sec: reduced-energy}. Moving from the properties of the reduced energy $E_{\rm red}$, we directly obtain Theorem \ref{th: main1}.

\begin{proof}[Proof of  Theorem \ref{th: main1}]
Theorem \ref{th: main1} follows from Property 5 of Proposition \ref{th: mainenergy}.
\end{proof}

We denote the unique minimzer again by $\mathcal{F}_\mu^*$ and recall the definition of small perturbations $\mathscr{P}_\eta(\mu)$ in \eqref{eq: bc}.  Based on the  properties of the reduced energy $E_{\rm red}$, \EEE we are able to show that, up to a linear perturbation in terms of the symmetry defect $\Delta$ defined in \eqref{delta}, $E_{\rm red}$ bounds the cell energy $E_{\rm cell}$ from below. More precisely, we have the following.

\begin{theorem}[Energy defect controls symmetry defect]\label{th: Ered}
There exist   $C>0 $ and $\ell_0 \in \Nz$ only depending on $v_2$ and $v_3$, and for each $\ell \ge \ell_0$ there are $\eta_\ell > 0$ and an open interval $M^\ell$ containing $\mu^{\rm us}_\ell $ such that for all $\mu \in M^\ell$,  $\tilde{\mathcal{F}} \in  \mathscr{P}_{\eta_\ell}(\mu)$, and centers $z_{i,j,k}$   we have
\begin{align*}
 E_{{\rm cell}}(z_{i,j,k}) \ge  E_{\rm red}\big(| z^{\rm dual}_{i,j,k} - z^{\rm dual}_{i,j-1,k}| , \bar{\theta}(z_{i,j,k}), \bar{\theta}(z_{i,j,k})\big)   + C \ell^{-2}\Delta(z_{i,j,k}),
\end{align*}
where $\bar{\theta}(z_{i,j,k}) := \big(\theta_l(z_{i,j,k}) + \theta_r(z_{i,j,k}) + \theta_l(z^{\rm dual}_{i,j,k}) + \theta_r(z^{\rm dual}_{i,j-1,k}) \big)/4$.
\end{theorem}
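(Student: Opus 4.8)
The plan is to compare a general basic cell $\boldsymbol{x}\in\Rz^{3\times 8}$ (the cell with center $z_{i,j,k}$ of a perturbation $\tilde{\mathcal{F}}\in\mathscr{P}_{\eta_\ell}(\mu)$) with its symmetrization $\mathcal{S}(\boldsymbol{x})$ from \eqref{reflection2}, and then to estimate the energy of the symmetric cell from below by the reduced energy. The whole argument rests on the strict convexity of the cell energy established in Theorem \ref{th: cell convexity3}, whose convexity constant degenerates like $\ell^{-2}$ precisely because of the near-planarity of the cell; this is the source of the factor $C\ell^{-2}$ in the claim.

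\emph{Convexity controls the symmetry defect.} Since $\mathcal{S}$ is built in two averaging steps (cf. \eqref{s1s2}, \eqref{reflection2}) and since $E_{\rm cell}(\boldsymbol{x}_{S_1})=E_{\rm cell}(\boldsymbol{x}_{S_2})=E_{\rm cell}(\boldsymbol{x})$ by the symmetry of the configuration, I would apply the strong-convexity inequality $f\big(\tfrac{a+b}{2}\big)\le \tfrac12 f(a)+\tfrac12 f(b)-\tfrac{c}{8}|a-b|^2$ twice: once to the pair $(\boldsymbol{x},\boldsymbol{x}_{S_1})$, whose midpoint is $\boldsymbol{x}'$, and once to $(\boldsymbol{x}',\boldsymbol{x}'_{S_2})$, whose midpoint is $\mathcal{S}(\boldsymbol{x})$. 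Here $\eta_\ell$ is taken small enough that the relevant segments stay inside the convexity neighbourhood of $\boldsymbol{x}_{\rm kink}^\ell$ and that the reference-frame normalization of the preceding subsection applies. Recalling $|\boldsymbol{x}-\boldsymbol{x}_{S_1}|=2|\boldsymbol{x}-\boldsymbol{x}'|$ and $|\boldsymbol{x}'-\boldsymbol{x}'_{S_2}|=2|\boldsymbol{x}'-\mathcal{S}(\boldsymbol{x})|$ and summing, this yields $E_{\rm cell}(\boldsymbol{x})\ge E_{\rm cell}(\mathcal{S}(\boldsymbol{x}))+C\ell^{-2}\Delta(z_{i,j,k})$ with $\Delta$ as in \eqref{delta}.

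\emph{Reduction of the symmetric cell to $E_{\rm red}$.} By construction $\mathcal{S}(\boldsymbol{x})$ is invariant under both $S_1$ and $S_2$, so its bond lengths and angles pair up exactly as in \eqref{sym-assumption}, with parameters $\lambda_2,\alpha_1,\alpha_2,\gamma_1,\gamma_2$ and $\widetilde{\mu}=|z^{\rm dual}_{i,j,k}-z^{\rm dual}_{i,j-1,k}|$; the latter identity is exactly the invariance of the dual-center distance under $\mathcal{S}$ remarked after \eqref{delta}. For $\eta_\ell$ and $M^\ell$ small enough the hypotheses $|\lambda_1-1|+|\lambda_3-1|\le\ell^{-4}$ and $|\gamma_1-\gamma_2|\le\ell^{-2}$ of Lemma \ref{lemma: sym-energy} are met, giving $E_{\rm cell}(\mathcal{S}(\boldsymbol{x}))\ge E^{\rm sym}_{\widetilde{\mu},\gamma_1,\gamma_2}(\lambda_2,\alpha_1,\alpha_2)-c_0\ell^{-4}(\gamma_1-\gamma_2)^2$. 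The definition \eqref{red} of $E_{\rm red}$ as a minimum gives $E^{\rm sym}_{\widetilde{\mu},\gamma_1,\gamma_2}(\lambda_2,\alpha_1,\alpha_2)\ge E_{\rm red}(\widetilde{\mu},\gamma_1,\gamma_2)$, and the strict convexity of $E_{\rm red}$ (Property 2 of Proposition \ref{th: mainenergy}) yields $E_{\rm red}(\widetilde{\mu},\gamma_1,\gamma_2)\ge E_{\rm red}(\widetilde{\mu},\bar\gamma,\bar\gamma)+c_0'\ell^{-2}(\gamma_1-\gamma_2)^2$ with $\bar\gamma=(\gamma_1+\gamma_2)/2$. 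Since $c_0'\ell^{-2}$ dominates $c_0\ell^{-4}$ for $\ell\ge\ell_0$, the two $(\gamma_1-\gamma_2)^2$ terms combine into a nonnegative contribution, and chaining the inequalities produces $E_{\rm cell}(\boldsymbol{x})\ge E_{\rm red}(\widetilde{\mu},\bar\gamma,\bar\gamma)+C\ell^{-2}\Delta(z_{i,j,k})$.

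\emph{Closing and main obstacle.} It remains to replace $\bar\gamma$ by $\bar\theta(z_{i,j,k})$. For the symmetric cell $\mathcal{S}(\boldsymbol{x})$ one has $\theta_l=\theta_r=\gamma_1$ on the hexagon and $\theta(x_1)=\theta(x_2)=\gamma_2$, so $\bar\theta$ of $\mathcal{S}(\boldsymbol{x})$ equals $(\gamma_1+\gamma_2)/2=\bar\gamma$ exactly; I would then show that passing from $\boldsymbol{x}$ to $\mathcal{S}(\boldsymbol{x})$ alters the averaged plane angle only by a discrepancy that, combined with the weak $\ell^{-3}$ dependence of $\mu\mapsto E_{\rm red}(\mu,\gamma,\gamma)$ in its diagonal angle (Property 3 of Proposition \ref{th: mainenergy}), is absorbed into the gap $C\ell^{-2}\Delta$, after possibly enlarging $\ell_0$ and shrinking $\eta_\ell$. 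I expect the main difficulties to be exactly here: securing the degenerate $\ell^{-2}$ scaling of the cell-convexity constant in Theorem \ref{th: cell convexity3}, and controlling the nonlinear dihedral angles entering $\bar\theta$ under the \emph{linear} symmetrization $\mathcal{S}$ against that degenerate constant.
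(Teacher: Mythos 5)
Your architecture coincides with the paper's: symmetrize the cell in two reflection--averaging steps, harvest the $C\ell^{-2}\Delta(z_{i,j,k})$ term from the strict convexity of Theorem \ref{th: cell convexity3} along the directions $\boldsymbol{x}_{S_1}-\boldsymbol{x}$ and $\boldsymbol{x}'_{S_2}-\boldsymbol{x}'$ (which the frame normalization keeps uniformly away from ${\rm span}(\mathcal{V}_{\rm degen})$), pass to $E^{\rm sym}_{\widetilde\mu,\gamma_1,\gamma_2}$ via Lemma \ref{lemma: sym-energy}, and then to $E_{\rm red}(\widetilde\mu,\bar\gamma,\bar\gamma)$ via Property 2 of Proposition \ref{th: mainenergy}, absorbing the $c_0\ell^{-4}(\gamma_1-\gamma_2)^2$ loss into the $c_0'\ell^{-2}(\gamma_1-\gamma_2)^2$ gain. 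Those first two stages are sound and are exactly what the paper does.

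The gap is in your closing step, and it is not a technicality that can be deferred. You need the discrepancy between $\bar\theta(z_{i,j,k})$, computed on $\boldsymbol{x}$, and $\bar\gamma$, computed on $\mathcal{S}(\boldsymbol{x})$, to be of order $\Delta(z_{i,j,k})$, i.e.\ \emph{quadratic} in the displacements $\boldsymbol{w}_1=\boldsymbol{x}'-\boldsymbol{x}$ and $\boldsymbol{w}_2=\mathcal{S}(\boldsymbol{x})-\boldsymbol{x}'$. A generic Lipschitz bound on the dihedral-angle functions only gives a discrepancy of order $|\boldsymbol{w}_1|+|\boldsymbol{w}_2|\sim\sqrt{\Delta}$, and $\ell^{-3}\sqrt{\Delta}$ is \emph{not} absorbed by $\ell^{-2}\Delta$ as $\Delta\to 0$, so the absorption you invoke fails without a genuinely quadratic estimate. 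The paper devotes a separate result to exactly this point (Lemma \ref{lemma: angle invariance}, proved at the end of Section \ref{sec: cellenery}): the first-order variation of the angle sums vanishes because $t\mapsto g(\boldsymbol{x}'+t\boldsymbol{w}_1)$ and $t\mapsto g(\mathcal{S}(\boldsymbol{x})+t\boldsymbol{w}_2)$ are even (the reflections $S_1,S_2$ are involutions), and the second-order term must then be bounded below by $-C(|\boldsymbol{w}_1|^2+|\boldsymbol{w}_2|^2)$ with a \emph{universal} $C$ despite the factor $1/\sin\theta\sim\ell$ coming from $\arccos'$ near the planar limit $\theta\approx\pi$; this requires showing that the dangerous components of the tangent vectors (those parallel to the hinge direction, equivalently the $e_1$-components of the $s^k_j$) are themselves of size ${\rm O}(|\boldsymbol{w}_2|\ell^{-1})$, cf.\ \eqref{taylor9}. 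You correctly located the difficulty, but the mechanism that resolves it — evenness killing the first order, plus the $\ell^{-1}$ gain on the out-of-plane components beating the $\ell$ blow-up of the angle Hessian — is absent from your argument, and it is where the real work of this theorem lies.
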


\RRR
 We postpone the proof of  Theorem \ref{th: Ered} to Section \ref{sec: cellenery} and close this section with the proof of our main stability result Theorem \ref{th: main3}.  \EEE

 \begin{proof}[Proof of  Theorem \ref{th: main3}]
Let $M^\ell$ be an open interval containing $\mu^{\rm us}_\ell$ such
that Proposition \ref{th: mainenergy} and  Theorem \ref{th: Ered} hold
for all $\mu \in M^\ell$ and let $G^\ell$ be the interval from
Proposition \ref{th: mainenergy}. Then choose $\mu^{\rm crit}_\ell >
\mu^{\rm us}_\ell$ such that $[\mu^{\rm us}_\ell, \mu^{\rm crit}_\ell]
\subset \subset M^\ell$.  Let  $\ell \ge \ell_0$ and  $\mu \in
[\mu^{\rm us}_\ell, \mu^{\rm crit}_\ell]$ be given. Consider a
nontrivial perturbation $\tilde{\mathcal{F}} \in
\mathscr{P}_{\eta_\ell}(\mu)$ with $\eta_\ell$ as in Theorem \ref{th:
  Ered}. We denote the atomic positions by $x_{i,k}^{j,l}$ and the
centers by $z_{i,j,k}$, $z_{i,j,k}^{\rm dual}$ as introduced at the
beginning of the section, see \eqref{eq: centers} and  Figure \ref{cell}. Define 
\begin{align}\label{thetabar}
\bar{\theta}(z_{i,j,k}) = \frac{1}{4}\big(\theta_l(z_{i,j,k}) + \theta_r(z_{i,j,k})+\theta_l(z^{\rm dual}_{i,j,k}) + \theta_r(z^{\rm dual}_{i,j-1,k}) \big)
\end{align}
and also  
$$\bar{\mu} = \frac{1}{2m\ell}\sum_{j=1}^m\sum_{i=1}^\ell \sum_{k=0,1} |z^{\rm dual}_{i,j,k} - z^{\rm dual}_{i,j-1,k}|,  \  \ \ \ \ \bar{\theta} = \frac{1}{2m\ell}\sum_{j=1}^m\sum_{i=1}^\ell \sum_{k=0,1} \bar{\theta}(z_{i,j,k}).$$
Possibly passing to a smaller $\eta_\ell$, we get $|z^{\rm dual}_{i,j,k} - z^{\rm dual}_{i,j-1,k}| \in M^\ell$ and $\bar{\theta}(z_{i,j,k}) \in G^\ell$ for all $i,j,k$. By Theorem \ref{th: Ered}   we have for each cell 
\begin{align}\label{mainproof1}
E_{{\rm cell}}(z_{i,j,k}) \ge  E_{\rm red}\Big( |z^{\rm dual}_{i,j,k} - z^{\rm dual}_{i,j-1,k}|, \bar{\theta}(z_{i,j,k}),  \bar{\theta}(z_{i,j,k}) \Big)  + C \ell^{-2}\Delta(z_{i,j,k}) 
\end{align}
if $\ell_0$ is chosen sufficiently large. Then, taking the sum over all cells and using  Property {\it 2.} of Proposition \ref{th: mainenergy}, we get by \eqref{eq: sumenergy}
\begin{align*}
E(\tilde{\mathcal{F}}) &= \sum_{i=1}^\ell \sum_{j=1}^m \sum_{k=0,1}  E_{{\rm cell}}(z_{i,j,k})\ge 2m\ell  E_{\rm red}(\bar{\mu}, \bar{\theta},\bar{\theta}) + C \ell^{-2}\sum_{i=1}^\ell \sum_{j=1}^m \sum_{k=0,1} \Delta(z_{i,j,k}).
\end{align*}
\RRR Possibly passing to a smaller $\eta_\ell$, we can assume that $\Delta(z_{i,j,k}) \le \eta$ for all centers with $\eta$ from Lemma \ref{lemma: sum}. Then \EEE using Lemma \ref{lemma: sum} and recalling \eqref{thetabar} we find 
$$\bar{\theta} \le \frac{1}{8m\ell}\Big( 4m(2\ell - 2)\pi + C\sum_{j=1}^m\sum_{i=1}^\ell\sum_{k=0,1} \Delta(z_{i,j,k}) \Big) \le   \gamma_\ell + \frac{c}{2m\ell}\sum_{j=1}^m\sum_{i=1}^\ell\sum_{k=0,1} \Delta(z_{i,j,k}),$$
where in the last step we have used the fact that $\gamma_\ell = \pi(1-1/\ell)$, see \eqref{gamma}. This together with  Property 3 of Proposition \ref{th: mainenergy} yields   
\begin{align*}
 E(\tilde{\mathcal{F}})   \ge 2m\ell  E_{\rm red}(\bar{\mu}, \gamma_\ell,\gamma_\ell) + \big(C \ell^{-2} - C'\ell^{-3} \big)\sum_{j=1}^m \sum_{i=1}^\ell \sum_{k=0,1} \Delta(z_{i,j,k})
\end{align*}
for some $C'>0$ only depending { on $v_3$}.  Recalling the constraint in definition \eqref{eq: bc}, we get for fixed $i$ and $k$ that
$$m\mu = L^\mu_m = \Big|\sum_{j=1}^m z^{\rm dual}_{i,j,k} - z^{\rm dual}_{i,j-1,k} \Big| \le  \sum_{j=1}^m |z^{\rm dual}_{i,j,k} - z^{\rm dual}_{i,j-1,k}| $$
and therefore, by taking the sum over all $i$ and $k$, we get $\bar{\mu} \ge \mu \ge \mu^{\rm us}_\ell$. Then we  derive by Property 4 and 5  of Proposition \ref{th: mainenergy}
\begin{align}
E(\tilde{\mathcal{F}}) &\ge 2m\ell E_{\rm
  red}(\mu,\gamma_\ell,\gamma_\ell) + C''\ell^{-2}\sum_{i=1}^\ell
\sum_{j=1}^m \sum_{k=0,1} \Delta(z_{i,j,k})\nonumber \\ &= E(\mathcal{F}_\mu^*) + C''\ell^{-2}\sum_{i=1}^\ell \sum_{j=1}^m \sum_{k=0,1} \Delta(z_{i,j,k}) \label{mainproof3}
\end{align}
for $\ell_0$ sufficiently large and a possibly smaller constant $C''>0$. Note that in this step of the proof we have fundamentally used that $\mu \ge \mu^{\rm us}_\ell$,  \PPP i.e., \EEE the nanotube is stretched, so that a monotonicity argument can be applied. 

 It remains to confirm the strict inequality $E(\tilde{\mathcal{F}}) > E(\mathcal{F}_\mu^*)$. If $\Delta(z_{i,j,k})>0$ for some center $z_{i,j,k}$, this follows directly from the previous estimate. Otherwise, as  $\tilde{\mathcal{F}}$ is a nontrivial perturbation, one of the angles in \eqref{thetabar} or one of the lengths $|z^{\rm dual}_{i,j,k} - z^{\rm dual}_{i,j-1,k}|$ does not coincide with the corresponding mean value and then at least one of the inequalities \eqref{mainproof1}-\eqref{mainproof3} is strict due to the strict convexity and monotonicity of the mappings considered in Proposition \ref{th: mainenergy}.  \end{proof}

 \section{Symmetry defect controls angle defect: Proof of Lemma \ref{lemma: sum}}\label{sec: angles}

This short section  is devoted to the proof of Lemma \ref{lemma: sum}. Recall the definition of the centers in \eqref{eq: centers}, the angles \eqref{eq: thetaangle}, and the symmetry defect \eqref{delta}. 

\begin{proof}[Proof of Lemma \ref{lemma: sum}]
\PPP Let  $\tilde{\mathcal{F}}$ be a small perturbation of \EEE  $\mathcal{F}' \in \mathscr{F}(\mu)$, with $\Delta(z_{i,j,k}) \le \eta$ for all centers $z_{i,j,k}$. \EEE Due to the symmetry of the problem it suffices to show
$$ \sum_{j=1}^m\sum_{i=1}^\ell  \Big(\theta_l(z_{i,j,0}) + \theta_l(z^{\rm dual}_{i,j-1,1}) \Big) \le    m(2\ell - 2)\pi + c\sum_{j=1}^m\sum_{i=1}^\ell\sum_{k=0,1} \Delta(z_{i,j,k}). $$
For brevity we write $\theta'_i = \theta_l(z_{\frac{i+1}{2},j,0})$ for $i =1,3,\ldots, 2\ell-1$ and  $\theta'_i = \theta_l(z^{\rm dual}_{\frac{i}{2},j-1,1})$ for $i=2,4,\ldots,2\ell$. (Note that for convenience we do not include the index $j$ in the notation.)  

 Let $n_i,n_{i+1}$ be unit normal vectors as introduced before \eqref{eq: thetaangle} such that $n_i \cdot n_{i+1}$ is near $1$ and \PPP the smallest angle between them, which we denote by $\sphericalangle(n_i,n_{i+1})$, is given by $$\sphericalangle(n_i,n_{i+1}) = \pi - \theta'_i$$ 
 \EEE  for $i= 1,3,\ldots, 2\ell-1$. For a suitable ordering of $n_i$ and $n_{i+1}$  we then also obtain $\sphericalangle(n_{i},n_{i+1}) = \pi- \theta'_i$ for $i=2,4,\ldots,2\ell$.
Fix a center $x_0 \in \Rz^3$ and let $P$ be the $2\ell$-gon with vertices $v_i := x_0 + n_i$, $i=1,\ldots,2\ell$. Denote the interior angles accordingly by $\varphi_i$. Note that each edge of $P$ forms a triangle with $x_0$ with angles $\pi - \theta'_i$, $\psi_i^1$, and $\psi_i^2$, where $\psi_i^1$ is the angle at the vertex $v_i$ and $\psi_{i}^2$ is the angle  at $v_{i+1}$.  The key ingredient in the proof is now the observation that there exists a  universal $c>0$  such that
\begin{subequations}\label{psi-phi}
  \begin{align}
    &\psi_{i+1}^1 + \psi_{i}^2  - \varphi_{i+1} \le c\Delta(z_{\frac{i+1}{2},j,0}) + c\Delta(z_{\frac{i+3}{2},j,0})  , \label{psi-phi-a} \\
    &\psi_{i}^1 + \psi_{i-1}^2 - \varphi_{i} \le
    c\Delta(z_{\frac{i-1}{2},j,0}) + c\Delta(z_{\frac{i+1}{2},j,0})\label{psi-phi-b}
  \end{align}
\end{subequations}
for $i=1,3\ldots,2\ell-1$, {\BBB where it is understood that $\psi_0^2=\psi^2_{2\ell}$ and $z_{0,j,0}=z_{\ell,j,0}$}. We defer the derivation of this property to the end of the proof. Notice that  $\theta_i' = \psi_i^1 + \psi_i^2$ for $i=1,\ldots, 2\ell$ and that $\sum_{i=1}^{2\ell} \varphi_i \le (2\ell-2) \pi$ since $P$ is a $2\ell$-gon. We now obtain by \eqref{psi-phi}  
\begin{align*}
\sum_{i=1}^{2\ell}  \theta_i' = \sum_{i=1}^{2\ell} (\psi_i^1 + \psi_i^2) \le (2 \ell - 2)\pi + c \sum_{i=1}^\ell \Delta(z_{i,j,0}).
\end{align*}
The assertion then follows by taking the sum over all $j=1,\ldots,m$. 

It remains to confirm \eqref{psi-phi}. Fix $i=1,3,\ldots,2\ell-1$ and let $N_{i+1}$ be the plane containing the points $v_i, v_{i+1}$, and $v_{i+2}$. By $d_{i+1}$ we denote the distance of $x_0$ from $N_{i+1}$ and  by $n'_{i+1}$ the orthogonal projection of the vector $n_{i+1}$ onto $N_{i+1}$. Note that $d_{i+1} \le \delta$ for $\delta$ small, depending only on the choice of $\eta$, and that $|n_{i+1}'| = |n_{i+1}| + {\rm O}(d_{i+1}^2)$. The segments $v_{i+2} - v_{i+1},n'_{i+1}$ and $v_{i}-v_{i+1},n_{i+1}'$ enclose two angles, denoted by  $\hat{\psi}_{i+1}^1$ and $\hat{\psi}_{i}^2$, so that $\varphi_{i+1} = \hat{\psi}_{i+1}^1 + \hat{\psi}^2_{i}$. Observe that  $\hat{\psi}_{i+1}^1$ and $\hat{\psi}_{i}^2$ are the projections of $\psi_{i+1}^1$, $\psi_{i}^2$, respectively, onto  $N_{i+1}$.  For notational convenience suppose  $(v_{i+2} - v_{i+1}) \cdot n_{i+1}'>0$ and  $(v_{i+2} - v_{i+1}) \cdot n_{i+1}>0$, which holds  after possibly changing the signs of the vectors. Using that $(v_{i+2} - v_{i+1}) \cdot (n_{i+1} - n_{i+1}') = 0$ and recalling that $d_{i+1}$ is small, we calculate by a Taylor expansion 
\begin{align*}
\hat{\psi}_{i+1}^1 & = \arccos\Big( \frac{(v_{i+2} - v_{i+1}) \cdot n_{i+1}'}{|v_{i+2} - v_{i+1}||n_{i+1}'|} \Big) = \arccos\Big( \frac{(v_{i+2} - v_{i+1}) \cdot n_{i+1}}{|v_{i+2} - v_{i+1}|(|n_{i+1}| + {\rm O}(d_{i+1}^2))} \Big) \\& = \psi_{i+1}^1 + {\rm O}(d_{i+1}^2),
\end{align*}
where ${\rm O}(\cdot)$ is universal. \RRR Likewise, we have $\hat{\psi}_i^2 = \psi_i^2 + {\rm O}(d_{i+1}^2)$. Since $\varphi_{i+1} = \hat{\psi}_{i+1}^1 + \hat{\psi}^2_{i}$,  \EEE to conclude  \eqref{psi-phi-a}, it therefore remains to show 
\begin{align}\label{di+1}
d^2_{i+1} \le c\big(\Delta(z_{\frac{i+1}{2},j,0})  + \Delta(z_{\frac{i+3}{2},j,0}) \big)
\end{align}
for a universal constant $c>0$.  To see this, we first note that we have $d_{i+1} = 0 $ whenever $\Delta(z_{\frac{i+1}{2},j,0})  + \Delta(z_{\frac{i+3}{2},j,0})  = 0$. Indeed, if  $\Delta(z_{\frac{i+1}{2},j,0})  + \Delta(z_{\frac{i+3}{2},j,0})  = 0$, the high symmetry of the atoms in the cells with centers $z_{\frac{i+1}{2},j,0}$ and $z_{\frac{i+3}{2},j,0}$ (cf.  \eqref{delta}) implies that the three normal vectors $n_i$, $n_{i+1}$, and $n_{i+2}$ are coplanar. Thus, $x_0$ is contained in $N_{i+1}$ and therefore $d_{i+1} =0$. 

\BBB Note that $d^2_{i+1}$, $\Delta(z_{\frac{i+1}{2},j,0})$, and $\Delta(z_{\frac{i+3}{2},j,0})$ are functions of the positions of the atoms contained in the adjacent cells with center $z_{\frac{i+1}{2},j,0},  z_{\frac{i+3}{2},j,0}$, denoted by $\tilde{\boldsymbol{y}}=(\tilde{y}_1,\ldots,\tilde{y}_{14}) \in \Rz^{3 \times 14}$.  By \eqref{delta} we find that $\Delta(z_{\frac{i+1}{2},j,0})  + \Delta(z_{\frac{i+3}{2},j,0}) = (\tilde{\boldsymbol{y}} - {\boldsymbol{y}}^0)^T \mathcal{Q} (\tilde{\boldsymbol{y}} - {\boldsymbol{y}}^0)  $ is quadratic with $\mathcal{Q}\in \Rz^{42 \times 42}$, where ${\boldsymbol{y}}^0$ denotes the atomic  positions of \RRR $\mathcal{F}' \in \mathscr{F}(\mu)$. \EEE \BBB Moreover, the fact that  $d^2_{i+1}$ is smooth as a function in $\tilde{\boldsymbol{y}}$, a Taylor expansion, and $d_{i+1} \le \delta $ yield  $d^2_{i+1} \le C |\tilde{\boldsymbol{y}} - {\boldsymbol{y}}^0|^2$ for a universal constant $C>0$. Now    \eqref{di+1} follows from the property that \EEE  $d_{i+1} = 0 $ whenever $\Delta(z_{\frac{i+1}{2},j,0})  + \Delta(z_{\frac{i+3}{2},j,0})  = 0$. 

The second estimate \eqref{psi-phi-b} can be shown along similar lines. This concludes the proof. 
\end{proof}

\section{Properties of the reduced energy: Proof of Lemma \ref{lemma: sym-energy}, Proposition \ref{th: mainenergy}, and Proposition \ref{th: main2}}\label{sec: reduced-energy}

In this section we investigate the properties of the  symmetric energy  and the reduced energy as introduced in \eqref{symmetric-cell} and \eqref{red}, respectively.

\subsection{Proof of Lemma \ref{lemma: sym-energy}} We {\RRR start with} the relation of the cell energy \eqref{eq: cell} and the symmetric energy \eqref{symmetric-cell}. 

\begin{proof}[Proof of Lemma \ref{lemma: sym-energy}]   {\BBB In the proof we let $\lambda=\lambda_2$}.
Given the cell energy, the symmetric energy,  and the constraints \eqref{sym-assumption}-\eqref{eq: constraint2}, we observe that it suffices to show 
\begin{align}\label{v_2--lambda_3}
v_2(\lambda_1) + v_2(\lambda_3) \ge  2{v}_2  \big(\widetilde\mu/2 +2\lambda\cos\alpha_i \big) -c_0  \ell^{-4} (\gamma_1 - \gamma_2)^2 \ \ \ \text{for} \ \ \ i=1,2
\end{align}
for a constant $c_0$ only depending on $v_2$ and $v_3$. First, with the notation of \eqref{sym-assumption}, particularly recalling $\lambda_3 = | x_8 - x_2| = |2(z^{\rm dual}_{i,j,k} - x_2)|$, we see 
$$ \lambda^2_3 = (\widetilde\mu - \lambda_4)^2 + 4|(x_2 - z^{\rm dual}_{i,j,k}) \cdot e_2|^2 + 4|(x_2 - z^{\rm dual}_{i,j,k}) \cdot e_3|^2.$$
As in the special case $\gamma_1 = \gamma_2$ the points $x_1,x_2,z^{\rm dual}_{i,j,k}$ are contained in one line and thus the latter two terms vanish, we obtain by a Taylor expansion $\lambda_3 = \widetilde\mu - \lambda_4+ {\rm O}((\gamma_1 - \gamma_2)^2)$, which together with \eqref{lambda4} gives
$$ \lambda_1 + \lambda_3 = \widetilde\mu + 2\lambda\cos\alpha_1  + {\rm O}((\gamma_1 - \gamma_2)^2).$$
By a similar argument, interchanging the roles of $\lambda_1$ and $\lambda_3$, we also get
$$\lambda_1 + \lambda_3  = \widetilde\mu + 2\lambda\cos\alpha_2  + {\rm O}((\gamma_1 - \gamma_2)^2).$$
\RRR Recall that  $|\lambda_1 - 1|  + |\lambda_3 - 1| \le \ell^{-4}$ and  $|\gamma_1 - \gamma_2| \le \ell^{-2}$ by assumption. \EEE   Then by the convexity of $v_2$ in a neighborhood of $1$  and a Taylor expansion we derive
\begin{align*}
v_2(\lambda_1)  + v_2(\lambda_3) &\ge  2v_2(\widetilde\mu/ 2 + \lambda\cos\alpha_i + {\rm O}((\gamma_1 - \gamma_2)^2)) \\ 
& \ge 2v_2(\widetilde\mu/ 2 + \lambda\cos\alpha_i) - C|v'_2(\widetilde\mu/2   + \lambda\cos\alpha_i)| (\gamma_1 - \gamma_2)^2 - C(\gamma_1 - \gamma_2)^4 
\end{align*}
for $i=1,2$. We recall that  $|v'_2(\widetilde\mu/2 + \lambda\cos\alpha_i)|  = {\rm O}(\ell^{-4})$ since $|\lambda_1 - 1|  + |\lambda_3 - 1| + |\gamma_1 - \gamma_2|^2 \le 2\ell^{-4}$, and $v_2$ is smooth and attains its minimum in $1$. Moreover, observe that by  $|\gamma_1 - \gamma_2|	 \le \ell^{-2}$ we get $|\gamma_1 - \gamma_2|^4 \le \ell^{-4}|\gamma_1 - \gamma_2|^2$. This concludes the proof of \eqref{v_2--lambda_3}. 
\end{proof}

\subsection{Convexity of the reduced energy}
Let us now concentrate on the symmetric energy $E_{\mu,\gamma_1,\gamma_2}^{{\rm sym}}$ introduced in \eqref{symmetric-cell}. We recall the definition of the angle $\beta=\beta(\alpha,\gamma) = 2\arcsin\left(\sin\alpha\sin\frac{\gamma}{2}\right)$ in \eqref{eq: constraint2} and for later use we note that the function $\beta$ is smooth on $[\frac{1}{2}\pi,\frac{3}{4}\pi] \times [\frac{3}{4}\pi,\pi]$ and satisfies
\begin{subequations}\label{zigder2}
\begin{align}
\partial_\alpha\beta(2\pi/3,\pi)&=-2,\quad \partial^2_{\alpha\alpha}\beta(2\pi/3,\pi)=0, \quad \partial_\gamma\beta(2\pi/3,\pi)=0, \label{zigder2-a}\\
 \partial^2_{\gamma\gamma}\beta(2\pi/3,\pi)&=-\sqrt{3}/2,  \quad \partial^2_{\alpha\gamma} \beta(2\pi/3,\pi)=0. \label{zigder2-b}
 \end{align}
\end{subequations}
More precisely, a Taylor expansion also shows
\begin{equation}\label{zigder3}
\lim_{\ell \to \infty} \ell\partial_\gamma\beta(2\pi/3,\gamma_\ell)=  \frac{\sqrt{3}}{2} \pi, \ \ \ \ \ \ \lim_{\ell \to \infty} \ell^2\partial^2_{\alpha\alpha}\beta(2\pi/3,\gamma_\ell)=  -2\sqrt{3} \pi^2,
\end{equation}
where \PPP $\gamma_\ell$ was defined in \eqref{gamma}. \EEE For the exact expressions of the derivatives of the function $\beta$ we refer the \PPP 
reader \EEE to \cite[Section 4]{MMPS-new}. Recall the definition of $\alpha_\ell^{\rm us}$ in Proposition \ref{eq: old main result}.

\begin{lemma}[Angles of unstretched nanotubes]\label{aus}
There \PPP exist \EEE $0 < c_1 < c_2$ and $\ell_0 \in \Nz$ only depending on $v_3$ such that for all $\ell \ge \ell_0$
$$
\alpha^{\rm us}_\ell, \beta(\alpha_\ell^{\rm us}, { \RRR \gamma_\ell \EEE }) \in (2\pi/3 - c_2\ell^{-2} , 2\pi/3 - c_1\ell^{-2}).
$$
\end{lemma}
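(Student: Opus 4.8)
The plan is to read off the asymptotics of $\alpha^{\rm us}_\ell$ from its first-order optimality condition as $\ell\to\infty$. Recall from Proposition \ref{eq: old main result} and \eqref{basicenergy} that $\alpha^{\rm us}_\ell$ is the unique minimizer in $A$ of the map $\alpha\mapsto 2v_3(\alpha)+v_3(\beta(\alpha,\gamma_\ell))$, hence it solves
\begin{equation}\label{aus-foc}
2v_3'(\alpha) + v_3'(\beta(\alpha,\gamma_\ell))\,\partial_\alpha\beta(\alpha,\gamma_\ell) = 0 .
\end{equation}
Since $\gamma_\ell=\pi(1-1/\ell)\to\pi$ by \eqref{gamma}, the strategy is to regard \eqref{aus-foc} as a one-parameter problem in the \emph{continuous} variable $s=1/\ell$, setting $\gamma(s):=\pi(1-s)$ and
\begin{equation}\label{aus-Phi}
\Phi(\alpha,s) := 2v_3'(\alpha) + v_3'(\beta(\alpha,\gamma(s)))\,\partial_\alpha\beta(\alpha,\gamma(s)),
\end{equation}
and to apply the implicit function theorem at $(\alpha,s)=(2\pi/3,0)$.

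First I would verify the hypotheses of the implicit function theorem. Using $\beta(2\pi/3,\pi)=2\pi/3$, $v_3'(2\pi/3)=0$, and $\partial_\alpha\beta(2\pi/3,\pi)=-2$ from \eqref{zigder2-a}, one checks $\Phi(2\pi/3,0)=0$ and
$$\partial_\alpha\Phi(2\pi/3,0) = 2v_3''(2\pi/3) + v_3''(2\pi/3)\big(\partial_\alpha\beta(2\pi/3,\pi)\big)^2 = 6\,v_3''(2\pi/3) > 0 .$$
Thus there is a smooth branch $s\mapsto\alpha(s)$ with $\alpha(0)=2\pi/3$ solving $\Phi(\alpha(s),s)=0$ near $s=0$, which by the uniqueness in Proposition \ref{eq: old main result} coincides with $\alpha^{\rm us}_\ell$ at $s=1/\ell$ for $\ell$ large.

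The crucial point, and the main obstacle, is to show that the first-order term in $s$ vanishes, so that the correct scale is $\ell^{-2}$ rather than $\ell^{-1}$. Differentiating $\Phi(\alpha(s),s)=0$ once and using $\partial_\gamma\beta(2\pi/3,\pi)=0$ together with $\partial^2_{\alpha\gamma}\beta(2\pi/3,\pi)=0$ from \eqref{zigder2}, I would obtain $\partial_s\Phi(2\pi/3,0)=0$, whence $\alpha'(0)=0$. Differentiating a second time and evaluating at $s=0$ (where $\alpha'(0)=0$), the relation reduces to $\partial_\alpha\Phi(2\pi/3,0)\,\alpha''(0)+\partial_s^2\Phi(2\pi/3,0)=0$. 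In computing $\partial_s^2\Phi(2\pi/3,0)$ every term carrying the factor $v_3'(2\pi/3)=0$ or the factor $\partial_\gamma\beta(2\pi/3,\pi)=0$ drops out, and since $\gamma(s)$ is affine only the contribution of $\partial^2_{\gamma\gamma}\beta(2\pi/3,\pi)=-\sqrt3/2$ from \eqref{zigder2-b} survives, giving $\partial_s^2\Phi(2\pi/3,0)=\sqrt3\,\pi^2v_3''(2\pi/3)$ and hence
$$\alpha''(0) = -\frac{\partial_s^2\Phi(2\pi/3,0)}{\partial_\alpha\Phi(2\pi/3,0)} = -\frac{\sqrt3\,\pi^2}{6} .$$
Therefore $\alpha^{\rm us}_\ell - 2\pi/3 = -\tfrac{\sqrt3\pi^2}{12}\ell^{-2}+{\rm O}(\ell^{-3})$.

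The same scaling for $\beta$ then follows by a chain-rule expansion of $s\mapsto\beta(\alpha(s),\gamma(s))$: its first-order term vanishes (since $\alpha'(0)=0$ and $\partial_\gamma\beta(2\pi/3,\pi)=0$), while the second-order coefficient computes, via $\partial^2_{\gamma\gamma}\beta(2\pi/3,\pi)=-\sqrt3/2$, $\partial_\alpha\beta(2\pi/3,\pi)=-2$, and the value of $\alpha''(0)$ above, to $-\tfrac{\sqrt3\pi^2}{6}$ as well, so that $\beta(\alpha^{\rm us}_\ell,\gamma_\ell)-2\pi/3 = -\tfrac{\sqrt3\pi^2}{12}\ell^{-2}+{\rm O}(\ell^{-3})$. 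Both quantities thus lie in $(2\pi/3-c_2\ell^{-2},\,2\pi/3-c_1\ell^{-2})$ for any fixed $0<c_1<\tfrac{\sqrt3\pi^2}{12}<c_2$, for instance $c_1=\tfrac{\sqrt3\pi^2}{24}$ and $c_2=\tfrac{\sqrt3\pi^2}{6}$, once $\ell\ge\ell_0$ is large enough to absorb the ${\rm O}(\ell^{-3})$ remainders; here $\ell_0$ depends only on $v_3$ through bounds on its derivatives near $2\pi/3$. As indicated, the heart of the argument is the vanishing of the first-order contributions, which pins down the $\ell^{-2}$ scale and rests entirely on the degeneracies $\partial_\gamma\beta(2\pi/3,\pi)=\partial^2_{\alpha\gamma}\beta(2\pi/3,\pi)=0$ recorded in \eqref{zigder2}.
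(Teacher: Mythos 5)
Your proof is correct, and it takes a genuinely different route from the paper. The paper argues more softly: it imports the estimate $2\pi/3-\alpha^{\rm ch}_\ell={\rm O}(\ell^{-2})$ from \cite{MMPS} to get the upper bound $2\pi/3-\alpha^{\rm us}_\ell\le C\ell^{-2}$, uses the Taylor expansion of $\gamma\mapsto\beta(\alpha,\gamma)$ at $\gamma=\pi$ to get the lower bound $2\pi-2\alpha^{\rm us}_\ell-\beta(\alpha^{\rm us}_\ell,\gamma_\ell)\ge C\ell^{-2}$, and then uses the first-order optimality condition only to show that the ratio $(2\pi/3-\alpha^{\rm us}_\ell)/(2\pi/3-\beta(\alpha^{\rm us}_\ell,\gamma_\ell))$ stays in $[C',1]$, which lets it distribute the lower bound onto both quantities. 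You instead extract the full leading-order asymptotics $\alpha^{\rm us}_\ell,\ \beta(\alpha^{\rm us}_\ell,\gamma_\ell)=2\pi/3-\tfrac{\sqrt3\pi^2}{12}\ell^{-2}+{\rm O}(\ell^{-3})$ via the implicit function theorem; your computations of $\partial_\alpha\Phi(2\pi/3,0)=6v_3''(2\pi/3)$, $\alpha'(0)=0$, $\partial_s^2\Phi(2\pi/3,0)=\sqrt3\pi^2v_3''(2\pi/3)$, and $\alpha''(0)=-\sqrt3\pi^2/6$ all check out, and both approaches ultimately rest on the same degeneracies $\partial_\gamma\beta(2\pi/3,\pi)=0$, $\partial^2_{\gamma\gamma}\beta(2\pi/3,\pi)=-\sqrt3/2$. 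What your version buys is a self-contained argument (no appeal to \cite{MMPS} for $\alpha^{\rm ch}_\ell$) with an explicit, $v_3$-independent leading constant; it even reveals that the two angles agree to order $\ell^{-2}$, which is finer information than the paper records. The one step you should make explicit is the identification of the implicit-function branch $\alpha(1/\ell)$ with $\alpha^{\rm us}_\ell$: the IFT only parametrizes the zeros of $\Phi(\cdot,s)$ in a fixed neighborhood $U$ of $2\pi/3$, so you need $\alpha^{\rm us}_\ell\to2\pi/3$ to know that $\alpha^{\rm us}_\ell$ eventually lies in $U$; this follows from $\alpha^{\rm ch}_\ell<\alpha^{\rm us}_\ell<2\pi/3$ together with $\alpha^{\rm ch}_\ell\to2\pi/3$ (an elementary consequence of the fixed-point equation $\beta(\alpha^{\rm ch}_\ell,\gamma_\ell)=\alpha^{\rm ch}_\ell$ and $\gamma_\ell\to\pi$), so the gap is easily filled.
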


\begin{proof}
By Proposition \ref{eq: old main result} and the fact that $\alpha \mapsto \beta(\alpha,\gamma_\ell)$ is decreasing,  we obtain \RRR $\alpha_\ell^{\rm us} \ge \alpha_\ell^{\rm ch}$ and \EEE  $\beta(\alpha^{\rm us}_\ell,\gamma_\ell) \le \alpha^{\rm us}_\ell \le 2\pi/3$. \RRR By \cite[(11)]{MMPS} we have $2\pi/3 - \alpha_\ell^{\rm ch} = {\rm O}(\ell^{-2})$. Moreover, in view of \eqref{gamma}, \eqref{betaz} and a Taylor expansion, we find  $ \alpha^{\rm us}_\ell -  \beta(\alpha^{\rm us}_\ell,\gamma_\ell)  \ge C\ell^{-2}$.  \RRR Summarizing, we get   
\begin{align}\label{sumatjunction}
\RRR 2\pi/3 - \alpha^{\rm us}_\ell \le C \ell^{-2},  \ \ \ \ \ \  2\pi - 2\alpha^{\rm us}_\ell - \beta(\alpha^{\rm us}_\ell,\gamma_\ell)\ge C\ell^{-2} 
\end{align}
\EEE for some universal $C>0$. As $2v_3(\alpha) + v_3(\beta(\alpha,\gamma_\ell))$ is minimized at $\alpha = \alpha^{\rm us}_\ell$ (see Proposition \ref{eq: old main result}), we get $2v'_3( \alpha^{\rm us}_\ell) + v'_3(\beta( \alpha^{\rm us}_\ell,\gamma_\ell))\partial_\alpha \beta( \alpha^{\rm us}_\ell,\gamma_\ell) = 0$. Using \eqref{zigder2-a} and a Taylor expansion of \RRR $v'_3$ \EEE around $2\pi/3$, we deduce that for $\ell_0$ large enough and all $\ell \ge \ell_0$
$$ \frac{2\pi/3 - \alpha^{\rm us}_\ell}{ 2\pi/3 -  \beta( \alpha^{\rm us}_\ell,\gamma_\ell)} \in [C',1]$$
for a constant $0 < C'<1$ only depending on $v_3$. This together with \eqref{sumatjunction} concludes the proof. 
\end{proof}

 Recall the minimization problem \eqref{red} for the symmetric energy introduced in \eqref{symmetric-cell}.   We proceed with the identification of the minimizers of \eqref{red}.

\begin{proposition}[Existence and uniqueness of minimizers]\label{prop1}
There exists $\delta>0$ depending only on $v_2$, $v_3$  such that, for any fixed $\mu\in[3-\delta, 3+\delta]$ and $\gamma = (\gamma_1,\gamma_2)\in [\pi-\delta,\pi]^2$,  the minimization problem  \eqref{red} has a unique solution $(\lambda^*(\mu,\gamma),\alpha_1^*(\mu,\gamma), \alpha_2^*(\mu,\gamma))$, which satisfies 
\begin{align}\label{eq: firstorder-opt}
\nabla E_{\mu,\gamma_1,\gamma_2}^{{\rm sym}}(\lambda^*(\mu,\gamma),\alpha_1^*(\mu,\gamma), \alpha_2^*(\mu,\gamma)) = 0,
\end{align}
where $\nabla$ denotes the derivative with respect to $(\lambda, \alpha_1,\alpha_2)$.
\end{proposition}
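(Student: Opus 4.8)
The plan is to prove strict convexity of $E_{\mu,\gamma_1,\gamma_2}^{{\rm sym}}$ in a small neighborhood of the unstretched reference point and to confine every minimizer to that neighborhood, so that existence comes from compactness, uniqueness from strict convexity, and the Euler--Lagrange relation \eqref{eq: firstorder-opt} from interiority. The reference point is $\lambda=1$, $\alpha_1=\alpha_2=2\pi/3$: at $\mu=3$, $\gamma_1=\gamma_2=\pi$ it makes every two-body argument equal to $1$ (indeed $\mu/2+\lambda\cos\alpha_i=3/2-1/2=1$) and, since $\beta(2\pi/3,\pi)=2\pi/3$, every three-body argument equal to $2\pi/3$, so that both $v_2$ and $v_3$ sit exactly at their minima.

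First I would establish a confinement estimate. Dropping the nonnegative terms $v_3(\beta(\alpha_i,\gamma_i))$ and using $v_2\ge -1=v_2(1)$ gives the a priori bound
$$
E_{\mu,\gamma_1,\gamma_2}^{{\rm sym}}(\lambda,\alpha_1,\alpha_2)\ \ge\ 2v_2(\lambda)+2v_3(\alpha_1)+2v_3(\alpha_2)-1 .
$$
Because $v_2$ attains its minimum only at $1$ and $v_3$ only at $2\pi/3$ inside the admissible box (note that $4\pi/3\notin(\arccos(-0.4),\arccos(-0.6))$), for every $r>0$ there is $c(r)>0$ with $v_2(\lambda)\ge -1+c(r)$ whenever $|\lambda-1|\ge r$ and $v_3(\alpha_i)\ge c(r)$ whenever $|\alpha_i-2\pi/3|\ge r$. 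Testing \eqref{red} with the admissible triple $(\lambda,\alpha_1,\alpha_2)=(\mu-2,2\pi/3,2\pi/3)$, which keeps all $v_2$-arguments equal to $1$, shows that the minimal energy is at most $-3+{\rm O}(\delta^2)$. Hence, choosing $\delta$ small relative to $c(r)$, every minimizer satisfies $|\lambda-1|<r$ and $|\alpha_i-2\pi/3|<r$; together with $\mu$ near $3$ this also forces $\mu/2+\lambda\cos\alpha_i$ near $1$. Thus all minimizers lie in a small convex box $K$ around the reference.

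Next I would compute the $3\times 3$ Hessian of $E_{\mu,\gamma_1,\gamma_2}^{{\rm sym}}$ in $(\lambda,\alpha_1,\alpha_2)$ at the reference point. Using $v_2'(1)=0$, $v_3'(2\pi/3)=0$, the values $\cos(2\pi/3)=-\tfrac12$, $\sin(2\pi/3)=\tfrac{\sqrt3}{2}$, and the data $\partial_\alpha\beta(2\pi/3,\pi)=-2$, $\partial^2_{\alpha\alpha}\beta(2\pi/3,\pi)=0$ from \eqref{zigder2-a}, the Hessian takes the explicit form
$$
\begin{pmatrix} \tfrac94 a & \tfrac{\sqrt3}{8}a & \tfrac{\sqrt3}{8}a \\[1mm] \tfrac{\sqrt3}{8}a & \tfrac38 a+6b & 0 \\[1mm] \tfrac{\sqrt3}{8}a & 0 & \tfrac38 a+6b \end{pmatrix},\qquad a:=v_2''(1)>0,\ \ b:=v_3''(2\pi/3)>0,
$$
whose leading principal minors are manifestly positive and whose determinant equals $\bigl(\tfrac38 a+6b\bigr)\bigl(\tfrac34 a^2+\tfrac{27}{2}ab\bigr)>0$. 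The only off-diagonal coupling, between $\lambda$ and each $\alpha_i$, comes from the term $v_2(\mu/2+\lambda\cos\alpha_i)$ and is harmless precisely because $v_2'$ vanishes at the reference. Hence the Hessian is positive definite there for all admissible $v_2,v_3$.

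Finally, since $E_{\mu,\gamma_1,\gamma_2}^{{\rm sym}}$ and its Hessian are jointly smooth in $(\lambda,\alpha_1,\alpha_2,\mu,\gamma_1,\gamma_2)$, positive definiteness persists on $\overline K\times[3-\delta,3+\delta]\times[\pi-\delta,\pi]^2$ after shrinking $\delta$ and $K$ if necessary, so $E_{\mu,\gamma_1,\gamma_2}^{{\rm sym}}$ is strictly convex on the convex set $K$. Minimizing the continuous energy over the compact closure of the admissible box yields a minimizer, which the confinement estimate places in $K$; strict convexity on $K$ then gives uniqueness and interiority gives \eqref{eq: firstorder-opt}. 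I expect the confinement step to be the main obstacle: it is what reduces the fixed, non-small admissible box to a neighborhood on which the only quantitative information at hand---positivity of $v_2''(1)$ and $v_3''(2\pi/3)$ plus continuity---actually suffices for convexity. The Hessian computation, though the technical heart, is a routine calculation once the $\lambda$--$\alpha$ coupling is seen to be controlled.
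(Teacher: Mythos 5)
Your proposal is correct and follows essentially the same route as the paper: compute the Hessian of $E^{\rm sym}_{\mu,\gamma_1,\gamma_2}$ at the planar reference point $(1,2\pi/3,2\pi/3)$ with $(\mu,\gamma)=(3,\pi,\pi)$ (your matrix and minors agree with the paper's), extend positive definiteness to a neighborhood by smoothness, and confine all minimizers to that neighborhood by comparing the near-$(-3)$ minimal value with the strictly-larger infimum away from the reference. The only difference is cosmetic: you certify the value $-3+{\rm O}(\delta^2)$ via the explicit competitor $(\mu-2,2\pi/3,2\pi/3)$, whereas the paper gets it from continuity of the constrained minimizer; both are fine.
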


\begin{proof}
 We start the proof with a direct computation of the derivatives. Replace  $E_{\mu,\gamma_1,\gamma_2}^{{\rm sym}}$ by $\tilde{E}$ for notational convenience. We obtain 
\begin{subequations}\label{first order}
\begin{align}\displaystyle 
{\partial_{\lambda}}\tilde{E}(\lambda, \alpha_1, \alpha_2)&=
2v_2'(\lambda) +  \sum_{i=1,2} \Big( \frac{1}{2} \cos\alpha_i \, v_2'(\mu/2  + \lambda\cos\alpha_i) \Big)\PPP,\EEE \label{first order-a} \\\displaystyle
{\partial_{\alpha_i}}\tilde{E}(\lambda,\alpha_1, \alpha_2)&=- \frac{1}{2}\lambda \sin\alpha_i \,  v_2'(\mu/2  + \lambda\cos\alpha_i) \notag \\ &  \ \ \  + v'_3(\beta(\alpha_i,\gamma_i))\partial_\alpha\beta(\alpha_i,\gamma_i) + 2 v'_3(\alpha_i), \ \  \ \ \ \  i =1,2.\label{first order-b}
\end{align}
\end{subequations}
Moreover, for $i=1,2$
\[\begin{aligned}
{\partial^2_{\lambda\lambda}}\tilde{E}(\lambda,\alpha_1,\alpha_2)&=
2v_2''(\lambda) +  \sum_{j=1,2} \Big( \frac{1}{2}\cos^2\alpha_j \ v_2''(\mu/2 + \lambda\cos\alpha_j) \Big),\\
{\partial^2_{\alpha_i\alpha_i}}\tilde{E}(\lambda,\alpha_1,\alpha_2)&= 
\frac{1}{2}\lambda^2 \sin^2\alpha_i  \, v_2''(\mu/2  + \lambda\cos\alpha_i)   - \frac{1}{2}\lambda \cos\alpha_i \, v_2'(\mu/2  + \lambda\cos\alpha_i)  + 2 v_3''(\alpha_i)\\
& \ \ \ +  v''_3(\beta(\alpha_i,\gamma_i))\,(\partial_\alpha\beta(\alpha_i,\gamma_i))^2 + v'_3(\beta(\alpha_i,\gamma_i))\partial^2_{\alpha\alpha}\beta(\alpha_i,\gamma_i),
\\
 {\partial^2_{\lambda\alpha_i}}\tilde{E}(\lambda,\alpha_1,\alpha_2)&=  -\frac{1}{2}\sin\alpha_i \,  v_2'(\mu/2 + \lambda\cos\alpha_i)   - \frac{1}{2}\lambda \sin\alpha_i \cos\alpha_i  \, v_2''(\mu/2  + \lambda\cos\alpha_i ), 
 \\ 
  {\partial^2_{\alpha_1\alpha_2}}\tilde{E}(\lambda,\alpha_1,\alpha_2)&= 0. 
\end{aligned}
\]
For notational convenience we define $s_{\rm ref} :=(1,2\pi/3,2\pi/3)$. Recall that $\partial_\alpha \beta(2\pi/3,\pi)= -2  $ by \eqref{zigder2-a}, $\beta(2\pi/3,\pi)=2\pi/3$ by  \eqref{eq: constraint2},  $v_3'(2\pi/3) =0$,  $\cos(2\pi/3) = - 1/2$, $\sin(2\pi/3) =\sqrt{3}/2$. At the planar reference configuration $\mu = 3$, $\gamma_1=\gamma_2 = \pi$, $\alpha_1= \alpha_2=2\pi/3$, $\lambda  = 1$ the derivative then reads after some computation  
\begin{align*}
{\partial^2_{\lambda\lambda}}E_{3,\pi,\pi}^{{\rm sym}}(s_{\rm ref}) &= \frac{9}{4} v_2''(1), \ \ \ \ \ \
{\partial^2_{\alpha_i\alpha_i}}E_{3,\pi,\pi}^{{\rm sym}}(s_{\rm ref})=\frac{3}{8} v_2''(1) + 6 v''_3(2\pi/3), \ \ i=1,2,\\
{\partial^2_{\lambda\alpha_i}}E_{3,\pi,\pi}^{{\rm sym}}(s_{\rm ref}) &=  \frac{\sqrt{3}}{8}v_2''(1), \ \ i=1,2,\ \ \ \ \ 
{\partial^2_{\alpha_1\alpha_2}}E_{3,\pi,\pi}^{{\rm sym}}(s_{\rm ref})=  0.
\end{align*}
We shall check the positivity of the Hessian matrix in a neighborhood of the reference configuration. Since
\begin{align*}
{\rm det} \Big( D^2_{\alpha_1\alpha_2}E_{3,\pi,\pi}^{{\rm sym}}(s_{\rm ref}) \Big) &= \big({\partial^2_{\alpha_1\alpha_1}}E_{3,\pi,\pi}^{{\rm sym}}(s_{\rm ref})\big)^2, \\ 
{\rm det} \big(D^2E_{3,\pi,\pi}^{{\rm sym}}(s_{\rm ref})\big) &=
  \big({\partial^2_{\alpha_1\alpha_1}}E_{3,\pi,\pi}^{{\rm sym}}(s_{\rm
  ref})\big)^2  {\partial^2_{\lambda\lambda}}E_{3,\pi,\pi}^{{\rm
  sym}}(s_{\rm ref}) \\
&- 2\big({\partial^2_{\lambda\alpha_1}}E_{3,\pi,\pi}^{{\rm sym}}(s_{\rm ref})\big)^2 {\partial^2_{\alpha_1\alpha_1}}E_{3,\pi,\pi}^{{\rm sym}}(s_{\rm ref})
\end{align*}
are positive,  the principal minors of the Hessian matrix $D^2E_{3,\pi,\pi}^{{\rm sym}}(1,2\pi/3,2\pi/3)$ are positive. Due to the smoothness of the potentials $v_2$, $v_3$ and the mapping $(\alpha,\gamma) \mapsto \beta(\alpha,\gamma)$, we get that for $\delta'>0$ sufficiently small the principal  minors of the Hessian matrix $D^2E_{\mu,\gamma_1,\gamma_2}^{{\rm sym}}(\lambda,\alpha_1,\alpha_2)$ are positive for all $(\lambda,\alpha_1,\alpha_2) \in D_{\delta'}$ and
 for all  $\mu\in[3-\delta',3+\delta']$, $(\gamma_1,\gamma_2) \in [\pi-\delta',\pi]^2$, where 
 $$D_{\delta'} := [1-\delta',1+\delta'] \times  [2\pi/3-\delta',2\pi/3 + \delta']^2.$$ 
Since we have shown that $E_{\mu,\gamma_1,\gamma_2}^{{\rm sym}}$ is strictly convex on $D_{\delta'}$, it follows that it has a unique minimizer $(\lambda^*(\mu,\gamma),\alpha_1^*(\mu,\gamma), \alpha_2^*(\mu,\gamma) )$ for all  $\mu\in[3-\delta',3+\delta']$ and $\gamma = (\gamma_1,\gamma_2) \in [\pi-\delta',\pi]^2$. Moreover, a continuity argument shows that
\begin{align}\label{eq:continuity}
(\lambda^*(\mu,\gamma),\alpha_1^*(\mu,\gamma), \alpha_2^*(\mu,\gamma)) & \to (\lambda^*(3,\pi,\pi), \alpha_1^*(3,\pi,\pi), \alpha_2^*(3,\pi,\pi))  = (1,2\pi/3, 2\pi/3) 
\end{align}
 as $\gamma \to  (\pi,\pi)$ and $\mu \to 3$.  \BBB Recalling \eqref{symmetric-cell} and the fact that $v_2$ and $v_3$ attain their minimum exactly at $1$ and $2\pi/3$, respectively, we find $\inf_{(\lambda,\alpha_1,\alpha_2)\notin D_{\delta'}}E_{\mu,\gamma_1,\gamma_2}^{{\rm sym}}(\lambda,\alpha_1,\alpha_2) > - 3$. On the other hand, by \eqref{eq: constraint2}, \eqref{symmetric-cell}, and \eqref{eq:continuity} we get $E_{\mu,\gamma_1,\gamma_2}^{{\rm sym}}(\lambda^*(\mu,\gamma),\alpha_1^*(\mu,\gamma), \alpha_2^*(\mu,\gamma)) \to -3$  as $\gamma \to  (\pi,\pi)$ and $\mu \to 3$. This shows that  for all $\mu\in[3-\delta'',3+\delta'']$ and $\gamma \in [\pi-\delta'',\pi]^2$, for some small $\delta''>0$, the triple $(\lambda^*(\mu,\gamma),\alpha_1^*(\mu,\gamma), \alpha_2^*(\mu,\gamma))$ is the unique solution of the minimization problem  \eqref{red}. Moreover, if $\delta''>0$ is chosen small enough, the triple \EEE lies in the interior of $D_{\delta'}$ and the first order optimality conditions \eqref{eq: firstorder-opt} follow.  We conclude the proof by setting $\delta = \min\lbrace \delta',\delta''\rbrace$. 
 \end{proof}

 We now study convexity properties of the reduced energy $E_{\rm red}$ defined in \eqref{red}.  Recall the definition of $\gamma_\ell$ in \eqref{gamma} and the \RRR definition of  $\mu^{\rm us}_\ell$ in \eqref{eq:muellneu}. \EEE

\begin{proposition}[Convexity of reduced energy]\label{convexenergy}
There exists $\ell_0 \in \Nz$ and  for each $\ell \ge \ell_0$ there exits $\eps=\eps(\ell)>0$ such that  $E_{\rm red}$ is strictly convex on $D^\ell_\eps :=[\mu_\ell^{\rm us} - \eps, \mu_\ell^{\rm us}+ \eps] \times [\gamma_\ell - \eps,\gamma_\ell+\eps]^2$. Moreover, there \PPP exists \EEE  $c_0'>0$ depending only on $v_2$ and $v_3$ such that for all $\ell \ge \ell_0$ and $(\mu,\gamma_1,\gamma_2) \in D_\eps^\ell$  
\begin{align}\label{eq:convexenergy}
E_{\rm red}(\mu,\gamma_1,\gamma_2) = E_{\rm red}(\mu,\gamma_2,\gamma_1) \ge E_{\rm red} \Big( \mu, \frac{\gamma_1 + \gamma_2}{2},\frac{\gamma_1 + \gamma_2}{2} \Big) +  c_0' \ell^{-2} (\gamma_1-\gamma_2)^2.
\end{align}
\end{proposition}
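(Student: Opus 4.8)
The plan is to regard $E_{\rm red}(\mu,\gamma_1,\gamma_2)=\min_q E^{\rm sym}_{\mu,\gamma_1,\gamma_2}(q)$ as the partial minimization of the symmetric energy over $q=(\lambda,\alpha_1,\alpha_2)$, keeping $p=(\mu,\gamma_1,\gamma_2)$ as parameters, and to use that the partial minimization of a jointly strictly convex function is strictly convex, its Hessian being the Schur complement of the full Hessian with respect to the $q$-block. Thus the core task reduces to proving that the full $6\times 6$ Hessian of $E^{\rm sym}$ is positive definite in a neighborhood of the shifted reference configuration $(\mu^{\rm us}_\ell,\gamma_\ell,\gamma_\ell,\lambda^*,\alpha^{\rm us}_\ell,\alpha^{\rm us}_\ell)$, where by \eqref{eq:muellneu} and Lemma \ref{aus} one has $\mu^{\rm us}_\ell\to 3$, $\gamma_\ell\to\pi$, $\alpha^{\rm us}_\ell\to 2\pi/3$ as $\ell\to\infty$; existence of the interior minimizer and strict convexity in $q$ alone are already furnished by Proposition \ref{prop1}.

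First I would use the reflection symmetry of $E^{\rm sym}$ under the exchange $(\alpha_1,\gamma_1)\leftrightarrow(\alpha_2,\gamma_2)$ to block-diagonalize the Hessian into a \emph{symmetric sector} in the variables $(\mu,\lambda,\gamma_+,\alpha_+)$ and an \emph{antisymmetric sector} in $(\gamma_-,\alpha_-)$, where $\gamma_\pm=(\gamma_1\pm\gamma_2)/2$, $\alpha_\pm=(\alpha_1\pm\alpha_2)/2$; this is legitimate because $E^{\rm sym}$ contains no mixed term in $\gamma_1,\gamma_2$ nor in $\alpha_1,\alpha_2$, so $\partial^2_{\gamma_1\gamma_2}E^{\rm sym}=\partial^2_{\alpha_1\alpha_2}E^{\rm sym}=0$. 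The $O(1)$ part of the Hessian, namely the block in $(\mu,\lambda,\alpha_1,\alpha_2)$, is positive definite by a computation extending the one in Proposition \ref{prop1} to include the $\mu$-row (with $\partial^2_{\mu\mu}E^{\rm sym}=\tfrac14 v_2''(1)$ at the planar point). The delicate directions are $\gamma_\pm$: since $\partial_\gamma\beta(2\pi/3,\pi)=0$ by \eqref{zigder2-a}, the pure $\gamma$-curvature degenerates at the planar limit and is only of order $\ell^{-2}$, the precise scale being provided by \eqref{zigder3} and Lemma \ref{aus}.

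The quantitative estimate \eqref{eq:convexenergy} is exactly the content of the antisymmetric sector. Along the path $\gamma_1=\bar\gamma+t$, $\gamma_2=\bar\gamma-t$ the minimizer responds antisymmetrically in $(\alpha_1,\alpha_2)$ and evenly in $\lambda$, so at $t=0$ one has $d\lambda^*/dt=0$ and the couplings to $\mu$ and $\lambda$ drop out, while the two angles decouple because $\partial^2_{\alpha_1\alpha_2}E^{\rm sym}=0$. Consequently $\partial^2_{tt}E_{\rm red}=2\phi''(\bar\gamma)$, where $\phi(\gamma):=\min_\alpha f(\alpha;\gamma)$ with $f(\alpha;\gamma):=\tfrac12 v_2(\mu/2+\lambda\cos\alpha)+2v_3(\alpha)+v_3(\beta(\alpha,\gamma))$, and the one-dimensional Schur complement reads
\begin{align*}
\phi''=\partial^2_{\gamma\gamma}f-\frac{(\partial^2_{\alpha\gamma}f)^2}{\partial^2_{\alpha\alpha}f}=(\partial_\gamma\beta)^2\,v_3''\Big(1-\frac{4v_3''}{K}\Big)-\frac{\sqrt3}{2}\,v_3'(\beta)+{\rm o}(\ell^{-2}),
\end{align*}
with $K=\tfrac38 v_2''(1)+6v_3''(2\pi/3)$ the leading value of $\partial^2_{\alpha\alpha}f$ (all derivatives of $v_3$ and $\beta$ evaluated at the reference). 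Here $1-4v_3''/K=(\tfrac38 v_2''+2v_3'')/(\tfrac38 v_2''+6v_3'')>0$, while $v_3'(\beta(\alpha^{\rm us}_\ell,\gamma_\ell))<0$ since $\beta<2\pi/3$ and $v_3$ is convex with minimum at $2\pi/3$; as $(\partial_\gamma\beta)^2\sim\ell^{-2}$ and $-v_3'(\beta)\sim\ell^{-2}$ by \eqref{zigder3} and Lemma \ref{aus}, both contributions are positive of order $\ell^{-2}$, whence $\phi''\ge c\ell^{-2}$ uniformly on $D^\ell_\eps$. Integrating twice, and using that $t\mapsto E_{\rm red}(\mu,\bar\gamma+t,\bar\gamma-t)$ is even so the linear term vanishes, yields \eqref{eq:convexenergy}; positivity of $\phi''$ simultaneously settles the antisymmetric sector. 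An entirely analogous one-dimensional reduction in the symmetric sector, after eliminating $\mu$, $\lambda$, and $\alpha_+$, produces the remaining positive $\gamma_+$-curvature and completes the proof of strict convexity (part one).

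The main obstacle will be the two $\ell^{-2}$-order Schur-complement computations in the $\gamma_\pm$ directions: at the planar reference the $\gamma$-curvature vanishes identically, so positivity is not automatic, and one must track the exact leading constants of $\partial_\gamma\beta$, $\partial^2_{\gamma\gamma}\beta$, and $\beta-2\pi/3$ through \eqref{zigder2}--\eqref{zigder3} and Lemma \ref{aus}, then verify the sign of the resulting $O(\ell^{-2})$ expression — in particular the structural inequality $K>4v_3''(2\pi/3)$, which holds because $v_2''(1)>0$. Maintaining all remainders uniformly ${\rm o}(\ell^{-2})$ over the whole region $D^\ell_\eps$, rather than merely at the reference point, is where the bulk of the technical bookkeeping lies.
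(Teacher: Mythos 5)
Your proposal is correct and, at bottom, performs the same computation as the paper: both identify the Hessian of $E_{\rm red}$ with the Schur complement of the full Hessian of $E^{\rm sym}_{\mu,\gamma_1,\gamma_2}$ with respect to the $(\lambda,\alpha_1,\alpha_2)$-block (the paper phrases this as differentiating \eqref{derivative1}--\eqref{derivative1.2} and implicitly differentiating the optimality system \eqref{eq: long-est-a}--\eqref{eq: long-est-b}), and both locate the delicate, $\ell^{-2}$-degenerate directions in $\gamma$ using \eqref{zigder2}--\eqref{zigder3} and Lemma \ref{aus}. Where you genuinely differ is in the organization: you block-diagonalize by the $(\alpha_1,\gamma_1)\leftrightarrow(\alpha_2,\gamma_2)$ symmetry into an antisymmetric sector and a symmetric one, whereas the paper computes all entries of $D^2E_{\rm red}$ at $t^\ell_{\rm ref}$ and checks the principal minors $H_1,H_2,H_3$. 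Your antisymmetric reduction is sound: since $\partial^2_{\lambda\gamma_i}E^{\rm sym}=\partial^2_{\alpha_j\gamma_i}E^{\rm sym}=0$ for $j\neq i$ and the $(\lambda,\alpha_1,\alpha_2)$-Hessian has the symmetric arrow structure, the response to an antisymmetric $\gamma$-perturbation has $\dot\lambda^*=0$ and the Schur complement collapses to the one-dimensional $\phi''=\partial^2_{\gamma\gamma}f-(\partial^2_{\alpha\gamma}f)^2/\partial^2_{\alpha\alpha}f$; your leading-order expression, the sign condition $K>4v_3''(2\pi/3)$, and the positivity of $-v_3'(\beta)\partial^2_{\gamma\gamma}\beta$ via Lemma \ref{aus} all check out and reproduce the paper's bound $H_1-|\partial^2_{\gamma_1\gamma_2}E_{\rm red}|\geq c\ell^{-2}$. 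This route has the advantage of delivering \eqref{eq:convexenergy} directly by integrating the even function $t\mapsto E_{\rm red}(\mu,\bar\gamma+t,\bar\gamma-t)$ twice, rather than extracting it from a lower bound on the eigenvalues of $\ell^2D^2E_{\rm red}$ as the paper does.

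The one place your sketch is too quick is the symmetric sector. "Eliminating $\mu$" is not innocuous: the cross-derivative $\partial^2_{\mu\gamma_i}E_{\rm red}$ is of order $\ell^{-1}$, so in the determinant of the $(\mu,\gamma_+)$-block the term $(\partial^2_{\mu\gamma_+})^2$ competes at the same order $\ell^{-2}$ with $\partial^2_{\mu\mu}E_{\rm red}\cdot\partial^2_{\gamma_+\gamma_+}E_{\rm red}$. In the paper's $H_3$ computation there is an exact cancellation of the two $(v_2''(1))^2/K^2$ contributions, leaving only $\pi^2v_2''(1)v_3''(2\pi/3)/(2K)>0$; positivity of the symmetric-sector Schur complement is therefore a genuine computation with the precise constants from \eqref{convexity3} and \eqref{convexity6}, not a consequence of positivity of the separate diagonal curvatures. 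You flag the two $\gamma_\pm$ Schur complements as the main obstacle, which is the right instinct, but the $\mu$--$\gamma_+$ coupling is the specific point that must be tracked to the same precision as the antisymmetric one.
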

 
\begin{proof}
 Choosing $\ell$ sufficiently large and $\eps>0$ small we can suppose that $D^\ell_\eps \subset [3- \delta, 3+ \delta]  \times [\pi- \delta,\pi]^2$ with  $\delta$ from Proposition \ref{prop1} since $\mu^{\rm us}_\ell = 2-2\cos\alpha^{\rm us}_\ell \to 3$ as $\ell \to\infty$.  \RRR Then \eqref{eq: firstorder-opt} holds for $(\mu,\gamma_1,\gamma_2) \in  D^\ell_\eps$. \EEE

We drop the brackets $(\mu,\gamma_1,\gamma_2)$ and indicate the unique solution at $(\mu,\gamma_1,\gamma_2)$ by $(\lambda^*,\alpha_1^*, \alpha_2^*)$ for notational convenience. Taking the partial derivatives and  making use of the first order optimality conditions \eqref{eq: firstorder-opt}, we get
\begin{align}\label{derivative1}
\partial_\mu E_{\rm red}(\mu,\gamma_1,\gamma_2)&=\frac d{d\mu}E_{\mu,\gamma_1,\gamma_2}^{{\rm sym}}(\lambda^*,\alpha_1^*, \alpha_2^*)\notag\\
&=\frac{\partial E_{\mu,\gamma_1,\gamma_2}^{{\rm sym}}}{\partial \mu}(\lambda^*,\alpha_1^*, \alpha_2^*)  + \nabla E_{\mu,\gamma_1,\gamma_2}^{{\rm sym}}(\lambda^*,\alpha_1^*, \alpha_2^*)\cdot (\partial_\mu\lambda^*,\partial_\mu\alpha_1^*, \partial_\mu\alpha_2^*)\notag\\
&=\frac{\partial E_{\mu,\gamma_1,\gamma_2}^{{\rm sym}}}{\partial \mu}(\lambda^*,\alpha_1^*, \alpha_2^*) 
=\sum_{j=1,2}\frac{1}{4}{v}'_2  \big(\mu/2+\lambda^*\cos\alpha^*_j \big),
\end{align}
where $\nabla$ denotes the derivative with respect to $(\lambda,\alpha_1,\alpha_2)$. Likewise, we get for $i=1,2$
\begin{equation}\label{derivative1.2}
\begin{aligned}
\partial_{\gamma_i} E_{\rm red}(\mu,\gamma_1,\gamma_2) &=\frac{\partial E_{\mu,\gamma_1,\gamma_2}^{{\rm sym}}}{\partial \gamma_i}(\lambda^*,\alpha_1^*, \alpha_2^*)  =  v'_3(\beta(\alpha_i^*,\gamma_i))\, \partial_\gamma \beta(\alpha_i^*,\gamma_i).
\end{aligned}
\end{equation}
Next we compute the second derivatives and obtain
  \begin{align}
    \partial^2_{\mu\mu} E_{\rm red}(\mu,\gamma_1,\gamma_2)&=\sum_{j=1,2}\frac{1}{4} {v}''_2  \big(\mu/2 +\lambda^*\cos\alpha^*_j \big) \, w_{j,\mu}(\mu,\gamma_1,\gamma_2), \label{convexity7-a}\\
    \partial^2_{\gamma_i\gamma_i} E_{\rm red}(\mu,\gamma_1,\gamma_2)&= v'_3(\beta(\alpha_i^*,\gamma_i))\, \big( \partial^2_{\gamma\gamma} \beta(\alpha_i^*,\gamma_i) + \partial^2_{\gamma\alpha} \beta(\alpha_i^*,\gamma_i) \,\partial_{\gamma_i} \alpha^*_i \big)\notag\\
    & \hspace{-10mm}+  v''_3(\beta(\alpha_i^*,\gamma_i))\, \partial_\gamma \beta(\alpha_i^*,\gamma_i) \cdot \big(\partial_\gamma\beta(\alpha_i^*,\gamma_i) +  \partial_\alpha\beta(\alpha_i^*,\gamma_i)\, \partial_{\gamma_i} \alpha_i^* \big), \  i=1,2,\label{convexity7-b}\\
    \partial^2_{\mu\gamma_i} E_{\rm red}(\mu,\gamma_1,\gamma_2)&= \sum_{j=1,2}\frac{1}{4} {v}''_2  \big(\mu/2 + \lambda^*\cos\alpha^*_j \big) \, w_{j,\gamma_i}(\mu,\gamma_1,\gamma_2), \ \ i=1,2,\label{convexity7-c}\\
    \partial^2_{\gamma_1\gamma_2} E_{\rm red}(\mu,\gamma_1,\gamma_2) &  =  v'_3(\beta(\alpha_1^*,\gamma_1))\,  \partial^2_{\gamma\alpha} \beta(\alpha_1^*,\gamma_1) \,\partial_{\gamma_2} \alpha^*_1 \notag \\
    &  \ \ \ +
    v''_3(\beta(\alpha_1^*,\gamma_1))\, \partial_\gamma
    \beta(\alpha_1^*,\gamma_1)
    \, \partial_\alpha\beta(\alpha_1^*,\gamma_1)\, \partial_{\gamma_2}
    \alpha_1^*,\label{convexity7-d}
  \end{align} 
where for brevity we have introduced
\begin{subequations}\label{convexity2}
\begin{align}
w_{j,\mu}(\mu,\gamma_1,\gamma_2) & = 1/2  + \partial_\mu\lambda^*\cos\alpha^*_j - \lambda^*\sin\alpha^*_j\,\partial_\mu \alpha^*_j,  \ \ \ \ j =1,2, \label{convexity2-a}\\
w_{j, \gamma_i}(\mu,\gamma_1,\gamma_2) & =  \partial_{\gamma_i}\lambda^*\cos\alpha^*_j - \lambda^*\sin\alpha^*_j\, \partial_{\gamma_i} \alpha^*_j, \ \ \  \ i,j=1,2. \label{convexity2-b}
\end{align}
\end{subequations}
We now exploit the identity  $\nabla E^{\rm sym}_{\mu,\gamma_1,\gamma_2}(\lambda^*,\alpha_1^*, \alpha_2^*) =0$: differentiating \eqref{first order} with respect to $\mu$, $\gamma_1$ or $\gamma_2$, respectively, we obtain
\begin{align}
0&=2v_2''(\lambda^*) \, \partial_X \lambda^* +  \sum_{j=1,2} \Big( -\frac{1}{2}\sin\alpha^*_j \, \partial_X \alpha^*_j \, v_2'(\mu/2 + \lambda^*\cos\alpha^*_j) \Big) \notag \\
&  \ \  \ + \sum_{j=1,2} \Big( \frac{1}{2}\cos\alpha^*_j \, v_2''(\mu/2  + \lambda^*\cos\alpha^*_j) \, w_{j,X}(\mu,\gamma_1,\gamma_2)  \Big), \label{eq: long-est-a} \\
0 & = -\frac{1}{2}v_2'(\mu/2 + \lambda^*\cos\alpha^*_j)  \Big( \partial_X\lambda^* \sin\alpha^*_j   + \lambda^* \cos\alpha^*_j \partial_X \alpha_j^* \Big)\notag \\
& \ \ \ -\frac{1}{2}\lambda^* \,  \sin\alpha^*_j \,  v_2''(\mu/2 + \lambda^*\cos\alpha^*_j)\, w_{j,X}(\mu,\gamma_1,\gamma_2) + v'_3(\beta(\alpha^*_j,\gamma_j))\partial^2_{\alpha\alpha}\beta(\alpha^*_j,\gamma_j)\partial_X\, \alpha^*_j \notag\\
& \ \ \ + v''_3(\beta(\alpha^*_j,\gamma_j))\big(\partial_\alpha \beta(\alpha^*_j,\gamma_j)\big)^2\, \partial_X \alpha^*_j + 2v''_3(\alpha^*_j) \,  \partial_X \alpha^*_j + z_{j,X}(\mu,\gamma_1,\gamma_2), \ \ \ j=1,2,   \label{eq: long-est-b}
\end{align}
where $X \in \lbrace \mu, \gamma_1, \gamma_2 \rbrace$ and where we have defined for brevity
\begin{align*}
z_{j,\gamma_j}(\mu,\gamma_1,\gamma_2) &= v'_3(\beta(\alpha^*_j,\gamma_j))\partial_{\alpha\gamma}\beta(\alpha^*_j,\gamma_j) +  v''_3(\beta(\alpha^*_j,\gamma_j))\partial_\alpha\beta(\alpha^*_j,\gamma_j)  \partial_\gamma \beta(\alpha^*_j,\gamma_j),\\
z_{j,\gamma_i}(\mu,\gamma_1,\gamma_2) &= z_{j,\mu}(\mu,\gamma_1,\gamma_2) = 0, \ \ \ i \neq j.
\end{align*}
\RRR For brevity let $t_{\rm ref}^\ell := (\mu^{\rm us}_\ell,\gamma_\ell,\gamma_\ell)$ and $t_{\rm ref} := (3,\pi,\pi)$. Observe that $t_{\rm ref}^\ell \to t_{\rm ref}$ as $\ell \to \infty$ by  \eqref{gamma}, \eqref{eq:muellneu}, and Lemma \ref{aus}. Moreover, by \eqref{eq:continuity} we get that the unique solution of the problem \eqref{red} corresponding to $t_{\rm ref}^\ell$ converges to $(1,2\pi/3,2\pi/3)$, in particular $\alpha^*_j(t_{\rm ref}^\ell)   \to 2\pi/3$ for $j=1,2$.  We also recall $\beta(\alpha^*_j(t_{\rm ref}^\ell),\gamma_\ell) \to 2\pi/3$ for $j=1,2$  (see \eqref{eq: constraint2}). \EEE  Using $v_2'(1) = v_3'(2\pi/3) = 0$, $\cos(2\pi/3) = -1/2$, $\sin(2\pi/3)=  \sqrt{3}/2$ and \eqref{zigder2}  we then deduce from \eqref{eq: long-est-a}-\eqref{eq: long-est-b} 
\begin{subequations}\label{convexity1}
\begin{align}
0 &= 2 v_2''(1)\, \partial_X \lambda^*({\RRR t_{\rm ref} \EEE})-  \frac{1}{4} v_2''(1)\sum_{j=1,2}w_{j,X}({\RRR t_{\rm ref} \EEE}), \label{convexity1-a}\\
0 & = -v_2''(1) \, w_{j,X}({\RRR t_{\rm ref} \EEE})
+ 8\sqrt{3} v_3''(2\pi/3) \, \partial_X\alpha^*_j({\RRR t_{\rm ref} \EEE}), \ \ \ j=1,2, \label{convexity1-b}
 \end{align} 
\end{subequations}
as $\ell \to \infty$ , where $X \in \lbrace \mu, \gamma_1, \gamma_2 \rbrace$. Inserting the identities into \eqref{convexity2}, we obtain, after some elementary but tedious calculations, 
\begin{subequations}\label{convexity3}
\begin{align}
&w_{1,\mu}({\RRR t_{\rm ref} \EEE }) = w_{2,\mu}({\RRR t_{\rm ref} \EEE }) = 4/K, \ \ \  w_{1,\gamma_i}({\RRR t_{\rm ref} \EEE }) = w_{2,\gamma_i}({\RRR t_{\rm ref} \EEE })=0, \ \ i=1,2,\\
& \partial_\mu \lambda^*({\RRR t_{\rm ref} \EEE }) = 1/K, \ \ \ \partial_\mu \alpha_1^*({\RRR t_{\rm ref} \EEE }) =\partial_\mu \alpha_2^*({\RRR t_{\rm ref} \EEE })= v_2''(1) / (2\sqrt{3}K v_3''(2\pi/3)), \label{convexity3-b}
\end{align}
\end{subequations} 
where $K:= 9 + v_2''(1)/(2 v_3''(2\pi/3))$. In particular, the last two equalities of the first line together with \eqref{convexity1} yield that
$\partial_{\gamma_i}\lambda^*$,   $\partial_{\gamma_i} \alpha^*_1$, and $\partial_{\gamma_i} \alpha^*_2$  vanish at ${\RRR t_{\rm ref} \EEE }$. Thus, by a Taylor expansion in terms of $1/\ell$ the limits $w_{j,\gamma_i}^\infty := \lim_{\ell \to \infty} \ell w_{j, \gamma_i}({\RRR t^\ell_{\rm ref} \EEE })$,  $\partial_{\gamma_i}\lambda^\infty := \lim_{\ell \to \infty} \ell \partial_{\gamma_i} \lambda^*({\RRR t^\ell_{\rm ref} \EEE })$, and $\partial_{\gamma_i} \alpha^\infty_j := \lim_{\ell \to \infty} \ell \partial_{\gamma_i} \alpha^*_j({\RRR t^\ell_{\rm ref} \EEE })$ for $i,j=1,2$  exist {\BBB and are finite}.  

By Lemma \ref{aus} and the fact that $v_3$ is smooth with minimum at
$2\pi/3$  we note that one has $|v'_3(\beta(\alpha^{\rm us}_\ell,\gamma_\ell))| \le C\ell^{-2}$ for a constant only depending on $v_3$.   Consequently, multiplying the  estimates in \eqref{eq: long-est-a}-\eqref{eq: long-est-b} by $\ell$ and letting $\ell \to \infty$ we get using \eqref{zigder2} and \eqref{zigder3}
\begin{align*}
\begin{split}
0 & = 2 v_2''(1) \partial_{\gamma_i} \, \lambda^\infty - \frac{1}{4} v_2''(1)\sum_{j=1,2}w_{j,{\gamma_i} }^\infty,  \ \ \ i=1,2,\\
0&= - \frac{1}{4} v_2''(1)w_{j,\gamma_i}^\infty  + 2\sqrt{3} v_3''(2\pi/3) \,  \partial_{\gamma_i} \alpha^\infty_j - v_3''(2\pi/3)\pi \, \delta_{ij}, \ \ i,j=1,2,
\end{split}
\end{align*}
where $\delta_{ij}$ denotes the Kronecker delta.  As before,  inserting the identities into \eqref{convexity2-b}, we obtain after some tedious calculations 
\begin{subequations}\label{convexity6}
\begin{align}
\sum_{j=1,2}w_{j,\gamma_i}^\infty &  = - \frac{2\pi}{K}, \ \ \ \ \ \ \ \  \ \ \   \ \ \ \ \ \ \ \  \ \ \ \ \ \ \ \ \  \sum_{j=1,2}  \partial_{\gamma_i} \alpha_j^\infty = \frac{\pi}{2\sqrt{3}} - \frac{\pi v_2''(1)}{4\sqrt{3}K v_3''(2\pi/3)},\\
 \partial_{\gamma_i} \alpha_i^\infty & = \frac{\pi}{2\sqrt{3}} - \frac{\pi v_2''(1)}{4\sqrt{3}K v_3''(2\pi/3)} - \frac{\pi}{KK^\infty}, \ \ \ \ \ \ \ \ \ \partial_{\gamma_i} \alpha_j^\infty =\frac{\pi}{KK^\infty}, \ \ i \neq j,
 \end{align}
\end{subequations} 
for $i=1,2$ with $K$ as defined after \eqref{convexity3} and $K^\infty := 64\sqrt{3} v_3''(2\pi/3)/v_2''(1) + 4\sqrt{3}$. Moreover, \PPP we notice by \eqref{zigder2-b}  and Lemma \ref{aus} that \EEE
$$v'_3(\beta(\alpha_\ell^{\rm us}, \gamma_\ell)) \partial^2_{\gamma\gamma} \beta(\alpha_\ell^{\rm us},\gamma_\ell) \ge 0$$ 
for $\ell$ sufficiently large. With this at hand, we go back to
\eqref{convexity7-a}-\eqref{convexity7-d} and derive as $\ell \to \infty$ by \eqref{zigder2}, \eqref{zigder3},
\eqref{convexity3}, and \eqref{convexity6}
 
  \begin{align}
    \partial^2_{\mu\mu} E_{\rm red}({\RRR t^\ell_{\rm ref} \EEE })&=\frac{2v''_2(1)}{K}+ {\rm O}(\ell^{-1}),\label{convexity-last-a}\\
    \partial^2_{\gamma_i\gamma_i} E_{\rm red}({\RRR t^\ell_{\rm ref} \EEE })& \ge  \ell^{-2} \Big(
    v''_3(2\pi/3) \frac{3}{4}\pi^2 -  v''_3(2\pi/3)
    \,\sqrt{3}\pi \partial_{\gamma_i}\alpha^\infty_i  \Big)+ {\rm
      O}(\ell^{-3}) \notag \\
    & \RRR = \EEE \ell^{-2}  v''_3(2\pi/3)\pi^2 \Big( \frac{1}{4} + \frac{v_2''(1)}{4Kv_3''(2\pi/3)} + \frac{\sqrt{3}}{KK^\infty}\Big) + {\rm O}(\ell^{-3}), \  i=1,2,  \notag\\
    \partial^2_{\mu\gamma_i} E_{\rm red}({\RRR t^\ell_{\rm ref} \EEE })&= - \ell^{-1}\frac{\pi v_2''(1)}{2K}  +  {\rm O}(\ell^{-2}), \ \ i=1,2, \notag  \\
    \partial^2_{\gamma_1\gamma_2} E_{\rm red}({\RRR t^\ell_{\rm ref} \EEE })& = -\ell^{-2} v''_3(2\pi/3)\,
    \sqrt{3}\pi \partial_{\gamma_1} \alpha_2^\infty + {\rm
      O}(\ell^{-3})  = -\ell^{-2} v''_3(2\pi/3)\,
    \frac{\sqrt{3}\pi^2}{KK^\infty} + {\rm O}(\ell^{-3}). \notag 
  \end{align} 
We now check the positivity of the Hessian $D^2 E_{\rm red}$ by considering the  minors $H_1 = \partial^2_{\gamma_2\gamma_2} E_{\rm red}$, $H_2 = \det  (D^2_{\gamma_1\gamma_2} E_{\rm red}  )$ and $H_3 = \det (D^2  E_{\rm red})$. First, we get for $\ell \in  \Nz$ sufficiently large  
\begin{align*}
&H_1({\RRR t^\ell_{\rm ref} \EEE })  \ge  \ell^{-2}  v''_3(2\pi/3) \frac{\pi^2}{4}>0, \ \ \ H_2({\RRR t^\ell_{\rm ref} \EEE }) \ge \ell^{-4} (v_3(2\pi/3)'')^2 \pi^4  (1/4)^2  >0 
  \end{align*}
and finally  for $\ell$ large enough 
\begin{align*}
H_3({\RRR t^\ell_{\rm ref} \EEE }) &=  \Big(\partial^2_{\gamma_2\gamma_2} E_{\rm red} -  \partial^2_{\gamma_1\gamma_2} E_{\rm red}\Big)   \cdot \Big(  \partial^2_{\mu\mu} E_{\rm red}\big(\partial^2_{\gamma_2\gamma_2} E_{\rm red} + \partial^2_{\gamma_1\gamma_2} E_{\rm red}\big)  - 2\big( \partial^2_{\mu\gamma_1} E_{\rm red}\big)^2 \Big)\\
& \ge \ell^{-4}  v''_3(2\pi/3) \frac{\pi^2}{4} \Big( \frac{\pi^2 v_2''(1) v_3''(2\pi/3)}{2K} + \frac{\pi^2(v_2''(1))^2}{2K^2} - 2 \frac{\pi^2(v_2''(1))^2}{4K^2}\Big)>0.
\end{align*}
 Due to the smoothness of the potentials $v_2$, $v_3$, the mapping $(\alpha,\gamma) \mapsto \beta(\alpha,\gamma)$, and the solutions $(\lambda^*,\alpha^*_1,\alpha_2^*)$ as functions of $(\mu,\gamma_1,\gamma_2)$, we get that for $\ell_0 \in \Nz$ sufficiently large and  $\eps>0$ small (depending on $\ell$)    
$H_i(\mu,\gamma_1,\gamma_2) >0$ for $i=1,2,3$ for all $(\mu,\gamma_1,\gamma_2) \in [\mu^{\rm us}_\ell - \eps, \mu^{\rm us}_\ell + \eps] \times [\gamma_\ell - \eps, \gamma_\ell + \eps]^2 $.  

It remains to confirm \eqref{eq:convexenergy}. The first identity is a consequence of the fact that  $E^{\rm sym}_{\mu,\gamma_1,\gamma_2}$ is  symmetric in $(\alpha_1,\gamma_1)$ and $(\alpha_2,\gamma_2)$. \BBB Recalling \eqref{convexity-last-a} and the fact that  $D^2E_{\rm red}$ is positive definite, we can control the eigenvalues of $\ell^2 D^2E_{\rm red}$ from below and find $\ell^2 D^2E_{\rm red} \ge  8c_0' \mathbf{I} + {\rm O}(\ell^{-1})$ for some constant $c_0'$ depending only on $v_2''(1)$ and $v_3''(2\pi/3)$, where $\mathbf{I}$ denotes the identity matrix. This implies the  second estimate of \eqref{eq:convexenergy}. \EEE
\end{proof}

\subsection{Proof of  Proposition \ref{th: mainenergy} and Proposition \ref{th: main2}}
We are now in the position to show the main properties of $E_{\rm red}$.

\begin{proof}[Proof of Proposition \ref{th: mainenergy}]
Property 2 follows directly from Proposition \ref{convexenergy} if the intervals $M^\ell, G^\ell$ are chosen appropriately  depending on $\eps$, with $\eps$ from Proposition \ref{convexenergy}. 

In Proposition \ref{prop1} we have seen that for given $(\mu,\gamma_1,\gamma_2) \in M^\ell \times G^\ell \times G^\ell$ there is a unique solution $(\lambda^*,\alpha_1^*, \alpha_2^*)$ of the minimization problem  \eqref{red}. In particular, if $\gamma_1 = \gamma_2$ we obtain $\alpha^* := \alpha_1^* = \alpha_2^*$ as then \eqref{red} is completely symmetric in $\alpha_1$ and $\alpha_2$.  \RRR This  \PPP proves \EEE Property 1. \EEE

We now specifically consider the case $\gamma_1 = \gamma_2 = \gamma_\ell$ and denote the minimizer in \eqref{red} by  $(\lambda^\mu,\alpha^\mu, \alpha^\mu)$. We observe that  $\lambda_1^\mu := \mu/2 +\lambda^\mu \cos\alpha^\mu$, $\lambda_2^\mu := \lambda^\mu$, and $\sigma^\mu : = -\lambda^\mu\cos\alpha^\mu$ satisfy the relations \eqref{eq: basic constraints} and \eqref{alphars}. Then   by  \eqref{basicenergy}, \eqref{symmetric-cell}, and the fact that $n= 4m\ell$ we derive
\begin{align*}
E_{\rm red}(\mu,\gamma_\ell,\gamma_\ell) &=  2v_2(\lambda^\mu)    +   {v}_2  \big(\mu/2 +\lambda^\mu\cos\alpha^\mu \big) +  4   v_3(\alpha^\mu) +  2v_3(\beta(\alpha^\mu,\gamma_\ell))\\
& =  2v_2(\lambda^\mu_2) +  v_2(\lambda^\mu_1)     +  4   v_3(\alpha^\mu) +  2v_3(\beta(\alpha^\mu,\gamma_\ell))  = \frac{1}{2m\ell} E(\mathcal{F}_{\lambda_1^\mu,\lambda_2^\mu,\mu}),
\end{align*}
which concludes the proof of Property 5.

To see Property 3, we introduce $g(\gamma) = E_{\rm red}(\mu,\gamma,\gamma)$ for $\mu \in M^\ell$. By \eqref{derivative1.2} we have $$g'(\gamma) = \sum_{i=1,2}\partial_{\gamma_i} E_{\rm red}(\mu,\gamma,\gamma) = 2v_3'(\beta(\alpha^*,\gamma)) \partial_\gamma \beta(\alpha^*,\gamma),$$
where $\alpha^* = \alpha^*(\mu,\gamma,\gamma)$. Using \eqref{zigder3} and the fact that $v_3'(\beta(\alpha^*,\gamma)) < 0$ since  $\beta(\alpha^*,\gamma) < 2\pi/3$, we get $g'(\gamma) < 0$. Moreover, taking again  \eqref{zigder3} and  Lemma \ref{aus} into account, a Taylor expansion shows $|g'(\gamma)| \le C\ell^{-3}$ for some $C>0$ only depending on $v_3$. This  shows Property 3.

Finally, we show Property 4. The strict convexity of $\mu \mapsto E_{\rm red}(\mu,\gamma_\ell,\gamma_\ell)$  follows from \eqref{convexity-last-a} and  a continuity argument, exactly as in the proof of Proposition \ref{convexenergy}. To show that the mapping is \RRR strictly \EEE increasing for $\mu > \mu^{\rm us}_\ell$, we have to show that for $\mu > \mu^{\rm us}_\ell$
\begin{align}\label{larger than one}
\mu/2 + \lambda^\mu \cos\alpha^\mu >1 
\end{align}
as then the property follows from \eqref{derivative1}.  Using the monotonicity properties of $v_2$ we see that the first-order optimality conditions \eqref{eq: firstorder-opt} and \eqref{first order-a} imply
\begin{align}\label{eq: sgn}
\begin{split}
 \mu/2 + \lambda^\mu\cos\alpha^\mu> 1 \ \ \  \Leftrightarrow \ \ \ \lambda^\mu >1.
\end{split}
\end{align}
We prove \eqref{larger than one} by contradiction. Suppose  $\lambda^\mu \le  1$. This together with the fact $\mu > \mu^{\rm us}_\ell = 2 - 2\cos\alpha^{\rm us}_\ell$ \RRR (see \eqref{eq:muellneu}) \EEE and $\cos\alpha^\mu<0$  would imply by  \eqref{eq: sgn}
\begin{align}\label{eq: sgn2}
2\cos\alpha^\mu - 2\cos\alpha^{\rm us}_\ell +1 = \mu^{\rm us}_\ell -1 +2\cos\alpha^\mu <  \mu -1 +2\lambda^\mu\cos\alpha^\mu \le 1
\end{align}
and thus $\alpha^\mu > \alpha^{\rm us}_\ell$. By the  optimality condition in the unstretched case (see \eqref{first order-b} and recall that bond lengths are all equal to $1$) we get 
$$v_3'(\beta(\alpha^{\rm us}_\ell,\gamma_\ell))\, \partial_\alpha \beta(\alpha^{\rm us}_\ell,\gamma_\ell) + 2v_3'(\alpha^{\rm us}_\ell) = 0.$$
Consider the mapping $\alpha \mapsto v_3'(\beta(\alpha,\gamma_\ell))\, \partial_\alpha \beta(\alpha,\gamma_\ell) + 2v_3'(\alpha)$ and \PPP observe that  its derivative is \EEE 
$$v_3'(\beta(\alpha,\gamma_\ell))\, \partial^2_{\alpha\alpha} \beta(\alpha,\gamma_\ell) + v_3''(\beta(\alpha,\gamma_\ell))\, (\partial_\alpha \beta(\alpha,\gamma_\ell))^2  + 2v_3''(\alpha).$$
Thus, the mapping is strictly increasing in a left neighborhood of $2\pi/3$ by \eqref{zigder3} and the fact that $\beta(\alpha,\gamma_\ell) < 2\pi/3$. Since $\alpha^\mu > \alpha^{\rm us}_\ell$, this gives 
$$
v'_3(\beta(\alpha^\mu,\gamma_\ell)) \, \partial_\alpha\beta(\alpha^\mu,\gamma_\ell)   + 2 v'_3(\alpha^\mu)  >0.
$$
In view of \eqref{first order-b} and the first order optimality conditions \eqref{eq: firstorder-opt},    we get $\mu/2 + \lambda^\mu\cos\alpha^\mu > 1$, which contradicts the last inequality in  \eqref{eq: sgn2}. Consequently,   \eqref{larger than one} holds, which concludes the proof. 
\end{proof}

\RRR We close this section with the proof of Proposition \ref{th: main2}.\EEE

\begin{proof}[Proof of Proposition \ref{th: main2}]
Let $M^\ell$ be the interval given by Proposition \ref{th: mainenergy}. The strict convexity of the mapping $\mu \mapsto  E_{\rm min}(\mu)$ on $M^\ell$ as well as $\frac{d^2}{d\mu^2} E_{\rm min}(\mu_\ell^{\rm us}) \ge c2m\ell \ge cn$  follow from Properties 4 and 5 of Proposition \ref{th: mainenergy}. The fact that the energy minimum is attained at $\mu^{\rm us}_\ell$ follows from the definition of $\mu^{\rm us}_\ell$, see Proposition \ref{eq: old main result} \RRR and \eqref{eq:muellneu}. \EEE This shows Property 1.

Now consider Property 2. We define $\lambda^\mu_1=\mu/2 +\lambda^\mu \cos\alpha^\mu$, $\lambda^\mu_2 = \lambda^\mu$ \RRR with $\lambda^\mu$, $\alpha^\mu$ being the solution of \eqref{red} for $\mu$ and $\gamma_1 = \gamma_2 = \gamma_\ell$ (cf. Proposition   \ref{th: mainenergy}(v)) \EEE and use  \eqref{convexity3-b} to obtain $\partial_\mu \lambda_2^\mu({\RRR t_{\rm ref} \EEE }) = \partial_\mu \lambda^*({\RRR t_{\rm ref} \EEE }) = 1/K$ and $\partial_\mu \lambda_1^\mu({\RRR t_{\rm ref} \EEE }) = 1/2 - \partial_\mu \lambda^*({\RRR t_{\rm ref} \EEE })/2- \sqrt{3}\partial_\mu \alpha^*_1({\RRR t_{\rm ref} \EEE })/2 =  4/K$ with $K = 9 + v_2''(1)/(2 v_3''(2\pi/3))$. \RRR (Recall the definition $t_{\rm ref} = (3,\pi,\pi)$.) \EEE Consequently, by a standard continuity argument we see that $\lambda^\mu_1$ and $\lambda^\mu_2$ increase continuously for $\mu \in M^\ell$, possibly passing to a smaller (not relabeled)  open interval $M^\ell$ containing $\mu^{\rm us}_\ell$. The proof of the fact that $\mu > \mu^{\rm us}_\ell$ implies $\lambda^\mu_1,\lambda^\mu_2 >1$ is already contained in the proof of Proposition \ref{th: mainenergy}, see particularly \eqref{larger than one} and \eqref{eq: sgn}. The fact that  $\mu < \mu^{\rm us}_\ell$ implies $\lambda^\mu_1,\lambda^\mu_2 <1$ can be proved along similar lines.  

To see Property 3, recall that by Proposition \ref{eq: old main result} we have $ \RRR \alpha^{\rm us}_\ell  \EEE = \alpha^{\mu^{\rm us}_\ell} \in (\alpha^{\rm ch}_\ell,\alpha^{\rm ru})$ in the unstretched case. By a continuity argument we particularly obtain the convergence of minimizers,  \PPP i.e., \EEE $\alpha^\mu \to \alpha^{\mu^{\rm us}_\ell}$ as $\mu \to \mu_\ell^{\rm us}$. Consequently, again possibly passing to a smaller interval $M^\ell$,  Property 3 follows. We finally concern ourselves with Property 4. Recall by \eqref{alphars} that the radius of the nanotube is given by
$$\rho^\mu = \lambda_2^\mu \sin\alpha^\mu / (2\sin(\pi/(2\ell))).$$
We compute the derivative and obtain
$$\partial_\mu \rho^\mu = \big(\lambda_2^\mu \cos\alpha^\mu \, \partial_\mu \alpha^\mu + \partial_\mu \lambda_2^\mu\sin\alpha^\mu \big)/(2\sin(\pi/(2\ell))). $$
By \eqref{convexity3-b} the derivative at the unstrechted planar reference configuration  \PPP is \EEE
\begin{align*}
\lim_{\ell \to \infty} \partial_\mu \rho^{\mu^{\rm us}_\ell} \cdot (2\sin(\pi/(2\ell))) & =  - \frac{1}{2}  \partial_\mu \alpha_1^*({\RRR t_{\rm ref} \EEE }) + \frac{1}{2}\sqrt{3}\partial_\mu \lambda^*({\RRR t_{\rm ref} \EEE })  = \frac{\sqrt{3}}{2K}   \Big( 1 - \frac{v_2''(1)}{6v_3''(2\pi/3)} \Big).
\end{align*}
Consequently, whenever $v_2''(1) \neq 6v_3''(2\pi/3))$, by a continuity argument the sign of $\partial_\mu \rho^{\mu}$ for $\ell \in \Nz$ large in a small neighborhood of $\mu^{\rm us}_\ell$ only depends on the sign of $v_2''(1) - 6v_3''(2\pi/3)$.
\end{proof}

\section{Energy defect controls symmetry defect: Proof of Theorem \ref{th: Ered}}\label{sec: cellenery}
This section is devoted to the proof of Theorem \ref{th: Ered}. The
fact that the minimum of the cell energy is attained for a special
configuration with  high symmetry (see \eqref{sym-assumption})
essentially relies on convexity properties of the cell energy $E_{\rm
  cell}$ defined in \eqref{eq: cell}.  Throughout the section we
consider a cell consisting of eight points $\boldsymbol{x} =
(x_1,\ldots,x_8) \in \Rz^{3 \times 8}$ as defined before \eqref{eq:
  cell}, see Figure \ref{cell}. Likewise, the bond lengths are again denoted by   $b_1,\ldots,b_8$ and the angles by $\varphi_1,\ldots, \varphi_{10}$, \RRR see Figure \ref{cellangles}. \EEE With a slight abuse of notation we denote the cell energy for a given configuration $\boldsymbol{x}$ by $E_{\rm cell}(\boldsymbol{x})$.

 \subsection{Relation between atomic positions, bonds, and angles} 
 We will investigate the convexity properties of $E_{{\rm cell}}$ near the \emph{planar reference configuration} $\boldsymbol{x}^0 = (x^0_1,\ldots,x^0_8) \in \Rz^{3 \times 8}$ defined by  
 \begin{align*} 
 \begin{split}
&x_1^0 = (-1,0,0), \ \ \ x^0_2 = (1,0,0), \ \ \    x_3^0 =(-1/2,\sqrt{3}/2,0), \\ & x_4^0 =(1/2,\sqrt{3}/2,0),  \ \ \ x_5^0 =(1/2,-\sqrt{3}/2,0), \ \ x_6^0 =(-1/2,-\sqrt{3}/2,0), \\& x_7^0 = (-2,0,0), \ \ x_8^0 = (2,0,0).
\end{split}
 \end{align*}
Moreover, we introduce the \emph{unstretched kink configuration}  $\boldsymbol{x}^\ell_{\rm kink} = (x^{\rm kink}_1,\ldots,x^{\rm kink}_8) \in \Rz^{3 \times 8}$ by 
 \begin{equation}\label{kink} 
 \begin{aligned}
&x_1^{\rm kink} = (-1/2-\sigma^{\rm us},0,0), \\ &  x^{\rm kink}_2 = (1/2 + \sigma^{\rm us},0,0), \\
&  x_3^{\rm kink} =(-1/2,\sin\alpha^{\rm us}_\ell \sin(\gamma_\ell/2),\sin\alpha^{\rm us}_\ell \cos(\gamma_\ell/2)), \\ & x_4^{\rm kink} =(1/2,\sin\alpha^{\rm us}_\ell \sin(\gamma_\ell/2),\sin\alpha^{\rm us}_\ell \cos(\gamma_\ell/2)),  \\
&  x_5^{\rm kink} =(1/2,-\sin\alpha^{\rm us}_\ell \sin(\gamma_\ell/2),\sin\alpha^{\rm us}_\ell \cos(\gamma_\ell/2)), \\ & x_6^{\rm kink} =(-1/2,-\sin\alpha^{\rm us}_\ell \sin(\gamma_\ell/2),\sin\alpha^{\rm us}_\ell \cos(\gamma_\ell/2)), \\& x_7^{\rm kink} = (-3/2 - \sigma^{\rm us},0,0), \\ & x_8^{\rm kink} = (3/2 + \sigma^{\rm us},0,0),
\end{aligned}
 \end{equation}
where $\gamma_\ell=\pi(1-1/\ell)$ and $\sigma^{\rm us} = - \cos\alpha^{\rm us}_\ell$ with $\alpha^{\rm us}_\ell$ as given by Proposition \ref{eq: old main result} (cf. also \eqref{alphars}). Note that $\boldsymbol{x}^\ell_{\rm kink}$ represents the mutual position of atoms in a cell for the unstretched nanotube $\mathcal{G}_{\alpha^{\rm us}_\ell}$ found   in  Proposition \ref{eq: old main result}.  For later use we note that by  Lemma \ref{aus} and a Taylor expansion we find 
\begin{align}\label{eq: distance} 
|\boldsymbol{x}^0 -\boldsymbol{x}^\ell_{\rm kink}| \le C \ell^{-1}
\end{align}
for some universal $C>0$  large enough. In order to discuss the convexity properties of $E_{\rm cell}$ we need to introduce a specific basis of $\Rz^{3 \times 8}$,  \PPP i.e., \EEE  the space of cell configurations. This will consist of three collections of vectors, denoted by $\mathcal{V}_{\rm degen}$, $\mathcal{V}_{\rm good}$, and $\mathcal{V}_{\rm bad}$, where the sets are defined as follows:   We introduce the translations and infinitesimal rotations
\begin{align*}
\mathcal{V}_{\rm trans} &= \Big\{ ( {e}_1,\ldots, {e}_1),  ( e_2,\ldots, {e}_2), ( {e}_3,\ldots, {e}_3)\Big\} \subset \Rz^{3 \times 8} \\
\mathcal{V}_{\rm rot} &=  \left\{ 
\boldsymbol{v}_1 := \begin{pmatrix}
  0 & 1   & 0 \\
  -1 & 0  & 0 \\
  0  & 0 & 0  
 \end{pmatrix} \, \boldsymbol{x}^0, \ 
\boldsymbol{v}_2 :=\begin{pmatrix}
  0 & 0   & 1 \\
  0 & 0  & 0 \\
  -1  & 0 & 0   
 \end{pmatrix} \, \boldsymbol{x}^0, \ 
\boldsymbol{v}_3 :=\begin{pmatrix}
  0 & 0   & 0 \\
  0 & 0  & 1 \\
  0  & -1 & 0   
 \end{pmatrix} \, \boldsymbol{x}^0 \right\} \subset \Rz^{3 \times 8}
\end{align*}
and set $\mathcal{V}_{\rm degen}= \mathcal{V}_{\rm trans}  \cup
\mathcal{V}_{\rm rot}$.  The family $\mathcal{V}_{\rm good}$ contains
the 13 vectors 
\begin{align*}
\boldsymbol u_1=&( -1, 0, 0  |  1 , 0 , 0 | - 1/2, \sqrt{3}/2 , 0  |  1/2 , \sqrt{3}/2, 0  | 1/2 , -\sqrt{3}/2 , 0|  - 1/2 , -\sqrt{3}/2 , 0 | 0 , 0 ,  0  |  0 , 0 , 0),\\
\boldsymbol u_2=& (0 , 0 , 0 \, | \,  0 , 0 , 0  \, | \,  1/2 , \sqrt{3}/2 , 0  \, | \,  - 1/2 , \sqrt{3}/2 , 0   \, | \,  0 , 0 , 0  \, | \,  0 , 0 , 0  \, | \,  0 , 0 , 0  \, | \,  0 , 0 , 0),\\ 
\boldsymbol u_3=&(0 , 0 , 0  \, | \, 1 , 0 , 0 \, | \, 0 , 0 , 0 \, | \, 1 , 0 , 0  \, | \, 1 , 0 , 0 \, | \, 0 , 0 , 0 \, | \, 0 , 0 , 0 \, | \, 0 , 0 , 0),\\ 
\boldsymbol u_4=&(0 , 0 , 0 \, | \, 1/2 , -\sqrt{3}/2 , 0 \, | \, 1/2 , \sqrt{3}/2 , 0 \, | \, - 1/2 , \sqrt{3}/2 , 0  \, | \, 1 , 0 , 0 \, | \, 0 , 0 , 0 \, | \, 0 , 0 , 0 \, | \, 0 , 0 , 0),\\ 
\boldsymbol u_5=&(0 , 0 , 0 \, | \, 0 , 0 , 0 \, | \, 0 , 0 , 0 \, | \, 0 , 0 , 0 \, | \, 0 , 0 , 0 \, | \, 0 , 0 , 0 \, | \, -1 , 0 , 0 \, | \, 0 , 0 , 0),\\
\boldsymbol u_6=&(0 , 0 , 0 \, | \, 0 , 0 , 0 \, | \, 0 , 0 , 0 \, | \, 0 , 0 , 0 \, |
\, 0 , 0 , 0 \, | \, 0 , 0 , 0 \, | \, -1 , 0 , 0 \, | \, 1 , 0 ,
0),\\
\boldsymbol u_7=&(\sqrt{3} , 0 , 0 \, | \, 0 , 0 , 0 \, | \, 0 , 1 , 0 \, | \, 0 , 0 , 0  \, | \, 0 , 0 , 0 \, | \, 0 , -1 , 0 \, | \, 0 , 0 , 0  \, | \, 0 , 0 , 0),\\ 
\boldsymbol u_8=&(0 , 0 , 0 \, | \, 0 , 0 , 0 \, | \, \sqrt{3}/2 , - 1/2 , 0 \, | \, \sqrt{3}/2 , 1/2 , 0  \, | \, 0 , 0 , 0 \, | \, 0 , 0 , 0 \, | \, 0 , 0 , 0 \, | \, 0 , 0 , 0),\\ 
\boldsymbol u_9=&(\sqrt{3}/2, 1/2 , 0 \, | \, -\sqrt{3}/2 , 1/2 , 0 \, | \, 0 , 1 , 0 \, | \, 0 , 1 , 0  \, | \, 0 , 0 , 0 \, | \, 0 , 0 , 0 \, | \, 0 , 0 , 0 \, | \, 0 , 0 , 0),\\ 
\boldsymbol u_{10}=&(0 , 0 , 0 \, | \, 0 , 0 , 0 \, | \, 0 , 0 , 0 \, | \, 0 , 0 , 0 \, | \, 0 , 0 , 0 \, | \, 0 , 0 , 0 \, | \, 0 , 1 , 0 \, | \, 0 , 0 , 0),\\
\boldsymbol u_{11}=&(0 , 0 , 0 \, | \, 0 , 0 , 0 \, | \, 0 , 0 , 0 \, | \, 0 , 0 , 0 \, |
\, 0 , 0 , 0 \, | \, 0 , 0 , 0 \, | \, 0 , 1 , 0 \, | \, 0 , 1 , 0),\\
\boldsymbol u_{12}=&(1 , 0 , 0 \, | \, 0 , 0 , 0 \, | \, 0 , 0 , 0 \, | \, 0 , 0 , 0 \, | \, 0 , 0 , 0 \, | \, 0 , 0 , 0 \, | \, 0 , 0 , 0 \, | \, 0 , 0 , 0),\\ 
\boldsymbol u_{13}=&(0 , 1 , 0 \, | \, 0 , 0 , 0 \, | \, 0 , 0 , 0 \, | \, 0 , 0 , 0 \, | \, 0 , 0 , 0 \, | \, 0 , 0 , 0 \, | \, 0 , 0 , 0 \, | \, 0 , 0 , 0).
\end{align*}
The first 6 vectors keep the angles fixed and \PPP modify only the bond lengths, \EEE
see Figure \ref{Eigenvectors_angle_fixed}. The vectors $\boldsymbol
u_8, \dots , \boldsymbol u_{11}$ keep the bond lengths fixed \RRR to first order \EEE and change the
angles, see Figure \ref{Eigenvectors_sides_fixed}. Eventually, the
remaining  vectors $\boldsymbol
u_{12}$ and $ \boldsymbol u_{13}$  modify both angles and bonds as in Figure \ref{Eigenvectors_nofixed}.

 \begin{figure}[htp]
\begin{center}
\pgfdeclareimage[width=0.9\textwidth]{Eigenvectors_angle_fixed}{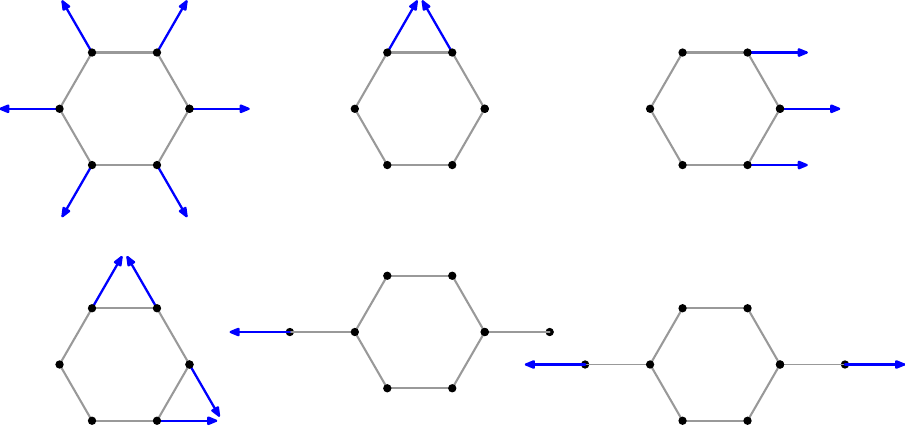}
    \pgfuseimage{Eigenvectors_angle_fixed}
\caption{Vectors $\boldsymbol u_1, \dots, \boldsymbol u_6$ in  $\mathcal{V}_{\rm good}$ keep the angles fixed (ordered from left to right  both in the first and in the second line).}
\label{Eigenvectors_angle_fixed}
\end{center}
\end{figure}

 \begin{figure}[tp]
\begin{center}
\pgfdeclareimage[width=0.8\textwidth]{Eigenvectors_sides_fixed}{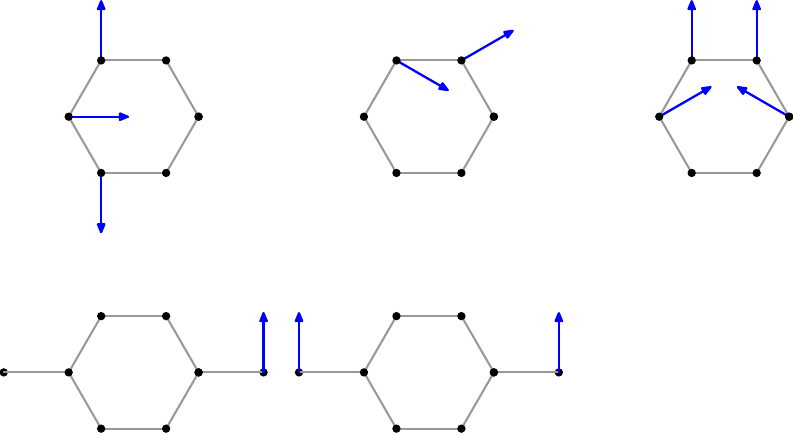}
    \pgfuseimage{Eigenvectors_sides_fixed}
\caption{Vectors $\boldsymbol u_{7}, \dots, \boldsymbol u_{11}$ in  $\mathcal{V}_{\rm good}$  keep the bond lengths fixed  (ordered from left to right  both in the first and in the second line).}
\label{Eigenvectors_sides_fixed}
\end{center}
\end{figure}
 
\begin{figure}[htp]
\begin{center}
\pgfdeclareimage[width=0.45\textwidth]{Eigenvectors_nofixed}{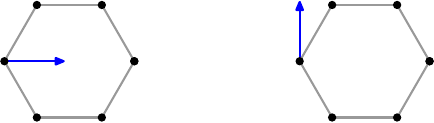}
    \pgfuseimage{Eigenvectors_nofixed}
\caption{Vectors $\boldsymbol u_{12} $ and $\boldsymbol u_{13}$ in
  $\mathcal{V}_{\rm good}$   keep neither angles nor bond lengths fixed (ordered from left to right).}
\label{Eigenvectors_nofixed}
\end{center}
\end{figure}

 By $\mathcal{V}_{\rm bad}$ we denote the collection of the vectors
\begin{align*}
& (0 , 0 , 1 \, | \, 0 , 0 , 0 \, | \, 0 , 0 , 0 \, | \, 0 , 0 , 0  \, | \, 0 , 0 , 0 \, | \, 0 , 0 , 0 \, | \, 0 , 0 , 0 \, | \, 0 , 0 , 0),\\ 
&(0 , 0 , 1 \, | \, 0 , 0 , 0 \, | \, 0 , 0 , 1 \, | \, 0 , 0 , 0  \, | \, 0 , 0 , 0 \, | \, 0 , 0 , 0 \, | \, 0 , 0 , 0 \, | \, 0 , 0 , 0),\\ 
&(0 , 0 , 1 \, | \, 0 , 0 , 0  \, | \, 0 , 0 , 0 \, | \, 0 , 0 , 1 \, | \, 0 , 0 , 1 \, | \, 0 , 0 , 0 \, | \, 0 , 0 , 0 \, | \, 0 , 0 , 0),\\
&(0 , 0 , 0 \, | \, 0 , 0 , 0 \, | \, 0 , 0 , 0 \, | \, 0 , 0 , 0 \, | \, 0 , 0 , 0 \, | \, 0 , 0 , 0 \, | \, 0 , 0 , 1 \, | \, 0 , 0 , 0),\\
&(0 , 0 , 0 \, | \, 0 , 0 , 0 \, | \, 0 , 0 , 0 \, | \, 0 , 0 , 0 \, | \, 0 , 0 , 0 \, | \, 0 , 0 , 0 \, | \, 0 , 0 , 1 \, | \, 0 , 0 , 1).
\end{align*}
It is elementary to check that the vectors $\mathcal{V}_{\rm degen} \cup \mathcal{V}_{\rm good} \cup \mathcal{V}_{\rm bad}$ are linearly independent and thus form a basis of $\Rz^{3 \times 8}$. Note that    the vectors in $\mathcal{V}_{\rm good}$ are perpendicular to the vectors in $\mathcal{V}_{\rm bad}$.

Clearly, the cell energy is strictly convex as a function of the bond lengths and angles by the assumptions on the potentials $v_2$ and $v_3$. Our goal is to show that the same property holds if the cell energy is given as a function of the atomic positions. To this end, we introduce the mapping $T = (T^a,T^b): \Rz^{3 \times 8} \to \Rz^{18}$ defined by
\begin{align*}
T^a_i(\boldsymbol{x}) = \varphi_{i} \ \text{ for } i= 1,\ldots,10, \ \ \ \ T^b_i(\boldsymbol{x}) &= b_i \ \text{ for } i= 1,\ldots,8.
\end{align*}
Then the cell energy reads as 
\begin{align}\label{eq: cell-energy}
E_{{\rm cell}}(\boldsymbol{x}) = \sum_{i=1}^8 \kappa^b_i v_2(T^b_i(\boldsymbol{x}))  +\sum_{i=1}^{10} \kappa^a_i v_3(T^a_i(\boldsymbol{x}))  
\end{align}
with the factors $\kappa^b_1 = \kappa^b_2 = \kappa^b_7= \kappa^b_8 = 1/4$, $\kappa^b_3 = \kappa^b_4 = \kappa^b_5=  \kappa^b_6 = 1/2$, $\kappa^a_1 = \kappa^a_{2} = 1$, $\kappa^a_{3} = \ldots =  \kappa^a_{10} =  1/2$.

{\BBB Before analyzing the mapping $T$, we need to introduce some more notation for the sum of angles $\varphi_i$. From here on, we denote by $\boldsymbol{e}_1,\ldots,\boldsymbol{e}_{10}$ the canonical basis of $\mathbb{R}^{10}$, and we let
$$\boldsymbol{a}_1 := \boldsymbol{e}_1 + \ldots + \boldsymbol{e}_6,\quad \boldsymbol{a}_2 := \boldsymbol{e}_1 + \boldsymbol{e}_7 + \boldsymbol{e}_8,\quad \boldsymbol{a}_3: = \boldsymbol{e}_2 + \boldsymbol{e}_9 + \boldsymbol{e}_{10}$$ be vectors in $\Rz^{10}$.  Elementary geometry yields $T^a(\boldsymbol{x}^0)\cdot  \boldsymbol{a}_1 = 4\pi$ and $T^a(\boldsymbol{x}^0) \cdot \boldsymbol{a}_j = 2\pi$ for $j=2,3$ as well as  $T^a(\boldsymbol{x})\cdot \boldsymbol{a}_1 \le 4\pi$ and $T^a(\boldsymbol{x}) \cdot \boldsymbol{a}_j \le 2\pi$ for $j=2,3$ for each $\boldsymbol{x}\in\Rz^{3\times 8}$. Indeed, the sum of the interior angles in a hexagon is always smaller or equal to $4\pi$ and exactly $4\pi$ if the hexagon is planar. Likewise one argues for a triple junction. 

}

\begin{lemma}[Properties of $T$]\label{lemma: T}
The mapping $T$ is smooth in a neighborhood of $\boldsymbol{x}^0$. There is a  constant $c_{\rm kink} >0$ such that  
\begin{align*} 
1.& \ \ {\rm Ker}(D T(\boldsymbol{x}^0)) = {\rm span}(\mathcal{V}_{\rm degen} \cup \mathcal{V}_{\rm bad}), \ \ \ \ {\rm dim}({\rm Ker}(D T(\boldsymbol{x}^0))) = 11,\\
2.& \ \ {\rm dim}({\rm Ker}(D T^a(\boldsymbol{x}^0))) = 17,\\
3.& \ \ (\boldsymbol{v}^T D^2 T^a(\boldsymbol{x}^0) \boldsymbol{v})\cdot \boldsymbol{a}_j \le 0  \text{ for } \ j=1,2,3, \ \ \text{ for all } \ \ \boldsymbol{v} \in \Rz^{3 \times 8},\\
4.& \ \  \sum_{j=1}^{3}(\boldsymbol{v}^T D^2 T^a(\boldsymbol{x}^0)
\boldsymbol{v}) \cdot \boldsymbol{a}_j\le -c_{\rm kink}|\boldsymbol{v}
- \boldsymbol{v}_{\rm degen}|^2  \ \ \text{ for all } \ \
\boldsymbol{v} \in {\rm span}(\mathcal{V}_{\rm degen} \cup
\mathcal{V}_{\rm bad}), \\ & \ \ \ \text{ where $\boldsymbol{v}_{\rm
    degen}$ is the orthogonal  projection of $\boldsymbol{v}$ onto
  ${\rm span}(\mathcal{V}_{\rm degen})$.} 
\end{align*}

\end{lemma}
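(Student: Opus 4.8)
The plan is to read off all four statements from the explicit geometry of $\boldsymbol{x}^0$. First note that $T$ is smooth near $\boldsymbol{x}^0$: at the regular planar hexagon $\boldsymbol{x}^0$ every bond has positive length and every angle equals $2\pi/3\in(0,\pi)$, so each $b_i=|x_a-x_b|$ and each $\varphi_i=\arccos(\cdot)$ is $C^\infty$ in a neighbourhood. Two geometric facts then do all the work. \emph{(Rigid invariance.)} Rigid motions leave all bonds and all angles unchanged, hence $\mathcal{V}_{\rm degen}=\mathcal{V}_{\rm trans}\cup\mathcal{V}_{\rm rot}\subset{\rm Ker}(DT(\boldsymbol{x}^0))$, and moreover each angle sum $f_j:=T^a\cdot\boldsymbol{a}_j$ is constant on the rigid-motion orbit of $\boldsymbol{x}^0$. \emph{(Planarity.)} If one perturbs $\boldsymbol{x}^0$ purely out of plane, then to first order every bond and every angle is unchanged: for a bond, $\frac{d}{dt}|x_a-x_b+t(w_a-w_b)|_{t=0}=0$ because $x_a-x_b\in\Rz^2\times\{0\}$ is orthogonal to the $e_3$-perturbation $w_a-w_b$, and the same orthogonality gives $\frac{d}{dt}\cos\varphi=0$, hence $\frac{d\varphi}{dt}=0$ since $\sin\varphi\neq0$. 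Therefore $\mathcal{V}_{\rm bad}\subset{\rm Ker}(DT(\boldsymbol{x}^0))$ as well.

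\textbf{Properties 1 and 2.} The two inclusions give ${\rm span}(\mathcal{V}_{\rm degen}\cup\mathcal{V}_{\rm bad})\subseteq{\rm Ker}(DT(\boldsymbol{x}^0))$, an $11$-dimensional subspace since these vectors are linearly independent. For the reverse inclusion I would show $DT(\boldsymbol{x}^0)$ is injective on ${\rm span}(\mathcal{V}_{\rm good})$: projecting a relation $\sum_i c_i\,DT(\boldsymbol{x}^0)\boldsymbol{u}_i=0$ onto its angle block annihilates $\boldsymbol{u}_1,\dots,\boldsymbol{u}_6$ (angle-preserving by construction), reducing to the linear independence of $DT^a(\boldsymbol{x}^0)\boldsymbol{u}_7,\dots,DT^a(\boldsymbol{x}^0)\boldsymbol{u}_{13}$ in $\Rz^{10}$; the remaining coefficients then vanish by independence of the bond variations $DT^b(\boldsymbol{x}^0)\boldsymbol{u}_1,\dots,DT^b(\boldsymbol{x}^0)\boldsymbol{u}_6$. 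Both independence statements are finite explicit computations at $\boldsymbol{x}^0$. This yields ${\rm rank}\,DT(\boldsymbol{x}^0)\ge13$, hence $=13$, proving Property~1. For Property~2, since $\boldsymbol{u}_1,\dots,\boldsymbol{u}_6$ are angle-preserving to first order they also lie in ${\rm Ker}(DT^a(\boldsymbol{x}^0))$; together with $\mathcal{V}_{\rm degen}\cup\mathcal{V}_{\rm bad}$ they span a $17$-dimensional kernel, while the seven images $DT^a(\boldsymbol{x}^0)\boldsymbol{u}_7,\dots,DT^a(\boldsymbol{x}^0)\boldsymbol{u}_{13}$ certify ${\rm rank}\,DT^a(\boldsymbol{x}^0)\ge7$, forcing ${\rm dim}\,{\rm Ker}(DT^a(\boldsymbol{x}^0))=17$.

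\textbf{Property 3.} Each $f_j$ is smooth near $\boldsymbol{x}^0$ and, by the angle-sum inequalities recorded above, satisfies $f_1\le4\pi$ and $f_2,f_3\le2\pi$ globally, with equality at the planar configuration $\boldsymbol{x}^0$. Thus $\boldsymbol{x}^0$ is a global maximiser of each $f_j$, so $Df_j(\boldsymbol{x}^0)=0$ and $D^2f_j(\boldsymbol{x}^0)\le0$, which is precisely $\boldsymbol{v}^TD^2T^a(\boldsymbol{x}^0)\boldsymbol{v}\cdot\boldsymbol{a}_j\le0$ for all $\boldsymbol{v}$.

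\textbf{Property 4.} Write $Q(\boldsymbol{v}):=\sum_{j=1}^3\boldsymbol{v}^TD^2T^a(\boldsymbol{x}^0)\boldsymbol{v}\cdot\boldsymbol{a}_j$, a negative semidefinite form by Property~3. As each $f_j$ is constant on the rigid orbit and $\boldsymbol{x}^0$ is critical, $Q$ vanishes on ${\rm span}(\mathcal{V}_{\rm degen})$; the Cauchy--Schwarz inequality for semidefinite forms then places ${\rm span}(\mathcal{V}_{\rm degen})$ in the radical of $Q$, so $Q(\boldsymbol{v})=Q(\boldsymbol{v}-\boldsymbol{v}_{\rm degen})$. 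Since $\boldsymbol{v}-\boldsymbol{v}_{\rm degen}=(\mathbf{I}-P)\boldsymbol{v}$ with $P$ the orthogonal projection onto ${\rm span}(\mathcal{V}_{\rm degen})$, and $\boldsymbol{v}\in{\rm span}(\mathcal{V}_{\rm degen}\cup\mathcal{V}_{\rm bad})$, it remains to prove that $Q$ is \emph{negative definite} on the five-dimensional bending space $(\mathbf{I}-P)\,{\rm span}(\mathcal{V}_{\rm bad})$; the constant $c_{\rm kink}>0$ is then the modulus of its smallest eigenvalue, and it is universal because $\boldsymbol{x}^0$ is a fixed regular hexagon. Geometrically this asserts that, transverse to rigid motions, a planar hexagon (resp.\ triple junction) is a \emph{strict} maximiser of the angle sum: the out-of-plane displacement space is eight-dimensional, three of its directions being the rigid $e_3$-translation and the tilts $\boldsymbol{v}_2,\boldsymbol{v}_3$, leaving exactly the five genuine bending modes carried by $\mathcal{V}_{\rm bad}$. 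I expect this last step --- explicitly computing the out-of-plane Hessian of the three angle sums at $\boldsymbol{x}^0$ and verifying its negative definiteness modulo the rigid tilts --- to be the main obstacle, while all the rest reduces to the first-order planarity cancellation and to rank counts.
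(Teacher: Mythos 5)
Your treatment of Properties 1--3 is essentially the paper's own argument: first-order invariance of bonds and angles under rigid motions and under out-of-plane displacements of the planar configuration gives ${\rm span}(\mathcal{V}_{\rm degen}\cup\mathcal{V}_{\rm bad})\subseteq{\rm Ker}(DT(\boldsymbol{x}^0))$, the reverse inclusion is a finite rank computation on the images of $\mathcal{V}_{\rm good}$ (which the paper tabulates explicitly), and Property 3 is the observation that the three angle sums are globally maximized at planar configurations. For Property 4 your reduction is genuinely different and arguably cleaner: where the paper constructs a rotation $R_t$ with $|R_t-(\mathbf{I}-tA)|={\rm O}(|tA|^2)$ to show that the second-order deviation of $\boldsymbol{x}^0+t\boldsymbol{v}_{\rm rot}$ from $\boldsymbol{x}^0$ is in-plane and hence invisible to the angle sums, you pass through the radical of the negative semidefinite form $Q$. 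One caveat: your justification that $Q$ vanishes on ${\rm span}(\mathcal{V}_{\rm rot})$ (``$f_j$ is constant on the rigid orbit'') is not literally correct, since $\boldsymbol{x}^0+t\boldsymbol{v}_{\rm rot}=(\mathbf{I}+tA)\boldsymbol{x}^0$ does not lie on the rigid orbit; you need the extra observation that it is congruent to a \emph{planar} configuration, along which each $f_j$ is exactly constant --- this is precisely the content of the paper's $R_t$ argument, so the point is repairable but not free.

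The genuine gap is the last step of Property 4. You explicitly defer the proof that $Q$ is negative definite on the five-dimensional bending space $(\mathbf{I}-P)\,{\rm span}(\mathcal{V}_{\rm bad})$, but that is the entire content of the assertion: it is the only place the constant $c_{\rm kink}>0$ can come from, and it is what later powers the quantitative symmetrization estimate in Theorem \ref{th: Ered}. Property 3 only yields $Q\le 0$; a priori some nonzero bending mode could leave all three angle sums stationary to second order, in which case the lemma would fail. The paper closes this by direct computation: for $\boldsymbol{v}=(e_3\,|\,0\,|\cdots|\,0)$ each of the three angles at the junction $x_1$ expands as $\arccos(-1/2+3t^2/2)+{\rm O}(t^3)\le 2\pi/3-ct^2$, so the corresponding angle sum drops strictly at second order, and the argument is then extended to the other basis directions and to general elements of ${\rm span}(\mathcal{V}_{\rm bad})$. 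To complete your proof you must supply some such explicit second-order computation, or an argument that $Q(\boldsymbol{v})=0$ forces simultaneous second-order planarity of the hexagon and of both triple junctions, which only the rigid tilts achieve.
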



\begin{proof}
First, to see Property 1, we note that ${\rm span}(\mathcal{V}_{\rm
  degen} \cup \mathcal{V}_{\rm bad})$ is a subset of ${\rm Ker}(D
T(\boldsymbol{x}^0))$ since each vector in $\mathcal{V}_{\rm degen}
\cup \mathcal{V}_{\rm bad}$ does not change bond lengths and angles to
first order. On the other hand, each vector in $\mathcal{V}_{\rm
  good}$ changes bond lengths or angles to first order and is
therefore not contained in the kernel of $D
T(\boldsymbol{x}^0)$. Indeed, the first six vectors of
$\mathcal{V}_{\rm good}$ are directions of perturbations that do not
change angles \PPP to first order, \EEE but bond lengths. Vectors $\boldsymbol
u_7, \dots, \boldsymbol u_{11}$ are perturbations that do not change
bond lengths in first order, but angles. Vectors $ \boldsymbol u_{12}$ and
$\boldsymbol u_{13}$ are in-plane displacements of a single atom and
change both bond lengths and angles to first order.  \RRR More precisely, \vspace{0.2cm} for the changes of bond lengths we get\\ \begin{tabular}{ll}\vspace{0.1cm}
$DT^b(\boldsymbol{x}^0) \boldsymbol{u}_1 \,\|\, (1,1,1,1,1,1,-1,-1)$, & $DT^b(\boldsymbol{x}^0) \boldsymbol{u}_2 \,\|\, (0,-1,1,1,0,0,0,0)$,\\ \vspace{0.1cm}
$DT^b(\boldsymbol{x}^0) \boldsymbol{u}_3 \,\|\, (1,1,0,0,0,0,0,-1)$, & $DT^b(\boldsymbol{x}^0) \boldsymbol{u}_4 \,\|\, (2,-2,2,4,-2,0,0,-1)$,\\ \vspace{0.1cm}
$DT^b(\boldsymbol{x}^0) \boldsymbol{u}_5 \,\|\, (0,0,0,0,0,0,1,0)$, & $DT^b(\boldsymbol{x}^0) \boldsymbol{u}_6 \,\|\, (0,0,0,0,0,0,1,1)$,\\ \vspace{0.2cm}
$DT^b(\boldsymbol{x}^0) \boldsymbol{u}_{12} \,\|\, (0,0,-1,0,0,-1,2,0)$, & $DT^b(\boldsymbol{x}^0) \boldsymbol{u}_{13} \,\|\, (0,0,-1,0,0,1,0,0)$,
  \end{tabular}\\ 
where $\boldsymbol{w}_1 \,\|\, \boldsymbol{w}_2$ indicates that $\boldsymbol{w}_1$ and $\boldsymbol{w}_2$ are linearly dependent. Likewise, for the changes of angles we have  \vspace{0.2cm}  \\   
\begin{tabular}{ll}\vspace{0.1cm}
$DT^a(\boldsymbol{x}^0) \boldsymbol{u}_7 \,\|\, (4,0,-3,1,1,-3,-2,-2,0,0),$ & $DT^a(\boldsymbol{x}^0) \boldsymbol{u}_8 \,\|\, (-1,1,2,-2,0,0,1,0,-1,0)$,  \\  \vspace{0.1cm}
  $DT^a(\boldsymbol{x}^0) \boldsymbol{u}_9 \,\|\, (-2,-2,1,1,1,1,1,1,1,1)$, & $DT^a(\boldsymbol{x}^0) \boldsymbol{u}_{10} \,\|\, (0,0,0,0,0,0,0,0,-1,1)$,  \\ \vspace{0.1cm}
 $DT^a(\boldsymbol{x}^0) \boldsymbol{u}_{11} \,\|\, (0,0,0,0,0,0,-1,1,-1,1)$, & $DT^a(\boldsymbol{x}^0) \boldsymbol{u}_{12} \,\|\, (2,0,-1,0,0,-1,-1,-1,0,0)$,\\ \vspace{0.1cm}
$DT^a(\boldsymbol{x}^0) \boldsymbol{u}_{13} \,\|\, (0,0,0,0,0,0,1,-1,0,0).$ & \vspace{0.2cm}
  \end{tabular}\\
 \EEE (We prefer not to give
details of the computation, but rather refer the \PPP reader  \EEE to Figures \ref{Eigenvectors_angle_fixed}-\ref{Eigenvectors_nofixed} where the situation of the different directions is indicated.). \RRR It is elementary to check that the vectors $DT(\boldsymbol{x}^0) \boldsymbol{u}_i$, $i =1,\ldots, 13$, are linearly independent which concludes the proof of Property 1 \PPP by dimension counting. \EEE 
\EEE

Since  $ {\rm dim}({\rm Ker}(D T(\boldsymbol{x}^0))) =11$ and in $\mathcal{V}_{\rm good}$ only the first six vectors do not change angles to first order,   Property 2 holds. 

Property 3 follows from the fact that the mapping $t \mapsto T^a(\boldsymbol{x}^0 + t\boldsymbol{v})\cdot \boldsymbol{a}_j$ has a local maximum at $t= 0$  for $j=1,2,3$ and for all $\boldsymbol{v} \in \Rz^{3 \times 8}$ as noticed before the statement of the lemma.

To see Property 4,  we first consider the special case $\boldsymbol{v} \in \mathcal{V}_{\rm bad}$. In this situation the property follows from an elementary computation, which we detail only in the case $\boldsymbol{v} = (e_3 |0| \ldots|0)$. In this case, after some calculations, we obtain $(T^a(\boldsymbol{x}^0 + t\boldsymbol{v}))_i = \arccos(-1/2 + 3t^2/2)  + {\rm O}(t^3) \le  2\pi/3 -ct^2$ for some $c>0$ for $i=1,7,8$, \PPP i.e.,  for the angles at the triple junction at point $x_1$. \EEE Using also  Property 1, this indeed implies $(\boldsymbol{v}^T D^2 T^a(\boldsymbol{x}^0) \boldsymbol{v}) \cdot \boldsymbol{a}_2 \le -c$, \PPP i.e., \EEE by a perturbation out of the plane the sum of the angles is reduced to second order. For the other triple junction and the interior angles of the hexagon we argue analogously. This shows the property for perturbations in the directions $\mathcal{V}_{\rm bad}$. \BBB Likewise, we proceed for directions in ${\rm span}(\mathcal{V}_{\rm bad})$. \EEE   

Now consider the general case $\boldsymbol{v} = \boldsymbol{v}_{\rm trans} + \boldsymbol{v}_{\rm rot} + \boldsymbol{v}_{\rm bad} \in {\rm span}(\mathcal{V}_{\rm degen} \cup \mathcal{V}_{\rm bad})$ for $\boldsymbol{v}_{\rm trans} \in {\rm span}(\mathcal{V}_{\rm trans})$, $\boldsymbol{v}_{\rm rot} \in {\rm span}(\mathcal{V}_{\rm rot})$, and $\boldsymbol{v}_{\rm bad} \BBB \in {\rm span}(\mathcal{V}_{\rm bad}) \EEE $. 

First, since $T(\boldsymbol{x} + \boldsymbol{w}) =  T(\boldsymbol{x})$ for all $\boldsymbol{x} \in \Rz^{3 \times 8}$ and all $\boldsymbol{w} \in \mathcal{V}_{\rm trans}$, we get $DT(\boldsymbol{x}) \boldsymbol{w} = 0$ and $\boldsymbol{w}^TD^2 T(\boldsymbol{x}) \boldsymbol{w}' = 0$ for all $\boldsymbol{w} \in {\rm span}(\mathcal{V}_{\rm trans})$, $\boldsymbol{w}' \in \Rz^{3 \times 8}$, and $\boldsymbol{x} \in \Rz^{3 \times 8}$. Consequently, we deduce  $\boldsymbol{v}^TD^2 T^a(\boldsymbol{x}^0) \boldsymbol{v} = (\boldsymbol{v}_{\rm rot} + \boldsymbol{v}_{\rm bad})^TD^2 T^a(\boldsymbol{x}^0) (\boldsymbol{v}_{\rm rot} + \boldsymbol{v}_{\rm bad})$. 

Moreover, let $A \in \Rz^{3 \times 3}_{\rm skew}$ \PPP be \EEE    such that $\boldsymbol{v}_{\rm rot} = A \boldsymbol{x}^0$ and observe that there is a rotation $R_t \in SO(3)$ such that 
$\boldsymbol{x}^0_t := R_t (\boldsymbol{x}^0 + t \boldsymbol{v}_{\rm rot})$ is contained in the plane $\Rz^2 \times \lbrace 0 \rbrace$ and one has $|R_t - (\mathbf{I} - tA)| = {\rm O}(|tA|^2)$, cf. \BBB \cite[(3.20)]{FrieseckeJamesMueller:02}. \EEE (Here $\mathbf{I} \in \Rz^{3 \times 3}$ denotes the identity matrix.) Consequently, we get $|\boldsymbol{x}^0 - \boldsymbol{x}^0_t| = {\rm O}(|tA|^2)$. This  implies
\begin{align*}
T^a(\boldsymbol{x}^0 + t(\boldsymbol{v}_{\rm rot} + \boldsymbol{v}_{\rm bad}))& = T^a\big(R_t(\boldsymbol{x}^0 + t(\boldsymbol{v}_{\rm rot} + \boldsymbol{v}_{\rm bad}))\big) = T^a(\boldsymbol{x}^0_t + t R_t\boldsymbol{v}_{\rm bad}) \\
&= T^a(\boldsymbol{x}^0 + t \boldsymbol{v}_{\rm bad} + t^2 \boldsymbol{w} + {\rm O}(t^3))
\end{align*}
for some $\boldsymbol{w} \in \Rz^{3 \times 8}$ with $|\boldsymbol{w}| \le c|A|^2$ and the property that the third component of each vector in $\boldsymbol{w}$ is zero. A Taylor expansion and Property 1 of the lemma then yield
\begin{align*}
T^a(\boldsymbol{x}^0 + t(\boldsymbol{v}_{\rm rot} + \boldsymbol{v}_{\rm bad}))& = T^a(\boldsymbol{x}^0) + t^2DT^a(\boldsymbol{x}^0)\boldsymbol{w} + \frac{t^2}{2}\boldsymbol{v}_{\rm bad}^TD^2T^a(\boldsymbol{x}^0)\boldsymbol{v}_{\rm bad} + {\rm O}(t^3).
\end{align*}
As the sum of the angles in the hexagon and at the triple junctions remains invariant under perturbation $\boldsymbol{w}$, we then deduce
$$\sum_{j=1}^{3} T^a(\boldsymbol{x}^0 + t(\boldsymbol{v}_{\rm rot} + \boldsymbol{v}_{\rm bad})) \cdot \boldsymbol{a}_j = 8\pi + \sum_{j=1}^{3}\frac{t^2}{2}\boldsymbol{v}_{\rm bad}^TD^2T^a(\boldsymbol{x}^0)\boldsymbol{v}_{\rm bad} \cdot \boldsymbol{a}_j + {\rm O}(t^3).
$$
The desired result now follows from the fact that $\sum_{j=1}^3\boldsymbol{v}_{\rm bad}^TD^2T^a(\boldsymbol{x}^0)\boldsymbol{v}_{\rm bad} \cdot \boldsymbol{a}_j \le -c|\boldsymbol{v}_{\rm bad}|^2$ has already been established in the first part of the proof, \BBB where we also note that $|\boldsymbol{v}_{\rm bad}| \ge c |\boldsymbol{v}
- \boldsymbol{v}_{\rm degen}|$ with $\boldsymbol{v}_{\rm
    degen}$ being the orthogonal  projection of $\boldsymbol{v}$ onto
  ${\rm span}(\mathcal{V}_{\rm degen})$. \EEE
\end{proof}

For later purpose we also introduce  the mapping $\tilde{E}:  {\RRR [0,2\pi]^{10}\times [0,+\infty)^8} \to \Rz$ defined by 
$$
\tilde{E}(\boldsymbol y) = \sum_{i=1}^{10} \kappa^a_i v_3(y_{i})  + \sum_{i=1}^{8} \kappa^b_i v_2(y_{i+10})
$$
for $\boldsymbol y \in {\BBB [0,2\pi]^{10}\times [0,+\infty)^8}$.  Note that $E_{\rm cell}(\boldsymbol{x}) = \tilde{E}(T(\boldsymbol{x}))$ for all $\boldsymbol{x} \in \Rz^{3\times 8}$.

\begin{lemma}[Properties of $\tilde{E}$]\label{tildeE}
The mapping $\tilde{E}$ is \RRR smooth \EEE and there are constants $0 < c_{E,1}< c_{E,2}$ and $\ell_0 \in \Nz$ depending only on $v_2$ and $v_3$ such that for $\ell \ge \ell_0$
\begin{align*}
\begin{split}
1.& \ \ (D \tilde{E}(T(\boldsymbol{x}_{\rm kink}^\ell)))_i =0 \ \ \ \text{ for } \ i=11,\ldots,18,\\
2.& \ \  -c_{E,2} \ell^{-2} \le (D\tilde{E}(T(\boldsymbol{x}_{\rm kink}^\ell)))_i \le -c_{E,1} \ell^{-2} \ \ \ \text{ for } \ i=1,\ldots,10,\\
3.& \ \  c_{E,1} \le (D^2 \tilde{E}(T(\boldsymbol{x}_{\rm kink}^\ell)))_{ii} \le c_{E,2}     \ \  \text{ for } \ i=1,\ldots,18, \ \ \ (D^2 \tilde{E}(T(\boldsymbol{x}_{\rm kink}^\ell)))_{ij} = 0 \ \  \text{ for } i\neq j. 
\end{split}
\end{align*}
\end{lemma}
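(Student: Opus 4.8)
The plan is to exploit the fact that $\tilde{E}$ is \emph{separable}: each summand depends on a single coordinate of $\boldsymbol{y}$, so the Hessian $D^2\tilde{E}$ is diagonal at every argument. This immediately yields the off-diagonal part of Property 3, namely $(D^2\tilde{E})_{ij}=0$ for $i\neq j$, and reduces the whole statement to evaluating the one-variable derivatives $\kappa^a_i v_3'$, $\kappa^a_i v_3''$ (for $i=1,\ldots,10$) and $\kappa^b_i v_2'$, $\kappa^b_i v_2''$ (for $i=1,\ldots,8$) at the point $T(\boldsymbol{x}^\ell_{\rm kink})$.

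First I would read off $T(\boldsymbol{x}^\ell_{\rm kink})$ from the definition \eqref{kink}. A direct computation, using $\sigma^{\rm us}=-\cos\alpha^{\rm us}_\ell$, shows that all eight bond lengths of $\boldsymbol{x}^\ell_{\rm kink}$ equal $1$ (reflecting $\lambda_1=\lambda_2=1$ in the unstretched nanotube $\mathcal{G}_{\alpha^{\rm us}_\ell}$), so the last eight coordinates of $T(\boldsymbol{x}^\ell_{\rm kink})$ are $1$; the ten angles take only the two values $\alpha^{\rm us}_\ell$ (for indices $3,\ldots,10$) and $\beta(\alpha^{\rm us}_\ell,\gamma_\ell)$ (for indices $1,2$), in accordance with \eqref{sym-assumption} specialized to the unstretched case $\alpha_1=\alpha_2=\alpha^{\rm us}_\ell$. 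Property 1 is then immediate, since the bond-length derivatives equal $\kappa^b_i v_2'(1)$ and $v_2$ attains its minimum at $1$, whence $v_2'(1)=0$.

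For Property 2, the angle derivatives are $\kappa^a_i v_3'(y_i)$ with $y_i\in\{\alpha^{\rm us}_\ell,\beta(\alpha^{\rm us}_\ell,\gamma_\ell)\}$. Here I would invoke Lemma \ref{aus}, which places both angles in $(2\pi/3-c_2\ell^{-2},\,2\pi/3-c_1\ell^{-2})$. A Taylor expansion of $v_3'$ around $2\pi/3$, using $v_3'(2\pi/3)=0$ and $v_3''(2\pi/3)>0$, gives $v_3'(y_i)=-(2\pi/3-y_i)\,v_3''(2\pi/3)+{\rm O}(\ell^{-4})$, which is negative of exact order $\ell^{-2}$; multiplying by $\kappa^a_i\in\{1/2,1\}$ produces the two-sided bound with suitable constants $c_{E,1},c_{E,2}$. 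For Property 3 on the diagonal, the bond entries $\kappa^b_i v_2''(1)$ are fixed positive constants since $\kappa^b_i\in\{1/4,1/2\}$ and $v_2''(1)>0$, while the angle entries $\kappa^a_i v_3''(y_i)$ lie in a fixed positive range for $\ell$ large, by continuity of $v_3''$, positivity $v_3''(2\pi/3)>0$, and $y_i=2\pi/3+{\rm O}(\ell^{-2})$.

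This lemma poses no genuine obstacle, as everything reduces to single-variable Taylor expansions of the smooth potentials. The only point demanding care is the sharp $\ell^{-2}$ scaling in Property 2: it is essential to use the \emph{two-sided} estimate of Lemma \ref{aus}, since merely knowing that the angles converge to $2\pi/3$ would yield neither the correct order nor the lower bound $-c_{E,2}\ell^{-2}$. Collecting the three cases and taking $\ell_0$ large enough to absorb the $\ell^{-2}$ corrections in the Hessian then furnishes constants $0<c_{E,1}<c_{E,2}$ as claimed.
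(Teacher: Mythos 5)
Your proposal is correct and follows essentially the same route as the paper's proof: all bond lengths of $\boldsymbol{x}^\ell_{\rm kink}$ equal $1$ so $v_2'(1)=0$ gives Property 1, the two-sided angle bounds of Lemma \ref{aus} combined with $v_3'(2\pi/3)=0$, $v_3''(2\pi/3)>0$ give Property 2, and positivity of $v_2''(1)$ and $v_3''$ near $2\pi/3$ gives Property 3. The paper states these steps more tersely, but your observation that separability of $\tilde{E}$ makes the Hessian diagonal and reduces everything to one-variable Taylor expansions is exactly the intended argument.
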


\begin{proof}
Property 1 follows from the fact that $T^b(\boldsymbol{x}^\ell_{\rm kink}) = (1,\ldots,1) \in \Rz^8$ and $v_2'(1) = 0$. To see Property 2, we apply Lemma \ref{aus} to find $(T^a(\boldsymbol{x}^\ell_{\rm kink}))_i \in (2\pi/3 - c_2\ell^{-2},   2\pi/3 - c_1\ell^{-2})$ for $i=1,\ldots,10$ and the fact that $v_3 \in C^2$ with $v_3'(2\pi/3) = 0$, $v_3''(2\pi/3)>0$. Likewise, Property 3 follows from $v_2''(1)>0$ and $v''_3(2\pi/3)>0$, respectively. 
\end{proof}

\subsection{Convexity of the cell energy} \label{sec: convexity}
The following theorem gives a first property of the Hessian of $E_{\rm cell}$ at the kink configuration $\boldsymbol{x}^\ell_{\rm kink}$.

\begin{theorem}[Convexity of $E_{\rm cell}$ in good directions]\label{th: cell convexity1}
Let $0 < r <1$. Then there exist $\ell_0 \in \Nz$ and a constant $c>0$ depending only on $v_2$, $v_3$, and $r$ such that for $\ell \ge \ell_0$ and each $\boldsymbol{v} \in \Rz^{3 \times 8}$ with 
$$|\boldsymbol{v} \cdot \boldsymbol{w}| \le r|\boldsymbol{w}||\boldsymbol{v}|  \ \ \ \text{ for all } \ \ \ \boldsymbol{w}  \in {\rm span}(\mathcal{V}_{\rm degen} \cup \mathcal{V}_{\rm bad})$$
 one has
$$\boldsymbol{v}^TD^2E_{{\rm cell}}(\boldsymbol{x}^\ell_{\rm kink})\boldsymbol{v} \ge c|\boldsymbol{v}|^2.$$
\end{theorem}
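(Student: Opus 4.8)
The plan is to use the factorization $E_{\rm cell}(\boldsymbol{x}) = \tilde{E}(T(\boldsymbol{x}))$ from \eqref{eq: cell-energy} together with the chain rule, evaluated at $\boldsymbol{x}^\ell_{\rm kink}$:
\begin{align*}
D^2E_{\rm cell}(\boldsymbol{x}^\ell_{\rm kink}) = DT(\boldsymbol{x}^\ell_{\rm kink})^T\, D^2\tilde{E}(T(\boldsymbol{x}^\ell_{\rm kink}))\, DT(\boldsymbol{x}^\ell_{\rm kink}) + \sum_{k=1}^{18} \big(D\tilde{E}(T(\boldsymbol{x}^\ell_{\rm kink}))\big)_k\, D^2T_k(\boldsymbol{x}^\ell_{\rm kink}).
\end{align*}
The first (Gauss--Newton) term is positive semidefinite by Property 3 of Lemma \ref{tildeE}, which in fact gives $D^2\tilde{E}(T(\boldsymbol{x}^\ell_{\rm kink})) \ge c_{E,1}\mathbf{I}$, so this term is bounded below by $c_{E,1}|DT(\boldsymbol{x}^\ell_{\rm kink})\boldsymbol{v}|^2$. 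The second (curvature) term will be shown to be a negligible perturbation of order $\ell^{-2}$. I would also first reformulate the hypothesis: writing $S := {\rm span}(\mathcal{V}_{\rm degen}\cup\mathcal{V}_{\rm bad})$ and letting $P_S$ be the orthogonal projection onto $S$, the bound $|\boldsymbol{v}\cdot\boldsymbol{w}| \le r|\boldsymbol{w}||\boldsymbol{v}|$ for all $\boldsymbol{w}\in S$ is equivalent, by taking the supremum over unit $\boldsymbol{w}\in S$, to $|P_S\boldsymbol{v}| \le r|\boldsymbol{v}|$, whence $|\boldsymbol{v} - P_S\boldsymbol{v}|^2 \ge (1-r^2)|\boldsymbol{v}|^2$.

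\textbf{Coercivity of the Gauss--Newton term.} By Property 1 of Lemma \ref{lemma: T} we have ${\rm Ker}(DT(\boldsymbol{x}^0)) = S$, so $DT(\boldsymbol{x}^0)$ is injective on $S^\perp$; since $S^\perp$ is finite-dimensional, there is $c_0>0$ with $|DT(\boldsymbol{x}^0)\boldsymbol{u}| \ge c_0|\boldsymbol{u}|$ for all $\boldsymbol{u}\in S^\perp$. For any admissible $\boldsymbol{v}$, using $DT(\boldsymbol{x}^0)P_S\boldsymbol{v} = 0$ and $|\boldsymbol{v} - P_S\boldsymbol{v}|\ge\sqrt{1-r^2}\,|\boldsymbol{v}|$, I get $|DT(\boldsymbol{x}^0)\boldsymbol{v}| = |DT(\boldsymbol{x}^0)(\boldsymbol{v}-P_S\boldsymbol{v})| \ge c_0\sqrt{1-r^2}\,|\boldsymbol{v}|$. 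To transfer this bound to $\boldsymbol{x}^\ell_{\rm kink}$, I invoke the smoothness of $T$ near $\boldsymbol{x}^0$ and the estimate \eqref{eq: distance}, namely $|\boldsymbol{x}^0 - \boldsymbol{x}^\ell_{\rm kink}| \le C\ell^{-1}$, which gives $\|DT(\boldsymbol{x}^\ell_{\rm kink}) - DT(\boldsymbol{x}^0)\| \le C'\ell^{-1}$. Hence for $\ell$ large enough $|DT(\boldsymbol{x}^\ell_{\rm kink})\boldsymbol{v}| \ge \tfrac12 c_0\sqrt{1-r^2}\,|\boldsymbol{v}|$, so the Gauss--Newton term is bounded below by $2c|\boldsymbol{v}|^2$ for a suitable $c = c(v_2,v_3,r)>0$.

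\textbf{Control of the curvature term and conclusion.} By Property 1 of Lemma \ref{tildeE} the components of $D\tilde{E}(T(\boldsymbol{x}^\ell_{\rm kink}))$ corresponding to the bond lengths (indices $11,\dots,18$) vanish, so only the angle part $T^a$ contributes; by Property 2 each of these at most ten entries is of order $\ell^{-2}$ in absolute value. Since $T$ is smooth near $\boldsymbol{x}^0$, the second derivatives $D^2T^a_k(\boldsymbol{x}^\ell_{\rm kink})$ are uniformly bounded, so the curvature term is bounded in absolute value by $C''\ell^{-2}|\boldsymbol{v}|^2$. Combining the two estimates yields $\boldsymbol{v}^TD^2E_{\rm cell}(\boldsymbol{x}^\ell_{\rm kink})\boldsymbol{v} \ge (2c - C''\ell^{-2})|\boldsymbol{v}|^2 \ge c|\boldsymbol{v}|^2$ for all $\ell \ge \ell_0$ with $\ell_0$ sufficiently large. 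The main obstacle I anticipate is the transfer from the reference configuration $\boldsymbol{x}^0$, where the kernel of $DT$ is explicitly known, to the kink configuration $\boldsymbol{x}^\ell_{\rm kink}$ at which the Hessian of $\tilde{E}$ is controlled; this is exactly where the quantitative closeness \eqref{eq: distance} and the strictness of the cone condition (the factor $\sqrt{1-r^2}$) are needed to absorb the $O(\ell^{-1})$ perturbation of $DT$ and the $O(\ell^{-2})$ curvature contribution.
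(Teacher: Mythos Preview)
Your proof is correct and follows essentially the same approach as the paper: both use the chain-rule splitting of $D^2E_{\rm cell}$ into a Gauss--Newton part controlled via Property~3 of Lemma~\ref{tildeE} and Property~1 of Lemma~\ref{lemma: T}, and a curvature part of order $\ell^{-2}$ via Properties~1--2 of Lemma~\ref{tildeE}, with the transfer from $\boldsymbol{x}^0$ to $\boldsymbol{x}^\ell_{\rm kink}$ handled through \eqref{eq: distance}. Your organization is in fact slightly cleaner, since by estimating $|DT(\boldsymbol{x}^\ell_{\rm kink})\boldsymbol{v}|$ directly via $DT(\boldsymbol{x}^0)P_S\boldsymbol{v}=0$ and the $O(\ell^{-1})$ perturbation of $DT$, you avoid the explicit cross term $-2c_{E,2}c_2^2|\boldsymbol{v}_{\rm good}||\boldsymbol{v}_{\rm good}^\bot|\ell^{-1}$ that the paper carries through its decomposition.
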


\begin{proof} First, by the  regularity of the mapping $T$,  Property 1 in Lemma \ref{lemma: T}, and the fact that $\boldsymbol{x}^\ell_{\rm kink} \to \boldsymbol{x}^0$ for $\ell \to \infty$, we find  $\ell_0 \in \Nz$ sufficiently large such that for $\ell \ge \ell_0$ the kernel of $D T(\boldsymbol{x}^\ell_{\rm kink})$ has  dimension at most $11$. Then we find universal constants $0< c_1 < c_2$  such that for all $\ell \ge \ell_0$, possibly for a larger $\ell_0$, we have
\begin{align}\label{eq: conv1}
\begin{split}
 &  c_1|\boldsymbol{v}| \le |D T(\boldsymbol{x}^\ell_{\rm kink}) \boldsymbol{v}| \le c_2|\boldsymbol{v}|   \ \text{ for all } \boldsymbol{v}\in  {\rm span}(\mathcal{V}_{\rm degen} \cup \mathcal{V}_{\rm bad})^\bot,\\
 &|D T(\boldsymbol{x}^\ell_{\rm kink}) \boldsymbol{v}| \le c_2| \boldsymbol{v}| \ell^{-1}  \ \text{ for all } \boldsymbol{v}\in  {\rm span}(\mathcal{V}_{\rm degen} \cup \mathcal{V}_{\rm bad}).
  \end{split}
  \end{align}
  For the second property we  used \eqref{eq: distance}.   Let  be given  $\boldsymbol{v} \in \Rz^{3 \times 8}$ with $|\boldsymbol{v} \cdot \boldsymbol{w}| \le r|\boldsymbol{w}||\boldsymbol{v}|  \ \text{ for all } \ \boldsymbol{w}  \in {\rm span}(\mathcal{V}_{\rm degen} \cup \mathcal{V}_{\rm bad})$. The vector can be written as $\boldsymbol{v} = \boldsymbol{v}_{\rm good} + \boldsymbol{v}_{\rm good}^\bot$ with two orthogonal vectors $\boldsymbol{v}_{\rm good}, \boldsymbol{v}_{\rm good}^\bot$ satisfying $\boldsymbol{v}_{\rm good}^\bot \in {\rm span}(\mathcal{V}_{\rm degen} \cup \mathcal{V}_{\rm bad})$ and $|\boldsymbol{v}_{\rm good}| \ge \sqrt{1-r^2}|\boldsymbol{v}|$. Consider the mapping $f_{\boldsymbol{v}}:\Rz \to \Rz$ defined by $f_{\boldsymbol{v}}(t) = \tilde{E}(T(\boldsymbol{x}^\ell_{\rm kink} + t\boldsymbol{v}))$. We compute
\begin{align}
f'_{\boldsymbol{v}}(t) & = D \tilde{E}(T(\boldsymbol{x}^\ell_{\rm kink} + t\boldsymbol{v}))  \big(D T(\boldsymbol{x}^\ell_{\rm kink} + t\boldsymbol{v}) \boldsymbol{v}\big), \notag\\
f''_{\boldsymbol{v}}(t) & =  \big(D T(\boldsymbol{x}^\ell_{\rm kink} + t\boldsymbol{v}) \boldsymbol{v}\big)^T D^2 \tilde{E}(T(\boldsymbol{x}^\ell_{\rm kink} + t\boldsymbol{v}))  \big(D T(\boldsymbol{x}^\ell_{\rm kink} + t\boldsymbol{v}) \boldsymbol{v}\big) \notag \\& \ \ \ + D \tilde{E}(T((\boldsymbol{x}^\ell_{\rm kink} + t\boldsymbol{v})) \big(\boldsymbol{v}^TD^2 T(\boldsymbol{x}^\ell_{\rm kink} + t\boldsymbol{v}) \boldsymbol{v}\big). \label{eq: conv3-b}
\end{align}
We further observe that by Lemma \ref{tildeE}, Property 1 and 2, there is a constant $c_3$ only depending on $c_{E,2}$   such that 
\begin{align}\label{eq: conv2}
|D \tilde{E}(T(\boldsymbol{x}^\ell_{\rm kink})) \big(\boldsymbol{v}^TD^2 T(\boldsymbol{x}^\ell_{\rm kink}) \boldsymbol{v}\big)| \le c_3|\boldsymbol{v}|^2 \ell^{-2}.
\end{align}
Then collecting    \eqref{eq: conv1}-\eqref{eq: conv2} and using Property 3 of Lemma \ref{tildeE}   we derive
\begin{align*}
\boldsymbol{v}^TD^2E_{{\rm cell}}(\boldsymbol{x}^\ell_{\rm kink})\boldsymbol{v} & = f_{\boldsymbol{v}}''(0) \\&\ge c_{E,1}c_1^2|\boldsymbol{v}_{\rm good}|^2 -2 c_{E,2} c^2_2|\boldsymbol{v}_{\rm good}||\boldsymbol{v}_{\rm good}^\bot| \ell^{-1}  - c_3|\boldsymbol{v}|^2 \ell^{-2} \\
& \ge |\boldsymbol{v}|^2 \big(c_{E,1}c_1^2(1-r^2) - 2c_{E,2}c^2_2 \ell^{-1}  - c_3 \ell^{-2}\big).
\end{align*}
For $\ell_0$ large enough \RRR (depending also on $r$) \EEE this implies the assertion of the lemma for $\ell \ge \ell_0$.  
\end{proof}

To investigate the convexity properties in the directions $\mathcal{V}_{\rm bad}$, we need some further preparations. Recall the reflections introduced in \eqref{reflexion}. The following lemma is a consequence of Theorem \ref{th: cell convexity1} and shows that variations in  the directions $\mathcal{V}_{\rm good}$ decrease the energy only to higher order.

\begin{lemma}[Energy decrease in good directions]\label{lemma: in plane}
There exist $\ell_0 \in \Nz$ and a constant $C>0$ depending only on $v_2$ and $v_3$ such that for $\ell \ge \ell_0$ and each $\boldsymbol{v} \in {\rm span}(\mathcal{V}_{\rm good})$ 
$$D \tilde{E}(T(\boldsymbol{x}^\ell_{\rm kink}))  \big(D T(\boldsymbol{x}^\ell_{\rm kink}) \boldsymbol{v}\big) \ge - C|\boldsymbol{v}| \ell^{-3}.$$
\end{lemma}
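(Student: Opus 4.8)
The plan is to read the left-hand side as the directional derivative of $E_{\rm cell}=\tilde E\circ T$ at $\boldsymbol{x}^\ell_{\rm kink}$ and to exploit the fact that $\boldsymbol{x}^\ell_{\rm kink}$ is nearly stationary. By the chain rule the quantity to estimate is $D\tilde E(T(\boldsymbol{x}^\ell_{\rm kink}))\cdot\big(DT(\boldsymbol{x}^\ell_{\rm kink})\boldsymbol v\big)$. Writing $\boldsymbol d=D\tilde E(T(\boldsymbol{x}^\ell_{\rm kink}))\in\Rz^{18}$ and splitting it into its angle block $\boldsymbol d^a\in\Rz^{10}$ (indices $1,\dots,10$) and its bond block $\boldsymbol d^b\in\Rz^8$ (indices $11,\dots,18$), Property 1 of Lemma \ref{tildeE} gives $\boldsymbol d^b=0$, since all bonds of $\boldsymbol{x}^\ell_{\rm kink}$ have length $1$ and $v_2'(1)=0$. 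Hence it suffices to bound $\boldsymbol d^a\cdot\big(DT^a(\boldsymbol{x}^\ell_{\rm kink})\boldsymbol v\big)$, and in fact I would establish this for every $\boldsymbol v\in\Rz^{3\times 8}$, which covers ${\rm span}(\mathcal{V}_{\rm good})$ a fortiori.

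The decisive step is an algebraic decomposition of $\boldsymbol d^a$. At the kink configuration \eqref{kink} the interior hexagon angles $\varphi_1,\varphi_2$ at $x_1,x_2$ equal $\beta:=\beta(\alpha^{\rm us}_\ell,\gamma_\ell)$, while the remaining eight angles equal $\alpha^{\rm us}_\ell$, so that $\boldsymbol d^a=v_3'(\beta)(\boldsymbol e_1+\boldsymbol e_2)+\tfrac12 v_3'(\alpha^{\rm us}_\ell)(\boldsymbol e_3+\dots+\boldsymbol e_{10})$. A direct check against the definitions of $\boldsymbol a_1,\boldsymbol a_2,\boldsymbol a_3$ then shows
\begin{align*}
\boldsymbol d^a=\tfrac12 v_3'(\alpha^{\rm us}_\ell)\,(\boldsymbol a_1+\boldsymbol a_2+\boldsymbol a_3)+\boldsymbol r,\qquad \boldsymbol r=\big(v_3'(\beta)-v_3'(\alpha^{\rm us}_\ell)\big)(\boldsymbol e_1+\boldsymbol e_2).
\end{align*}
The point is that $\boldsymbol r$ is of order $\ell^{-4}$: the unstretched optimality condition $2v_3'(\alpha^{\rm us}_\ell)+v_3'(\beta)\,\partial_\alpha\beta(\alpha^{\rm us}_\ell,\gamma_\ell)=0$ (from the proof of Lemma \ref{aus}) together with $\partial_\alpha\beta(\alpha^{\rm us}_\ell,\gamma_\ell)=-2+{\rm O}(\ell^{-2})$ — a Taylor expansion using \eqref{zigder2-a}, \eqref{zigder2-b}, $\gamma_\ell-\pi=-\pi/\ell$ and $\alpha^{\rm us}_\ell-2\pi/3={\rm O}(\ell^{-2})$ — yields $v_3'(\beta)=v_3'(\alpha^{\rm us}_\ell)(1+{\rm O}(\ell^{-2}))$. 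Since $v_3'(\alpha^{\rm us}_\ell)={\rm O}(\ell^{-2})$ by Lemma \ref{aus}, this gives $|\boldsymbol r|={\rm O}(\ell^{-4})$.

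Next I would control $\boldsymbol a_j\cdot\big(DT^a(\boldsymbol{x}^\ell_{\rm kink})\boldsymbol v\big)$ for $j=1,2,3$. As observed before Lemma \ref{lemma: T}, the angle sums $\boldsymbol x\mapsto T^a(\boldsymbol x)\cdot\boldsymbol a_j$ (the hexagon sum and the two triple–junction sums) are maximized by planarity, so that $t\mapsto T^a(\boldsymbol x^0+t\boldsymbol v)\cdot\boldsymbol a_j$ has a local maximum at $t=0$; consequently $\boldsymbol a_j\cdot\big(DT^a(\boldsymbol x^0)\boldsymbol v\big)=0$ for all $\boldsymbol v$. Since $T$ is smooth near $\boldsymbol x^0$ (hence $DT^a$ is Lipschitz there) and $|\boldsymbol x^0-\boldsymbol{x}^\ell_{\rm kink}|\le C\ell^{-1}$ by \eqref{eq: distance}, I obtain $\big|\boldsymbol a_j\cdot\big(DT^a(\boldsymbol{x}^\ell_{\rm kink})\boldsymbol v\big)\big|=\big|\boldsymbol a_j\cdot\big((DT^a(\boldsymbol{x}^\ell_{\rm kink})-DT^a(\boldsymbol x^0))\boldsymbol v\big)\big|\le C\ell^{-1}|\boldsymbol v|$.

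Finally I would combine the two ingredients. The main term is $\tfrac12 v_3'(\alpha^{\rm us}_\ell)\sum_{j=1}^3\boldsymbol a_j\cdot\big(DT^a(\boldsymbol{x}^\ell_{\rm kink})\boldsymbol v\big)={\rm O}(\ell^{-2})\cdot{\rm O}(\ell^{-1}|\boldsymbol v|)={\rm O}(\ell^{-3}|\boldsymbol v|)$, while the residual term is $|\boldsymbol r\cdot(DT^a\boldsymbol v)|\le|\boldsymbol r|\,\|DT^a(\boldsymbol{x}^\ell_{\rm kink})\|\,|\boldsymbol v|={\rm O}(\ell^{-4}|\boldsymbol v|)$, using boundedness of $DT^a$ near $\boldsymbol x^0$. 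Together these give $\big|\boldsymbol d^a\cdot(DT^a\boldsymbol v)\big|\le C\ell^{-3}|\boldsymbol v|$, which is stronger than the claimed one-sided bound. I expect the main obstacle to be the second paragraph: the genuine gain from $\ell^{-2}$ (the naive size of each angle derivative, by Lemma \ref{tildeE}) to $\ell^{-3}$ rests entirely on the near-cancellation $v_3'(\alpha^{\rm us}_\ell)\approx v_3'(\beta)$ encoded in the unstretched Euler--Lagrange equation, so the delicate point is verifying that $\boldsymbol r$ lives only in the $\boldsymbol e_1,\boldsymbol e_2$ directions and is of order $\ell^{-4}$ rather than $\ell^{-2}$; the remaining estimates are soft consequences of smoothness of $T$ and of the planar stationarity of the angle sums.
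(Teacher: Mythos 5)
Your argument is correct, but it proves the lemma by a genuinely different route than the paper. The paper never computes the pairing $D\tilde E\cdot DT\boldsymbol v$ directly: it first perturbs $\boldsymbol v$ to $\boldsymbol v'=\boldsymbol v+s\ell^{-1}|\boldsymbol v|(0,0,e_3,e_3,e_3,e_3,0,0)$ (an admissible modification since the angle components of $D\tilde E$ are $O(\ell^{-2})$, so the discrepancy is $O(\ell^{-3}|\boldsymbol v|)$), and then shows the directional derivative along $\boldsymbol v'$ is nonnegative by establishing the \emph{global} inequality $E_{\rm cell}(\boldsymbol x^\ell_{\rm kink}+t\boldsymbol v')\ge E_{\rm cell}(\boldsymbol x^\ell_{\rm kink})$ for small $t>0$. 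That inequality is obtained by symmetrizing the perturbed cell via $\mathcal S$ (energy decreases by the convexity result, Theorem \ref{th: cell convexity1}), observing that the symmetrized cell lies in the two-plane kinked family with incidence angle $\gamma\le\gamma_\ell$ (this is what the added out-of-plane push with $s$ large guarantees), and invoking the optimality of $\alpha^{\rm us}_\ell$ from Proposition \ref{eq: old main result}. You instead exploit the algebraic structure of the gradient: $\boldsymbol d^b=0$ and $\boldsymbol d^a=\tfrac12 v_3'(\alpha^{\rm us}_\ell)(\boldsymbol a_1+\boldsymbol a_2+\boldsymbol a_3)+O(\ell^{-4})$, where the $O(\ell^{-4})$ residual comes from the Euler--Lagrange equation for $\alpha^{\rm us}_\ell$ combined with $\partial_\alpha\beta=-2+O(\ell^{-2})$ and $v_3'(\alpha^{\rm us}_\ell)=O(\ell^{-2})$ (Lemma \ref{aus}); pairing the leading part with $DT^a\boldsymbol v$ then costs only $O(\ell^{-1}|\boldsymbol v|)$ because $\boldsymbol a_j\cdot(DT^a(\boldsymbol x^0)\boldsymbol v)=0$ (planar stationarity of the angle sums, exactly the fact the paper records before Lemma \ref{lemma: T} and reuses in Lemma \ref{lemma: conv}) and $\|DT^a(\boldsymbol x^\ell_{\rm kink})-DT^a(\boldsymbol x^0)\|\le C\ell^{-1}$ by \eqref{eq: distance}. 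I checked the key identities: at $\boldsymbol x^\ell_{\rm kink}$ one indeed has $\varphi_1=\varphi_2=\beta(\alpha^{\rm us}_\ell,\gamma_\ell)$ and $\varphi_3=\dots=\varphi_{10}=\alpha^{\rm us}_\ell$, and with the weights $\kappa^a_1=\kappa^a_2=1$, $\kappa^a_3=\dots=\kappa^a_{10}=1/2$ the residual $\boldsymbol r=(v_3'(\beta)-v_3'(\alpha^{\rm us}_\ell))(\boldsymbol e_1+\boldsymbol e_2)$ is exactly as you claim. Your approach is more elementary and self-contained (it bypasses Theorem \ref{th: cell convexity1} and the symmetrization machinery entirely), and it yields a strictly stronger conclusion --- a two-sided bound $|D\tilde E\cdot DT\boldsymbol v|\le C\ell^{-3}|\boldsymbol v|$ valid for \emph{all} $\boldsymbol v\in\Rz^{3\times 8}$, which also makes the origin of the $\ell^{-3}$ scaling transparent; what the paper's route buys is that it reuses tools already needed elsewhere in Section \ref{sec: cellenery} and avoids any explicit knowledge of $DT^a$, at the price of the auxiliary out-of-plane perturbation and a one-sided statement. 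Both arguments ultimately rest on the same two facts: the first-order optimality of $\alpha^{\rm us}_\ell$ and the fact that planarity maximizes the angle sums.
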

\begin{proof}
Let $\boldsymbol{v} \in {\rm span}(\mathcal{V}_{\rm good})$ be given and define a perturbation of $\boldsymbol{v}$ by 
\begin{align}\label{eq:sv'}
\boldsymbol{v}' = \boldsymbol{v} \RRR + \EEE s\ell^{-1}|\boldsymbol{v}|(0,0,{e}_3,  {e}_{3}, {e}_{3}, {e}_{3},0,0) \in \Rz^{3 \times 8}
\end{align}
 for some universal $s>0$ to be specified below. (Note that the direction $\boldsymbol{v}' - \boldsymbol{v}$ increases the third components of the points $x_3,\ldots,x_6$ {\BBB of the basic cell}). By Property 1 and 2 of Lemma \ref{tildeE} and the fact that $|\boldsymbol{v}-\boldsymbol{v}'| \le 4s|\boldsymbol{v}|\ell^{-1}$ it clearly suffices to show
\begin{align}\label{eq: deriv=0}
D \tilde{E}(T(\boldsymbol{x}^\ell_{\rm kink}))  \big(D T(\boldsymbol{x}^\ell_{\rm kink}) \boldsymbol{v}'\big) \ge 0.
\end{align}
To this end, we will show that 
\begin{align}\label{eq: deriv=0.2}
\tilde{E}(T(\boldsymbol{x}^\ell_{\rm kink} + t\boldsymbol{v}')) \ge \tilde{E}(T(\boldsymbol{x}^\ell_{\rm kink}))
\end{align}
for all $t >0$ small. Then \eqref{eq: deriv=0} follows by taking the limit $t \to 0$.

Consider  $\boldsymbol{x} = \boldsymbol{x}_{\rm kink}^\ell + t \boldsymbol{v}'$ for $t >0$ small.  Possibly after applying a rigid motion we can assume that the second  and third components of \RRR $(x_1 + x_7)/2 $ and $(x_2 + x_8)/2$ are zero, \EEE the points $x_1,x_2,x_7,x_8$ lie in the plane $\Rz^2 \times \lbrace 0\rbrace$ and that the points $x_3,x_4,x_5,x_6$ lie in a plane parallel to $\Rz^2 \times \lbrace 0\rbrace$. (Recall that $\boldsymbol{v}$ induces an in-plane perturbation, \PPP i.e., \EEE the third component of each vector in $\boldsymbol{v}$ is zero.) We replace $\boldsymbol{x}$ by a symmetrized version as follows.

Define {\BBB $\boldsymbol{x}_{S_1} $ by \eqref{s1s2} } and note that $E_{\rm cell}(\boldsymbol{x}_{S_1} ) =  E_{\rm cell}(\boldsymbol{x})$.   Moreover, it is elementary to see that the third component of each vector in $\boldsymbol{w}_1 := \boldsymbol{x}_{S_1}  - \boldsymbol{x}$ is zero. Consequently, $\boldsymbol{w}_1$ is perpendicular to $\mathcal{V}_{\rm bad}$, $\mathcal{V}_{\rm trans}$, and the rotations $\boldsymbol{v}_2, \boldsymbol{v}_3$. Clearly, as the reflection $S_1$ leaves the points \RRR $(x_1 + x_7)/2 $ and $(x_2 + x_8)/2$ \EEE unchanged, we also have that $\boldsymbol{w}_1$ is not parallel to the rotation $\boldsymbol{v}_1$. Consequently, by Theorem \ref{th: cell convexity1} and a continuity argument with $t$ small enough, the mapping $t' \mapsto E_{\rm cell}(\boldsymbol{x} + t' \RRR \boldsymbol{w}_1 \EEE )$ is convex on $[0,1]$. This implies for  $\boldsymbol{x}' = \frac{1}{2}(\boldsymbol{x} + \boldsymbol{x}_{S_1} )$ (see \eqref{reflection2-a}) that $E_{\rm cell}(\boldsymbol{x}') \le \frac{1}{2}(E_{\rm cell}(\boldsymbol{x}) + E_{\rm cell}(\boldsymbol{x}_{S_1} )) = E_{\rm cell}(\boldsymbol{x})$.

Likewise, we consider {\BBB $\boldsymbol{x}'_{S_2}:  = \boldsymbol{x}_{\rm kink}^\ell  + S_2(\boldsymbol{x}'- \boldsymbol{x}_{\rm kink}^\ell)$ and note that $E_{\rm cell}(\boldsymbol{x}'_{S_2}) =  E_{\rm cell}(\boldsymbol{x}')$. Similarly as before, the vector $\boldsymbol{w}_2 := \boldsymbol{x}'_{S_2} - \boldsymbol{x}'$ is perpendicular to the vectors $\mathcal{V}_{\rm bad}$ and not parallel to $\mathcal{V}_{\rm degen}$. Using Theorem \ref{th: cell convexity1} we get $E_{\rm cell} (\mathcal{S}(\boldsymbol{x})) \le E_{\rm cell}(\boldsymbol{x}') \le E_{\rm cell}(\boldsymbol{x}) $  for  $\mathcal{S}(\boldsymbol{x}) = \frac{1}{2}(\boldsymbol{x}' +\boldsymbol{x}'_{S_2})$ (see \eqref{reflection2-b}).}

By this symmetrization procedure we get that the eight points $\mathcal{S}(\boldsymbol{x})$ are contained in two {\BBB kinked} planes (similarly as $\boldsymbol{x}_{\rm kink}^\ell$).  We denote the incidence angle of  the two planes by $\gamma \le \pi$ and note that $\gamma \le \gamma_\ell$ if the constant $s>0$ \RRR in \eqref{eq:sv'} \EEE is chosen sufficiently large.   The bond lengths satisfy $b_1 = b_2$, $b_3 = b_4 = b_5= b_6$ and $b_7 = b_{8}$. For the angles $\varphi_1 = \varphi_2$ and $\varphi_3 = \ldots = \varphi_{10}$ holds.

Recalling \eqref{betaz} and  \eqref{eq: cell-energy}    we find $\alpha$ in a small neighborhood of $\alpha_\ell^{\rm us}$ such that 
$$E_{\rm cell}(\mathcal{S}(\boldsymbol{x})) \ge - 3+ 4 v_3(\alpha) + 2v_3\big(2\arcsin(\sin\alpha \sin(\gamma/2)) \big).$$
Now taking $\gamma \le \gamma_\ell$ into account and recalling that  $\alpha_\ell^{\rm us}$ is {\BBB optimal angle from}  Proposition \ref{eq: old main result}, we find 
\begin{align*}
E_{\rm cell}(\boldsymbol{x}) & \ge E_{\rm cell}(\mathcal{S}(\boldsymbol{x})) \ge - 3 + 4 v_3(\alpha) + 2v_3\big(2\arcsin(\sin\alpha \sin(\gamma_\ell/2)) \big) \\& \ge  - 3 + 4 v_3(\alpha_\ell^{\rm us}) + 2v_3\big(2\arcsin(\sin\alpha_\ell^{\rm us} \sin(\gamma_\ell/2)) \big)  = E_{\rm cell}(\boldsymbol{x}_{\rm kink}^\ell),
\end{align*}
\RRR where the last step \PPP follows from \EEE \eqref{kink}. \EEE This shows \eqref{eq: deriv=0.2} and concludes the proof.
\end{proof}

The next lemma shows that a perturbation of the angles, which does not change the sum of the angles, essentially does not decrease the energy to first order. 

\begin{lemma}\label{lemma: conv}
There exist $\ell_0 \in \Nz$ and a constant $C>0$ depending only on $v_2$ and $v_3$ such that for $\ell \ge \ell_0$ and each \PPP $\boldsymbol{w} =(w_1,\ldots,w_{10}) \in \Rz^{10}$ \EEE with $\boldsymbol{w} \cdot \boldsymbol{a}_j = 0$ for $j=1,2,3$ we have
$$\sum_{i=1}^{10} \big( D \tilde{E}(T(\boldsymbol{x}^\ell_{\rm kink})) \big)_i   \PPP w_i\EEE  \ge -  C|\boldsymbol{w}|\ell^{-3}.$$
\end{lemma}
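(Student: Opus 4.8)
The plan is to compute the gradient $D\tilde{E}(T(\boldsymbol{x}^\ell_{\rm kink}))$ explicitly and then exploit both the three linear constraints $\boldsymbol{w}\cdot\boldsymbol{a}_j=0$ and the optimality of the unstretched angle $\alpha^{\rm us}_\ell$. First I would note that, since $v_2'(1)=0$ and all bond lengths of $\boldsymbol{x}^\ell_{\rm kink}$ equal $1$, the components $i=11,\dots,18$ of the gradient vanish (Property 1 of Lemma \ref{tildeE}) and do not enter the sum, while for $i=1,\dots,10$ we have $\big(D\tilde{E}(T(\boldsymbol{x}^\ell_{\rm kink}))\big)_i=\kappa^a_i\,v_3'(\varphi_i)$ with $\varphi_i=T^a_i(\boldsymbol{x}^\ell_{\rm kink})$. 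By the explicit coordinates \eqref{kink} (equivalently, by the angle structure of Proposition \ref{betaproperties}) these kink angles are $\varphi_1=\varphi_2=\beta(\alpha^{\rm us}_\ell,\gamma_\ell)$ and $\varphi_3=\dots=\varphi_{10}=\alpha^{\rm us}_\ell$. Recalling $\kappa^a_1=\kappa^a_2=1$ and $\kappa^a_3=\dots=\kappa^a_{10}=\tfrac12$, the relevant gradient entries are $d_1=d_2=v_3'(\beta(\alpha^{\rm us}_\ell,\gamma_\ell))$ and $d_3=\dots=d_{10}=\tfrac12 v_3'(\alpha^{\rm us}_\ell)$.

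Next I would collapse the sum using the constraints. Expanding $\boldsymbol{w}\cdot\boldsymbol{a}_1=\boldsymbol{w}\cdot\boldsymbol{a}_2=\boldsymbol{w}\cdot\boldsymbol{a}_3=0$ gives $w_3+w_4+w_5+w_6=-(w_1+w_2)$, $w_7+w_8=-w_1$, and $w_9+w_{10}=-w_2$, hence $\sum_{i=3}^{10}w_i=-2(w_1+w_2)$. Substituting the values of $d_i$ yields the clean identity
\[
\sum_{i=1}^{10}\big(D\tilde{E}(T(\boldsymbol{x}^\ell_{\rm kink}))\big)_i\,w_i=\big(v_3'(\beta(\alpha^{\rm us}_\ell,\gamma_\ell))-v_3'(\alpha^{\rm us}_\ell)\big)(w_1+w_2).
\]
Since $|w_1+w_2|\le\sqrt{2}\,|\boldsymbol{w}|$, the whole statement reduces to the sharp estimate $|v_3'(\beta(\alpha^{\rm us}_\ell,\gamma_\ell))-v_3'(\alpha^{\rm us}_\ell)|=\mathrm{O}(\ell^{-3})$.

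This last estimate is the heart of the matter and the step I expect to be the main obstacle. The naive bound $|v_3'(\beta(\alpha^{\rm us}_\ell,\gamma_\ell))-v_3'(\alpha^{\rm us}_\ell)|\le v_3''(2\pi/3)\,|\beta(\alpha^{\rm us}_\ell,\gamma_\ell)-\alpha^{\rm us}_\ell|+\mathrm{O}(\ell^{-4})$ combined with Lemma \ref{aus} only gives $\mathrm{O}(\ell^{-2})$, since Lemma \ref{aus} merely confines $\alpha^{\rm us}_\ell$ and $\beta(\alpha^{\rm us}_\ell,\gamma_\ell)$ to an interval of width $\mathrm{O}(\ell^{-2})$. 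The additional decay must be extracted from the unstretched first order optimality condition $2v_3'(\alpha^{\rm us}_\ell)+v_3'(\beta(\alpha^{\rm us}_\ell,\gamma_\ell))\,\partial_\alpha\beta(\alpha^{\rm us}_\ell,\gamma_\ell)=0$ established in the proof of Lemma \ref{aus}. Rewriting it as
\[
v_3'(\beta(\alpha^{\rm us}_\ell,\gamma_\ell))-v_3'(\alpha^{\rm us}_\ell)=-\tfrac12\,v_3'(\beta(\alpha^{\rm us}_\ell,\gamma_\ell))\,\big(\partial_\alpha\beta(\alpha^{\rm us}_\ell,\gamma_\ell)+2\big),
\]
I would estimate the two factors separately. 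On one hand $v_3'(\beta(\alpha^{\rm us}_\ell,\gamma_\ell))=\mathrm{O}(\ell^{-2})$ by Lemma \ref{aus} and $v_3'(2\pi/3)=0$. On the other hand $\partial_\alpha\beta(\alpha^{\rm us}_\ell,\gamma_\ell)+2=\mathrm{O}(\ell^{-2})$, obtained by Taylor expanding $\partial_\alpha\beta$ about $(2\pi/3,\pi)$ and using $\partial_\alpha\beta(2\pi/3,\pi)=-2$ together with $\partial^2_{\alpha\alpha}\beta(2\pi/3,\pi)=\partial^2_{\alpha\gamma}\beta(2\pi/3,\pi)=0$ from \eqref{zigder2}, so that the leading surviving contribution is quadratic and controlled by $(\gamma_\ell-\pi)^2=\mathrm{O}(\ell^{-2})$. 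Multiplying, $|v_3'(\beta(\alpha^{\rm us}_\ell,\gamma_\ell))-v_3'(\alpha^{\rm us}_\ell)|=\mathrm{O}(\ell^{-4})$, whence for $\ell\ge\ell_0$
\[
\sum_{i=1}^{10}\big(D\tilde{E}(T(\boldsymbol{x}^\ell_{\rm kink}))\big)_i\,w_i\ge-C\ell^{-4}|\boldsymbol{w}|\ge-C|\boldsymbol{w}|\ell^{-3},
\]
which proves the claim with room to spare.
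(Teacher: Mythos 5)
Your proof is correct, and it takes a genuinely different route from the paper's. The paper argues structurally: it shows that $DT^a(\boldsymbol{x}^0)$ maps ${\rm span}(\mathcal{V}_{\rm good})$ onto the $7$-dimensional space $\{\boldsymbol{w}:\boldsymbol{w}\cdot\boldsymbol{a}_j=0,\ j=1,2,3\}$, picks a preimage $\boldsymbol{v}'$ of $\boldsymbol{w}$, and then invokes Lemma \ref{lemma: in plane} (itself proved by a symmetrization argument reducing to the one-dimensional optimality of $\alpha^{\rm us}_\ell$), finally transferring the estimate from $DT(\boldsymbol{x}^0)$ to $DT(\boldsymbol{x}^\ell_{\rm kink})$ via \eqref{eq: distance}. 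You instead compute the gradient in closed form: since the kink angles are exactly $\varphi_1=\varphi_2=\beta(\alpha^{\rm us}_\ell,\gamma_\ell)$ and $\varphi_3=\dots=\varphi_{10}=\alpha^{\rm us}_\ell$ (which indeed follows from \eqref{kink}; one checks e.g.\ $\cos\varphi_1=\cos^2\alpha^{\rm us}_\ell+\sin^2\alpha^{\rm us}_\ell\cos\gamma_\ell=\cos\beta$), the three constraints collapse the sum to $\big(v_3'(\beta(\alpha^{\rm us}_\ell,\gamma_\ell))-v_3'(\alpha^{\rm us}_\ell)\big)(w_1+w_2)$, and the stationarity condition $2v_3'(\alpha^{\rm us}_\ell)+v_3'(\beta)\,\partial_\alpha\beta=0$ together with $v_3'(\beta)={\rm O}(\ell^{-2})$ (Lemma \ref{aus}) and $\partial_\alpha\beta(\alpha^{\rm us}_\ell,\gamma_\ell)+2={\rm O}(\ell^{-2})$ (from \eqref{zigder2}) gives the factorized bound. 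Your identity should read $v_3'(\beta)-v_3'(\alpha^{\rm us}_\ell)=+\tfrac12 v_3'(\beta)\big(\partial_\alpha\beta+2\big)$ rather than with a minus sign, but this is immaterial since only the modulus enters. What your approach buys is a sharper rate, ${\rm O}(\ell^{-4})$ instead of ${\rm O}(\ell^{-3})$, and independence from Lemma \ref{lemma: in plane} and the surjectivity count of Lemma \ref{lemma: T}; what the paper's approach buys is robustness — it does not require the exact values of the kink angles, only that $\boldsymbol{x}^\ell_{\rm kink}$ is built from the minimizer of the restricted problem, and it reuses machinery (the good/bad direction decomposition) that is needed elsewhere in Section \ref{sec: cellenery} anyway.
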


\begin{proof}
From Property 2 of Lemma \ref{lemma: T} we have that the image of the affine mapping $DT^a(\boldsymbol{x}^0)$ has dimension 7.  Moreover, we have $(DT^a(\boldsymbol{x}^0) \boldsymbol{v}) \cdot \boldsymbol{a}_j = 0$ for $j=1,2,3$ and all $\boldsymbol{v} \in \Rz^{3 \times 8}$. Indeed, \RRR write $\boldsymbol{v} =  \boldsymbol{v}_{\rm good} + \boldsymbol{v}_{\rm bad}$ with $\boldsymbol{v}_{\rm good} \in {\rm span}(\mathcal{V}_{\rm good})$ and $v_{\rm bad} \in {\rm span}(\mathcal{V}_{\rm degen} \cup \mathcal{V}_{\rm bad})$. \EEE Note that $DT^a(\boldsymbol{x}^0) \boldsymbol{v} = DT^a( \boldsymbol{x}^0)  \boldsymbol{v}_{\rm good}$ \RRR by Property 1 of Lemma \ref{lemma: T}. \EEE For each $t \in \Rz$ the eight points $ \boldsymbol{x}^0 + t \boldsymbol{v}_{\rm good}$ are contained in the plane $\Rz^2 \times \lbrace 0 \rbrace$. This implies $T^a( \boldsymbol{x}^0 + t  \boldsymbol{v}_{\rm good}) \cdot  \boldsymbol{a}_j \in \lbrace 2\pi,4\pi \rbrace$ for all $t \in \Rz$ and $j=1,2,3$, which gives $(DT^a( \boldsymbol{x}^0)  \boldsymbol{v}_{\rm good}) \cdot  \boldsymbol{a}_j = 0$ for $j=1,2,3$, as desired. 

The dimension of the image of $DT^a(\boldsymbol{x}^0)$ together with   the fact that $\boldsymbol{w} \cdot \boldsymbol{a}_j = 0$ for $j=1,2,3$ show that there exists a vector $\boldsymbol{v}' \in {\rm span}(\mathcal{V}_{\rm good})$ such that $DT^a(\boldsymbol{x}^0) \boldsymbol{v}' = \boldsymbol{w}$. \PPP By applying Lemma \EEE \ref{lemma: in plane} we get 
$$
D \tilde{E}(T(\boldsymbol{x}^\ell_{\rm kink}))  \big(D T(\boldsymbol{x}^\ell_{\rm kink}) \boldsymbol{v}'\big)  \ge -  C'|\boldsymbol{v}'| \ell^{-3},
$$
where $C'$ is the constant from Lemma \ref{lemma: in plane}. By a continuity argument and \eqref{eq: distance} we get $|D T(\boldsymbol{x}^\ell_{\rm kink}) - D T(\boldsymbol{x}^0)| \le c\ell^{-1}$. This together with Property 2 of Lemma \ref{tildeE} shows  
$$
D \tilde{E}(T(\boldsymbol{x}^\ell_{\rm kink}))  \big(D T(\boldsymbol{x}^0) \boldsymbol{v}'\big)   \ge -  C|\boldsymbol{v}'| \ell^{-3}
$$
for $C=C(C',c_{E,2},c)$. The fact that $DT^a(\boldsymbol{x}^0) \boldsymbol{v}' = \boldsymbol{w}$, $|\boldsymbol{v}'| \le c|\boldsymbol{w}|$ for a   constant $c>0$ (depending on $DT^a(\boldsymbol{x}^0)$) and Property 1 of Lemma \ref{tildeE} conclude the proof.
\end{proof}

We now  improve Theorem \ref{th: cell convexity1} and prove convexity  of $E_{\rm cell}$ at the kink configuration $\boldsymbol{x}^\ell_{\rm kink}$.

\begin{theorem}[Convexity of $E_{\rm cell}$]\label{th: cell convexity3}
Let $0 < r <1$. Then there exist $\ell_0 \in \Nz$ and a constant $c>0$ depending only on $v_2$, $v_3$, and $r$ such that for $\ell \ge \ell_0$ and each $\boldsymbol{v} \in \Rz^{3 \times 8}$ with 
$$|\boldsymbol{v} \cdot \boldsymbol{w}| \le r|\boldsymbol{w}||\boldsymbol{v}|  \ \ \ \text{ for all } \  \ \ \boldsymbol{w}  \in {\rm span}(\mathcal{V}_{\rm degen})$$ one has
$$\boldsymbol{v}^TD^2E_{\rm cell}(\boldsymbol{x}^\ell_{\rm kink})\boldsymbol{v} \ge c|\boldsymbol{v}|^2\ell^{-2}.$$
\end{theorem}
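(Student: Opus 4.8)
The plan is to upgrade Theorem~\ref{th: cell convexity1} by splitting $\boldsymbol v$ off its good component and extracting the sign of the \emph{gradient} term in the Hessian. Writing $E_{\rm cell}=\tilde E\circ T$ and differentiating twice (as in \eqref{eq: conv3-b}), for every $\boldsymbol v\in\Rz^{3\times 8}$ one has
\begin{equation*}
\boldsymbol v^TD^2E_{\rm cell}(\boldsymbol x^\ell_{\rm kink})\boldsymbol v=\big(DT(\boldsymbol x^\ell_{\rm kink})\boldsymbol v\big)^TD^2\tilde E(T(\boldsymbol x^\ell_{\rm kink}))\big(DT(\boldsymbol x^\ell_{\rm kink})\boldsymbol v\big)+D\tilde E(T(\boldsymbol x^\ell_{\rm kink}))\big(\boldsymbol v^TD^2T(\boldsymbol x^\ell_{\rm kink})\boldsymbol v\big).
\end{equation*}
By Property~3 of Lemma~\ref{tildeE} the first summand is $\ge c_{E,1}|DT(\boldsymbol x^\ell_{\rm kink})\boldsymbol v|^2\ge 0$, while by Properties~1 and~2 of Lemma~\ref{tildeE} the second summand sees only the \emph{angle} part of $D^2T$ and is of size $\mathrm O(\ell^{-2}|\boldsymbol v|^2)$; the whole issue is its sign. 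I would decompose $\boldsymbol v=\boldsymbol v_{\rm good}+\boldsymbol v^\perp$ with $\boldsymbol v^\perp\in{\rm span}(\mathcal V_{\rm degen}\cup\mathcal V_{\rm bad})$ and $\boldsymbol v_{\rm good}\perp{\rm span}(\mathcal V_{\rm degen}\cup\mathcal V_{\rm bad})$, exactly as in the proof of Theorem~\ref{th: cell convexity1}, fix a small threshold $\tau>0$, and distinguish two regimes.

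In the regime $|\boldsymbol v_{\rm good}|\ge\tau|\boldsymbol v|$ only the first summand is needed. By \eqref{eq: conv1} we have $|DT(\boldsymbol x^\ell_{\rm kink})\boldsymbol v_{\rm good}|\ge c_1|\boldsymbol v_{\rm good}|$ and $|DT(\boldsymbol x^\ell_{\rm kink})\boldsymbol v^\perp|\le c_2\ell^{-1}|\boldsymbol v|$, so $|DT(\boldsymbol x^\ell_{\rm kink})\boldsymbol v|\ge\tfrac12 c_1\tau|\boldsymbol v|$ once $\ell_0$ is large (depending on $\tau$). Hence the first summand is $\ge c\,|\boldsymbol v|^2$ and the second is $\ge -C\ell^{-2}|\boldsymbol v|^2$, so the claimed bound holds with an order-one constant.

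The delicate regime is $|\boldsymbol v_{\rm good}|<\tau|\boldsymbol v|$, where $\boldsymbol v$ is essentially a bad/degenerate direction but, by hypothesis, its orthogonal projection $P_D\boldsymbol v$ onto ${\rm span}(\mathcal V_{\rm degen})$ obeys $|P_D\boldsymbol v|\le r|\boldsymbol v|$, whence $|\boldsymbol v^\perp-P_D\boldsymbol v|\ge(\sqrt{1-r^2}-\tau)|\boldsymbol v|\ge c|\boldsymbol v|$. Here positivity must come from the gradient term. Set $\boldsymbol\varphi:=\boldsymbol v^TD^2T^a(\boldsymbol x^\ell_{\rm kink})\boldsymbol v\in\Rz^{10}$ and split $\boldsymbol\varphi=\boldsymbol\varphi_a+\boldsymbol\varphi_0$ into its projection onto ${\rm span}(\boldsymbol a_1,\boldsymbol a_2,\boldsymbol a_3)$ and the orthogonal complement. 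Since $\boldsymbol\varphi_0\cdot\boldsymbol a_j=0$, Lemma~\ref{lemma: conv} bounds the $\boldsymbol\varphi_0$-part of the gradient term below by $-C\ell^{-3}|\boldsymbol v|^2$, which is negligible. For the $\boldsymbol\varphi_a$-part I would use Properties~3 and~4 of Lemma~\ref{lemma: T}: each $s_j:=\boldsymbol\varphi\cdot\boldsymbol a_j\le 0$, and applying Property~4 to $\boldsymbol v^\perp$ (absorbing the $\mathrm O(\tau|\boldsymbol v|^2)$ good cross-terms and the $\mathrm O(\ell^{-1}|\boldsymbol v|^2)$ discrepancy between $\boldsymbol x^\ell_{\rm kink}$ and $\boldsymbol x^0$ from \eqref{eq: distance}) gives $\sum_j s_j\le -c|\boldsymbol v^\perp-P_D\boldsymbol v|^2\le -c'|\boldsymbol v|^2$ for $\tau$ small and $\ell$ large. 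Writing $\boldsymbol g$ for the first ten (angle) components of $D\tilde E(T(\boldsymbol x^\ell_{\rm kink}))$ and $\boldsymbol h:=(\boldsymbol g\cdot\boldsymbol a_j)_{j}$, the identity $\boldsymbol g\cdot\boldsymbol\varphi_a=\boldsymbol s\cdot(G^{-1}\boldsymbol h)$ with $G$ the Gram matrix of $\{\boldsymbol a_j\}$ is the key; by Lemma~\ref{aus} and a Taylor expansion $\boldsymbol g$ is a negative vector of size $\ell^{-2}$ with $\boldsymbol h$ proportional to $(4,2,2)$ up to higher order, and since $G^{-1}(4,2,2)^T=\tfrac12(1,1,1)^T$ one obtains $\boldsymbol g\cdot\boldsymbol\varphi_a=-\kappa\,\ell^{-2}\sum_j s_j+\mathrm O(\ell^{-3}|\boldsymbol v|^2)$ for some $\kappa>0$, hence $\ge c\ell^{-2}|\boldsymbol v|^2$. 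Collecting the contributions and discarding the nonnegative first summand yields the assertion in this regime as well.

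The main obstacle is precisely this sign extraction: one must see that an out-of-plane (bad) perturbation strictly decreases the three angle sums to second order (Lemma~\ref{lemma: T}(4)) \emph{and} that, because $v_3$ is minimised slightly beyond the actual bond angles (Lemma~\ref{aus}), the angle gradient $\boldsymbol g$ penalises exactly this decrease, the two effects combining into a genuinely positive $\mathrm O(\ell^{-2})$ term. Making the two regimes fit together is then a matter of bookkeeping: tracking the $\mathrm O(\ell^{-1})$ gaps between $\boldsymbol x^\ell_{\rm kink}$ and $\boldsymbol x^0$ in Lemma~\ref{lemma: T}, the $\mathrm O(\tau)$ cross-terms generated by the nonzero good component, and the $\mathrm O(\ell^{-3})$ remainder from Lemma~\ref{lemma: conv}, and then choosing first $\tau$ small and afterwards $\ell_0$ large so that all errors are dominated by the $c\ell^{-2}|\boldsymbol v|^2$ main term.
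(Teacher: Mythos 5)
Your proposal is correct and follows the paper's overall architecture: the chain-rule splitting of $\boldsymbol v^TD^2E_{\rm cell}\boldsymbol v$ into a quadratic term in $DT\boldsymbol v$ and a gradient term, the orthogonal decomposition of $\boldsymbol v$ relative to ${\rm span}(\mathcal V_{\rm degen}\cup\mathcal V_{\rm bad})$, a two-regime case distinction on $|\boldsymbol v_{\rm good}|$, and, in the bad regime, the combination of Lemma \ref{lemma: T}(3)--(4) (concavity of the angle sums), Lemma \ref{lemma: conv} (for the component of the second-derivative vector annihilated by the $\boldsymbol a_j$), and Lemma \ref{aus} (negativity of the angle gradient at scale $\ell^{-2}$). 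Two steps differ from the paper in a way worth recording. First, your threshold $|\boldsymbol v_{\rm good}|\ge\tau|\boldsymbol v|$ with $\tau$ of order one (chosen small depending on $r$) replaces the paper's $\ell$-dependent threshold $|\boldsymbol v_{\rm good}|\ge c_*\ell^{-1}|\boldsymbol v|$; both work, yours at the cost of tracking the $\mathrm O(\tau\ell^{-2}|\boldsymbol v|^2)$ cross-terms in the gradient term, the paper's at the cost of a sharper constant $c_*$ tuned to make the quadratic term beat the $\mathrm O(\ell^{-2})$ error in the first regime. Second, and more substantially, for the sign of the gradient term the paper decomposes the angle part $\boldsymbol w$ of $\boldsymbol v^TD^2T\boldsymbol v$ by hand into $\boldsymbol w'+\boldsymbol w''$ with $\boldsymbol w'\cdot\boldsymbol a_j=0$ and then exploits that every angle component of $D\tilde E$ lies in $[-c_{E,2}\ell^{-2},-c_{E,1}\ell^{-2}]$ together with an index-set argument over the signs of the $w_i''$; your Gram-matrix identity $\boldsymbol g\cdot\boldsymbol\varphi_a=\boldsymbol s^TG^{-1}\boldsymbol h$ is a cleaner route, and it is in fact exact rather than approximate here, since by the symmetry of $\boldsymbol x^\ell_{\rm kink}$ one has $g_1=g_2=v_3'(\beta)$ and $g_3=\cdots=g_{10}=\tfrac12v_3'(\alpha_\ell^{\rm us})$, whence $\boldsymbol h=(v_3'(\beta)+v_3'(\alpha_\ell^{\rm us}))(2,1,1)^T$ and $G^{-1}(2,1,1)^T=\tfrac14(1,1,1)^T$, so the gradient term reduces exactly to a positive multiple of $-\ell^{-2}\sum_js_j$; your computation buys that only the aggregate bound $\sum_js_j\le-c|\boldsymbol v|^2$ from Lemma \ref{lemma: T}(4) is needed, whereas the paper also leans on the individual inequalities $s_j\le 0$ from Lemma \ref{lemma: T}(3).
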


\begin{proof} As in the proof of Theorem \ref{th: cell convexity1} we consider the mapping $f_{\boldsymbol{v}}$ as defined before \eqref{eq: conv3-b}. The goal is to show $f''_{\boldsymbol{v}}(0) \ge c |\boldsymbol{v}|^2 \ell^{-2}$.   We write $\boldsymbol{v} = \boldsymbol{v}_{\rm degen}+ \boldsymbol{v}_{\rm bad} + \boldsymbol{v}_{\rm good} $ with three orthogonal vectors, where  $\boldsymbol{v}_{\rm degen}+ \boldsymbol{v}_{\rm bad} \in {\rm span}(\mathcal{V}_{\rm degen} \cup \mathcal{V}_{\rm bad})$, $\boldsymbol{v}_{\rm degen} \in {\rm span}(\mathcal{V}_{\rm degen})$, $\boldsymbol{v}_{\rm bad} \in {\rm span}(\mathcal{V}_{\rm degen})^\bot$, and $\boldsymbol{v}_{\rm good} \in {\rm span}(\mathcal{V}_{\rm degen} \cup \mathcal{V}_{\rm bad})^\bot$. By assumption we obtain after a short calculation
\begin{align}\label{givenumber}
|\boldsymbol{v}_{\rm good}|^2 + |\boldsymbol{v}_{\rm bad}|^2 \ge (1-r^2)|\boldsymbol{v}|^2.
\end{align}
Set $c_* := \max\lbrace 2 c_{2}/c_1, (8c_3/(c_{E,1}c^2_1))^{1/2}  \rbrace$ with $c_1,c_2$ from \eqref{eq: conv1}, $c_3$ from \eqref{eq: conv2}, and $c_{E,1}$ from  Lemma \ref{tildeE}.  First,  we suppose $|\boldsymbol{v}_{\rm good}| \ge c_*|\boldsymbol{v}|\ell^{-1}$. We use \eqref{eq: conv1} and   $\boldsymbol{v}_{\rm good} \in {\rm span}(\mathcal{V}_{\rm degen} \cup \mathcal{V}_{\rm bad})^\bot$ to find
$$ |D T(\boldsymbol{x}^\ell_{\rm kink})\boldsymbol{v}| \ge c_1|\boldsymbol{v}_{\rm good}| - c_{2}|\boldsymbol{v}| \ell^{-1} \ge \frac{c_1}{2}|\boldsymbol{v}_{\rm good}|.$$ 
 Then by Property 3 of Lemma \ref{tildeE}, \eqref{eq: conv3-b}, and  \eqref{eq: conv2} we get 
 \begin{align*}
 f''_{\boldsymbol{v}}(0)  &= \boldsymbol{v}^TD^2E_{\rm cell}(\boldsymbol{x}_{\rm kink})\boldsymbol{v}  \ge \big(D T(\boldsymbol{x}^\ell_{\rm kink}) \boldsymbol{v}\big)^T D^2 \tilde{E}(T(\boldsymbol{x}^\ell_{\rm kink}))  \big(D T(\boldsymbol{x}^\ell_{\rm kink}) \boldsymbol{v}\big) - c_3 |\boldsymbol{v}|^2 \ell^{-2}  \\
 & \ge c_{E,1}|D T(\boldsymbol{x}^\ell_{\rm kink})\boldsymbol{v}|^2 - c_3 |\boldsymbol{v}|^2 \ell^{-2} \ge  \frac{c_{E,1} c^2_1}{4}|\boldsymbol{v}_{\rm good}|^2- c_3 |\boldsymbol{v}|^2 \ell^{-2} \ge  \frac{c_{E,1} c^2_1c_*^2}{8\ell^2} |\boldsymbol{v}|^2.
 \end{align*}
 
 Now suppose $|\boldsymbol{v}_{\rm good}| < c_*|\boldsymbol{v}|\ell^{-1}$. Since the first term of $f_{\boldsymbol{v}}''(0)$ given in \eqref{eq: conv3-b} is nonnegative, it suffices to consider the second term of $f_{\boldsymbol{v}}''(0)$. First, using Property 1 of Lemma \ref{tildeE} we have
\begin{align}\label{eq: conv6}
\sum_{i=11}^{18} \big( \PPP D \tilde{E}(T(\boldsymbol{x}^\ell_{\rm kink}) \EEE \big)_i  \, \big(\boldsymbol{v}^TD^2 T(\boldsymbol{x}^\ell_{\rm kink}) \boldsymbol{v}\big)_i =0.
\end{align}
Define for brevity  $\boldsymbol{w} =(\boldsymbol{v}_{\rm degen} + \boldsymbol{v}_{\rm bad})^T D^2 T^a(\boldsymbol{x}^\ell_{\rm kink}) (\boldsymbol{v}_{\rm degen} + \boldsymbol{v}_{\rm bad}) \in \Rz^{10}$ and note that $|\boldsymbol{v}_{\rm good}| < c_*\ell^{-1}|\boldsymbol{v}|$   implies
\begin{align}\label{new estimate}
\Big|(\boldsymbol{v}^T D^2 T^a(\boldsymbol{x}^\ell_{\rm kink}) \boldsymbol{v})_i -  \boldsymbol{w}_i  \Big|    \le c_4|\boldsymbol{v}|^2\ell^{-1}, \ \ \ \ i=1,\ldots,10,
\end{align}
for $c_4$  depending on $c_*$.  By Properties 3 and 4 in Lemma \ref{lemma: T}, \eqref{eq: distance},  and a continuity argument  we obtain constants $0 < c_5 < c_6$ (depending on $c_{\rm kink}$) such that for $\ell$ sufficiently large
\begin{align*}
\boldsymbol{w} \cdot \boldsymbol{a}_j \le  c_6|\boldsymbol{v}|^2 \ell^{-1},   \ \ \ j=1,2,3, \ \ \ \ \ \  
  \sum_{j=1}^{3} \boldsymbol{w} \cdot \boldsymbol{a}_j \le -c_5|\boldsymbol{v}_{\rm bad}|^2  + c_6|\boldsymbol{v}|^2 \ell^{-1}.
\end{align*}
Consequently, we can find a decomposition $\boldsymbol{w} = \boldsymbol{w}' + \boldsymbol{w}''$ with the property
\begin{align*}
&\boldsymbol{w}' \cdot \boldsymbol{a}_j = 0, \ \ \ j=1,2,3,  \ \ \ \  |\boldsymbol{w}'| \le c_7|\boldsymbol{v}|^2,\\
&  \sum_{i=1}^{10} w''_i \le -c_5|\boldsymbol{v}_{\rm bad}|^2  + c_6|\boldsymbol{v}|^2 \ell^{-1},  \  \ \ \ w''_i \le c_6|\boldsymbol{v}|^2 \ell^{-1}, \ \  \ i=1,\ldots,10
\end{align*}
for a universal constant $c_7>0$. \BBB (Choose, e.g., $w_3' = w_3 - \boldsymbol{w} \cdot \boldsymbol{a}_1$, $w_7' = w_7 - \boldsymbol{w} \cdot \boldsymbol{a}_2$, $w_9' = w_9 - \boldsymbol{w} \cdot \boldsymbol{a}_3$, and $w_i' = w_i$ else.) \EEE Let $I  = \lbrace i=1,\ldots,10| \ \boldsymbol{w}_i'' \le 0 \rbrace$ and note $\sum_{i \in I}  \boldsymbol{w}_i'' \le  \sum_{i=1}^{10} \boldsymbol{w}''_i $. Then using Property 2 of Lemma \ref{tildeE} and Lemma \ref{lemma: conv}  we derive
\begin{align*}
\sum_{i=1}^{10} & \big( \PPP D \tilde{E}(T(\boldsymbol{x}^\ell_{\rm kink}) \EEE \big)_i  \boldsymbol{w}_i   \\&= \sum_{i=1}^{10} \big( \PPP D \tilde{E}(T(\boldsymbol{x}^\ell_{\rm kink}) \EEE \big)_i   \boldsymbol{w}_i' +\sum_{i \in I} \big( \PPP D \tilde{E}(T(\boldsymbol{x}^\ell_{\rm kink}) \EEE \big)_i  \boldsymbol{w}_i'' + \sum_{i \notin I} \big( \PPP D \tilde{E}(T(\boldsymbol{x}^\ell_{\rm kink}) \EEE \big)_i  \boldsymbol{w}_i''\\
&\ge - C|\boldsymbol{w}'| \ell^{-3} + c_{E,1}\ell^{-2} \sum_{i \in I} - \boldsymbol{w}_i'' - \PPP 10 c_{E,2} \EEE c_6|\boldsymbol{v}|^2 \ell^{-3} 
\\
&\ge - Cc_7|\boldsymbol{v}|^2 \ell^{-3} + c_{E,1}\ell^{-2}\big(c_5|\boldsymbol{v}_{\rm bad}|^2  - c_6|\boldsymbol{v}|^2 \ell^{-1}\big) - \PPP 10 c_{E,2} \EEE c_6|\boldsymbol{v}|^2 \ell^{-3} ,
\end{align*}
where $C$ is the constant from Lemma \ref{lemma: conv}. Moreover, again using  Lemma \ref{tildeE} and \eqref{new estimate} we get
$$\sum_{i=1}^{10} {\BBB \left|   \big( \PPP D \tilde{E}(T(\boldsymbol{x}^\ell_{\rm kink}) \EEE \big)_i   \Big(\boldsymbol{w}_i - \big(\boldsymbol{v}^T D^2 T(\boldsymbol{x}^\ell_{\rm kink}) \boldsymbol{v}\big)_i \Big)  \right|}\le \PPP 10 c_{E,2} \EEE c_4 |\boldsymbol{v}|^2\ell^{-3}.$$ 
We then use \eqref{eq: conv3-b}, \eqref{eq: conv6}, and the previous two estimates to find
\begin{align*}
f''_{\boldsymbol{v}}(0) &= \boldsymbol{v}^TD^2E_{\rm cell}(\boldsymbol{x}_{\rm kink})\boldsymbol{v} \ge D \tilde{E}(T(\boldsymbol{x}^\ell_{\rm kink})) \big(\boldsymbol{v}^TD^2 T(\boldsymbol{x}^\ell_{\rm kink}) \boldsymbol{v}\big)  \\&
\ge c_{E,1}c_5|\boldsymbol{v}_{\rm bad}|^2 \ell^{-2} - c' |\boldsymbol{v}|^2 \ell^{-3},
\end{align*}
\PPP for \EEE $c' =c'(C,c_{E,1},c_{E,2},c_4,c_5,c_6,c_7)$ large enough. Since $|\boldsymbol{v}_{\rm good}| < c_*|\boldsymbol{v}|\ell^{-1}$, we get $|\boldsymbol{v}_{\rm bad}|^2 \ge \frac{1}{2}(1-r^2)|\boldsymbol{v}|^2$ for $\ell_0$ large enough by 
 \eqref{givenumber}. Then $f''_{\boldsymbol{v}}(0) \ge c\ell^{-2}|\boldsymbol{v}|^2$  follows when we choose $\ell_0 \in \Nz$ sufficiently large (depending also on $r$).
\end{proof}  

\subsection{Proof of  Theorem \ref{th: Ered}}
As a last preparation for the proof of Theorem \ref{th: Ered}, we need to investigate how the angles between planes behave under reflection of a configuration (see \eqref{reflexion}-\eqref{reflection2}). Let  a center $z_{i,j,k}$ be given and, as before, denote by  $\boldsymbol{x} \in \Rz^{3 \times 8}$ the atoms of the corresponding cell. We introduce the angles between the planes as in Section \ref{sec: main proof}.   By $\theta_l(\boldsymbol{x})$ we denote the angle between the planes $\{x_1 x_3 x_4\}$ and $\{x_1 x_6 x_5\}$. By $\theta_r(\boldsymbol{x})$ we denote the angle between the planes $\{x_3 x_4 x_2\}$ and $\{x_2 x_5 x_6\}$. Moreover, we let   $\theta^{\rm dual}_l(\boldsymbol{x}) = \theta(x_1)$ and $\theta^{\rm dual}_r(\boldsymbol{x}) = \theta(x_2)$ with $\theta(x_i)$, $i=1,2$, as defined in \eqref{eq: thetaangle}.  Recall also the definition of  $\Delta(z_{i,j,k})$ in \eqref{delta}.

\begin{lemma}[Symmetry defect controls angle defect]\label{lemma: angle invariance}
There \PPP exist \EEE a universal constant $C>0$ and $\ell_0 \in \Nz$, and for each $\ell \ge \ell_0$  there  \PPP exists \EEE $\eta_\ell >0$ such that for all $\tilde{\mathcal{F}} \in  \mathscr{P}_{\eta_\ell}(\mu)$,  $\mu \in (2.6,3.1)$, \PPP and  all \EEE centers $z_{i,j,k}$ we have
\begin{align*} 
&\theta_l(\mathcal{S}(\boldsymbol{x})) +  \theta_r(\mathcal{S}(\boldsymbol{x})) \le \theta_l(\boldsymbol{x}) +  \theta_r(\boldsymbol{x})+ C\Delta(z_{i,j,k}),\\
 &\theta^{\rm dual}_l(\mathcal{S}(\boldsymbol{x}))  + \theta^{\rm dual}_r(\mathcal{S}(\boldsymbol{x})) \le  \theta^{\rm dual}_l(\boldsymbol{x})  + \theta^{\rm dual}_r(\boldsymbol{x}) + C\Delta(z_{i,j,k}),
\end{align*}
where $\boldsymbol{x} \in \Rz^{3 \times 8}$  \PPP denotes \EEE the position of the atoms in the cell with center $z_{i,j,k}$ and $\mathcal{S}(\boldsymbol{x})$ as in \eqref{reflection2-b}. 
\end{lemma}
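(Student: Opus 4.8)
The plan is to prove the two displayed inequalities by the same mechanism, exploiting that the symmetrization $\mathcal{S}$ is a composition of two reflection-averagings. Writing $\boldsymbol{p} = \boldsymbol{x} - \boldsymbol{x}_{\rm kink}^\ell$, the definitions \eqref{reflection2} give $\boldsymbol{x}' = \tfrac12(\boldsymbol{x} + \boldsymbol{x}_{S_1})$ and $\mathcal{S}(\boldsymbol{x}) = \tfrac12(\boldsymbol{x}' + \boldsymbol{x}'_{S_2})$, where $\boldsymbol{x}_{S_1} = \boldsymbol{x}_{\rm kink}^\ell + S_1\boldsymbol{p}$ and $\boldsymbol{x}'_{S_2} = \boldsymbol{x}_{\rm kink}^\ell + S_2(\boldsymbol{x}' - \boldsymbol{x}_{\rm kink}^\ell)$. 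Setting $g = \theta_l + \theta_r$ (and later $g = \theta^{\rm dual}_l + \theta^{\rm dual}_r$), the whole statement reduces to a single \emph{midpoint estimate}: if $g(\boldsymbol{a}) = g(\boldsymbol{b})$ then $g(\tfrac12(\boldsymbol{a}+\boldsymbol{b})) \le g(\boldsymbol{a}) + C|\boldsymbol{a}-\boldsymbol{b}|^2$. Indeed, applying it first to $(\boldsymbol{a},\boldsymbol{b}) = (\boldsymbol{x},\boldsymbol{x}_{S_1})$ and then to $(\boldsymbol{x}',\boldsymbol{x}'_{S_2})$, and using $|\boldsymbol{x}-\boldsymbol{x}_{S_1}|^2 = 4|\boldsymbol{x}-\boldsymbol{x}'|^2$ and $|\boldsymbol{x}'-\boldsymbol{x}'_{S_2}|^2 = 4|\boldsymbol{x}'-\mathcal{S}(\boldsymbol{x})|^2$ together with the definition \eqref{delta} of $\Delta(z_{i,j,k})$, one obtains $g(\mathcal{S}(\boldsymbol{x})) \le g(\boldsymbol{x}) + 4C\,\Delta(z_{i,j,k})$, which is the claim.

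First I would verify the symmetry hypothesis $g(\boldsymbol{a}) = g(\boldsymbol{b})$ needed at each step. Since the angle between two planes and the dihedral angle \eqref{eq: thetaangle} are invariant under an isometry applied to all points combined with a consistent relabeling, and since $S_1 = r_2\circ(\text{permutation})$, $S_2 = r_1\circ(\text{permutation})$, a direct inspection of \eqref{reflexion} shows that $S_2$ interchanges the plane pair defining $\theta_l$ with that defining $\theta_r$ (and $\theta^{\rm dual}_l$ with $\theta^{\rm dual}_r$), while $S_1$ fixes each angle individually; in either case $g\circ S_1 = g\circ S_2 = g$. As $\boldsymbol{x}_{\rm kink}^\ell$ is fixed by $S_1$ and $S_2$ (immediate from \eqref{kink}), linearity gives $\boldsymbol{x}_{\rm kink}^\ell + S_i\boldsymbol{q} = S_i(\boldsymbol{x}_{\rm kink}^\ell + \boldsymbol{q})$, whence $g(\boldsymbol{x}_{S_1}) = g(\boldsymbol{x})$ and $g(\boldsymbol{x}'_{S_2}) = g(\boldsymbol{x}')$, exactly the equalities required.

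It then remains to prove the midpoint estimate, which is the heart of the matter. I would fix $\eta_\ell$ so small (allowed to shrink with $\ell$) that every cell, as well as all intermediate configurations and segments, stays near $\boldsymbol{x}_{\rm kink}^\ell$, where the four planes are nondegenerate and the relevant dihedral angles remain strictly below $\pi$, so that $g$ is smooth and a second-order Taylor expansion along the segment joining $\boldsymbol{a}$ to $\boldsymbol{b}$ applies (the equality $g(\boldsymbol a)=g(\boldsymbol b)$ cancelling the first-order term). For the dual angles this already yields a \emph{universal} constant: $\theta^{\rm dual}_l,\theta^{\rm dual}_r$ are honest dihedral angles about the axial bonds $x_1x_7$, $x_2x_8$ and, in the range near $\gamma_\ell \in (\pi/2,\pi)$, equal a difference of azimuthal angles of the two lateral bonds about that axis; as those bonds make an angle bounded away from $0$ and $\pi$ with the axis, these azimuthal angles — and hence $\theta^{\rm dual}_l + \theta^{\rm dual}_r$ — have second derivatives bounded independently of $\ell$.

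The genuine obstacle is $\theta_l + \theta_r$, whose Hessian is \emph{not} uniformly bounded: at the kink configuration the two planes defining $\theta_l$ are nearly parallel, their unit normals satisfying $F := n_1\cdot n_2 = \pm 1 + \mathrm{O}(\ell^{-2})$, so $\arccos$ in \eqref{eq: thetaangle} is evaluated near its singular points and a naive bound on $D^2\theta_l$ costs a factor $\ell$. The plan to recover a universal constant is to write $\theta_l = \pi - \arccos(F)$ and to use the reflection symmetry twice. Because $F$ is $S_1$-invariant, its gradient at the symmetric configuration is symmetric, so the directional derivative of $F$ along the antisymmetric increment $\boldsymbol{x}_{S_1} - \boldsymbol{x}$ vanishes there; this kills the worst term $\arccos''(F)\,(\partial F)^2$ (of size $\ell^3 \cdot \ell^{-2} = \ell$ after accounting for $1-|F| = \mathrm{O}(\ell^{-2})$) and leaves $\arccos'(F)\,\partial^2 F$. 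For this remaining order-$\ell$ term I would show, via the explicit expansion of $F$ in the antisymmetric directions (in the spirit of the distance estimate behind Lemma \ref{lemma: sum}), that the symmetric configuration \emph{minimizes} the nonplanarity $\arccos(F)$ along each such direction, so that this term enters $\theta_l + \theta_r$ with the convex sign; convexity of the restriction of $\theta_l+\theta_r$ to the symmetrization segment then makes the midpoint value no larger than the common endpoint value with no error, while the genuinely uniformly bounded curvature contributes the $C\,\Delta(z_{i,j,k})$ term with $C$ independent of $\ell$. Establishing this sign and gap rigorously — i.e. that the net second-order change of $\theta_l+\theta_r$ under the two reflection-averagings is controlled by $|\boldsymbol{x}-\mathcal{S}(\boldsymbol{x})|^2$ with a universal constant despite the near-degeneracy — is the step I expect to be the main difficulty.
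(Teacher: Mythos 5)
Your overall architecture matches the paper's: you reduce the claim to a second-order (midpoint) estimate along the two symmetrization segments, you use the invariance of $g=\theta_l+\theta_r$ under the relabelled reflections \eqref{reflexion} to make the restrictions $t\mapsto g(\boldsymbol{x}'+t\boldsymbol{w}_1)$, $t\mapsto g(\mathcal{S}(\boldsymbol{x})+t\boldsymbol{w}_2)$ even (killing the first-order terms), and you correctly identify that the worst term $\arccos''(F)\,(\partial F)^2$ disappears because the first derivative of $F$ along an antisymmetric direction vanishes at a symmetric configuration. This is exactly the paper's Step I and the first half of Step II (the paper phrases it with the in-plane unit vectors $s_j^k$ rather than the normals, which is immaterial). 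Your observation that the dual angles can be written as differences of azimuthal angles about the axial bond, and hence have uniformly bounded Hessians outright, is a legitimate shortcut the paper does not take.

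The gap is in the one step that carries the entire content of the lemma: the remaining term $\arccos'(F)\,\partial^2F$ of size $\ell\cdot|h|^2$. Your plan — show that the symmetric configuration \emph{minimizes} the nonplanarity $\arccos(F)$ along each antisymmetric direction, and conclude convexity of $\theta_l+\theta_r$ on the segment — is internally inconsistent: if $\arccos(F)$ has a minimum at the midpoint then $\theta_l+\theta_r=2\pi-\arccos(F_l)-\arccos(F_r)$ is \emph{concave} there, the midpoint value \emph{exceeds} the common endpoint value, and this is precisely the dangerous case the lemma must quantify; no ``zero-error'' conclusion follows. Indeed, what the paper establishes is not a sign but a quantitative bound: in \eqref{taylor7} the only contribution to $f_1''(0)$ carrying the singular factor $\sin^{-1}(\theta_k)\sim\ell$ is $-\tfrac12\sin^{-1}(\theta_k)\,|w_1^{1,k}+w_2^{1,k}|^2\le 0$ (the unfavourable sign), where $w_j^{1,k}$ are the components of the derivatives of $s_j^k$ \emph{parallel} to the normal $n_k$. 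This is controlled by two different mechanisms: for the $S_1$-averaging one has the exact antisymmetric cancellation $w_1^{1,k}=-w_2^{1,k}$, so the term vanishes; for the $S_2$-averaging one has $\hat w_1^{1,k}=\hat w_2^{1,k}$ (no cancellation) and must instead prove the smallness estimate \eqref{taylor9}, $|\hat w_j^{1,k}|\le C|\boldsymbol{w}_2|\,\ell^{-1}$, which rests on the special geometry of the $S_1$-symmetric intermediate configuration ($n_k(\boldsymbol{x}')\cdot e_2=0$ and $|n_k(\boldsymbol{x}')\cdot e_3|\le C|\boldsymbol{w}_2|$), yielding a net contribution $O(\ell^{-1}|\boldsymbol{w}_2|^2)$. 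Neither mechanism appears in your proposal, and you yourself flag the step as open; as it stands, the proof is not complete and the route you sketch for closing it points in the wrong direction.
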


We postpone the proof of this lemma to the end of the section and now continue with the proof of   Theorem \ref{th: Ered}.

\begin{proof}[Proof of Theorem \ref{th: Ered}]
Let \PPP $\tilde{\mathcal{F}} \in \mathscr{P}_{\eta_\ell}(\mu)$ be a given configuration, where $\eta_\ell$ is specified below, \EEE and let $\boldsymbol{x} \in \Rz^{3 \times 8}$ be the points of one cell as introduced in Section \ref{sec: main proof}. {\BBB As usual}, possibly after a rigid motion we can assume that the second and third components of \RRR $(x_1 + x_7)/2$, $(x_2+x_8)/2$ are zero \EEE and the points $x_4$, $x_5$ lie in a plane parallel to $\Rz^2 \times \lbrace 0 \rbrace$. We now perform a symmetrization argument as in the proof of Lemma \ref{lemma: in plane}.

 We define $\boldsymbol{x}_{S_1}$ by \PPP \eqref{s1s2}. Clearly \EEE the  vector $\boldsymbol{w}_1 := \boldsymbol{x}_{S_1} - \boldsymbol{x}$ is  perpendicular to $\mathcal{V}_{\rm trans}$. Moreover, we have $|\boldsymbol{w}_1\cdot \boldsymbol{v}_i| \le r |\boldsymbol{w}_1||\boldsymbol{v}_i|$ for $i=1,2,3$ for \PPP a universal constant  $r\in(0,1)$. In particular, $r$ is independent \EEE of the perturbation $\boldsymbol{x}$. Indeed, for $\boldsymbol{v}_1$ and $\boldsymbol{v}_2$ this follows from the fact that the points \RRR $(x_1 + x_7)/2 $ and $(x_2 + x_8)/2$ \EEE are left unchanged. For $\boldsymbol{v}_3$ it follows from the assumption that the points $x_4$, $x_5$ lie in a plane parallel to $\Rz^2 \times \lbrace 0 \rbrace$.

Consequently, by Theorem \ref{th: cell convexity3}, a continuity argument, and the definition of the the perturbations $\mathscr{P}_{\eta_\ell}(\mu)$, the mapping $t \mapsto E_{\rm cell}(\boldsymbol{x} + t \boldsymbol{w}_1)$ is strictly convex on $[0,1]$ if $\eta_\ell$  is chosen small enough (independent of $\boldsymbol{x}$). This implies for  $\boldsymbol{x}' = \frac{1}{2}(\boldsymbol{x} + \boldsymbol{x}_{S_1})$ (see \eqref{reflection2-a}) that $E_{\rm cell}(\boldsymbol{x}') + c\ell^{-2}|\boldsymbol{w}_1|^2  \le \frac{1}{2}(E_{\rm cell}(\boldsymbol{x}) + E_{\rm cell}(\boldsymbol{x}_{S_1})) = E_{\rm cell}(\boldsymbol{x})$, where $c$ only depends on the constant from   Theorem \ref{th: cell convexity3}. 

Likewise, we consider {\BBB $\boldsymbol{x}'_{S_2} := \boldsymbol{x}_{\rm kink}^\ell  + S_2(\boldsymbol{x}'- \boldsymbol{x}_{\rm kink}^\ell)$ } \PPP and, similarly as before, \EEE the vector {\BBB  $\boldsymbol{w}_2 := \boldsymbol{x}'_{S_2} - \boldsymbol{x}'$ } is perpendicular to  $\mathcal{V}_{\rm trans}$ and satisfies $|\boldsymbol{w}_2\cdot \boldsymbol{v}_i| \le r |\boldsymbol{w}_2||\boldsymbol{v}_i|$ for $i=1,2,3$ for \PPP  a universal constant  $r\in(0,1)$. \EEE Indeed, for $\boldsymbol{v}_1$ and $\boldsymbol{v}_2$ this follows as before and for $\boldsymbol{v}_3$ it suffices to note that also for the configuration $\boldsymbol{x}' = (x_1',\ldots,x_8')$ the points $x'_4$, $x'_5$ lie in a plane parallel to $\Rz^2 \times \lbrace 0 \rbrace$. \RRR Using again   Theorem \ref{th: cell convexity3} we get $E_{\rm cell} (\mathcal{S}(\boldsymbol{x})) + c\ell^{-2}|\boldsymbol{w}_2|^2\le  E_{\rm cell}(\boldsymbol{x}') $   with $\mathcal{S}(\boldsymbol{x})$ from \eqref{reflection2-b}. Possibly passing to a smaller \PPP constant $c>0$ (not relabeled) \EEE and using \eqref{delta}, we observe
$$E_{\rm cell}(\mathcal{S}(\mathbf{x})) + c\ell^{-2}\Delta(z_{i,j,k}) \le   E_{\rm cell}(\boldsymbol{x}).$$
\EEE By this symmetrization procedure we get that the eight points $\mathcal{S}(\mathbf{x})$ satisfy the symmetry conditions stated in \eqref{sym-assumption}. {\BBB  In particular, $\widetilde\mu$ from \eqref{sym-assumption} is here equal to $|z^{\rm dual}_{i,j,k} - z^{\rm dual}_{i,j-1,k}|$, the latter quantity being unchanged after symmetrization \RRR since the second and third component of $z^{\rm dual}_{i,j,k}, z^{\rm dual}_{i,j-1,k}$ are assumed to be zero. \EEE} {\RRR Choose $M^\ell$ and $\eta_\ell$   small enough   such that  $|\lambda_1 - 1|  + |\lambda_3 - 1| \le \ell^{-4}$, and  $|\gamma_1 - \gamma_2| \le \ell^{-2}$ with $\lambda_1, \lambda_3,\gamma_1,\gamma_2$ from \eqref{sym-assumption}. This choice of $M^\ell$ is possible thanks to Property 2 in  Proposition \ref{th: main2}}. Consequently, by Lemma \ref{lemma: sym-energy} we obtain 
$$
E_{\rm cell}(\boldsymbol{x}) = E_{\rm cell}(z_{i,j,k}) \ge  E_{{\BBB \widetilde\mu},\gamma_1,\gamma_2}^{{\rm sym}}(\lambda_2,\alpha_1,\alpha_2)  + c\ell^{-2}\Delta(z_{i,j,k})  - c_0 \ell^{-4} (\gamma_1 - \gamma_2)^2.
$$
Using Property 2 of Proposition \ref{th: mainenergy} and \eqref{red}  we get for $\ell_0$ sufficiently large  
\begin{align}\label{sym-red} 
E_{\rm cell}(z_{i,j,k}) \ge E_{\rm red}({\BBB \widetilde\mu}, \bar{\gamma},\bar{\gamma} )  + c\ell^{-2}\Delta(z_{i,j,k}),
\end{align}
where $\bar{\gamma} = (\gamma_1 + \gamma_2)/2$. By Lemma  \ref{lemma: angle invariance} we obtain $\bar{\gamma} \le \bar{\theta}(z_{i,j,k}) + C\Delta(z_{i,j,k})$, where  $\bar{\theta}(z_{i,j,k}) = \big(\theta_l(z_{i,j,k}) + \theta_r(z_{i,j,k}) + \theta_l(z^{\rm dual}_{i,j,k}) + \theta_r(z^{\rm dual}_{i,j-1,k}) \big)/4$. Thus, by the monotonicity of the reduced energy (see Property 3 of Proposition \ref{th: mainenergy}) and a Taylor expansion for the mapping $\gamma \mapsto E_{\rm red}({\BBB \widetilde\mu},\gamma,\gamma)$ we get
\begin{align} 
E_{\rm red}({\BBB\widetilde\mu},\bar{\gamma},\bar{\gamma})&  \ge E_{\rm red}\Big({\BBB\widetilde\mu},\bar{\theta}(z_{i,j,k}) , \bar{\theta}(z_{i,j,k})  \Big) -C\ell^{-3} \Delta(z_{i,j,k})  + {\rm O}\big((\Delta(z_{i,j,k}) )^2 \big) \notag\\
& \ge E_{\rm red}\Big({\BBB \widetilde\mu},\bar{\theta}(z_{i,j,k}) , \bar{\theta}(z_{i,j,k})  \Big) -\RRR 2 \EEE C\ell^{-3} \Delta(z_{i,j,k}) \label{sym-red2} 
\end{align} 
for $C>0$ large enough \RRR depending on $v_3$, \EEE where the last step follows for $\eta_\ell$ sufficiently small.  The assertion of the theorem now follows for $\ell_0$ sufficiently large and $\ell \ge \ell_0$  from  \eqref{sym-red}, \eqref{sym-red2},   {\BBB { and the fact that $\widetilde\mu = |z^{\rm dual}_{i,j,k} - z^{\rm dual}_{i,j-1,k}|$.}}
\end{proof}

Finally, we give the proof of Lemma \ref{lemma: angle invariance}.

\begin{proof}[Proof of Lemma \ref{lemma: angle invariance}]
The proof is mainly based on a careful Taylor expansion for the angles under the symmetrization of the atomic positions in the cell, which is induced by the reflections \eqref{reflexion}. In particular, the argumentation for the angles $\theta_l, \theta_r$ and the dual angles $\theta_l^{\rm dual}$, $\theta_r^{\rm dual}$, respectively, is very \PPP similar. \EEE Therefore, we concentrate on the first inequality in the following. 

Let the configuration $\boldsymbol{x}$ be given  for a center $z_{i,j,k}$. Let $n^l_{1}(\boldsymbol{x})$ and $n^l_2(\boldsymbol{x})$ be \RRR unit \EEE normal vectors of the planes  $\{x_1 x_3 x_4\}$ and $\{x_1 x_6 x_5\}$. Likewise, let $n^r_{1}(\boldsymbol{x})$ and $n^r_2(\boldsymbol{x})$ be normal vectors of the planes  $\{ x_2 x_4 x_3 \}$ and $\{ x_2 x_5 x_6\}$. Let $n_l(\boldsymbol{x})$ and $n_r(\boldsymbol{x})$ be  unit vectors perpendicular to $n^l_{1}(\boldsymbol{x}), n^l_2(\boldsymbol{x})$ and  $n^r_{1}(\boldsymbol{x}), n^r_2(\boldsymbol{x})$, respectively.

Let $s_1^l(\boldsymbol{x})$ be a unit vector perpendicular to $n_l(\boldsymbol{x})$, $n_1^l(\boldsymbol{x})$ and let $s_2^l(\boldsymbol{x})$ be a unit vector perpendicular to $n_l(\boldsymbol{x})$, $n_2^l(\boldsymbol{x})$ such that $s_1^l(\boldsymbol{x}) \cdot s_2^l(\boldsymbol{x})$ is near $-1$. \PPP We define $s_1^r(\boldsymbol{x})$, $s_2^r(\boldsymbol{x})$ in a similar fashion. \EEE Note that these objects can be \PPP chosen to depend  smoothly \EEE with respect to $\boldsymbol{x}$ and that the angle in \eqref{eq: thetaangle} can be expressed as 
$$\theta_k(\boldsymbol{x}) = \arccos\big(s_1^k(\boldsymbol{x}) \cdot s_2^k(\boldsymbol{x})\big) \ \ \ \text{for } \ \ k=l,r. $$
We also introduce the mapping
 \begin{align}\label{concave0}
 g(\boldsymbol{x}) = \arccos\big(s_1^l(\boldsymbol{x}) \cdot s_2^l(\boldsymbol{x})\big) + \arccos \big( s_1^r(\boldsymbol{x}) \cdot s_2^r(\boldsymbol{x}) \big).
 \end{align}
\emph{Step I.} Recall from \PPP the definition \EEE in \eqref{reflection2}, \eqref{delta} that there are two vectors $\boldsymbol{w}_1,\boldsymbol{w}_2 \in \Rz^{3 \times 8}$ such that  the symmetrized configurations can be expressed as $\boldsymbol{x}' = \boldsymbol{x} + \boldsymbol{w}_1$ and $\mathcal{S}(\boldsymbol{x}) = \boldsymbol{x}' + \boldsymbol{w}_2$ with  
\begin{align}\label{bad/good}
|\boldsymbol{w}_1|^2 + |\boldsymbol{w}_2|^2 \RRR = \EEE \Delta(z_{i,j,k})
\end{align}
for a universal constant $C>0$. The goal will be to investigate the Hessian of $g$ and to show
 \begin{align}\label{concave3}
 \boldsymbol{w}_1^TD^2g(\boldsymbol{x}')\boldsymbol{w}_1 +   \boldsymbol{w}_2^TD^2g(\mathcal{S}(\boldsymbol{x}))\boldsymbol{w}_2 \ge -C(|\boldsymbol{w}_1|^2 +  |\boldsymbol{w}_2|^2) 
 \end{align} 
for $C>0$ universal. We defer the proof of \eqref{concave3} and first show that the assertion follows \PPP from it. \EEE We consider the mappings 
\begin{align}\label{eq: f1,f2}
f_1(t) = g(\boldsymbol{x}' + t\boldsymbol{w}_1 ), \ \ \ \  f_2(t) =  g(\mathcal{S}(\boldsymbol{x}) + t\boldsymbol{w}_2 ) \ \ \ \ \text{for} \ \ t\in [-1,1] 
\end{align}
and observe that $f_1(-1) = g(\boldsymbol{x})$, $f_2(-1) = g(\boldsymbol{x}')$, {\BBB $f_1(1)=g(\boldsymbol{x_{S_1}})$, $f_2(1)=g(\boldsymbol{x}'_{S_2})$, where $\boldsymbol{x}_{S_1}=\boldsymbol{x}_{\rm kink}^\ell+S_1(\boldsymbol{x}-\boldsymbol{x}_{\rm kink}^\ell)$ and $\boldsymbol{x}'_{S_2}=\boldsymbol{x}_{\rm kink}^\ell+S_2(\boldsymbol{x}'-\boldsymbol{x}_{\rm kink}^\ell)$, see \eqref{reflexion}-\eqref{s1s2}.}  Moreover, due to the fact that the symmetrized configurations are obtained by applying the reflections $S_1,S_2$, see \eqref{reflexion}, we get that $f_1,f_2$ are smooth, even functions, in particular, $f'_1(0) = f'_2(0) = 0$.  Thus, by a Taylor expansion we find $\xi \in (-1,0)$ such that
$$
g(\boldsymbol{x})  = f_1(-1)  = f_1(0) -  f_1'(0) + \frac{1}{2} f_1''(0) - \frac{1}{6}f_1'''(\xi)    \ge  g(\boldsymbol{x}' )  + \frac{1}{2}\boldsymbol{w}_1^TD^2g(\boldsymbol{x}')\boldsymbol{w}_1 - C|\boldsymbol{w}_1|^3, 
$$
where $C>0$ is a universal constant. Indeed, the constant is independent of $\boldsymbol{x}$ as all admissible $\boldsymbol{x}$  \RRR lie in a compact neighborhood of $\boldsymbol{x}_{\rm kink}^\ell$ where $g$ is smooth. \EEE  Applying Taylor once more, we get
$$ g(\boldsymbol{x}) \ge  g(\mathcal{S}(\boldsymbol{x}) )  + \frac{1}{2}\boldsymbol{w}_1^TD^2g(\boldsymbol{x}')\boldsymbol{w}_1 + \frac{1}{2}\boldsymbol{w}_2^TD^2g(\mathcal{S}(\boldsymbol{x}))\boldsymbol{w}_2  - C|\boldsymbol{w}_1|^3  - C|\boldsymbol{w}_2|^3.$$
Then we conclude  for  $\eta_\ell$ sufficiently small (and thus $|\boldsymbol{w}_1|, |\boldsymbol{w}_2|$ small) by \eqref{bad/good}-\eqref{concave3}
 $$  g(\boldsymbol{x})    \ge g(\mathcal{S}(\boldsymbol{x}) ) - C (|\boldsymbol{w}_1|^2 +  |\boldsymbol{w}_2|^2)  \RRR = \EEE g(\mathcal{S}(\boldsymbol{x}) ) - C\Delta(z_{i,j,k}).$$
Recalling \eqref{concave0} we obtain the assertion of the lemma.

\emph{Step II.} It remains to confirm \eqref{concave3}. We first concern ourselves with the Hessian of the mapping $f_1$ as defined in \eqref{eq: f1,f2}. For $t \in [-1,1]$ we let $u^k_j(t) = s^k_j(\boldsymbol{x}' + t\boldsymbol{w}_1)$ for $j=1,2$ and $k=l,r$. By a Taylor expansion we obtain
\begin{align}\label{expansion}
 u^k_j(t) = s^k_j(\boldsymbol{x}') + \big( v^{1,k}_j + w^{1,k}_j \big) t + \big( v^{2,k}_j + w^{2,k}_j\big) t^2     + {\rm O}(|\boldsymbol{w}_1|^3t^3)  \ \ \ \ \text{ with $ |u^k_j(t)| = 1$},
 \end{align}
where $v^{1,k}_j,v^{2,k}_j$ are perpendicular to $n_k(\boldsymbol{x}')$ and $w^{1,k}_j,w^{2,k}_j$ are parallel to $n_k(\boldsymbol{x}')$ such that $\sum_{j=1,2} \sum_{k=l,r}(|v^{1,k}_j| +  |w^{1,k}_j|) \le C|\boldsymbol{w}_1|$ and $\sum_{j=1,2} \sum_{k=l,r}(|v^{2,k}_j| +  |w^{2,k}_j|) \le C|\boldsymbol{w}_1|^2$. (The constant $C$ is again universal as all admissible $\boldsymbol{x}$ lie in a compact set and the mappings $s^k_j$ are smooth.) {\BBB For $j=1,2$ and $k=l,r$, the two vectors $w_j^{1,k}$ and $w_{j}^{2,k}$ are  orthogonal to $s_j^k(\boldsymbol{x}')$, and taking the first and the second derivative of the constraint $|s^k_j(\boldsymbol{x}'+t\boldsymbol{w}_1)|^2 = |u^{k}_j(t)|^2 = 1$ with respect to $t$ }  yields by an elementary computation
\begin{align}\label{u2}
(a)  \ \ v^{1,k}_j \cdot s^k_j(\boldsymbol{x}') = 0, \ \ \ \ \ \ (b) \ \   |v^{1,k}_j|^2 + |w^{1,k}_j|^2 + 2s^{k}_j(\boldsymbol{x}') \cdot v^{2,k}_j = 0.
\end{align}
Then we compute by \eqref{eq: f1,f2}
\begin{align*}
f_1(t)  & = \sum_{k=l,r} \arccos\Big(s^k_1(\boldsymbol{x}')  \cdot s^k_2(\boldsymbol{x}')  + \big( v_1^{1,k} \cdot s^k_2(\boldsymbol{x}') + v_2^{1,k} \cdot s^k_1(\boldsymbol{x}') \big)t \\& \ \ \ \ \ \ \ \ \  + \big( v_1^{2,k} \cdot s^k_2(\boldsymbol{x}') + v_2^{2,k} \cdot s^k_1(\boldsymbol{x}')+ v_1^{1,k} \cdot v_2^{1,k} + w_1^{1,k} \cdot w_2^{1,k}  \big) t^2 + {\rm O}(|\boldsymbol{w}_1|^3t^3) \Big).
\end{align*}
 \RRR A Taylor expansion and the fact that $f_1$ is even yield $ f_1(t) -f_1(0) = f''_1(0)t^2/2 +  {\rm
  O}(|\boldsymbol{w}_1|^3t^3)$. \PPP More precisely,  \EEE 
  \EEE  we get recalling   $s^k_1(\boldsymbol{x}')  \cdot s^k_2(\boldsymbol{x}') = \cos(\theta_k(\boldsymbol{x}'))$  for $k=l,r$
\begin{align} f_1(t) -f_1(0) & = \sum_{k=l,r} \arccos'(\cos(\theta_k(\boldsymbol{x}')))
 \big( v_1^{2,k} \cdot s^k_2(\boldsymbol{x}') + v_2^{2,k} \cdot s^k_1(\boldsymbol{x}')+ v_1^{1,k} \cdot v_2^{1,k} + w_1^{1,k} \cdot w_2^{1,k}  \big) t^2 \notag \\& 
\ \ \ +  \sum_{k=l,r}\frac{1}{2}\arccos''(\cos(\theta_k(\boldsymbol{x}')))  \big(
v_1^{1,k} \cdot s^k_2(\boldsymbol{x}') + v_2^{1,k} \cdot
s^k_1(\boldsymbol{x}') \big)^2t^2      + {\rm
  O}(|\boldsymbol{w}_1|^3t^3). \label{taylor1} 
\end{align}
We get  $|v_1^{1,k}  \cdot s^k_2(\boldsymbol{x}')| =|v_1^{1,k}|\sin(\theta_k(\boldsymbol{x}'))  $ by  \eqref{u2}(a). This together with \eqref{u2}(b)  and $|v_1^{2,k}| \le C|\boldsymbol{w}_1|^2$  yields for $k= l,r$
\begin{align*}
v_1^{2,k} \cdot s^k_2(\boldsymbol{x}') &= \Big( (v_1^{2,k}\cdot s^k_1(\boldsymbol{x}'))  s^k_1(\boldsymbol{x}') +  |v_1^{1,k} |^{-2} (v_1^{2,k}\cdot v_1^{1,k})v_1^{1,k} \Big) \cdot s^k_2(\boldsymbol{x}')\\& = -\frac{1}{2}(|v_1^{1,k}|^2 + |w_1^{1,k}|^2)\cos(\theta_k(\boldsymbol{x}')) +   |v_1^{1,k} |^{-2}(v_1^{2,k}\cdot v_1^{1,k}) (v_1^{1,k}  \cdot s^k_2(\boldsymbol{x}')) \\
& \le  -\frac{1}{2}(|v_1^{1,k}|^2 + |w_1^{1,k}|^2)\cos(\theta_k(\boldsymbol{x}')) + C\sin(\theta_k(\boldsymbol{x}'))|\boldsymbol{w}_1|^2,
\end{align*}
and repeating the same calculation for $v_2^{2,k}$, we derive for $k=l,r$
\begin{equation}\label{cross12} 
\big( v_1^{2,k} \cdot s^k_2(\boldsymbol{x}') + v_2^{2,k} \cdot
s^k_1(\boldsymbol{x}') \big) \le  \sum_{j=1,2}
-\frac{1}{2}(|v_j^{1,k}|^2 +
|w_j^{1,k}|^2)\cos(\theta_k(\boldsymbol{x}'))
+C\sin(\theta_k(\boldsymbol{x}'))|\boldsymbol{w}_1|^2. 
\end{equation}
Note that  $v_1^{1,k} \cdot v_2^{1,k} = |v_1^{1,k}||v_2^{1,k}| q \cos(\theta_k(\boldsymbol{x}'))$ for $q \in \lbrace -1 , 1 \rbrace$  by \eqref{u2}(a). An elementary computation then yields
\begin{align}\label{taylor3}
\big(v_1^{1,k} \cdot s^k_2(\boldsymbol{x}') + v_2^{1,k} \cdot s^k_1(\boldsymbol{x}')\big)^2 \ =   \sin^2(\theta_k(\boldsymbol{x}'))  (|v_1^{1,k}|-q|v_2^{1,k}| )^2.
\end{align}
Combining \PPP {\BBB \eqref{taylor1}--\eqref{taylor3}} \EEE and using that  $\arccos'(x) = -(1-x^2)^{-1/2}$ and that $\arccos''(x)=-x(1-x^2)^{-3/2}$,  we find
\begin{align}
f_1&(t) -f_1(0) \notag\\
& \ge \sum_{k=l,r}  -\sin(\theta_k(\boldsymbol{x}'))^{-1}\Big( \sum_{j=1,2}  - \frac{1}{2}(|v_j^{1,k}|^2 + |w_j^{1,k}|^2)\cos(\theta_k(\boldsymbol{x}')) + C\sin(\theta_k(\boldsymbol{x}'))|\boldsymbol{w}_1|^2\notag\\&
\ \ \  + w_1^{1,k} \cdot w_2^{1,k}  + |v_1^{1,k}||v_2^{1,k}| q \cos(\theta_k(\boldsymbol{x}')) \Big)t^2 \notag \\& \ \ \ - \frac{1}{2}\cos(\theta_k(\boldsymbol{x}')) (1-\cos^2(\theta_k(\boldsymbol{x}')))^{-3/2}\sin^2(\theta_k(\boldsymbol{x}'))     (|v_1^{1,k}| - q|v_2^{1,k}|)^2 t^2      + {\rm O}(|\boldsymbol{w}_1|^3t^3)\notag \\
&  =   \sum_{k=l,r}  -\sin(\theta_k(\boldsymbol{x}'))^{-1}\Big( \sum_{j=1,2} - \frac{1}{2}|w_j^{1,k}|^2\cos(\theta_k(\boldsymbol{x}'))  + w_1^{1,k} \cdot w_2^{1,k}   \Big)t^2   - C|\boldsymbol{w}_1|^2t^2   + {\rm O}(|\boldsymbol{w}_1|^3t^3)\notag \\
& \ge \sum_{k=l,r}  -\sin(\theta_k(\boldsymbol{x}'))^{-1}\Big( \sum_{j=1,2} \frac{1}{2}|w_j^{1,k}|^2  + w_1^{1,k} \cdot w_2^{1,k}   \Big)t^2   - C|\boldsymbol{w}_1|^2t^2   + {\rm O}(|\boldsymbol{w}_1|^3t^3). \label{taylor7}
\end{align}
In the last step we used that \RRR $\cos\theta\ge -1$. \EEE  Before we proceed let us note that the same computation can be repeated for the second mapping $f_2$ defined in \eqref{eq: f1,f2}: considering an expansion as in \eqref{expansion} with $s^k_j(\mathcal{S}(\boldsymbol{x}))$ in place of $s^k_j(\boldsymbol{x}')$ and indicating the vectors by $\hat{v}^{i,k}_j$ and $\hat{w}^{i,k}_j$ (perpendicular and parallel to $n_k(\mathcal{S}(\boldsymbol{x}))$, respectively)  we also obtain
\begin{align}\label{taylor8}
f_2&(t) -f_2(0)  \notag\\
&\ge \sum_{k=l,r}  -\frac{1}{\sin(\theta_k(\mathcal{S}(\boldsymbol{x})))}\Big( \sum_{j=1,2}  \frac{1}{2}|\hat{w}_j^{1,k}|^2  + \hat{w}_1^{1,k} \cdot \hat{w}_2^{1,k}   \Big)t^2  -C|\boldsymbol{w}_2|^2 t^2   + {\rm O}(|\boldsymbol{w}_2|^3t^3).
\end{align}

\emph{Step III.} We now investigate \eqref{taylor7}-\eqref{taylor8} \PPP in more detail. \EEE Consider first $f_1$. Due to the symmetry of the setting induced by the reflection $S_1$ (recall \eqref{reflexion}) we find $u^k_1(t)\cdot n_k(\boldsymbol{x}') = u^k_2(-t)\cdot n_k(\boldsymbol{x}') $ for $k=l,r$. In particular, {\BBB  taking the derivative in $t$ and using \eqref{u2}(a),}  this implies $w_1^{1,k} = -w_2^{1,k}$. Then by \eqref{taylor7} we obtain $$f_1(t) -f_1(0) \ge -C|\boldsymbol{w}_1|^2t^2 +   {\rm O}(|\boldsymbol{w}_1|^3t^3)$$ and therefore taking $t \to 0$ we get $ \boldsymbol{w}_1^TD^2g(\boldsymbol{x}')\boldsymbol{w}_1 \ge -C|\boldsymbol{w}_1|^2$, which establishes the first part of \eqref{concave3}. Now consider $f_2$. Notice that one can show $\hat{w}_1^{1,k} = \hat{w}_2^{1,k} $ for $k=l,r$ by symmetry, \PPP i.e., \EEE we cannot repeat the same argument as for $f_1$. However, in this case we can show
\begin{align}\label{taylor9}
|\hat{w}_1^{1,l}| + |\hat{w}_1^{1,r}|  +|\hat{w}_2^{1,l}| + |\hat{w}_2^{1,r}| \le C|\boldsymbol{w}_2| \ell^{-1}.
\end{align}
Once this is proved, the assertion follows. Indeed, due to symmetry of $\mathcal{S}(\boldsymbol{x})$ we observe that $\theta_l(\mathcal{S}(\boldsymbol{x})) = \theta_r(\mathcal{S}(\boldsymbol{x}))$, denoted by $\varphi$ in the following. Recalling \eqref{kink} and the fact that $\mathcal{S}(\boldsymbol{x})$ is near $\boldsymbol{x}_{\rm kink}^\ell$, we get $\varphi \le  \pi- c\ell^{-1}$   and $\sin(\varphi) \ge c\ell^{-1}$ for some $c>0$. Then by  \eqref{taylor8} we have $$f_2(t) -f_2(0)  \ge - C|\boldsymbol{w}_2|^2t^2 - C\ell \cdot   |\boldsymbol{w}_2|^2 \ell^{-2} t^2 +  {\rm O}(|\boldsymbol{w}_2|^3t^3),$$ which shows the second part of \eqref{concave3}. 

Let us finally show  \eqref{taylor9}. Recall the definition of the \RRR unit \EEE normal vectors $n_1^k(\boldsymbol{x}), n_2^k(\boldsymbol{x})$, and $n_k(\boldsymbol{x})$ introduced before \eqref{concave0} for $k=l,r$. Observe that \PPP by symmetry reasons \EEE we have $n_k(\mathcal{S}(\boldsymbol{x})) = \pm e_1$ and $|n_j^k(\mathcal{S}(\boldsymbol{x})) \cdot e_2| = \sin(\frac{\pi - \varphi}{2})$ for $j=1,2$, $k=l,r$. Then a continuity argument gives $|n_k(\boldsymbol{x}') \cdot e_3| \le C|\boldsymbol{w}_2|$ and $|n^k_j(\boldsymbol{x}') \cdot e_2| \le \sin(\frac{\pi - \varphi}{2}) + C|\boldsymbol{w}_2|$ for $k=l,r$ and $j=1,2$.  Moreover, as $\boldsymbol{x}'$ is invariant under the reflection $S_1$ (recall \eqref{reflexion}), we get $n_k(\boldsymbol{x}') \cdot e_2 = 0$. 
By definition of $s^k_j(\boldsymbol{x}')$ this implies   $$|s^k_j(\boldsymbol{x}') \cdot e_1|  = \big|\big(n_k(\boldsymbol{x}') \times n^k_j(\boldsymbol{x}') \big) \cdot e_1\big| {\BBB =|n_k(\boldsymbol{x}') \cdot e_3| |n^k_j(\boldsymbol{x}') \cdot e_2|} \le C\sin(\frac{\pi - \varphi}{2})|\boldsymbol{w}_2| + C|\boldsymbol{w}_2|^2.$$  For  a small enough perturbation parameter $\eta_\ell$  we get $|\boldsymbol{w}_2| \le \ell^{-1}$ and thus $|s^k_j(\boldsymbol{x}') \cdot e_1| \le   C|\boldsymbol{w}_2|\ell^{-1}$ \RRR since $\sin(\frac{\pi - \varphi}{2}) \le c\ell^{-1}$ by \eqref{kink}. \EEE  As $s^k_j(\boldsymbol{x}')\cdot e_1 = s^k_j(\mathcal{S}(\boldsymbol{x}))\cdot e_1 - \hat{w}_j^{1,k} + {\rm O}(|\boldsymbol{w}_2|^2) = - \hat{w}_j^{1,k} + {\rm O}(|\boldsymbol{w}_2|^2) $ \RRR (see \eqref{expansion} and use the fact that $s^k_j(\mathcal{S}(\boldsymbol{x}))\cdot e_1 = 0$), \EEE this shows \eqref{taylor9} and concludes the proof.
\end{proof}

\section*{Acknowledgements} 
\RRR M.F. acknowledges support from the Alexander von Humboldt Stiftung. 
 E.M. acknowledges support from the Austrian
Science Fund (FWF) project M 1733-N20.  \PPP
 P. P. acknowledges support
from the Austrian Science Fund (FWF) project P~29681, and from the Vienna Science and Technology Fund (WWTF), the
City of Vienna, and the Berndorf Private Foundation through Project MA16-005. 
\EEE
 U.S. acknowledges support from
the Austrian Science Fund (FWF) projects  P~27052, I~2375, and F~65
and from the  Vienna Science and Technology Fund (WWTF)
through project MA14-009.
The \PPP authors  \EEE would like to acknowledge
the kind hospitality of the Erwin
Schr\"odinger International Institute for Mathematics and Physics,
where part of this research was developed under the frame of the \PPP thematic
 program \EEE {\it Nonlinear Flows}. \EEE

Conflict of Interest: The authors declare that they have no conflict of interest.

\vspace{15mm}

\bibliographystyle{alpha}

\end{document}